\newtheorem{neu}{}[section]
\newtheorem{Cor}[neu]{Corollary}
\newtheorem*{Cor*}{Corollary}
\newtheorem{Thm}[neu]{Theorem}
\newtheorem*{Thm*}{Theorem}
\newtheorem{Prop}[neu]{Proposition}
\newtheorem*{Prop*}{Proposition}
\theoremstyle{definition}
\newtheorem{Lemma}[neu]{Lemma}
\newtheorem*{Rmk*}{Remark}
\newtheorem{Rmk}[neu]{Remark}
\newtheorem{Ex}[neu]{Example}
\newtheorem*{Ex*}{Example}
\newtheorem{Def}[neu]{Definition}
\newcommand{\N}{\mathbb{N}}
\newcommand{\Z}{\mathbb{Z}}
\newcommand{\R}{\mathbb{R}}
\newcommand{\C}{\mathbb{C}}
\newcommand{\CP}{\C\mathrm{P}}
\newcommand{\pf}{\longrightarrow}
\newcommand{\wrt}{with respect to }
\newcommand{\CZ}{\mu_{\mathrm{CZ}}}
\newcommand{\Mas}{\mu_{\mathrm{Maslov}}}
\newcommand{\Morse}{\mu_{\mathrm{Morse}}}
\newcommand{\id}{\mathrm{id}}
\newcommand{\om}{\omega}
\newcommand{\ev}{\mathrm{ev}}
\newcommand{\Poincare}{Poincar\'{e} }
\newcommand{\A}{\mathcal{A}}
\renewcommand{\P}{\mathcal{P}}
\newcommand{\F}{\mathcal{F}}
\newcommand{\D}{\mathbb{D}}
\newcommand{\M}{\mathcal{M}}
\newcommand{\Mh}{\widehat{\mathcal{M}}}
\newcommand{\J}{\mathcal{J}}
\newcommand{\B}{\mathcal{B}}
\newcommand{\E}{\mathcal{E}}
\renewcommand{\L}{\mathcal{L}}
\renewcommand{\H}{\mathrm{H}}
\newcommand{\CF}{\mathrm{CF}}
\newcommand{\HF}{\mathrm{HF}}
\newcommand{\Crit}{\mathrm{Crit}}
\newcommand{\CC}{\mathfrak{C}}
\newcommand{\beq}{\begin{equation}}
\newcommand{\beqn}{\begin{equation}\nonumber}
\newcommand{\eeq}{\end{equation}}
\newcommand{\bea}{\begin{equation}\begin{aligned}}
\newcommand{\bean}{\begin{equation}\begin{aligned}\nonumber}
\newcommand{\eea}{\end{aligned}\end{equation}}
\newcommand{\HH}{{\widehat{H}}}
\newcommand{\LH}{{L^N}}
\numberwithin{equation}{section}
\definecolor{Urs}{rgb}{0,.7,0}
\definecolor{Peter}{rgb}{0,0,1}
\definecolor{red}{rgb}{1,0,0}
\begin{document}
\title[Floer homology for negative line bundles]{Floer homology for negative line bundles and Reeb chords in pre-quantization spaces}
\author{Peter Albers} 
\author{Urs Frauenfelder}
\address{
    Peter Albers\\
    Departement Mathematik\\
    ETH Z\"urich}
\email{palbers@math.purdue.edu}
\address{
    Urs Frauenfelder\\
    Department of Mathematics and Research Institute of Mathematics\\
    Seoul National University}
\email{frauenf@snu.ac.kr}
\keywords{Floer homology, negative line bundles, Reeb chords, pre-quantization spaces}
\subjclass[2000]{53D40, 53D12, 37J99}
\begin{abstract}
In this article we prove existence of Reeb orbits for Bohr-Sommerfeld Legendrians in certain pre-quantization spaces. We give a quantitative estimate
from below.  These estimates are obtained by studying Floer homology for fibre-wise quadratic Hamiltonian functions on negative line bundles.
\end{abstract}
\maketitle

\section{Introduction}

\noindent In this article we consider a closed, connected symplectic manifold $(M,\om)$, which is integral, that is, $[\om]\in\H^2(M;\Z)$. Furthermore, let $L\subset M$ be a closed Lagrangian submanifold. Throughout this article assume that the pair $(M,L)$ is symplectically aspherical (see equations \eqref{eqn:def_sympl_aspherical} and \eqref{eqn:aspherical_Lagrangian} for the definition).

\begin{Def}\label{def:Bohr_Sommerfeld_intro}
A pair $(E,\alpha)$ consisting of a complex line bundle $E\pf M$ and a connection one form $\alpha$ is called a Bohr-Sommerfeld pair
for $(M,\om,L)$ if
\begin{enumerate}
  \item $\exists N\in\N$ s.t.~the curvature of $\alpha$ satisfies $F_{\alpha}=N\om$,
  \item the holonomy $\mathrm{hol}_{\alpha|_L}:\pi_1(L)\pf S^1$ takes values only in $\{0,\frac12\}\subset S^1=\R/\Z$.
\end{enumerate}
The integer $N=N(E,\alpha)$ is called the power of the Bohr-Sommerfeld pair.
\end{Def}

Pre-quantization spaces and Bohr-Sommerfeld pairs naturally arise in geometric quantization theory. Both notions appear in various places in the literature. For the Lagrangian case of Bohr-Sommerfeld we refer the reader for instance to Eliashberg-Hofer-Salamon \cite{Eliashberg_Hofer_Salamon}, Eliashberg-Polterovich \cite{Eliashberg_Polterovich_Partially_ordered_groups_and_geometry_of_contact_transformations}, and Ono \cite{Ono_Lagrangian_intersection_under_Legendrian_deformations}.

To a Bohr-Sommerfeld pair $(E,\alpha)$ for $(M,\om,L)$ we naturally associate a Legendrian submanifold $\L$ in a pre-quantization space of $(M,\om)$
as follows. The hyperplane distribution $\widetilde{\xi}:=\ker\alpha$ restricted to the unit circle bundle $\widetilde{\Sigma}$ of $E$ is
a contact structure on $\widetilde{\Sigma}$. Condition (2) in Definition \ref{def:Bohr_Sommerfeld_intro} implies that $L$ lifts to
a Legendrian submanifold $\widetilde{\L}$ of $(\widetilde{\Sigma},\widetilde{\xi})$. The group $\Z/2$ acts on
$(\widetilde{\Sigma},\widetilde{\xi},\widetilde{\L})$ by $e\mapsto -e$. The quotient is denoted by $(\Sigma,\xi,\L)$. We note that $\L$ is diffeomorphic to $L$. This is not the case if we don't divide out by the $\Z/2$-action.

Given a positive, autonomous Hamiltonian function $H\in C^{\infty}(M)$ on the base $M$ we denote by $\alpha_H$ the contact form on $(\Sigma,\xi)$
which is induced by the $S^1$-invariant contact form $\frac{1}{NH}\alpha$ on $\widetilde{\Sigma}$. We denote by $\mathcal{R}_\L(H)$ the set of
Reeb chords of the triple $(\Sigma,\alpha_H,\L)$ and by $\mathcal{R}^{1}_\L(H)$ the set of Reeb
chords of period strictly less than 1.\footnote{
The contact from $\alpha_H$ determines uniquely the Reeb vector field $R_H$ by $\alpha_H(R_H)=1$ and $\iota_{R_H}d\alpha_H=0$. Then a Reeb chord of period $T>0$ is a map $e:[0,T]\pf\Sigma$ solving $\dot{e}=R_H(e)$ and $e(0),e(T)\in\L$.
}
The set of contractible intersection points $L\cap\phi_H^1(L)$ of $L$ and its image under the time-1-map $\phi_H^1$ of the Hamiltonian flow of $H$
is denoted by $\P_L(H)$.\footnote{
The set of intersection points $L\cap\phi_H^1(L)$ is in 1-1-correspondence to the set of Hamiltonian chords $x(t)=\phi^t_H(x(0))$, $x(0),x(1)\in L$.
An intersection point is \textit{contractible} if the corresponding chord $x$ satisfies $[x]=0\in\pi_1(M,L)$.
}

The close connection between Reeb chords and Lagrangian intersection points was already fruitfully applied in the work of Eliashberg-Hofer-Salamon \cite{Eliashberg_Hofer_Salamon}, Givental \cite{Givental_Periodic_mappings_in_symplectic_topology, Givental_Nonlinear_generalization_of_the_Maslov_index,Givental_The_nonlinear_Maslov_index}, and Ono \cite{Ono_Lagrangian_intersection_under_Legendrian_deformations}. 

Our first main result gives a lower bound on the number of Reeb chords of period less than 1 in terms of the number of Hamiltonian chords of period equal to 1. The proof uses the observation that Reeb chords are in 1-1 correspondence to Hamiltonian chords with quantized action, see Proposition \ref{pro:quantized_chords_are_Reeb_chords}. Our result shows that in a certain sense the time-one dynamics ``remembers the past'' as phrased by Leonid Polterovich.

We recall that a subset of a topological space is called generic if it is a countable intersection of open and dense sets. It follows from Baire's theorem that generic subsets of $C^{\infty}(M)$ are dense. To a Hamiltonian function $H:M\pf\R$ we assign the following finite data set
\beq
\mathscr{D}(H):=\big\{\big(\A_H(x),\Mas^L(x;H)\big)\mid x\in\P_L(H)\big\}\;,
\eeq
where $\A_H$ is the action functional (see equation \eqref{eqn:def_action_functional}) and $\Mas^L$ is the Maslov index as defined in \cite{Robbin_Salamon_Maslov_index_for_paths}.

\noindent \textbf{Theorem A.}
Let $\dim M\geq4$. Then there exists a generic subset of  $C^{\infty}(M)$ such that for each Hamiltonian function $H$ in this subset there exist constants $C=C(\mathscr{D}(H))>0$ and $N=N(\mathscr{D}(H))\in\N$ with the following property. For any Bohr-Sommerfeld pair $(E,\alpha)$ with associated Legendrian $\L$ and power $N(E,\alpha)\geq N$ we have the estimate
\beq\label{eqn:crucial_inequality}
\#\mathcal{R}^{1}_\L(H+c)\geq\tfrac12\#\P_L(H)\
\eeq
for all $c\geq C$.

\begin{Rmk*}$ $
\begin{itemize}
\item In Section \ref{sec:applications} we introduce the two notions of a huge and a non-resonant Hamiltonian function. Moreover, we define the wiggliness $\mathcal{W}(\mathscr{D}(H))\in\N$ of a Hamiltonian function. Then in Theorem A we have $N(\mathscr{D}(H))=\mathcal{W}(\mathscr{D}(H))$ and $C(\mathscr{D}(H))$ is so that $H+C$ is huge. In fact, any Hamiltonian function $H$ becomes huge after adding a sufficiently large constant. Moreover, the wiggliness of a Hamiltonian function $H$ is large if $H$ has 1-periodic orbits with small but non-zero difference in action values. Finally, the non-resonancy condition is the generic property appearing in Theorem A. It guarantees that the action functionals detecting intersection points and Reeb chords are Morse.
\item We point out that Reeb dynamics of $\alpha_{H+c}$ (in particular the number $\#\mathcal{R_\L}(H+c)$) is sensitive to adding constants $c$ while $\P_L(H)$ is unaffected.
\item In fact, the period of the Reeb chords found in Theorem A is bounded below by a constant $\tau(H)>0$ depending on the wiggliness
and the local behavior of $H$ near $L$. Moreover, we get information on the action of the Reeb chords. We refer the reader to
Theorem \ref{thm:main_theorem} for the full statement.
\item We note that the Bohr-Sommerfeld property is stable under taking tensor powers. In particular, whenever there exists a Bohr-Sommerfeld pair
$(E,\alpha)$ for $(M,\om,L)$ then a suitable high tensor power of $(E,\alpha)$ will satisfy the assumption of Theorem A.
\item The same techniques used to prove Theorem A can be adapted to obtain an analogue of Theorem A for the number of closed Reeb orbits in terms of the number of contractible fixed points. In the periodic case multiple covers of a Reeb orbit contribute to the count. However, it should be possible to use the information on action, period, and index to get estimates for the number of geometrically distinct Reeb orbits. This will be treated in the future.
\end{itemize}
\end{Rmk*}

Floer's theorem gives a lower bound for $\P_L(H)$ in topological terms of $L$. Thus, we obtain

\begin{Cor} \label{cor:intro}
Under the assumptions of Theorem A
\beq
\#\mathcal{R}^1_\L(H+c)\geq\tfrac12\sum_{i=0}^{\dim L} b_i(L;\Z/2)
\eeq
where $b_i=\dim\H_i(L;\Z/2)$ are the Betti numbers.
\end{Cor}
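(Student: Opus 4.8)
The plan is to deduce the corollary by feeding Floer's Lagrangian intersection estimate into Theorem~A, so that essentially no new analysis is required. Fix $H$ in the generic subset of $C^\infty(M)$ provided by Theorem~A, and let $C=C(\mathscr{D}(H))$ and $N=N(\mathscr{D}(H))$ be the associated constants. For every $c\geq C$ and every Bohr-Sommerfeld pair $(E,\alpha)$ with associated Legendrian $\L$ and power $N(E,\alpha)\geq N$, Theorem~A gives
\[
\#\mathcal{R}^1_\L(H+c)\;\geq\;\tfrac12\,\#\P_L(H)\;.
\]
Since constants generate the trivial Hamiltonian flow, $\phi^1_{H+c}=\phi^1_H$, hence $\P_L(H+c)=\P_L(H)$; in particular the right-hand side does not feel the shift by $c$, and it suffices to bound $\#\P_L(H)$ from below.

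For this I would invoke Floer's theorem. The non-resonancy condition defining the generic set in Theorem~A makes the action functional $\A_H$ on the space of paths in $M$ from $L$ to $L$ Morse, which is precisely the statement that $L$ and $\phi^1_H(L)$ intersect transversally; this transversality persists under adding the constant $c$. Because the pair $(M,L)$ is symplectically aspherical, the Lagrangian Floer chain complex over $\Z/2$ generated by the contractible chords $\P_L(H)$ is well defined and its homology is isomorphic to $\H_*(L;\Z/2)$. Counting generators therefore yields
\[
\#\P_L(H)\;\geq\;\dim_{\Z/2}\H_*(L;\Z/2)\;=\;\sum_{i=0}^{\dim L} b_i(L;\Z/2)\;.
\]
Combining this with the previous display gives the claimed inequality for all $c\geq C$ and all Bohr-Sommerfeld pairs of sufficiently high power.

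The only point requiring a moment's attention is the interplay of the two genericity requirements: to run Floer's argument one needs $L$ and $\phi^1_H(L)$ to be transverse, and one must know that the generic set singled out in Theorem~A lies in (or can be intersected with) the set of Hamiltonians with this property. As observed above this is automatic, since the Morse/non-resonancy condition of Theorem~A already encodes this transversality and the flow, and hence the intersection pattern, is unchanged by adding a constant. Thus no genuine obstacle arises: the corollary is a formal consequence of Theorem~A together with the Arnold--Floer inequality for aspherical Lagrangian submanifolds.
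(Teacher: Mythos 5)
Your proposal is correct and follows essentially the same route as the paper: the corollary is obtained by combining the inequality of Theorem~A with Floer's theorem (Theorem~\ref{thm_floer}), which bounds $\#\P_L(H)$ below by $\sum_i b_i(L;\Z/2)$ for the symplectically aspherical pair $(M,L)$. Your extra remark that the non-resonancy/nondegeneracy condition already supplies the transversality of $L$ and $\phi^1_H(L)$ needed to run Floer's argument is a point the paper leaves implicit, but it is the correct justification.
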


\begin{Rmk}
We point out that the estimate \eqref{eqn:crucial_inequality} does not hold in general. In section \ref{section:counterex} we construct a large class of examples of Bohr-Sommerfeld pairs of power 1 for which $\mathcal{R}^1_\L(H)=\emptyset$. 
\end{Rmk}

\begin{Rmk}\label{rmk:function_mu_nu}
For fixed Legendrian $\mathcal{L}$ in a pre-quantization space and $H:M\pf(0,\infty)$ Theorem A can be rephrased in terms of the function
\beq
\mu\equiv\mu_{\mathcal{L},H}:(-\min H,\infty)\pf\N_0,\qquad \mu(c):=\#\mathcal{R}^{1}_\L(H+c)\;.
\eeq
Namely, if the power of the pre-quantization space is large enough we have 
\beq
\mu(c)\geq \tfrac12\#\P_L(H)
\eeq
for sufficiently large $c$. Moreover, the example from Section \ref{section:counterex} mentioned in the Remark above implies that there exists $\mathcal{L}$ and $H$ such that
\beq
\mu(c)=0\qquad\forall c\leq0\;,
\eeq
see Remark \ref{rmk:example_mu}. The function $\mu$ should not be confused with the function
\beq
\nu\equiv \nu_{\mathcal{L},H}:(0,\infty)\pf\N_{\geq0},\qquad \nu(c):=\#\mathcal{R}^{1}_\L(cH)\;
\eeq
which has the following properties. $\nu$ is monotone increasing, moreover 
\beq
\nu(c)=0 \qquad
\eeq
for all $c$ smaller than the smallest period of a Reeb chord of $\alpha_H$. We point out that the function $\mu$ in general won't satisfy $\displaystyle\lim_{c\to(-\min H)}\mu(c)=0$. Moreover, since there is no relation between Reeb chords of $\alpha_H$ and $\alpha_{H+c}$ it is unlikely that $\mu$ is monotone.
\end{Rmk}

The method of proof for Theorem A is to study Floer homology of fiber-wise quadratic Hamiltonian functions on $E$. In fact, for the construction of
Floer homology itself the Hamiltonian function can be chosen as usual, namely any time-dependent nondegenerate Hamiltonian function. We construct a version of Floer homology for periodic orbits $\HF_*^N(H)$ and for chords with Lagrangian boundary conditions $\HF_*^N(H;L)$.

Here are some details of the construction. Let $E\pf M$ be a complex line bundle
with first Chern class $c_1(E)=-[\om]$. Then $E$ and its tensor powers $E^N$ can be endowed with the structure of a symplectic manifold being
convex at infinity. For a generic Hamiltonian function $H:S^1\times M\pf\R$ we define a finite-dimensional, $\Z$-graded $\Z/2$-vector space $\HF^N_*(H)$
which is associated to a fiber-wise quadratic lift of the Hamiltonian function $H$ to the bundle $E^N$.
$\HF^N_*(H)$ is defined as the Floer homology of the action functional of classical mechanics for the lift of $H$. For a Bohr-Sommerfeld Lagrangian $L\subset M$ we construct a Lagrangian lift $L^N\subset E^N$. Then $\HF_*^N(H;L)$ is the Lagrangian Floer homology of $L^N$ and the fiber-wise quadratic lift of $H$.

The homology $\HF^N_*(H)$ and $\HF_*^N(H;L)$ depends on both $H$ and $N$. By choosing $N=N(H)$ large enough $\HF^N_*(H)$ detects all periodic orbits of $H$ and $\HF_*^N(H;L)$ detects all Hamiltonian chords of $H$.\\[.5ex]
\textbf{Theorem B.}
Given a generic $H$ there exists a positive integer $N=N(H)$ such that
\beq
\dim\HF^N(H)=\#\P(H)
\eeq
where $\P(H)$ is the set of contractible 1-periodic orbits of the Hamiltonian vector field of $H$ and
\beq
\dim\HF^N(H;L)=\#\P_L(H)
\eeq
where $\P_L(H)$ is the set of contractible 1-periodic chords of $H$.\\[1ex]
\noindent Theorem B is proved as Proposition \ref{prop:negative_bundles_detect_orbits} (periodic case) and Proposition \ref{prop:huge_computes_homology} (Lagrangian case). Theorem A follows from the Lagrangian version of Theorem B in the following way. If $H$ is positive and autonomous then there exists a
compact perturbation of the quadratic lift of $H$ such that the action functional of this
perturbation detects Reeb orbits resp.~chords. Theorem A follows then from Theorem B together with the invariance of Floer homology under compact perturbations. The factor $\tfrac12$ in Theorem A is due to the $\Z/2$-symmetry which was divided out to obtain the space $\Sigma$ from $\widetilde{\Sigma}.$

\begin{Rmk}
The periodic case of Theorem B could be used to prove a periodic version of Theorem A. Unfortunately, as such it's not very interesting because Reeb orbits can be iterated and iterates potentially contribute to the set $\mathcal{R}^{1}(H)$. However, since we have additional information about period and action of the Reeb orbits a refined analysis 
should lead also to non-trivial estimates in the periodic case. We plan to treat this in the future.
\end{Rmk}

\tableofcontents

\subsubsection*{Organization of the article}
In Section \ref{sec:general_FH} we review the construction of classical Floer homology.
In Section \ref{sec:FH_for_negative_lb_periodic_case} we construct Floer homology for negative line bundles in the periodic case.
In Subsection \ref{sec:the_setting} we describe the symplectic geometry of negative line bundles and introduce the notion of strongly nondegenerate
Hamiltonian function. The necessary $C^0$-estimates are proved in Subsection \ref{sec:convexity}. We show a subharmonic estimate
which generalizes the known results in symplectic homology. In Subsection \ref{sec:index_considerations} we compare the indices
of the action functional of classical mechanics on the base and on the total space of the bundle. We define the new Floer homology and corresponding
continuation homomorphism in Subsection \ref{sec:def_of_FH_for_neg_bdls}. Theorem B is proved as Proposition \ref{prop:negative_bundles_detect_orbits} and Proposition \ref{prop:huge_computes_homology}. In Section \ref{sec:HF^E_relative_case} we treat the construction of Floer homology of negative line bundles in the Lagrangian case. For this we
extend the previously proved $C^0$-estimates to Lagrangian boundary conditions by a reflection argument.
Section \ref{sec:applications} contains the applications to Hamiltonian/Reeb chords. Theorem A is a special case of Theorem \ref{thm:main_theorem}.
In Appendix \ref{appendix:non_resonant} we prove that being non-resonant is a generic property in dimensions higher than 2.
In Appendix \ref{appendix:autonomous_Lagranians} we prove a Poincar\'{e}-type theorem for the local behavior of Hamiltonian chords.
In Appendix \ref{appendix:quantized_chords} we prove a Morse condition for the perturbed action functional. Finally, in Appendix \ref{appendix:holonomy}
we collect some well-known facts about holonomy of tensor products of line bundles.

\subsubsection*{Acknowledgments}
This paper was written during visits of the first author to the Ludwig-Maximilians-Universit\"at M\"unchen and visits of the second author
to the Courant Institute, NYU. Both authors thank the institutions for their stimulating working atmospheres. We want to express our gratitude
to Kai Cieliebak and Helmut Hofer for helpful discussions, and to Viktor Ginzburg for enlightening remarks on an early version of our result. Moreover, we greatly profited from Leonid Polterovich's insightful comments. Finally, we want to thank  Paul Biran for pointing out example \ref{Ex:Biran_Ex} to us.

The authors are supported by the German Research Foundation (DFG) through Priority Programm 1154
"Global Differential Geometry", grants AL 904/1-1 and FR 2637/1-1. Moreover, they received support from NSF Grant DMS-0603957.

\section{Floer homology for closed symplectic manifolds}
\label{sec:general_FH}

\subsection{The periodic case}\label{sec:Floer_hom_closed_case}

In this section we briefly recall Floer's construction of his semi-infinite dimensional Morse homology on the free loop space. We follow closely Dietmar Salamon's lecture notes \cite{Salamon_lectures_on_floer_homology}.
Let $(M,\om)$ be a closed connected symplectic manifold. We assume for simplicity that $(M,\om)$ is symplectically aspherical, that is
\beq\label{eqn:def_sympl_aspherical}
c^{TM}_1|_{\pi_2(M)}=0\quad\text{and}\quad\om|_{\pi_2(M)}=0\,.
\eeq
For a time-dependent Hamiltonian function $H\in C^{\infty}(S^1\times M)$ we set $H_t:=H(t,\cdot)\in C^{\infty}(M)$ for $t\in S^1:=\R/\Z$.
The time-dependent vector field $X_{H_t}=X_H(t,\cdot)$ defined by
\beq
\om(X_{H_t},\cdot)=dH_t(\cdot)
\eeq
is called the Hamiltonian vector field of $H$. We denote by $\mathscr{L}$ the set of smooth, contractible 1-periodic loops in $M$.
The subset of contractible 1-periodic orbits of $X_H$ is denoted by
\beq
\P^1(H):=\big\{\,x\in \mathscr{L}\mid \dot{x}(t)=X_H\big(t,x(t)\big)\big\}\,.
\eeq
Elements $x\in\P^1(H)$ will also be referred to as (contractible) 1-periodic orbits of $H$. They are the critical points of the action functional
of classical mechanics $\A_H:\mathscr{L}\pf\R$ defined by
\beq\label{eqn:def_action_functional}
\A_H(x)=-\int_{\D^2}\bar{x}^*\om-\int_0^1H\big(t,x(t)\big)dt
\eeq
where $\bar{x}:\D^2\pf M$ is an extension of the contractible loop $x$ to the unit disk $\D^2$. Since $(M,\om)$ is symplectically aspherical
the definition of $\A_H$ does not depend on the choice of an extension. The Hamiltonian vector field $X_H$ defines a flow $\varphi_H^t$ of
symplectomorphisms of $(M,\om)$. The Hamiltonian function $H$ is called nondegenerate if
\beq\label{eqn:non_degenerate_hamiltonian}
\det \left(D\varphi_H^1(x(0))-\mathbbm{1}\right)\not=0
\eeq
for all $x\in\P^1(H)$. This is implied by the requirement that $\mathrm{graph}(\varphi_H^1)$ intersects the diagonal in $M\times M$ transversally.
However, the latter condition is stronger since it implies \eqref{eqn:non_degenerate_hamiltonian} for all periodic orbits rather than only for
contractible ones. Contractible periodic orbits of a nondegenerate Hamiltonian function are isolated. Thus, $\#\P^1(H)<\infty$ since $M$ is closed.
To each periodic orbit $x\in\P^1(H)$ the Conley-Zehnder index $\CZ(x;H)\in\Z$ is assigned. This is well-defined as an integer due to the symplectic asphericity of $(M,\om)$. The Conley-Zehnder index is normalized so that for a $C^2$-small Morse function $f$ we have
\beq
\CZ(x)=\Morse(x)-n \quad\forall x\in\Crit(f)\,.
\eeq

For a nondegenerate Hamiltonian function $H$ Floer's complex $(\CF_*(H),\partial(J,H))$ is defined as follows.
$\CF_k(H)$ is generated over the field $\Z/2$ by all periodic orbits with Conley-Zehnder index equal to $k$
\beq
\CF_k(H)=\bigoplus_{\substack{x\in\P^1(H)\\\CZ(x)=k}}\Z/2\,\left<x\right>\;.
\eeq
To define the differential $\partial(J,H)$ we choose an $S^1$-family of $\om$-compatible almost complex structures $J=J(t,\cdot)$ and consider
solutions to Floer's equation, that is, maps $u:\R\times S^1\pf M$ satisfying
\beq\left\{
\begin{aligned}
\;\;&\partial_su+J(t,u)\big(\partial_tu-X_H(t,u)\big)=0\\
&u(-\infty)=x_-,\,u(+\infty)=x_+\in\P^1(H)
\end{aligned}\right.
\eeq
The space of solutions $\M(x_-,x_+;J,H)$ is called a moduli space. The energy
\beq
E(u):=\int_{-\infty}^{+\infty}\int_0^1|\partial_su|^2dt\,ds
\eeq
of elements $u\in\M(x_-,x_+;J,H)$ can be computed in terms of the action functional $\A_H$
\beq\label{eqn:energy_action_identity}
E(u)=\A_H(x_-)-\A_H(x_+)\;.
\eeq
Floer's equation can be interpreted as (a replacement for the ill-defined) negative gradient flow of the action functional $\A_H$.
The moduli space $\M(x_-,x_+;J,H)$ carries an $\R$-action $\sigma\ast u(s,t):=u(s+\sigma,t)$ which is free if $x_-\not=x_+$. From
the energy identity \eqref{eqn:energy_action_identity} it follows that $\M(x_-,x_-;J,H)$ contains only one element namely the $s$-independent
solution $x_-$.

\begin{Thm}[Floer]
For a generic family of almost complex structures $J=J(t,\cdot)$ all moduli spaces are smooth manifolds and
\beq
\dim\M(x_-,x_+;J,H)=\CZ(x_+;H)-\CZ(x_-;H)\;.
\eeq
\end{Thm}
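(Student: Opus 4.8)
The plan is the standard scheme of Floer's elliptic Morse theory, sketched here in the order in which I would carry it out. Step one: realize $\M(x_-,x_+;J,H)$ as the zero set of a section of a Banach bundle. Fix $p>2$, let $\B^{1,p}=\B^{1,p}(x_-,x_+)$ be the Banach manifold of maps $u:\R\times S^1\pf M$ that converge in $W^{1,p}$ to $x_\mp$ as $s\to\mp\infty$ (nondegeneracy \eqref{eqn:non_degenerate_hamiltonian} of $x_\pm$ then forces exponential convergence by elliptic regularity), and let $\E^p\pf\B^{1,p}$ be the Banach bundle whose fibre at $u$ is $L^p(u^*TM)$. Floer's equation defines a smooth section
\beq\nonumber
\bar\partial_{J,H}(u):=\partial_su+J(t,u)\big(\partial_tu-X_H(t,u)\big)\;,
\eeq
whose zero set is exactly $\M(x_-,x_+;J,H)$; symplectic asphericity \eqref{eqn:def_sympl_aspherical} is what makes the Conley--Zehnder indices $\CZ(x_\pm)$ well-defined integers, as recorded above.

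Step two: the vertical differential $D_u:=D\bar\partial_{J,H}(u)$ at a zero $u$ is Fredholm of index $\CZ(x_+;H)-\CZ(x_-;H)$. In a unitary trivialization of $u^*TM$ the operator takes the form $\partial_s+J_0\partial_t+S(s,t)$, with $S(s,t)$ converging as $s\to\pm\infty$ to the loops of symmetric matrices that govern the linearized Hamiltonian flow along $x_\pm$; nondegeneracy of $H$ says the asymptotic operators $A_\pm=-J_0\partial_t-S_\pm$ on $L^2(S^1,\R^{2n})$ are invertible, which yields the Fredholm property, and a spectral-flow computation identifies the index with the difference of Conley--Zehnder indices — with the normalization $\CZ=\Morse-n$ fixed in the text this produces precisely the stated sign. (For an $s$-independent solution, i.e. $x_-=x_+$, $D_u$ reduces to $\partial_s+A$ with $A$ invertible, hence an isomorphism of index $0$, consistent with the formula.)

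Step three, the heart of the matter: for generic $J$ every $D_u$ with $u\in\M(x_-,x_+;J,H)$ is surjective, so $\bar\partial_{J,H}$ is transverse to the zero section. One passes to the universal moduli space over a Banach manifold $\J^\ell$ of $C^\ell$ (or Floer's $C^\varepsilon$) $\om$-compatible $S^1$-families of almost complex structures and shows the universal linearization is onto at every zero. If $\eta\in L^q(u^*TM)$, $\tfrac{1}{p}+\tfrac{1}{q}=1$, annihilates its image, then $\eta$ solves a linear Cauchy--Riemann-type equation, hence is smooth and obeys unique continuation; assuming $u$ non-stationary, pick a point $(s_0,t_0)$ with $\partial_su(s_0,t_0)\neq0$ and $u(s_0,t_0)\notin x_-(S^1)\cup x_+(S^1)\cup\{u(s,t_0):s\neq s_0\}$ — the density of such points in $\R\times S^1$ is the genuinely technical ingredient — and build a variation $Y$ of $J$ supported in a small neighbourhood of that point with $\int\big\langle Y(t,u)(\partial_tu-X_H(t,u)),\eta\big\rangle\,ds\,dt\neq0$ unless $\eta$ vanishes near $(s_0,t_0)$; unique continuation then forces $\eta\equiv0$. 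Sard--Smale yields a residual subset of $\J^\ell$ on which all $D_u$ are surjective, a Taubes-type argument passes to smooth $J$, and since $\#\P^1(H)<\infty$ a single residual set works simultaneously for all pairs $(x_-,x_+)$. Finally, the implicit function theorem for Fredholm sections gives that each $\M(x_-,x_+;J,H)$ is then a smooth manifold of dimension $\ind D_u=\CZ(x_+;H)-\CZ(x_-;H)$. I expect the transversality step to be the main obstacle — concretely, locating a point on the image of a non-stationary $u$ where $\partial_su\neq0$ that avoids both asymptotic orbits and the rest of its own time-slice, and then checking that $J$-variations supported there span the cokernel of $D_u$.
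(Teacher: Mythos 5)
The paper states this as a recalled classical result of Floer with no proof of its own (Section \ref{sec:general_FH} just cites Salamon's lecture notes), so there is nothing internal to compare against; your outline is precisely the standard argument of Floer--Hofer--Salamon that those references carry out. The three steps (Banach-bundle setup, Fredholm index via spectral flow equal to $\CZ(x_+;H)-\CZ(x_-;H)$ in the paper's normalization, and Sard--Smale transversality via regular points, unique continuation, and a Taubes argument) are correct and correctly identify the density of regular points as the technical crux.
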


\begin{Rmk}\label{rmk:compactness}
Since we do Morse theory for the action functional $\A_H$ on the loop space of a compact manifold, gradient trajectories can escape to infinity
in the loop space only if derivatives explode. By Floer's equation the only way this can happen is by bubbling-off of holomorphic spheres. Since
the symplectic manifold in question is symplectically aspherical, there are no non-constant holomorphic spheres, hence the necessary compactness
is achieved.
\end{Rmk}

\begin{Thm}[\cite{Floer_Morse_theory_for_Lagrangian_intersections}]
For $x,z\in\P^1(H)$ the moduli space
\beq
\Mh(x,z;J,H):=\M(x,z;J,H)/\R
\eeq
is compact if $\CZ(z;H)-\CZ(x;H)=1$ and compact up to simple breaking if $\CZ(z;H)-\CZ(x;H)=2$.  That is, in the latter case
it admits a compactification (denoted by the same symbol) such that the boundary decomposes as follows
\beq
\partial\Mh(x,z;J,H)=\bigcup_{\substack{y\in\P^1(H)}}\Mh(x,y;J,H)\times\Mh(y,z;J,H)\,.
\eeq
\end{Thm}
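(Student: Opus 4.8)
The plan is to derive this from Gromov--Floer compactness together with a gluing construction, using transversality for generic $J$ (the preceding theorem of Floer) and the absence of sphere bubbles (Remark \ref{rmk:compactness}). Throughout, all moduli spaces that appear are assumed cut out transversally, so that $\M(y_-,y_+;J,H)$ is a smooth manifold of dimension $\CZ(y_+;H)-\CZ(y_-;H)$.

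First I would establish a uniform energy bound. By the energy--action identity \eqref{eqn:energy_action_identity}, every $u\in\M(x,z;J,H)$ satisfies $E(u)=\A_H(x)-\A_H(z)$, a quantity independent of $u$. Hence any sequence $u_\nu\in\M(x,z;J,H)$ has uniformly bounded energy, and the standard bubbling-off analysis applies: after passing to a subsequence and using $\R$-translations to recenter on each of finitely many ``necks,'' $u_\nu$ converges in $C^\infty_{\mathrm{loc}}$ to a broken trajectory $(v_1,\dots,v_k)$ with $v_i\in\M(y_{i-1},y_i;J,H)$, $y_0=x$, $y_k=z$, $y_i\in\P^1(H)$, together with a finite collection of non-constant $J$-holomorphic spheres absorbing the energy lost in the limit. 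Since $(M,\om)$ is symplectically aspherical we have $\om|_{\pi_2(M)}=0$, so such spheres would carry zero energy, a contradiction; therefore no energy escapes and the limit is a genuine broken Floer trajectory. Moreover $\M(y,y;J,H)$ contains only the constant solution $y$, so we may discard constant pieces and assume each $v_i$ is non-constant, whence $y_{i-1}\neq y_i$ and the $\R$-action on $\M(y_{i-1},y_i;J,H)$ is free.

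Next comes the index bookkeeping. Since each $v_i$ is non-constant, $\M(y_{i-1},y_i;J,H)$ is non-empty and carries a free $\R$-action, so its dimension $\CZ(y_i;H)-\CZ(y_{i-1};H)$ is at least $1$. Summing, $\CZ(z;H)-\CZ(x;H)=\sum_{i=1}^{k}\big(\CZ(y_i;H)-\CZ(y_{i-1};H)\big)\geq k$. If $\CZ(z;H)-\CZ(x;H)=1$ this forces $k=1$, so after translation $u_\nu$ subconverges in $\M(x,z;J,H)/\R$; hence $\Mh(x,z;J,H)$ is (sequentially) compact. If $\CZ(z;H)-\CZ(x;H)=2$ then $k\in\{1,2\}$, and for $k=2$ each of the two index differences equals exactly $1$, so the intermediate orbit $y:=y_1$ satisfies $\CZ(y;H)=\CZ(x;H)+1$. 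This yields the inclusion $\partial\Mh(x,z;J,H)\subseteq\bigcup_{y}\Mh(x,y;J,H)\times\Mh(y,z;J,H)$, the union over $y\in\P^1(H)$ with $\CZ(y;H)=\CZ(x;H)+1$; note these factor moduli spaces are compact by the previous case.

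Finally I would prove the reverse inclusion and the manifold-with-boundary structure by gluing. Given $([v_1],[v_2])\in\Mh(x,y;J,H)\times\Mh(y,z;J,H)$, one forms for a gluing parameter $R\gg 0$ the pre-glued approximate solution $v_1\#_R v_2$ (interpolating through the orbit $y$ on a neck of length $\sim R$) and corrects it to an exact solution of Floer's equation by a Newton iteration / implicit function theorem argument, using surjectivity of the linearized operator at $v_1$ and $v_2$ (transversality) to produce a uniformly bounded right inverse for the linearization at $v_1\#_R v_2$, together with the quadratic estimate for the nonlinearity. This gives, for $R$ large, an embedding of $(R_0,\infty)\times\Mh(x,y;J,H)\times\Mh(y,z;J,H)$ onto a collar neighborhood of the corresponding boundary stratum of $\Mh(x,z;J,H)$; combined with the compactness of Steps on energy and bubbling, it exhibits $\Mh(x,z;J,H)$ as a compact $1$-manifold with boundary equal to $\bigcup_{y\in\P^1(H)}\Mh(x,y;J,H)\times\Mh(y,z;J,H)$, as claimed. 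The main obstacle is precisely this last step: the gluing analysis — uniform elliptic estimates for the pre-glued configurations, the quadratic estimate, and injectivity of the gluing map so that distinct broken trajectories and distinct parameters give distinct glued solutions — is the technical core of Floer's theorem, whereas the compactness and index arguments are soft once transversality and asphericity are available.
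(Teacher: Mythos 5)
Your sketch is correct and is the standard argument; the paper does not prove this statement but cites it to Floer, and your route (uniform energy bound via \eqref{eqn:energy_action_identity}, Gromov--Floer convergence to broken trajectories with sphere bubbling excluded by asphericity exactly as in Remark \ref{rmk:compactness}, index bookkeeping from transversality together with the free $\R$-action on non-constant trajectory spaces, and gluing for the reverse inclusion) is precisely the one the paper implicitly relies on. You also correctly identify the gluing analysis as the technical core that would have to be carried out in full to turn this outline into a complete proof.
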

Counting elements of zero dimensional moduli space defines the differential $\partial=\partial(J,H)$
\bea
\partial x_-:=\sum_{\substack{y\in\P^1(H)\\\CZ(x_+)=\CZ(x_-)+1}}\#_2\Mh(x_-,x_+;J,H)\cdot x_+\;.
\eea

The previous theorems imply that the boundary operator $\partial$ is well-defined and satisfies $\partial^2=0$. This defines
Hamiltonian Floer homology of $H$
\beq
\HF_*(H):=\H_*(\CF_*(H),\partial(J,H))\;.
\eeq
As suggested by the notation, $\HF_*(H)$ does not depend on the chosen almost complex structure $J$. Furthermore, for Hamiltonian functions $H,K,L:S^1\times M\pf\R$ there exist canonical, grading preserving isomorphisms
\beq
m(K,H):\HF_*(H)\stackrel{\cong}{\pf}\HF_*(K)
\eeq
satisfying
\beq
m(L,K)\circ m(K,H)=m(L,H)\;.
\eeq
Hence, Floer homology does not depend (up to canonical isomorphisms) on the Hamiltonian function. Using the fact
that for a $C^2$-small Morse function Floer trajectories are in 1-1 correspondence to Morse trajectories the following theorem can be shown.
\begin{Thm}[\cite{Floer_Morse_theory_for_Lagrangian_intersections}]
\beq
\HF_*(H)\cong\H_{n-*}(M;\Z/2)\;.
\eeq
\end{Thm}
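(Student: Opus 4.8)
The plan is to use the invariance of $\HF_*$ under change of Hamiltonian, already established above, to reduce the computation to a Hamiltonian whose Floer complex is visibly a Morse complex, following Salamon's notes \cite{Salamon_lectures_on_floer_homology}. Concretely, fix a Morse function $f\in C^\infty(M)$ and consider $H_\epsilon:=\epsilon f$ for $\epsilon>0$ small. Since $M$ is closed, the $C^1$-norm of $X_{H_\epsilon}$ is $O(\epsilon)$, so $\varphi^1_{H_\epsilon}$ is $C^1$-close to the identity; hence for $\epsilon$ small enough $H_\epsilon$ is nondegenerate and its only contractible $1$-periodic orbits are the constant orbits sitting at the critical points of $f$. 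By the canonical isomorphisms $m(K,H)$ we have $\HF_*(H)\cong\HF_*(H_\epsilon)$, so it suffices to compute the latter.

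The key step is the comparison between Floer trajectories and Morse trajectories. For $\epsilon$ small enough and a suitable \emph{time-independent} $\om$-compatible $J$ — equivalently a Morse--Smale metric $g=\om(\cdot,J\cdot)$ for $f$ — every finite-energy solution of Floer's equation connecting two critical points is $t$-independent, hence a gradient flow line of $f$ for $g$, and conversely; moreover these moduli spaces are regular. This is the Floer--Hofer--Salamon argument: one derives an a priori bound $\|\partial_t u\|_\infty=O(\epsilon)$ and then a bootstrap forces $\partial_t u\equiv 0$, after which transversality reduces to the Morse--Smale condition. Consequently $(\CF_*(H_\epsilon),\partial(J,H_\epsilon))$ is, up to a shift of degree by $n$ coming from the normalization $\CZ(p)=\Morse(p)-n$, a Morse complex of $f$; the direction of the flow is pinned down by the energy identity \eqref{eqn:energy_action_identity} together with $\A_{H_\epsilon}(p)=-\epsilon f(p)$, which shows $\partial$ increases $f$-values, so the complex is the Morse chain complex of $-f$ with its negative gradient flow.

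Taking homology then finishes the proof: $\HF_k(H_\epsilon)$ is the Morse homology of $-f$ in degree $n-k$, using $\Morse_{-f}(p)=2n-\Morse_f(p)$ and $\CZ(p)=\Morse_f(p)-n$, hence $\HF_k(H)\cong\H_{n-k}(M;\Z/2)$. (Over the field $\Z/2$ one may alternatively finish the bookkeeping via Poincar\'e duality $\H_{k+n}(M;\Z/2)\cong\H_{n-k}(M;\Z/2)$.) The main obstacle is the middle step: establishing the a priori estimate that pins $\partial_t u$ to zero for small $\epsilon$ together with transversality for the resulting $t$-independent problem. The first and last steps are short once nondegeneracy, the index normalization, and the invariance isomorphisms — all available above — are in hand.
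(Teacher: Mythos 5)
Your proposal is correct and follows exactly the route the paper indicates: the paper's "proof" is just the citation of Floer together with the remark that for a $C^2$-small Morse function Floer trajectories correspond bijectively to Morse trajectories, and you have spelled out precisely that reduction (continuation to $\epsilon f$, the $t$-independence/regularity argument, and the index and flow-direction bookkeeping, which checks out against the normalization $\CZ=\Morse-n$ and the energy identity). No discrepancy with the paper's approach.
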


The maps $m(H_1,H_0)$ are called continuation homomorphisms and are constructed as follows. We choose a smooth 1-parameter family $H_s(t,x)$
of Hamiltonian functions such that $H_s=H_0$ for $s\leq0$ and $H_s=H_1$ for $s\geq1$. The set of solutions of
\beq\label{eqn:def_cont_morphism}\left\{
\begin{aligned}
\;\;&\partial_su+J(t,u)\big(\partial_tu-X_{H_s}(t,u)\big)=0\\
&u(-\infty)=x_-\in\P(H_0),\,u(+\infty)=x_+\in\P(H_1)
\end{aligned}\right.
\eeq
is denoted by $\M(x_-,x_+;J,H_s)$. Counting the elements of zero-dimensional components of the moduli spaces $\M(x_-,x_+;J,H_s)$
defines the map $m(H_1,H_0):\CF(H_0)\pf\CF(H_1)$ which is a chain map: $\partial(J,H_1)\circ m(H_1,H_0)=m(H_1,H_0)\circ\partial(J,H_0)$.
The induced map on homology is denoted by the same symbol. It can be shown
that the homomorphisms $m(H_1,H_0)$ on homology do not depend on the chosen 1-parameter family $H_s$. Moreover, an explicit inverse is given
by the map $m(H_0,H_1)$. We recall the following well-known energy identity.

\begin{Lemma}\label{lemma:energy_estimate_for_continuation}
For $u\in\M(x_-,x_+;J,H_s)$ holds
\beq
\A_{H_0}(x_-)-\A_{H_1}(x_+)=\int_{-\infty}^\infty\int_0^1\frac{\partial H_s}{\partial s}(u)dtds+\int_{-\infty}^\infty\int_0^1|\partial_s u|^2dtds
\eeq
\end{Lemma}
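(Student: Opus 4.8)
\emph{Proof proposal.} The plan is to view the right-hand side as an integral of the derivative of $s\mapsto\A_{H_s}\big(u(s,\cdot)\big)$ along the trajectory $u$, and to recover the two action values $\A_{H_0}(x_-)$ and $\A_{H_1}(x_+)$ as the limits of this quantity at $s\to\mp\infty$. First I would establish the first–variation formula for the action functional along an arbitrary smooth family $\gamma_s$ of contractible loops, where the filling disks $\bar\gamma_s$ are chosen to depend smoothly on $s$. Stokes' theorem applied to the cylinder swept out by $\gamma_s$, together with the asphericity hypothesis $\om|_{\pi_2(M)}=0$ from \eqref{eqn:def_sympl_aspherical} (which also makes $\int_{\D^2}\bar\gamma_s^*\om$ independent of the filling), gives $\frac{d}{ds}\int_{\D^2}\bar\gamma_s^*\om=\int_0^1\om(\partial_s\gamma_s,\partial_t\gamma_s)\,dt$, and the chain rule applied to $\int_0^1 H_s(t,\gamma_s(t))\,dt$ produces the two remaining terms, so that
\[
\frac{d}{ds}\A_{H_s}(\gamma_s)=-\int_0^1\om(\partial_s\gamma_s,\partial_t\gamma_s)\,dt-\int_0^1 dH_s(\partial_s\gamma_s)\,dt-\int_0^1\frac{\partial H_s}{\partial s}(\gamma_s)\,dt .
\]
Using the defining relation $\om(X_{H_s},\cdot)=dH_s(\cdot)$ to write $dH_s(\partial_s\gamma_s)=-\om(\partial_s\gamma_s,X_{H_s})$, the first two integrals combine into $-\int_0^1\om\big(\partial_s\gamma_s,\partial_t\gamma_s-X_{H_s}\big)\,dt$.

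Next I would specialize $\gamma_s=u(s,\cdot)$ and insert the continuation equation \eqref{eqn:def_cont_morphism}. From $\partial_s u+J(t,u)\big(\partial_t u-X_{H_s}(t,u)\big)=0$ and $J^2=-\mathbbm 1$ one gets $\partial_t u-X_{H_s}(t,u)=J(t,u)\,\partial_s u$, and since $J$ is $\om$-compatible, $\om(\partial_s u,J\partial_s u)=|\partial_s u|^2$ for the associated metric $g_J=\om(\cdot,J\cdot)$. Hence the formula above collapses to the pointwise identity
\[
\frac{d}{ds}\A_{H_s}\big(u(s,\cdot)\big)=-\int_0^1|\partial_s u|^2\,dt-\int_0^1\frac{\partial H_s}{\partial s}(u)\,dt .
\]

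Finally I would integrate this over $s\in\R$. Since $H_s\equiv H_0$ for $s\le 0$ and $H_s\equiv H_1$ for $s\ge 1$, and $x_-\in\P(H_0)$, $x_+\in\P(H_1)$ are critical points of $\A_{H_0}$, resp.\ $\A_{H_1}$, the standard asymptotic analysis of finite-energy solutions of the continuation equation yields exponential convergence $u(s,\cdot)\to x_\pm$ in $C^1$ as $s\to\pm\infty$; this gives $\A_{H_s}(u(s,\cdot))\to\A_{H_0}(x_-)$ as $s\to-\infty$ and $\A_{H_s}(u(s,\cdot))\to\A_{H_1}(x_+)$ as $s\to+\infty$, and makes both integrands on the right exponentially decaying, so the two double integrals converge absolutely. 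The fundamental theorem of calculus then gives $\A_{H_1}(x_+)-\A_{H_0}(x_-)=-\int_{-\infty}^{\infty}\!\int_0^1|\partial_s u|^2\,dt\,ds-\int_{-\infty}^{\infty}\!\int_0^1\frac{\partial H_s}{\partial s}(u)\,dt\,ds$, which rearranges to the claimed identity (and reduces to \eqref{eqn:energy_action_identity} when $H_s$ is $s$-independent, a useful consistency check). The one step that is more than bookkeeping is this last one: one must know the convergence of $u(s,\cdot)$ to its asymptotic orbits is fast enough that the action converges to the action of the limit orbit and both double integrals are finite. This is the usual removable-singularity/asymptotic package for Floer cylinders, available here precisely because $(M,\om)$ is symplectically aspherical — no bubbling, and the action is independent of cappings — and because $H_s$ is $s$-independent near $s=\pm\infty$. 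Everything else is sign bookkeeping.
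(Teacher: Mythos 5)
Your proof is correct and is precisely the standard computation: the paper itself does not spell out an argument but simply cites Schwarz and Salamon, and the derivation you give (first variation of $\A_{H_s}$ along $u(s,\cdot)$, substitution of the continuation equation via $\partial_t u - X_{H_s} = J\partial_s u$, and integration in $s$ using the exponential asymptotics at nondegenerate orbits) is exactly the one found in those references. The sign bookkeeping checks out against the stated identity, and your remark that the formula degenerates to \eqref{eqn:energy_action_identity} for $s$-independent $H$ is a valid consistency check.
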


\begin{proof}
We refer to \cite{Schwarz_Matthias_Morse_homology} or \cite{Salamon_lectures_on_floer_homology}.
\end{proof}

\subsection{The relative case}\label{sec:Floer_hom_relative_case}

Historically, the relative case of Floer homology was treated in fact before the absolute case in Floer's seminal article \cite{Floer_Morse_theory_for_Lagrangian_intersections}.

As before $(M,\om)$ is a closed connected symplectic manifold. Let $L\subset M$ be a closed connected Lagrangian submanifold which is symplectically
aspherical, that is
\beq
\Mas|_{\pi_2(M,L)}=0\quad\text{and}\quad\om|_{\pi_2(M,L)}=0\,.
\eeq
We denote by $I$ the interval $[0,1]$ and let $H:I\times M\pf\R$ be a smooth Hamiltonian function. In this case the action functional $\A_H$ is defined
on the space of contractible paths
\beq
\mathscr{P}:=\big\{x\in C^\infty(I,M)\mid x(0),x(1)\in L\,; [x]=0\in\pi_1(M,L)\big\}\;.
\eeq
We denote $\D^2_+:=\{z\in\D^2\mid\mathrm{Im}(z)\geq0\}$. Then for each $x\in\mathscr{P}$ we can choose a map $\bar{x}:\D^2_+\pf M$ satisfying
$\bar{x}(e^{\pi it})=x(t)$ and $\bar{x}(\D^2_+\cap\R)\subset L$. As in the periodic case the action functional of classical mechanics
$\A_H:\mathscr{P}\pf\R$ is defined by
\beq
\A_H(x):=-\int_{\D^2_+}\bar{x}^*\om-\int_0^1H\big(t,x(t)\big)dt\;.
\eeq
This definition is independent of the choice of $\bar{x}$ since $L$ is symplectically aspherical. The set $\P_L^1(H)$ of critical points of $\A_H$
are called Hamiltonian chords, i.e.
\beq
\P_L^1(H)=\{x\in\mathscr{P}\mid\dot{x}(t)=X_H(t,x(t))\}\;.
\eeq
There is an injective map from $\P_L^1(H)$ into the set of intersection points $L\cap\varphi^1_H(L)$ given by the evaluation map $x\mapsto x(1)$.
By symplectic asphericity the Maslov index $\Mas$ induces a well-defined map
\beq
\begin{cases}
\mathscr{P}\pf\Z&\text{if }\dim L=\text{even}\\
\mathscr{P}\pf\frac12+\Z&\text{if }\dim L=\text{odd}
\end{cases}
\eeq
which we denote by $x\mapsto\Mas(x;H)$. Here, we use the Maslov index $\Mas$ with the following normalization. For $C^2$-small functions $f$ whose restriction to $L$ is Morse there is a 1:1 correspondence between the critical points $\Crit(f|_L)$ and Hamiltonian chords $\P_L^1(f)$. Then the Maslov index is normalized by 
\beq
\Mas=\Morse-\frac{n}{2}
\eeq
on corresponding Hamiltonian chords and critical points. We call the Hamiltonian function $H$ nondegenerate if
\beq
D\varphi_H^1(T_{x(0)}L)\pitchfork T_{x(1)}L
\eeq
holds for all $x\in\P_L^1(H)$. For nondegenerate $H$ the action functional $\A_H$ is Morse.
In this case we define Floer's complex $(\CF_*(H;L),\partial(J,H))$ as follows.
The set $\CF_k(H;L)$ is generated over the field $\Z/2$ by all Hamiltonian chords with Maslov index $k$
\beq
\CF_k(H;L)=\bigoplus_{\substack{x\in\P_L^1(H)\\\Mas(x;H)=k}}\Z/2\,\left<x\right>\;
\eeq
where $k\in\Z$ or $k\in\frac12+\Z$ according to $\dim L=\mathrm{even}$ or $\dim L=\mathrm{odd}$.

To define the differential we consider the moduli space $\M_L(x_-,x_+;J,H)$ of perturbed holomorphic strips, that is,
the set of solutions $u:\R\times[0,1]\pf M$ of Floer's equation with Lagrangian boundary conditions
\beq\left\{
\begin{aligned}
\;\;&\partial_su+J(t,u)\big(\partial_tu-X_H(t,u)\big)=0\\
&u(s,0),\,u(s,1)\in L\\
&u(-\infty)=x_-,\,u(+\infty)=x_+\in\P_L^1(H)
\end{aligned}\right.
\eeq
As in the periodic case blowing-up of derivatives in the interior leads to bubbling-off of holomorphic spheres. In addition, blowing-up of
derivatives might occur at the boundary of the strip. This gives rise to bubbling-off of homomorphic disks with boundary on the Lagrangian
submanifold $L$. Both of these phenomena are excluded by symplectic asphericity. In particular, the construction of Hamiltonian Floer homology
carries over unchanged to the Lagrangian case. This leads to the definition of Lagrangian Floer homology $\HF_*(H;L)$. Again using
continuation homomorphisms $m(K,H)$ it can be shown that Lagrangian Floer homology is independent of the Hamiltonian function. Floer proved
\begin{Thm}[Floer]\label{thm_floer}
\beq
\HF_*(H;L)\cong\H_{\frac{n}{2}-*}(L;\Z/2)\;.
\eeq
\end{Thm}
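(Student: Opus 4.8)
The plan is to use the Hamiltonian-independence of $\HF_*(\,\cdot\,;L)$, established above through the continuation isomorphisms $m(K,H)$, to reduce the computation to an especially simple Hamiltonian. Fix a Morse function $g\colon L\pf\R$, let $U$ be a Weinstein \nhd of $L$, identified symplectically with a \nhd of the zero section in $T^*L$, and let $f_\varepsilon\colon M\pf\R$ be obtained by extending $\varepsilon\,(g\circ\pi)$ by a cutoff supported in $U$, with $\varepsilon>0$ small. For $\varepsilon$ small enough every Hamiltonian chord of $f_\varepsilon$ remains inside the region where $f_\varepsilon=\varepsilon\,(g\circ\pi)$; in the cotangent model the time-one flow of $g\circ\pi$ carries the zero section to the graph of $dg$, so $\P_L^1(f_\varepsilon)$ consists precisely of the constant chords sitting at the critical points of $g$. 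These chords are automatically contractible, and $f_\varepsilon$ is nondegenerate as a Hamiltonian exactly when $g$ is Morse, since the linearized return map sends $T_pL$ to the graph of $\varepsilon\,\mathrm{Hess}_p g$, which is transverse to $T_pL$ iff $p$ is a nondegenerate critical point of $g$. With the normalization $\Mas=\Morse-\tfrac n2$, where $n=\dim L$, the complex $\CF_k(f_\varepsilon;L)$ thus has as a basis the critical points of $g$ of Morse index $k+\tfrac n2$.

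It remains to identify the Floer differential $\partial(J,f_\varepsilon)$ with a Morse differential. The claim is that for $\varepsilon$ sufficiently small and $J$ generic, whenever $\Mas(x_+)=\Mas(x_-)+1$ the zero-dimensional moduli space $\Mh_L(x_-,x_+;J,f_\varepsilon)$ is in bijection with the set of unparametrized negative gradient trajectories of $-g$ from $x_-$ to $x_+$ on $L$, for the metric $\om(\cdot,J\cdot)|_L$; this is the Lagrangian analogue of the $C^2$-small comparison quoted in the periodic case and is part of the content of \cite{Floer_Morse_theory_for_Lagrangian_intersections}. The mechanism is the usual one: by the energy--action identity a strip between two critical chords has energy equal to the difference of the corresponding values of $-\varepsilon g$, hence $O(\varepsilon)$; a monotonicity estimate then confines such a strip to $U$; inside $U$ one rescales and runs an adiabatic argument showing that for small $\varepsilon$ a low-index strip must be independent of the strip coordinate $t\in[0,1]$ --- the Lagrangian boundary values can be exploited, e.g.\ by reflecting across $\D^2_+\cap\R$, to reduce to the interior ($s$-direction) equation --- and the $t$-independent solutions are exactly the gradient lines of $-g$; finally a quantitative implicit function theorem upgrades this to a bijection, and Floer-theoretic transversality matches the Morse--Smale condition. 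Since we work over $\Z/2$ there are no signs to reconcile.

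Hence $(\CF_*(f_\varepsilon;L),\partial(J,f_\varepsilon))$ is isomorphic, as a graded chain complex, to the Morse complex of $-g$ on $L$ with its negative gradient flow: a generator of Maslov index $k$ is a critical point of $g$ of Morse index $k+\tfrac n2$, hence of $-g$ of Morse index $n-(k+\tfrac n2)=\tfrac n2-k$, and the Floer differential, which raises $k$ by one and therefore lowers this index by one, agrees with the Morse boundary operator. Therefore $\HF_k(f_\varepsilon;L)\cong\HM_{\frac n2-k}(-g)\cong\H_{\frac n2-k}(L;\Z/2)$, and by Hamiltonian-independence $\HF_*(H;L)\cong\H_{\frac n2-*}(L;\Z/2)$ for every nondegenerate $H$. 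I expect the middle step to be the main obstacle: the $C^2$-small comparison of perturbed holomorphic strips with gradient trajectories under Lagrangian boundary conditions, and in particular ruling out $t$-dependence for the autonomous Hamiltonian $f_\varepsilon$; the rest is index bookkeeping together with the invariance already in hand.
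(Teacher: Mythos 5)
Your proof is correct and follows essentially the same route the paper has in mind: it states this theorem as Floer's result, citing \cite{Floer_Morse_theory_for_Lagrangian_intersections}, and the only hint it gives (in the periodic case) is precisely your mechanism, namely that for a $C^2$-small Morse function the Floer strips are in bijection with Morse gradient trajectories. Your index bookkeeping with the normalization $\Mas=\Morse-\tfrac n2$ is consistent with the paper's conventions, and you correctly flag the adiabatic comparison of strips with gradient lines as the one step that genuinely requires Floer's analysis rather than formal manipulation.
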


\section{Floer homology for negative line bundles - the periodic case}
\label{sec:FH_for_negative_lb_periodic_case}

\subsection{Negative line bundles}\label{sec:the_setting}

As in Section \ref{sec:Floer_hom_closed_case} we assume that
the symplectic manifold $(M,\om)$ is closed, connected and symplectically aspherical. Moreover, we require the symplectic form to be integral, i.e.
\beq
[\om]\in\H^2(M;\Z)\;.
\eeq
Therefore, for each $N\in\N$ we can choose a complex line bundle $E^N\stackrel{p}{\pf}M$ with first Chern
class $c_1(E^N)=-N[\om]$. 

We continue to use the convention $S^1=\R/\Z$. In particular, the Lie algebra equals $\R$. With this convention the action of $S^1$ on the bundle $E^N$ is given by 
\bea
S^1\times E^N&\pf E^N\\
(t,u)&\mapsto e^{2\pi it}u
\eea
On $E^N$ we define a symplectic form $\Omega$ as follows.
We choose a Hermitian connection 1-form $\alpha$ on $E^N\setminus M$ whose curvature $F_{\alpha}=d\alpha$ satisfies
\beq
F_{\alpha}=N\om\;.
\eeq
Furthermore, we fix the function $f(r)=\pi r^2+\tfrac1N$. Abbreviating $r=||e||$ the following 2-form
\beq
\Omega:=f'(r)\,dr\wedge\alpha+f(r)N\,p^*\om
\eeq
is a symplectic form on $E^N$. We note that this is well-defined  and satisfies $\Omega|_M=\om$ since $f'(0)=0$.
Furthermore, on $E^N\setminus M$ the symplectic form
can be written as $\Omega=d\big(f(r)\alpha\big)$. The vector field defined on $E^N\setminus M$
\beq\label{eqn:Liouville_vfield}
{X}:=\frac{f(r)}{f'(r)}\,\frac{\partial}{\partial r}
\eeq
is a \textit{Liouville vector field} for $\Omega$, that is $\L_{X}\Omega=\Omega$, or equivalently $f(r)\alpha=\iota_X\Omega$.
Here $\L$ denotes the Lie derivative. In particular, for all $c>\tfrac1N$
the manifold
\beq\label{eqn:def_of_Sigma_c}
\Sigma_c:=\{f(r)=c\}
\eeq
is of contact type. If we consider the canonical variable $\rho=\ln f(r)$ the Liouville vector field can be written as
\beq
{X}=\frac{\partial}{\partial \rho}\;.
\eeq
We note that the positive part of the symplectization of $\Sigma_c$ embeds into $E^N$ whereas the negative part only embeds partially.
For a nondegenerate Hamiltonian function $H:S^1\times M\pf\R$ we set
\beq\label{eqn:def_H_hat}
\widehat{H}(t,e)=N\cdot f(r)\cdot H\big(t,p(e)\big):S^1\times E^N\pf\R\,.
\eeq
The connection 1-form $\alpha$ induces a natural splitting of $TE^N$ into horizontal and vertical subspaces
\beq
T_eE^N=T_e^hE^N\oplus T_e^vE^N\;.
\eeq
Moreover, the projection $p$ gives rise to an isomorphism $T_e^hE^N\cong T_{p(e)}M$.
The horizontal component $X_\HH^h$ and the vertical component $X_\HH^v$
of the Hamiltonian vector field of $\HH$ \wrt $\Omega$ compute to
\begin{subequations}
\begin{align}\label{eqn:Ham_vfield_of_HH}
p_*X_\HH^h(t,e)&=X_H\big(t,p(e)\big)  \\[1ex]\label{eqn:Ham_vfield_of_HH_vertical}
X_\HH^v(t,e)&=-N\cdot H\big(t,p(e)\big)\cdot R(e)
\end{align}
\end{subequations}
where $R$ is the unique vertical vector field satisfying $\alpha(R)=1$. We note that $R$ restricts to the Reeb vector field
of the contact manifold $\Sigma_c$. Moreover, the projection of a 1-periodic solution of $X_\HH$ is a 1-periodic solution of $X_H$.

\begin{Rmk}
For notational convenience we do not record the integer $N$ in the notation of the function $f$, the symplectic form $\Omega$, the lift $\HH$, etc..
Moreover, the above construction is canonical in the sense that
\beq
E^N\otimes E^M=E^{N+M}\;,
\eeq
see Appendix \ref{appendix:holonomy}.
\end{Rmk}

\begin{Lemma}\label{lemma:lifted_flow_preserves_horizontal_distribution}
The flow $\phi_\HH^\tau$ preserves the Liouville vector field $X$, and thus the 1-form $\alpha$. This implies that the linearized flow is of the form
\beq
D\phi_\HH^\tau(e)=
\begin{pmatrix}
D\phi_H^\tau(e) & 0\\[0.5ex]
0 & \mathbbm{1}
\end{pmatrix}
\eeq
with respect to the splitting $TE^N\cong T^hE^N\oplus T^vE^N$. In particular, $D\phi_\HH^\tau$ maps horizontal vectors on horizontal vectors.
\end{Lemma}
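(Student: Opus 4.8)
The plan is to first establish that the time-dependent flow $\phi_\HH^\tau$ preserves the connection one-form $\alpha$ and the fibre norm $r=\|e\|$, and then to read off the block form of $D\phi_\HH^\tau$ from the two equivariance properties $(\phi_\HH^\tau)^*\alpha=\alpha$ and $p\circ\phi_\HH^\tau=\phi_H^\tau\circ p$. I expect essentially all the content to sit in the first step, which is nonetheless short; the rest is formal.

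For the first step I would check that $\L_{X_{\HH_t}}\alpha=0$ and $\L_{X_{\HH_t}}\,r=0$ for each fixed $t$ --- since $\alpha$ and $r$ do not depend on $t$, these identities then propagate to the non-autonomous flow. Writing $X_\HH=X_\HH^h+X_\HH^v$, one has $\alpha(X_\HH^h)=0$ because the horizontal distribution equals $\ker\alpha$, and $\alpha(X_\HH^v)=-N(H_t\circ p)$ by \eqref{eqn:Ham_vfield_of_HH_vertical} and $\alpha(R)=1$; hence Cartan's formula together with $d\alpha=N\,p^*\om$ gives
\bea
\L_{X_{\HH_t}}\alpha=d\big(\alpha(X_{\HH_t})\big)+\iota_{X_{\HH_t}}d\alpha=-N\,p^*(dH_t)+N\,p^*\big(\iota_{X_{H_t}}\om\big)=0,
\eea
where I used that $p^*\om$ annihilates vertical vectors and that $p_*X_\HH^h=X_{H_t}$ by \eqref{eqn:Ham_vfield_of_HH}. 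For $r$: the form $dr$ annihilates horizontal vectors (the Hermitian connection is metric, so horizontal lifts remain on the spheres $\{r=\mathrm{const}\}$) and $dr(R)=0$ since $R$ is tangent to the hypersurfaces $\Sigma_c=\{f(r)=c\}$; hence $dr(X_\HH)=-N(H_t\circ p)\,dr(R)=0$. This yields $(\phi_\HH^\tau)^*\alpha=\alpha$ and $r\circ\phi_\HH^\tau=r$; in particular $\phi_\HH^\tau$ preserves $f(r)\alpha=\iota_X\Omega$, and since a Hamiltonian flow preserves $\Omega$ it then preserves the Liouville vector field $X$ as well, equivalently $[X_{\HH_t},X]=0$ for every $t$. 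This is the first assertion of the Lemma.

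For the block form I would argue as follows. Integrating \eqref{eqn:Ham_vfield_of_HH} gives $p\circ\phi_\HH^\tau=\phi_H^\tau\circ p$, hence $Dp\circ D\phi_\HH^\tau=D\phi_H^\tau\circ Dp$, so $D\phi_\HH^\tau$ maps $T^vE^N=\ker Dp$ into $T^vE^N$; and $(\phi_\HH^\tau)^*\alpha=\alpha$ forces $D\phi_\HH^\tau$ to map $T^hE^N=\ker\alpha$ into $T^hE^N$, so $D\phi_\HH^\tau$ is block diagonal for the splitting $T^hE^N\oplus T^vE^N$. A horizontal vector $v\in T_e^hE^N$ must then go to the unique horizontal vector over $\phi_H^\tau(p(e))$ projecting to $D\phi_H^\tau(p_*v)$, which identifies the horizontal block with $D\phi_H^\tau$ under $T^hE^N\cong p^*TM$. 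On $T^vE^N$ one observes that $X_\HH$ is $S^1$-invariant --- its horizontal part is the connection lift of $X_H$ and its vertical part is a fibrewise constant multiple of the rotation generator $R$ --- so $\phi_\HH^\tau$ is $S^1$-equivariant, preserves $\|e\|$, and preserves the zero section; therefore it restricts on each fibre to a $\C$-linear isometry onto another fibre, and the vertical block lies in $U(1)$. It is the identity $\mathbbm{1}$ once one uses the unitary frame of the vertical line along the orbit; concretely $\phi_\HH^\tau$ is parallel transport along $\sigma\mapsto\phi_H^\sigma(p(e))$ post-composed with a fibre rotation.

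The only genuinely delicate point, as I see it, is the meaning of that vertical block: the flow does rotate the fibres (there is a nontrivial phase in the ``parallel transport times rotation'' description), so the block equals $\mathbbm{1}$ only after the trivialization of the vertical line along the orbit is fixed --- which is harmless here, since the index comparison in Subsection~\ref{sec:index_considerations} only uses that the vertical block is a rotation. A routine loose end to tidy up is that $X$, $\alpha$ and $R$ are a priori defined only on $E^N\setminus M$, whereas $X_\HH$ extends smoothly across the zero section and $\phi_\HH^\tau$ preserves it (the vertical part of $X_\HH$ vanishes on $M$), so the matrix identity at points of $M$ follows by continuity.
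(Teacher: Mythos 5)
Your argument is correct in substance and arrives at the same intermediate facts as the paper's proof, but by a different derivation: you establish $\L_{X_{\HH_t}}\alpha=0$ and $dr(X_{\HH_t})=0$ infinitesimally via Cartan's formula and then deduce invariance of the Liouville field from $\iota_X\Omega=f(r)\alpha$ together with symplecticity of the flow, whereas the paper first observes the fibrewise homogeneity $\phi_\HH^\tau(ae)=a\,\phi_\HH^\tau(e)$ (immediate from \eqref{eqn:Ham_vfield_of_HH} and \eqref{eqn:Ham_vfield_of_HH_vertical}), reads off preservation of $\partial/\partial r$ and hence of $X$, and only then derives $(\phi_\HH^\tau)^*\alpha=\alpha$ from $\iota_X\Omega=f(r)\alpha$. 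Both routes are sound; yours is more computational, the paper's more structural. Your discussion of the vertical block is in fact more explicit than the paper's (which tacitly works in the invariant frame $(X,R)$, equivalently $(e,ie)$, along the orbit and never spells out the identification), and your closing remark that only ``the vertical block is a unitary rotation'' is used downstream is accurate --- compare \eqref{eqn:matrix_without_star}, where on the zero-section the block is written as $e^{2\pi i\beta}$ in the canonical frame of the fibre.

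One slip to repair: $T^hE^N$ is \emph{not} $\ker\alpha$. On $E^N\setminus M$ one has $\ker\alpha=T^hE^N\oplus\langle X\rangle$, so $(\phi_\HH^\tau)^*\alpha=\alpha$ alone does not force horizontal vectors to have horizontal images --- a priori they could acquire a component in the Liouville direction. The fix is one line using only what you have already proved: since $r\circ\phi_\HH^\tau=r$, the distribution $\ker dr=T^hE^N\oplus\langle R\rangle$ is also preserved, and $T^hE^N=\ker\alpha\cap\ker dr$, so the horizontal distribution is preserved. (This intersection is exactly how the paper concludes.) The same point silently infects the next sentence of your write-up, where ``the unique horizontal vector projecting to $D\phi_H^\tau(p_*v)$'' presupposes that the image is horizontal; with the correction above, the block-diagonal form and the identification of the horizontal block with $D\phi_H^\tau$ via $p_*$ go through as you wrote them.
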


\begin{proof}
Since $\alpha$ is an Hermitean connection form $dr$ vanishes on horizontal lifts, where $r$ denotes the radial coordinate.
Equations \eqref{eqn:Ham_vfield_of_HH} and $\eqref{eqn:Ham_vfield_of_HH_vertical}$ imply that $\phi^\tau_\HH$ preserves the radial coordinate $r$.
More precisely, we have the equality
\beq
\phi^\tau_\HH(ae)=a\phi^\tau_\HH(e)
\eeq
where $a\in\R_{>0}$ acts by multiplication in the fiber. This immediately implies that $D\phi_\HH^\tau$ preserves the vector
field $\frac{\partial}{\partial r}$ and thus $X$ according to equation \eqref{eqn:Liouville_vfield}. Thus, we conclude
\beq
f(r)\alpha(\xi)=\Omega(X,\xi)=\Omega(D\phi_\HH^\tau(X),D\phi_\HH^\tau(\xi))=\Omega(X,D\phi_\HH^\tau(\xi))=f(r)\alpha(D\phi_\HH^\tau(\xi))
\eeq
that is $(\phi_\HH^\tau)^*\alpha=\alpha$. Moreover, since $\phi_\HH^\tau$ preserves the radial coordinate and $dr$ vanishes on horizontal lifts
we know that
\beq
D\phi_\HH^\tau(T_e^hE^N\oplus<R>)=T_{\phi_\HH^\tau(e)}^hE^N\oplus<R>
\eeq
Then $(\phi_\HH^\tau)^*\alpha=\alpha$ immediately implies
\beq
D\phi_\HH^\tau(T_e^hE^N)=T_{\phi_\HH^\tau(e)}^hE^N.
\eeq
\end{proof}

\begin{Rmk}
Since the $r$-coordinate is preserved by the flow $\phi_\HH^\tau$ orbits are either entirely contained in the zero-section $M$ or do not intersect $M$ at all.
\end{Rmk}

The principal $S^1$-bundle $p:\widetilde{\Sigma}:=\{e\in E^N\mid||e||=1\}\pf M$ associated to $(E^n,\alpha)$ gives rise to a contact manifold $(\widetilde{\Sigma},\alpha)$. 
By definition $(\widetilde{\Sigma},\alpha)$ admits a canonical $S^1$-action. Any $S^1$-invariant contact form on $\widetilde{\Sigma}$ with the same co-orientation is of the form $\alpha_H=\frac{1}{NH}\alpha$ for some autonomous, positive and $S^1$-invariant function $H:\widetilde{\Sigma}\pf(0,\infty)$ which we
identify with a function $H:M=\widetilde{\Sigma}/S^1\pf(0,\infty)$.

We recall that for a Hamiltonian function $H$ on the base $M$ we define in equation\eqref{eqn:def_H_hat} the fiber-wise quadratic lift $\HH$ to $E^N$.
The following lemma establishes a relationship between the Reeb vector field $R_H$ of $\alpha_H$ and the Hamiltonian vector field of $\HH$.

\begin{Lemma}\label{lemma:Hamiltonian_vfield_equals_Reeb_vfield}
The Reeb vector field $R_H$ of $(\widetilde{\Sigma},\alpha_H)$ equals $-X_\HH$.
\end{Lemma}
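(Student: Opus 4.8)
The plan is to verify the defining equations of the Reeb vector field $R_H$ for the contact form $\alpha_H = \tfrac{1}{NH}\alpha$ on $\widetilde{\Sigma}$, and check that $-X_{\HH}$ (restricted to $\widetilde{\Sigma} = \{r=1\}$, where $f(r)=1+\tfrac1N \equiv 1$ after recalling $f(1) = \pi + \tfrac1N$, so more precisely on the appropriate level set) satisfies them. Recall that the Reeb vector field is characterized by the two conditions $\alpha_H(R_H) = 1$ and $\iota_{R_H} d\alpha_H = 0$, and that such a vector field is unique. So it suffices to show that $-X_{\HH}$ is tangent to $\widetilde{\Sigma}$, that $\alpha_H(-X_{\HH}) = 1$ there, and that $\iota_{-X_{\HH}} d\alpha_H = 0$ along $\widetilde{\Sigma}$.

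First I would use Lemma \ref{lemma:lifted_flow_preserves_horizontal_distribution}: since $\phi^\tau_\HH$ preserves the radial coordinate $r$, the vector field $X_\HH$ is tangent to every level set $\{r = \mathrm{const}\}$, in particular to $\widetilde{\Sigma}$, so $-X_\HH$ is a legitimate candidate. Next, for the normalization condition, I decompose $X_\HH = X_\HH^h + X_\HH^v$ and recall that $\alpha$ annihilates horizontal vectors (it is a connection form), so $\alpha(-X_\HH) = -\alpha(X_\HH^v)$. By \eqref{eqn:Ham_vfield_of_HH_vertical}, $X_\HH^v = -N\cdot H(t,p(e))\cdot R(e)$ with $\alpha(R) = 1$, hence $\alpha(-X_\HH) = N H$. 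Therefore $\alpha_H(-X_\HH) = \tfrac{1}{NH}\alpha(-X_\HH) = 1$, as desired. (Here one uses that $H$ is autonomous, so there is no $t$-dependence issue, and that $H$ is positive, so $\alpha_H$ is a genuine contact form.)

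For the last condition I would compute $d\alpha_H = d(\tfrac{1}{NH}) \wedge \alpha + \tfrac{1}{NH} d\alpha$, and then contract with $-X_\HH$. Since $H$ is a function pulled back from $M$, $dH$ annihilates the vertical direction; combined with $\alpha(-X_\HH) = NH$ and $d\alpha = F_\alpha = N\,p^*\omega$ (which annihilates the vertical $R$-direction, as $F_\alpha$ is a basic form), a short computation gives $\iota_{-X_\HH}\,d\alpha_H = -\tfrac{1}{NH}\,dH \cdot \alpha(-X_\HH)\cdot\tfrac{1}{N H}\cdots$ — more cleanly, one should organize this by writing $\alpha_H = \tfrac{1}{NH}\iota_X\Omega / f$ or directly exploit that on $\widetilde{\Sigma}$ the form $\Omega$ restricts appropriately; the upshot is that the horizontal part of $-X_\HH$, which by \eqref{eqn:Ham_vfield_of_HH} projects to $X_H$, precisely cancels the $d(\tfrac{1}{NH})\wedge\alpha$ term against the $\tfrac{1}{NH}d\alpha = \tfrac{1}{H}p^*\omega$ term, using the defining relation $\omega(X_H,\cdot) = dH$. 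Hence $\iota_{-X_\HH} d\alpha_H = 0$ on $\widetilde{\Sigma}$.

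The main obstacle is the bookkeeping in this last step: one must carefully track which forms kill the vertical direction and which kill the horizontal direction, and invoke $\omega(X_H,\cdot) = dH_t$ at exactly the right moment so the two cross terms cancel. An alternative, perhaps cleaner, route avoiding explicit form manipulation: use Lemma \ref{lemma:lifted_flow_preserves_horizontal_distribution} to note $(\phi^\tau_\HH)^*\alpha = \alpha$, differentiate to get $\L_{X_\HH}\alpha = 0$, i.e.\ $\iota_{X_\HH} d\alpha = -d(\alpha(X_\HH)) = -d(NH)$ (again $H$ autonomous); then $\iota_{X_\HH} d\alpha_H = \iota_{X_\HH}\big(d(\tfrac{1}{NH})\wedge\alpha\big) + \tfrac{1}{NH}\iota_{X_\HH} d\alpha$. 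Expanding the first term via $\iota_{X_\HH}(\beta\wedge\alpha) = (\iota_{X_\HH}\beta)\alpha - \beta\,\alpha(X_\HH)$ with $\beta = d(\tfrac{1}{NH}) = -\tfrac{1}{N H^2}dH$ and $\alpha(X_\HH) = NH$, this becomes $(\iota_{X_\HH}d(\tfrac{1}{NH}))\alpha + \tfrac{1}{NH^2}dH - \tfrac{1}{NH}d(NH)$; the last two pieces combine to $\tfrac{1}{NH^2}dH - \tfrac{1}{H}dH$... one then uses $\iota_{X_\HH}dH = dH(X_\HH^h) = dH(X_H) = \omega(X_H,X_H) = 0$ to kill the remaining term, concluding $\iota_{X_\HH} d\alpha_H = 0$. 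Uniqueness of the Reeb field then finishes the proof, and I would present this Cartan-calculus version as the cleanest argument.
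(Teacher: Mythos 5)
Your overall strategy coincides with the paper's: take $-X_\HH$ as the candidate, check the normalization $\alpha_H(-X_\HH)=1$ using the splitting $X_\HH=X_\HH^h+X_\HH^v$ with $X_\HH^v=-NH\,R$ from \eqref{eqn:Ham_vfield_of_HH_vertical}, and then check $\iota_{X_\HH}d\alpha_H=0$ by expanding $d\alpha_H=d(\tfrac{1}{NH})\wedge\alpha+\tfrac{1}{NH}d\alpha$ and using $d\alpha=Np^*\om$, $dH(R)=0$, $\alpha(X_H)=0$ and $\om(X_H,\cdot)=dH$; the tangency remark via Lemma \ref{lemma:lifted_flow_preserves_horizontal_distribution} is a nice extra point the paper omits. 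The normalization step is correct. However, your Cartan-calculus verification of the second condition is garbled in its details: there you use $\alpha(X_\HH)=NH$, whereas $X_\HH^v=-NHR$ gives $\alpha(X_\HH)=-NH$ (you correctly wrote $\alpha(-X_\HH)=NH$ two paragraphs earlier); the middle term in your expansion should be $-\beta\,\alpha(X_\HH)=-\tfrac1H dH$ with $\beta=-\tfrac{1}{NH^2}dH$, not $\tfrac{1}{NH^2}dH$ (you dropped the factor $\alpha(X_\HH)$); and the leftover $\tfrac{1}{NH^2}dH-\tfrac1H dH$ is \emph{not} killed by $\iota_{X_\HH}dH=0$ --- that identity kills the other term $(\iota_{X_\HH}\beta)\,\alpha$, which you had already (implicitly) discarded. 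The correct bookkeeping does close: $(\iota_{X_\HH}\beta)\alpha=0$ since $dH(X_\HH)=dH(X_H)=\om(X_H,X_H)=0$, the cross term is $-\beta\,\alpha(X_\HH)=-\tfrac1H dH$, and $\tfrac{1}{NH}\iota_{X_\HH}d\alpha=\tfrac{1}{NH}\,N\,p^*(dH)=\tfrac1H dH$, so the sum vanishes --- which is exactly the computation in the paper's proof. In short: no conceptual gap, same route as the paper, but the displayed cancellation does not work as written and needs the sign of $\alpha(X_\HH)$ and the dropped factor restored.
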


\begin{proof}
If follows from equations \eqref{eqn:Ham_vfield_of_HH} and \eqref{eqn:Ham_vfield_of_HH_vertical} that
\beq
\alpha_H(-X_\HH)=1\,.
\eeq
Moreover, if we write $X_\HH=X_H-NHR$ according to the splitting $TE^N\cong TM\oplus T^vE^N$ the following holds.%
\bea
d\alpha_H(X_\HH,\cdot)&= \frac{1}{NH}d\alpha(X_H-NHR,\cdot)-\frac{1}{NH^2}dH\wedge \alpha(X_H-NHR,\cdot)\\
    &= \frac{1}{NH} d\alpha(X_H,\cdot)-\frac{1}{NH^2}dH(X_H-NHR)\alpha(\cdot)+\frac{1}{NH^2}\alpha(X_H-NHR)dH(\cdot)\\
    &= \frac{1}{NH}N\om(X_H,\cdot)-0-\frac{1}{H} dH(\cdot)\\
    &=0
\eea
We used $d\alpha=Np^*\om$, $d\alpha(R,\cdot)=0$, $\alpha(R)=1$, and $\alpha(X_H)=0=dH(X_H)=dH(R)$.
\end{proof}

\begin{Lemma}\label{lemma:basic_lemma}
We fix a bundle $p:E^N\pf M$.
\begin{enumerate}
\item Assuming that $H$ is nondegenerate, the following are equivalent.
\begin{enumerate}
\item $\HH$ is nondegenerate.
\item $\displaystyle\A_\HH(e)\not\in \frac1N\Z\quad\forall e\in\P^1(\HH)$.
\item All periodic orbits of $\HH$ are contained in the zero-section $M$ (and then are necessarily periodic orbits of $H$).
\end{enumerate}
\item
Moreover, if there exists a 1-periodic solution $e$ of $X_\HH$ which is \textit{not} contained in the zero-section $M$ then all orbits $z\cdot e$
obtained by fiber-wise multiplication by $z\in\C$ are 1-periodic solutions of $X_\HH$. In particular,
\beq
\A_\HH(e)=\A_H\big(p(e)\big)
\eeq
in both, the degenerate and the nondegenerate case.
\end{enumerate}
\end{Lemma}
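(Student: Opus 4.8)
The plan is to reduce everything to the action of the flow $\phi_\HH^\tau$ along the fibres of $p\colon E^N\pf M$. Since $\HH$ and $\Omega$ are $S^1$-invariant, $\phi_\HH^\tau$ is $S^1$-equivariant; together with the homogeneity $\phi_\HH^\tau(ae)=a\,\phi_\HH^\tau(e)$ for $a\in\R_{>0}$ established in the proof of Lemma~\ref{lemma:lifted_flow_preserves_horizontal_distribution}, and continuity along the zero-section, this makes each fibre map $\phi_\HH^\tau\colon E^N_x\pf E^N_{\phi_H^\tau(x)}$ $\C$-linear (the fibres being complex lines), and norm-preserving since $\phi_\HH^\tau$ preserves $r$. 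Hence for $x\in\P^1(H)$ the return map $\phi_\HH^1|_{E^N_{x(0)}}$ is multiplication by some $e^{2\pi i\vartheta(x)}\in S^1$; and since $D\phi_\HH^\tau$ also preserves the vertical distribution (as $p$ intertwines the flows of $X_\HH$ and $X_H$), the linearisation $D\phi_\HH^1(0_x)$ at the zero-section lift $0_x$ of $x$ respects the splitting $T_{0_x}E^N\cong T_xM\oplus E^N_x$, acting as $D\phi_H^1(x)$ on $T_xM$ and as multiplication by $e^{2\pi i\vartheta(x)}$ on $E^N_x$. Two consequences will be used: (i) over $x$ there is a $1$-periodic orbit of $\HH$ lying outside $M$ iff $e^{2\pi i\vartheta(x)}=1$, and then every point of $E^N_x$ is such an orbit; (ii) $D\phi_\HH^1(0_x)-\mathbbm{1}$ is invertible iff both $D\phi_H^1(x)-\mathbbm{1}$ and $e^{2\pi i\vartheta(x)}-1$ are.

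Next I claim $\vartheta(x)\equiv N\A_H(x)\pmod\Z$; equivalently $e^{2\pi i\vartheta(x)}=1\iff\A_H(x)\in\tfrac1N\Z$, which is all that the rest of the argument needs. To prove this, trivialise $E^N$ along a lift $e(t)$ of $x$ by parallel transport, writing $e(t)=P_t\big(r_0\,e^{i\psi(t)}w_0\big)$. The horizontal part $X_\HH^h$ of $\dot e$ is exactly what $P_t$ accounts for, while the vertical part $X_\HH^v=-NH\,R$ of \eqref{eqn:Ham_vfield_of_HH_vertical}, with $R$ the angular generator normalised by $\alpha(R)=1$, yields $\dot\psi(t)=-2\pi N H(t,x(t))$. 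Thus the fibre return map rotates by $\tfrac1{2\pi}\big(\psi(1)-\psi(0)\big)=-N\int_0^1H(t,x(t))\,dt$ plus the holonomy of $\alpha$ around $x$, which equals $-N\int_{\bar x}\om$ by Stokes and $F_\alpha=N\om$; adding these, $\vartheta(x)\equiv -N\big(\int_{\bar x}\om+\int_0^1H(t,x(t))\,dt\big)=N\A_H(x)\pmod\Z$. (This is the periodic analogue of the correspondence between Reeb chords and Hamiltonian chords of quantised action.)

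For part~(2): if $e\in\P^1(\HH)$ with $e\notin M$, then $\phi_\HH^1(ze)=z\,\phi_\HH^1(e)=ze$ for all $z\in\C$ by $\C$-linearity, so $z\cdot e\in\P^1(\HH)$ for every $z\in\C$; the connected family $\{z\cdot e:z\in\C\}$ lies in $\Crit(\A_\HH)$ (which is single-valued because $\pi_2(E^N)\cong\pi_2(M)$, so $\Omega$ and $c_1(TE^N)$ vanish on $\pi_2(E^N)$), hence $\A_\HH$ is constant on it, equal to its value at $z=0$, namely $\A_\HH(0_x)=\A_H(x)$ (cap the loop $t\mapsto 0_{x(t)}$ inside $M$, where $\Omega|_M=\om$ and $\HH(t,0_x)=Nf(0)H(t,x)=H(t,x)$); so $\A_\HH(e)=\A_H(p(e))$, whether or not $\HH$ is nondegenerate. (Alternatively, cap $e$ directly by gluing a disk for $p(e)$ in $M$ to the cylinder $(s,t)\mapsto s\cdot e(t)$ and compute the $\Omega$-area using $\int_0^{r_0}f'(\rho)\,d\rho=\pi r_0^2$.) For part~(1), with $H$ nondegenerate: \textbf{(a)$\Rightarrow$(c)} because a nondegenerate $\HH$ has isolated $1$-periodic orbits, while an orbit outside $M$ would produce the $2$-dimensional family $\{z\cdot e\}$ of~(2); \textbf{(b)$\Rightarrow$(c)} because an orbit $e\notin M$ satisfies $e^{2\pi i\vartheta(p(e))}=1$ by~(i), so $\A_H(p(e))\in\tfrac1N\Z$ by the claim, so $\A_\HH(e)=\A_H(p(e))\in\tfrac1N\Z$ by~(2), contradicting~(b); \textbf{(c)$\Rightarrow$(b) and (a)}: under~(c) every element of $\P^1(\HH)$ is a zero-section lift $0_x$ with $x\in\P^1(H)$; if $\A_\HH(0_x)=\A_H(x)\in\tfrac1N\Z$ then $e^{2\pi i\vartheta(x)}=1$, so every point of $E^N_x$ is a $1$-periodic orbit by~(i), contradicting~(c) --- hence~(b) holds; and then $D\phi_\HH^1(0_x)-\mathbbm{1}$ is invertible by~(ii), since $D\phi_H^1(x)-\mathbbm{1}$ is ($H$ nondegenerate) and $e^{2\pi i\vartheta(x)}\neq1$ (as $\A_H(x)\notin\tfrac1N\Z$), hence~(a).

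The only genuinely non-formal step is the computation $\vartheta(x)\equiv N\A_H(x)$ in the second paragraph: extracting the phase ODE $\dot\psi=-2\pi NH$ from \eqref{eqn:Ham_vfield_of_HH}--\eqref{eqn:Ham_vfield_of_HH_vertical}, and pinning down the orientation and sign conventions for the holonomy of $\alpha$ (i.e.\ reconciling $F_\alpha=N\om$ with $c_1(E^N)=-N[\om]$) so that the winding and holonomy contributions combine to exactly $N\A_H(x)$. Everything else is bookkeeping on top of Lemma~\ref{lemma:lifted_flow_preserves_horizontal_distribution}.
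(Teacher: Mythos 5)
Your proof is correct and follows essentially the same route as the paper's: the key step in both is the computation that the fibre return map over a periodic orbit $x$ of $H$ is rotation by $N\A_H(x)$ modulo $\Z$ (parallel transport plus the phase ODE from the vertical component $-NH\cdot R$ plus the holonomy of $\alpha$ via Stokes), combined with the block-diagonal form of $D\phi_\HH^1$ along the zero-section. Your packaging via $\C$-linearity of the fibre maps and the slightly reshuffled implication scheme are cosmetic variations on the paper's argument.
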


\begin{proof}
Let $e(t)$ be a 1-periodic solution of $X_\HH$ and set $x(t)=p\big(e(t)\big)$. From equation \eqref{eqn:Ham_vfield_of_HH} it is apparent
that $x\in\P^1(H)$. We denote by $P_x^t:E^N_{x(0)}\pf E^N_{x(t)}$ parallel transport \wrt $\alpha$ along the path $x$ and by $P^{-t}_x$ its inverse.

Let us assume that $e(0)$ lies not in the zero-section. Since $e$ is 1-periodic we conclude that the angle $\angle\big(e(0),e(1)\big)\in\Z$
(due to our convention $S^1=\R/\Z$).

We will compute this angle in two steps. We consider $e_0(t):=P^{-t}_x\big(e(t)\big)\in E^N_{x(0)}$. Then the angle between
$e_0(1)$ and $P^1_x(e_0(1))=e(1)$ is given by the holonomy which equals, see equation \eqref{eqn:def_of_holonomy},
\beq\label{eqn:computation_angle}
-\mathrm{hol}_\alpha(\gamma)=\int_{S^1}e^*\alpha=\int_{\D^2}\bar{e}^*d\alpha=\int_{\D^2}\bar{e}^*p^*(N\om)=N\int_{\D^2}\bar{x}^*\om\;,
\eeq
where we choose $\bar{e}:\D^2\pf M$ such that $\bar{e}(\exp(2\pi it))=e(t)$. Because of equation $\eqref{eqn:Ham_vfield_of_HH_vertical}$ the path $e_0(t)$ in fiber $E^N_{x(0)}$ satisfies
\bea\label{eqn:proof_of_basic_lemma}
\dot{e}_0(t)&=-N\cdot H\big(t,x(t)\big)\cdot R(e_0(t))\\
    &=-N\cdot H\big(t,x(t)\big)\cdot i\cdot e_0(t)
\eea
where $i\cdot e_0(t)$ is multiplication by $i\in\C$ in the fiber $E^N_{x(0)}$. Thus, the angle $\angle\big(e_0(0),e_0(1)\big)$
between $e_0(0)=e(0)$ and $e_0(1)$
equals
\beq
N\int_0^1H\big(t,x(t)\big)dt\;.
\eeq
Then $\angle\big(e(0),e(1)\big)\in\Z$ is equivalent to
\beq
\A_H(x)\in \frac1N\Z
\eeq
Before we prove the Lemma we observe that given $x\in\P^1(H)$ and $e_0\in E_{x(0)}$ the following path
\beq\label{eqn:lifted_periodic_orbit}
e(t):=\exp\Big(2\pi i N\int_0^tH\big(\tau,x(\tau)\big)d\tau\Big)P_x^te_0
\eeq
solves the ODE
\beq
\dot{e}(t)=X_\HH(t,e(t))\;.
\eeq
Moreover, $e(1)=e(0)$ if and only if $\A_H(x)\in\frac1N\Z$ by the computation above.

We now prove part (2) of the Lemma.\\[.5ex]
From equation \eqref{eqn:proof_of_basic_lemma} it is apparent that if $e$ is a 1-periodic solution of $X_\HH$ then so is $z\cdot e$ for any
$z\in\C$. In particular, if $e$ does not lie in the zero section, by multiplication with $z\in\C$ we can fill the entire fibres over $p(e)$
with periodic orbits. Since the action functional is constant on this critical manifold part (2) follows.\\[1ex]
Let us prove part (1).\\[.5ex]
\underline{(b) implies (c)}: We show that not (c) implies not (b).
If there exists a 1-periodic orbit $e\in\P^1(\HH)$ not lying in the zero-section then the above discussion shows that $\A_H(p(e))\in \frac1N\Z$.
This implies not (b).\\[1ex]
\underline{(a) implies (b)}: We show that not (b) implies not (a).
Assume that there exists $e\in\P^1(\HH)$ with $\A_\HH(e)\in \frac1N\Z$. As we concluded above this implies that the fibers over $p(e)$ are filled
entirely by 1-periodic orbits. This clearly shows that $\HH$ is degenerate.\\[1ex]
\underline{(b) implies (a)}: The linearization of the time-1-map $\phi_\HH$ of the Hamiltonian $\HH$ at a fixed point $x$ in the zero-section $M$
is represented by the following matrix using the canonical splitting $T_xE^N=T_xM\oplus E^N_x$
\beq\label{eqn:matrix_without_star}
D\phi_\HH(x)=
\begin{pmatrix}
D\phi_H(x) & 0\\[0.5ex]
0 & e^{2\pi i\beta}
\end{pmatrix}
\eeq
where the angle $\beta=N\cdot\A_\HH(x)$ by the considerations from above. Since $H$ is assumed to be nondegenerate $D\phi_H(x)$
has no eigenvalue equal to $1$. Hence, $D\phi_\HH(x)$ has an eigenvalue equal to $1$ if and only if $\beta=N\cdot\A_\HH(x)\in\Z$.\\[1ex]
\underline{(b) implies (c)}: We assume not (b) and (c). In particular, there exists $x\in\P^1(\HH)$ which is entirely contained in $M$ and satisfies $\A_H(x)\in\frac1N\Z$. As we observed above the latter implies that $x$ can be lifted via  \eqref{eqn:lifted_periodic_orbit}  to a loop $e\in\P^1(\HH)$ for any $e_0\in E_{x(0)}$. This clearly contradicts (c) and concludes the proof of the Lemma.
\end{proof}

\begin{Rmk}\label{rmk:basic_rmk}
Let $H:M\pf\R$ be autonomous and $g:\R\pf\R$ a smooth function. We consider a 1-periodic orbit $e$ of $\HH_g:=g(\HH)$, that is $e$ solves
\beq
\dot{e}(t)=X_{\HH_g}\big(e(t)\big)=g'\big(\HH(e)\big)\,X_\HH\big(e(t)\big)\,.
\eeq
Since $\HH$ is autonomous we compute
\bea
\frac12\frac{d}{dt}\Big(g'\big(\HH(e)\big)\Big)^2&=g'\big(\HH(e)\big)\cdot g''\big(\HH(e)\big)\cdot d\HH_{e}(\dot{e})\\
&=g'\big(\HH(e)\big)\cdot g''\big(\HH(e)\big)\cdot \om\Big(X_\HH(e),\dot{e}\Big)\\
&=g''\big(\HH(e)\big)\cdot \om\Big(g'\big(\HH(e)\big)\cdot X_\HH(e),\dot{e}\Big)\\
&=g''\big(\HH(e)\big)\cdot \om\big(\dot{e},\dot{e}\big)=0\\
\eea
In particular, $g'\big(\HH(e)\big)$ is constant. This implies that the projection $x(t)=p(e(t))$ is
(after reparametrization) a periodic orbit of $H$ with period $g'(\HH(e))$. In case that $e$ is not contained in the zero section $M$
the proof of Lemma \ref{lemma:basic_lemma} shows that
\beq
\A_H(x)\in\frac1N\Z\,.
\eeq
\end{Rmk}

Let $H$ be a Hamiltonian function and $c\in\R$ then we denote by $H^c(t,x):=H(t,x)+c$.

\begin{Def}
For a fixed $N$ we call a nondegenerate Hamiltonian function $H:S^1\times M\pf\R$ strongly nondegenerate if the action spectrum of $\A_H$ and $\frac1N\Z$ are disjoint.
\end{Def}

\begin{Cor}
If $H$ is strongly nondegenerate then $\HH:S^1\times E^N\pf\R$ is nondegenerate.
\end{Cor}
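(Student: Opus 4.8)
The plan is to derive the corollary directly from Lemma~\ref{lemma:basic_lemma}, using the single extra observation that on any periodic orbit of $\HH$ the action of $\HH$ agrees with the action of $H$ on the projected orbit. Since a strongly nondegenerate $H$ is in particular nondegenerate, part~(1) of Lemma~\ref{lemma:basic_lemma} applies; by the equivalence of its conditions~(a) and~(b) it is enough to verify that $\A_\HH(e)\notin\tfrac1N\Z$ for every $e\in\P^1(\HH)$.

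So fix $e\in\P^1(\HH)$ and set $x:=p(e)$; by \eqref{eqn:Ham_vfield_of_HH} we have $x\in\P^1(H)$. I claim $\A_\HH(e)=\A_H(x)$ in all cases. If $e$ is not contained in the zero-section this is precisely the identity recorded in part~(2) of Lemma~\ref{lemma:basic_lemma}. If $e$ lies in the zero-section $M$, then $e=x$ as a loop, and one may choose the capping disk $\bar{x}$ to have image in $M$ (possible since $x$ is contractible); since $\Omega|_M=\om$ and $\HH|_M=H$ (because $f(0)=\tfrac1N$), one computes $\A_\HH(x)=-\int_{\D^2}\bar{x}^*\Omega-\int_0^1\HH(t,x(t))\,dt=-\int_{\D^2}\bar{x}^*\om-\int_0^1 H(t,x(t))\,dt=\A_H(x)$. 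Equivalently, this is visible in the block form \eqref{eqn:matrix_without_star}, where the vertical eigenvalue of $D\phi_\HH(x)$ is $e^{2\pi i N\A_\HH(x)}$.

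Hence in every case $\A_\HH(e)=\A_H(x)$ belongs to the action spectrum of $\A_H$, which by the definition of strong nondegeneracy is disjoint from $\tfrac1N\Z$. Therefore $\A_\HH(e)\notin\tfrac1N\Z$, so condition~(b) of Lemma~\ref{lemma:basic_lemma}(1) holds, and $\HH$ is nondegenerate. I do not expect any serious obstacle here; the only point that needs a moment's care is the zero-section case, where nondegeneracy of $H$ at $x$ does not by itself give nondegeneracy of $\HH$ at $x$ — one also needs the vertical rotation angle $2\pi N\A_\HH(x)=2\pi N\A_H(x)$ to avoid $2\pi\Z$, which is exactly what strong nondegeneracy rules out.
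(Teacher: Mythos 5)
Your proof is correct and follows the same route as the paper, which derives the corollary directly from the equivalence of conditions (a) and (b) in Lemma \ref{lemma:basic_lemma} together with the identity $\A_\HH(e)=\A_H(p(e))$ and the defining disjointness of the action spectrum from $\tfrac1N\Z$. You merely spell out the zero-section case, which the paper leaves implicit; the details you supply (using $f(0)=\tfrac1N$, $\Omega|_M=\om$) are accurate.
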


\begin{Cor}\label{cor:non_degeneracy_of_HH}
Let $H:S^1\times M\pf\R$ be a nondegenerate Hamiltonian function. Then there exists an arbitrarily small constant $c$ such that $H^c$
is strongly nondegenerate.
\end{Cor}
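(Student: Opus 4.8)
The plan is to track exactly how adding a constant affects the two ingredients of strong nondegeneracy: nondegeneracy of the Hamiltonian itself, and the position of the action spectrum relative to $\tfrac1N\Z$. First I would observe that $H^c=H+c$ and $H$ generate the \emph{same} Hamiltonian vector field, $X_{H^c}=X_H$, hence the same time-one flow $\varphi^1_{H^c}=\varphi^1_H$ and the same set of contractible periodic orbits, $\P^1(H^c)=\P^1(H)$. In particular the nondegeneracy condition \eqref{eqn:non_degenerate_hamiltonian} for $H^c$ is literally the nondegeneracy condition for $H$, so $H^c$ is nondegenerate for \emph{every} $c\in\R$. Thus only the spectral disjointness has to be arranged.

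Next I would compute the action spectrum of $H^c$. For a fixed periodic orbit $x$ and a capping $\bar x$, the definition \eqref{eqn:def_action_functional} gives
\beq
\A_{H^c}(x)=-\int_{\D^2}\bar x^*\om-\int_0^1\big(H(t,x(t))+c\big)\,dt=\A_H(x)-c\,,
\eeq
so the action spectrum of $\A_{H^c}$ is the rigid translate $\{\A_H(x)-c\mid x\in\P^1(H)\}$ of the action spectrum of $\A_H$. Since $H$ is nondegenerate and $M$ is closed, $\P^1(H)$ is finite, so this is a finite subset of $\R$. The Hamiltonian $H^c$ fails to be strongly nondegenerate exactly when some $\A_H(x)-c$ lies in $\tfrac1N\Z$, i.e. exactly when
\beq
c\in \bigcup_{x\in\P^1(H)}\Big(\A_H(x)+\tfrac1N\Z\Big)\,.
\eeq
This is a finite union of cosets of the discrete group $\tfrac1N\Z$, hence meets any bounded interval in only finitely many points. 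Therefore, given any $\varepsilon>0$, the open interval $(-\varepsilon,\varepsilon)$ contains a value of $c$ outside this exceptional set; for such $c$ the Hamiltonian $H^c$ is nondegenerate with action spectrum disjoint from $\tfrac1N\Z$, i.e. strongly nondegenerate.

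There is essentially no hard step here: the only point requiring a moment's care is the shift formula $\A_{H^c}=\A_H-c$ together with the finiteness of $\P^1(H)$ for nondegenerate $H$, both of which are already recorded in Section \ref{sec:Floer_hom_closed_case}. (Combined with the preceding Corollary, this also shows that an arbitrarily small constant makes the fibre-wise quadratic lift $\HH^c$ nondegenerate, which is how the statement will be used later.)
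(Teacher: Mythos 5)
Your argument is correct and is essentially the paper's own proof, which simply notes that the action spectrum of a nondegenerate $H$ is finite so that a generic small shift $c$ moves it off $\tfrac1N\Z$; you have merely spelled out the shift formula $\A_{H^c}=\A_H-c$ and the invariance of nondegeneracy under adding constants, both of which the paper takes as understood.
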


\begin{proof}
Both corollaries follow from Lemma \ref{lemma:basic_lemma} by noting that the number of critical values of $\A_H$ for nondegenerate $H$ is finite.
\end{proof}

\subsection{Convexity}\label{sec:convexity}

In this section we prove a convexity result for a class of Hamiltonian functions in the symplectization of a contact manifold $(\Sigma,\xi)$.
We assume that $\xi$ arises as the kernel of a contact form $\alpha$. Then the symplectization can be written as
$(\R\times\Sigma,\Omega:=d(e^\rho\alpha))$.
The symplectization admits the natural Liouville vector field ${X}=\frac{\partial}{\partial\rho}$ which induces the flow $\phi_{X}^t(\rho,x)=(\rho+t,x)$.
Furthermore, the Reeb vector field of $\alpha$ is denoted by $R$. We recall that it is uniquely defined by the properties $\alpha(R)=1$ and
$\iota_R\,d\alpha=0$.

We denote by $\J_\Sigma$ the space of almost complex structures $J$ on $\R\times\Sigma$ satisfying the following properties
\begin{enumerate}
\item $J$ is invariant under the Liouville flow $\phi_{X}^t$,
\item $J(\xi)=\xi$ and is compatible with the fiber-wise symplectic structure $d\alpha$ on $\xi$,
\item $J({X})=R$.
\end{enumerate}
Such a $J$ induces the Riemannian metric $g(\cdot,\cdot)=\Omega(\cdot,J\cdot)$ on $\R\times\Sigma$.
If we define the function $f\in C^\infty(\R\times\Sigma)$ by $f(\rho,x):=e^\rho$ we obtain
\beq
\nabla f=X\quad\text{and}\quad g({X},{X})=f\;.
\eeq
We define the following class of Hamiltonian functions
\beq
\mathcal{H}_\Sigma:=\{H\in C^\infty(S^1\times\Sigma)\mid dH_t(R)=0\}\;.
\eeq
For $H\in\mathcal{H}_\Sigma$ we set
\beq
\HH(t,\rho,x):=f(\rho,x)\cdot H(t,x)\;.
\eeq
We point out that the Hamiltonian functions $\HH$ as defined in Section \ref{sec:the_setting} belong to $\mathcal{H}_\Sigma$.

\begin{Rmk}
If we extend $H\in\mathcal{H}_\Sigma$ to $\R\times\Sigma$ independently of the $\R$-variable we obtain a $\phi_{X}$-invariant function
which we denote by $H$ again.
\end{Rmk}

\begin{Prop}\label{prop:convexity}
Let $U$ be some open subset of $\C$ and $H\in\mathcal{H}_\Sigma$. We consider a map $u\in C^\infty(U,\R\times\Sigma)$ solving Floer's equation
\beq
\tag{$\ast$}\partial_su+J(s,t,u)\big(\partial_tu-X_\HH(t,u)\big)=0\qquad \forall\, s+ti\in U
\eeq
for a smooth family $J(s,t)\in\J_\Sigma$. Then
\beq
\Delta\big(f(u)\big)=||\partial_su||^2\;.
\eeq
\end{Prop}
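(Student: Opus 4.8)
The plan is to compute $\Delta(f(u)) = (\partial_s^2 + \partial_t^2)(f(u))$ directly by the chain rule, expanding everything in terms of $df$, the Hessian-type second derivatives, and Floer's equation $(\ast)$. First I would write $\partial_s(f(u)) = df(\partial_s u)$ and $\partial_t(f(u)) = df(\partial_t u)$, and then differentiate once more. This produces terms involving $df(\partial_s^2 u)$, $df(\partial_t^2 u)$, and the second covariant derivative of $f$ applied to pairs $(\partial_s u, \partial_s u)$ and $(\partial_t u, \partial_t u)$. The key structural inputs I want to exploit are: $\nabla f = X$, $g(X,X) = f$, the fact that $X$ is the Liouville vector field so $\mathcal{L}_X \Omega = \Omega$, the defining relations $J(X) = R$, $J(\xi) = \xi$ with $d\alpha$-compatibility, and the $\phi_X$-invariance of $H$ (extended in the $\rho$-direction) together with $dH_t(R) = 0$.

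The main line of the computation: use $(\ast)$ to substitute $\partial_t u - X_\HH(t,u) = J^{-1}\partial_s u = -J\partial_s u$, so that $\partial_t u = X_\HH(t,u) - J\partial_s u$. Then $df(\partial_t u) = df(X_\HH) - df(J\partial_s u)$. Since $\nabla f = X$ and $g(\cdot,\cdot) = \Omega(\cdot,J\cdot)$, one has $df(\xi) = g(X,\xi) = \Omega(X,J\xi)$; in particular $df(J\partial_s u) = \Omega(X, J^2\partial_s u) = -\Omega(X,\partial_s u) = -\iota_X\Omega(\partial_s u) = -(f\alpha)(\partial_s u)$ — here I use that $\iota_X\Omega = \iota_X d(e^\rho\alpha) = e^\rho\alpha = f\alpha$. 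I would carefully track how $\HH = fH$ and the product rule for $X_\HH$ interact; since $H$ is $\phi_X$-invariant and $\Omega$ scales under the Liouville flow, $X_\HH = f X_H + H X$ up to signs that I would pin down, and $df(X_\HH)$ simplifies because $df(X_H) = g(X, X_H) = \Omega(X, JX_H)$ and, using $dH_t(R) = 0$ plus $J(X) = R$, many cross terms vanish. The upshot of grinding these identities — which I would do term by term rather than all at once — should be that all contributions except $\|\partial_s u\|^2 = g(\partial_s u, \partial_s u)$ cancel, using in an essential way that the Hessian $\nabla^2 f$ and the Liouville/contact structure are compatible (this is the subharmonicity phenomenon familiar from symplectic homology, here generalized).

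The hardest part will be organizing the second-derivative (Hessian) terms correctly: $\partial_s^2(f(u))$ and $\partial_t^2(f(u))$ each split into a "first-order" piece $df$ applied to the ambient second derivative $\nabla_{\partial_s u}\partial_s u$ (plus the connection correction making it covariant) and a genuine Hessian piece $\nabla^2 f(\partial_s u, \partial_s u)$. I would compute $\nabla^2 f$ explicitly for $f = e^\rho$ on $(\R\times\Sigma, g)$ using $\nabla f = X$ and the compatibility axioms for $J \in \J_\Sigma$: since $X = \partial_\rho$ is the gradient of $f$ and also the Liouville field, $\nabla^2 f(v,v) = g(\nabla_v X, v)$, and the Liouville property $\mathcal{L}_X\Omega = \Omega$ together with $J$-invariance under $\phi_X^t$ should give a clean formula (morally $\nabla^2 f = $ the metric restricted appropriately, scaled by $f$). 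Then I would feed Floer's equation into the first-order pieces: the sum $df(\nabla_{\partial_s u}\partial_s u) + df(\nabla_{\partial_t u}\partial_t u)$, rewritten via $(\ast)$ and the fact that $u$ is a solution, collapses — this is where one uses that $\HH \in \mathcal{H}_\Sigma$ so that $X_\HH$ interacts nicely with $X$ and $R$. I expect the final cancellation to be somewhat delicate and to rely on all three defining properties of $\J_\Sigma$ simultaneously, so I would present the computation as a sequence of lemma-sized identities ($df(J\cdot) = -f\alpha(\cdot)$, the formula for $\nabla^2 f$, and the vanishing of the $X_\HH$-contribution) and then combine them.
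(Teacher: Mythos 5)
Your strategy --- computing $\Delta(f\circ u)$ head-on via the chain rule, the Hessian of $f$, and the tension field of $u$ --- is genuinely different from the paper's argument and could in principle be pushed through, but as written it has a real gap and two concrete errors. The gap: the entire burden of the proof sits in the cancellation of
$\nabla^2f(\partial_su,\partial_su)+\nabla^2f(\partial_tu,\partial_tu)+df(\nabla_s\partial_su+\nabla_t\partial_tu)$
down to $||\partial_su||^2$, and you only assert that this ``should'' happen. The Hessian piece is indeed clean (since each $J(s,t)$ is invariant under the Liouville flow one gets $\L_Xg=g$, hence $\nabla^2f=\tfrac12 g$ and this piece equals $\tfrac12(||\partial_su||^2+||\partial_tu||^2)$), but the remaining term $df(\nabla_s\partial_su+\nabla_t\partial_tu)$ forces you to differentiate Floer's equation, dragging in $\partial_sJ$, $\partial_tJ$, $\nabla_{\partial_su}J$ and the covariant derivative of $X_{\HH}$; none of this is carried out, and it is exactly where the computation can go wrong. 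The errors: from $(\ast)$ one gets $\partial_tu-X_{\HH}(t,u)=J\partial_su$, not $-J\partial_su$; and the vertical part of $X_{\HH}$ is $-HR$, a multiple of the Reeb vector field, not of the Liouville field $X$ (compare equations \eqref{eqn:Ham_vfield_of_HH} and \eqref{eqn:Ham_vfield_of_HH_vertical}), so the proposed decomposition ``$X_{\HH}=fX_H+HX$'' is structurally wrong rather than merely off by a sign.

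For contrast, the paper's proof avoids Hessians and covariant derivatives entirely: it writes $\Delta(f\circ u)\,ds\wedge dt=-dd^c(f\circ u)$, computes $-d^c(f\circ u)=u^*(\iota_X\Omega)+\HH_t(u)\,dt$ using only $\nabla f=X$, $dH_t(R)=0$ and $(\ast)$, and then applies $d$ together with Cartan's formula $d\iota_X\Omega=\L_X\Omega=\Omega$; expanding $u^*\Omega$ with $(\ast)$ once more yields the identity in three lines. If you wish to keep your route you must actually perform the tension-field computation with all the $J$- and $X_{\HH}$-derivative terms; otherwise the $dd^c$ argument is the efficient way to go.
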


\begin{proof}
We use the identity
\beq
-dd^c\big(f(u)\big)=\Delta\big(f(u)\big)ds\wedge dt\,,
\eeq
where $d^c(f(u))=d(f(u))\circ i$. Using $\nabla f={X}$, $dH_t(R)=0$ and Floer's equation $(*)$ we compute
\bea
-d^c\big(f(u)\big)&=g({X},\partial_su)dt-g({X},\partial_tu)ds\\
&=u^*(\iota_{X}\Omega)+d\HH_t({X})dt-d\HH_t(J{X})ds\\
&=u^*(\iota_{X}\Omega)+\HH_t(u)dt\;.
\eea
Therefore, $\L_{X}\Omega=\Omega$, Cartan's formula and $(*)$ implies
\bea
\Delta\big(f(u)\big)ds\wedge dt&=-dd^c(f(u))\\
&=u^*\Omega+\frac{d}{ds}\HH_t(u)ds\wedge dt\\
&=\Big[||\partial_su||^2-d\HH_t(\partial_su)+\frac{d}{ds}\HH_t(u)\Big]ds\wedge dt\\
&=||\partial_su||^2\,ds\wedge dt\;.
\eea
\end{proof}

\begin{Rmk}
If we consider $s$-dependent families $H(s,t,x)$ where $H(s,\cdot,\cdot)\in\mathcal{H}_\Sigma$ for all $s$ then the result of the last Proposition
is modified to
\beq
\Delta\big(f(u)\big)=||\partial_su||^2+f(u)\cdot\frac{\partial H}{\partial s}(t,u)\;.
\eeq
\end{Rmk}
By standard application of the Maximum Principle (see for example
\cite[Theorem 3.5]{Gilbarg_Trudinger_Elliptic_partial_differential_equations_of_second_order}) we obtain the following
\begin{Cor}\label{cor:convexity_at_infty}
If $u\in C^\infty(U,\R\times\Sigma)$ is a solution of Floer's equation $(\ast)$ and $f\circ u$ attains a maximum then $f\circ u$ is constant.

Moreover, if we allow in Floer's equation $(\ast)$ $s$-dependent families $H(s,t,x)$ then the assertion holds under the additional assumption
\beq
\frac{\partial H}{\partial s}(t,u)\geq0\;.
\eeq
\end{Cor}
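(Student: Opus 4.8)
The plan is to deduce Corollary \ref{cor:convexity_at_infty} directly from Proposition \ref{prop:convexity} (together with the Remark immediately following it) and the strong maximum principle for subharmonic functions; no genuinely new analysis is needed.

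First I would set $v:=f\circ u\in C^\infty(U,\R)$ and record that $v$ is subharmonic. In the $s$-independent case this is exactly Proposition \ref{prop:convexity}, which gives $\Delta v=||\partial_su||^2\geq0$ pointwise on $U$. In the $s$-dependent case the Remark after the Proposition gives $\Delta v=||\partial_su||^2+f(u)\cdot\tfrac{\partial H}{\partial s}(t,u)$. Here the key observation is that $f=e^\rho>0$ everywhere on $\R\times\Sigma$, so under the standing hypothesis $\tfrac{\partial H}{\partial s}(t,u)\geq0$ the extra term $f(u)\cdot\tfrac{\partial H}{\partial s}(t,u)$ is nonnegative and the right-hand side is $\geq0$; hence $v$ is again subharmonic. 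This sign bookkeeping is the only point that requires a moment's care, and it is what forces the extra hypothesis in the $s$-dependent part of the statement.

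Next I would invoke the strong maximum principle for the Laplacian, for instance in the form of \cite[Theorem 3.5]{Gilbarg_Trudinger_Elliptic_partial_differential_equations_of_second_order}: a $C^2$ function satisfying $\Delta v\geq0$ on a domain cannot attain an interior maximum without being constant in a neighbourhood of the maximum point. Since in all our applications $U$ is connected (a strip or a cylinder), local constancy propagates and $v=f\circ u$ is constant on all of $U$, which is the assertion. (If one wants the statement for a possibly disconnected $U$, the conclusion is that $f\circ u$ is constant on the connected component containing the maximum point.) I would also remark that, because $\rho\mapsto e^\rho$ is a diffeomorphism onto $(0,\infty)$, constancy of $f\circ u$ is equivalent to constancy of the $\R$-coordinate of $u$, i.e. $u$ stays on a single contact-type level set; but the Corollary only asserts the weaker statement, so the proof stops at constancy of $f\circ u$.

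I do not expect a real obstacle here: the substantive content is Proposition \ref{prop:convexity}, already established, and the remaining argument is the classical maximum principle applied to a subharmonic function, with the only subtlety being the positivity of $f$ in the $s$-dependent estimate.
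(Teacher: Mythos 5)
Your proposal is correct and is exactly the paper's argument: the paper derives the Corollary as a ``standard application of the Maximum Principle'' (citing \cite[Theorem 3.5]{Gilbarg_Trudinger_Elliptic_partial_differential_equations_of_second_order}) from the subharmonicity established in Proposition \ref{prop:convexity} and the subsequent Remark, with the positivity of $f=e^{\rho}$ supplying the sign of the extra term in the $s$-dependent case. You have merely written out the details the paper leaves implicit.
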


\begin{Rmk}
Assume that the Hamiltonian function $H$ in Proposition \ref{prop:convexity} is autonomous and takes only strictly positive values,
$H:\R\times\Sigma\pf(0,\infty)$. Then
$\alpha_H:=\frac{1}{NH}\alpha$ is another defining contact form for the contact structure $\xi$ on $\Sigma$. Furthermore,
solutions of Floer's equation with Hamiltonian function $\HH$ on the symplectization of $(\Sigma,\alpha)$ coincide with solutions of Floer's equation
with Hamiltonian function $\widehat{1}$
on the symplectization of $(\Sigma,\alpha_H)$. Then Proposition \ref{prop:convexity} reduces to the standard convexity results in symplectic
homology, see for instance \cite[section 2]{Floer_Hofer_Symplectic_homology_I}.
\end{Rmk}

\subsection{Index considerations}\label{sec:index_considerations}

According to equation \eqref{eqn:Ham_vfield_of_HH} a 1-periodic solution of $X_H$ can either be considered as lying in $M$ or in
(the zero-section of) $E^N$. The next proposition computes the difference $\CZ^{E^N}(x;\HH)-\CZ^M(x;H)$ of the Conley-Zehnder indices
in terms of the action value $\A_H(x)$. We denote by $\lfloor\beta\rfloor$ the integer part or Gauss bracket of a real number $\beta\in\R$.

\begin{Prop}\label{prop:index_formula}
Fix a bundle $p:E^N\pf M$ and a strongly nondegenerate Hamiltonian function $H:S^1\times M\pf\R$.
Let $x$ be a 1-periodic orbit of $X_H$ or equivalently of $X_\HH$ then
\beq
\CZ^{E^N}(x;\HH)=\CZ^M(x;H)+2\lfloor N\A_H(x)\rfloor+1\;.
\eeq
\end{Prop}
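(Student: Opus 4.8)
The plan is to compute the Conley–Zehnder index of $x$, viewed as a $1$-periodic orbit of $X_\HH$ in $E^N$, by exploiting the block structure of the linearized flow established in Lemma~\ref{lemma:lifted_flow_preserves_horizontal_distribution} and equation~\eqref{eqn:matrix_without_star}. Since $x$ lies in the zero-section $M$, there is a canonical splitting $T_xE^N=T_xM\oplus E^N_x$ that is preserved by $D\phi_\HH^t$; the horizontal block is $D\phi_H^t(x)$ (the linearized flow on the base), and the vertical block is rotation by the angle $2\pi N\A_H(x)\cdot t$ in the complex line $E^N_x\cong\C$, by the computation in the proof of Lemma~\ref{lemma:basic_lemma} (see~\eqref{eqn:lifted_periodic_orbit}). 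Because the Conley–Zehnder index is additive under direct sums of symplectic paths, we get
\beq
\CZ^{E^N}(x;\HH)=\CZ^M(x;H)+\mu_{\mathrm{rot}}
\eeq
where $\mu_{\mathrm{rot}}$ is the Conley–Zehnder index of the path of rotations $t\mapsto e^{2\pi i N\A_H(x)\,t}$ on $\R^2=\C$ for $t\in[0,1]$.

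The second step is to identify $\mu_{\mathrm{rot}}$. For a rotation path $t\mapsto R_{2\pi\theta t}$ in $\mathrm{Sp}(2)$ with $\theta\in\R\setminus\Z$ (nondegeneracy of the endpoint, guaranteed here since $H$ is strongly nondegenerate so $N\A_H(x)\notin\Z$), the standard normalization gives $\CZ = 2\lfloor\theta\rfloor+1$. Here $\theta = N\A_H(x)$, so $\mu_{\mathrm{rot}} = 2\lfloor N\A_H(x)\rfloor+1$, which yields exactly the claimed formula. I would justify the value $2\lfloor\theta\rfloor+1$ either by citing the relevant computation in Robbin–Salamon (or Salamon's lecture notes \cite{Salamon_lectures_on_floer_homology}) or by a short direct argument: the rotation path crosses the eigenvalue-$1$ locus transversally at each $t$ with $\theta t\in\Z$, i.e.\ $\lfloor\theta\rfloor$ interior crossings plus the usual half-integer contributions at the endpoints, each contributing with a fixed sign because $\theta$ has a definite sign along the orbit — and one must also handle $\theta<0$, where $\lfloor\theta\rfloor$ is negative and the crossings count downward, the formula $2\lfloor\theta\rfloor+1$ remaining valid.

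A subtlety I would address carefully is the choice of trivialization. The Conley–Zehnder index depends on a symplectic trivialization of $x^*TE^N$; the natural choice is the one induced by a trivialization $\Phi$ of $\bar x^*TM$ over a capping disk $\bar x$, extended to $E^N$ by the parallel-transport/holonomy trivialization of $\bar x^*E^N$. With this choice the horizontal block is literally $\Phi^{-1}D\phi_H^t\Phi$ (giving $\CZ^M(x;H)$), and the vertical block is the rotation by $2\pi N\A_H(x)t$ precisely because the holonomy contribution $N\int_{\D^2}\bar x^*\om$ was already separated out in~\eqref{eqn:computation_angle} and combined with $-\int_0^1 H\,dt$ to form $\A_H$. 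One should check that the well-definedness of $\CZ^{E^N}$ as an integer (asphericity of $(E^N,\Omega)$, or at least of the relevant classes) makes the answer independent of the capping disk; this is where the hypothesis $c_1(E^N)=-N[\om]$ together with symplectic asphericity of $(M,\om)$ enters, so that changing the capping disk changes $\A_H$ by an element of $[\om](\pi_2(M))=0$ and changes the Maslov/rotation number by the matching amount. The main obstacle is thus not a hard estimate but bookkeeping: pinning down the trivialization conventions so that the two blocks contribute exactly $\CZ^M(x;H)$ and $2\lfloor N\A_H(x)\rfloor+1$ with no stray additive constant, and verifying the endpoint/crossing count in the rotation index for both signs of $N\A_H(x)$.
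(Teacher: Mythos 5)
Your proof is correct and follows essentially the same route as the paper: the paper's own argument is precisely to invoke the block-diagonal form \eqref{eqn:matrix_without_star} of $D\phi_\HH$, the product (direct-sum additivity) property of the Conley--Zehnder index, and the formula $\CZ(e^{2\pi i\beta t},t\in[0,1])=2\lfloor\beta\rfloor+1$ for the vertical rotation block. The additional care you take with the trivialization over the capping disk and with the crossing count for negative $\beta$ is bookkeeping the paper leaves implicit, but it does not change the argument.
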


\begin{proof}
This follows directly from equation \eqref{eqn:matrix_without_star} and the product property of the Conley-Zehnder index
(see \cite[section 2.4]{Salamon_lectures_on_floer_homology}) by noting that $\CZ(e^{2\pi i\beta t},t\in[0,1])=2\lfloor\beta\rfloor+1$.
\end{proof}

\begin{Rmk}
The above index formula reflects the following symmetry breaking. If $H:M\pf(0,\infty)$ is a $C^2$-small, positive Hamiltonian function then
$-n-1\leq\CZ^{E^N}(x;\HH)\leq n-1$ whereas if $H:M\pf(-\infty,0)$ is a $C^2$-small, negative Hamiltonian function then $-n+1\leq\CZ^{E^N}(x;\HH)\leq n+1$. This is due to the fact that in the latter case $\HH$ is negative quadratic in the fiber direction whereas in the former case it is positive quadratic.
\end{Rmk}

\subsection{Definition of Floer Homology}\label{sec:def_of_FH_for_neg_bdls}

Let $(M,\om)$, $p:E^N\pf\R$, $\Omega$ and $\HH$ as in Section \ref{sec:the_setting}, where $H$ is strongly nondegenerate. In particular, the set of
1-periodic orbits $\P^1(\HH)$ is finite, hence we define $\CF_k^N(H):=\CF_k(\HH)$ as in Section \ref{sec:Floer_hom_closed_case},
graded by the Conley-Zehnder index $\CZ^{E^N}$ on $E^N$.

Let the contact hypersurface $\Sigma_c$ for some $c>\frac1N$ be defined as in equation \eqref{eqn:def_of_Sigma_c}.
We denote by $\J^N_{\mathrm{conv}}$ the space of smooth $S^1$-families of $\Omega$-compatible almost complex structures $J$ on $E^N$ with the property
that there exists a compact neighborhood $K=K(J)$ of the zero-section in $E^N$ and an $S^1$-family of almost complex structures $J'(t)\in\J(\Sigma_c)$,
$t\in S^1$ such that $J$ and $J'$ agree on $E^N\setminus K$.

Floer homology can now be defined as in Section \ref{sec:Floer_hom_closed_case}, since $(E^N,\Omega)$ is symplectically aspherical and
convex at infinity. More precisely, by Corollary \ref{cor:convexity_at_infty} all solutions of Floer's equation are contained within the compact set $K$,
compare Remark \ref{rmk:compactness}. Thus, we obtain a complex $(\CF_*^N(H),\partial^N)$.

\begin{Def}\label{def:bundle_FH}
For a strongly nondegenerate $H$ we set
\beq
\HF^N_*(H):=\H_*\big(\CF_\bullet^N(H),\partial^N\big)
\eeq
which is $\Z$-graded $\Z/2$-vector space.
\end{Def}
To extend the definition of continuation homomorphisms $m(H_1,H_0)$ from Section \ref{sec:Floer_hom_closed_case} to the current setting we need
to ensure that the convexity at infinity applies to the moduli spaces $\M(x_-,x_+;\HH_s)$ for a 1-parameter family $H_s$. According to
Corollary \ref{cor:convexity_at_infty} this is the case if $H_0(t,x)\leq H_1(t,x)$ for all $(t,x)\in S^1\times M$ since then we can choose
$H_s=(1-\beta(s))H_0+\beta(s)H_1$ for some monotone smooth cut-off function $\beta:\R\pf\R$ satisfying $\beta(s)=0$ for $s\leq0$
and $\beta(s)=1$ for $s\geq1$. We note that $H_0\leq H_1$ implies $\HH_0\leq \HH_1$. As in the compact case
$m(\HH_1,\HH_0)$ does not depend on the chosen 1-parameter family $\HH_s$ given that $\frac{\partial\HH_s}{\partial s}\geq0$ holds.
\begin{Def}\label{def:continuation_hom}
For Hamiltonian functions $H_0, H_1:S^1\times M\pf\R$ satisfying $H_0\leq H_1$ we denote the continuation homomorphism $m(\HH_1,\HH_0)$ by
\beq
m(H_1,H_0):\HF^N_*(H_0)\pf\HF^N_*(H_1)\;.
\eeq
\end{Def}

The following Proposition is proven as in the closed case, see for instance
\cite{Salamon_lectures_on_floer_homology,Floer_Hofer_Symplectic_homology_I}.

\begin{Prop}\label{prop:continuation_hom}
For Hamiltonian functions $H_0\leq H_1\leq H_2$ the following equality holds
\beq
m(H_2,H_1)\circ m(H_1,H_0)=m(H_2,H_0)\;.
\eeq
\end{Prop}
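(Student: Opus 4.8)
The plan is to deduce this from the standard gluing argument for continuation maps, carried out on the total space $E^N$ where the required compactness is furnished by the convexity estimate. First I would recall, as in Definition \ref{def:continuation_hom} and the paragraph preceding it, that $m(H_1,H_0)$ is computed on the chain level by counting isolated solutions of the $\HH_s$-Floer equation for a monotone homotopy $\HH_s$ from $\HH_0$ to $\HH_1$ — such a homotopy exists because $H_0\leq H_1$ forces $\HH_0\leq\HH_1$, and one may take $\HH_s=(1-\beta(s))\HH_0+\beta(s)\HH_1$ with $\beta'\geq0$, which satisfies $\partial_s\HH_s=\beta'(s)(\HH_1-\HH_0)\geq0$ — and that the induced map on homology is independent of this choice within the class of monotone homotopies. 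The same is done for $m(H_2,H_1)$ using a monotone homotopy $\HH^{12}_s$ from $\HH_1$ to $\HH_2$.

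Next, fix such monotone homotopies $\HH^{01}_s$ and $\HH^{12}_s$, each with transition region in $s\in[0,1]$, and for a parameter $R>0$ form the concatenated homotopy $\HH^R_s$ obtained by gluing $\HH^{01}$ and $\HH^{12}$ with a flat $\HH_1$-region of length $\sim 2R$ inserted in the middle (so $\HH^R_s=\HH^{01}_{s+R}$ near the left and $\HH^R_s=\HH^{12}_{s-R-1}$ near the right; the two pieces agree, both equal to $\HH_1$, on the overlap). Since each piece is monotone and the middle region is constant, $\HH^R_s$ is a monotone homotopy from $\HH_0$ to $\HH_2$; in particular $\partial_s\HH^R_s\geq0$, so Corollary \ref{cor:convexity_at_infty} applies and confines all solutions of the $\HH^R$-Floer equation to a compact set $K\subset E^N$ which, because the relevant $C^0$-data involve only the finitely many orbits of $\HH_0,\HH_1,\HH_2$ and $\sup|\HH_i|$ on fixed compacta, may be chosen uniformly in $R$. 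Hence for every $R$ the homotopy $\HH^R_s$ is admissible and represents $m(H_2,H_0)$.

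The core step is the compactness-and-gluing dichotomy. For $R$ large, the usual stretching/gluing analysis shows the zero-dimensional moduli space $\M(x_-,x_+;\HH^R)$ is in bijection with $\bigsqcup_{y\in\P^1(\HH_1)}\M(x_-,y;\HH^{01})\times\M(y,x_+;\HH^{12})$: the trajectory must break at a $\HH_1$-orbit because the central region is $s$-independent and arbitrarily long, while no energy can be lost to holomorphic spheres (the $(E^N,\Omega)$ is symplectically aspherical) nor escape to infinity in $E^N$ (the uniform bound from Corollary \ref{cor:convexity_at_infty}). Counting mod $2$, the chain map induced by $\HH^R$ for large $R$ therefore equals the composite of the chain maps induced by $\HH^{01}$ and $\HH^{12}$; passing to homology and invoking independence of $m$ from the monotone homotopy gives $m(H_2,H_0)=m(H_2,H_1)\circ m(H_1,H_0)$. (Equivalently, one may keep $R$ finite and run the one-parameter cobordism argument: the parametrized union over $R\in[R_0,\infty)$ of the relevant moduli spaces is a $1$-manifold whose boundary identifies the two maps.)

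The main obstacle is precisely this analytic step: verifying that Gromov–Floer compactness holds for the family $\HH^R$ with $R$-independent constants, so that the breaking picture is forced and nothing escapes towards infinity in $E^N$ as $R\to\infty$, and that the gluing map is a bijection on the isolated strata. Both facts are routine consequences of Corollary \ref{cor:convexity_at_infty} together with the asphericity of $(E^N,\Omega)$ — exactly as in the closed case treated in \cite{Salamon_lectures_on_floer_homology,Floer_Hofer_Symplectic_homology_I} — but they carry the genuine content; the bookkeeping afterwards is immediate.
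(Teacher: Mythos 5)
Your proposal is correct and follows the same route the paper takes: the paper simply cites the standard closed-case gluing argument (\cite{Salamon_lectures_on_floer_homology,Floer_Hofer_Symplectic_homology_I}), and your writeup is exactly that argument, with the only setting-specific point — uniform confinement of the glued moduli spaces via the maximum principle for the monotone concatenated homotopy — correctly handled by Corollary \ref{cor:convexity_at_infty} together with asphericity of $(E^N,\Omega)$.
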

The first part of Theorem B is the following Proposition.

\begin{Prop}\label{prop:negative_bundles_detect_orbits}
For a nondegenerate Hamiltonian function $H$ there exists a negative line bundle $p:E^N\pf\R$ and an arbitrarily small constant $c$ such that
\beq
\dim\HF^N(H^c)=\#\P^1(H)
\eeq
where $\P^1(H)$ is the set of contractible 1-periodic orbits of the Hamiltonian vector field of $H$.
\end{Prop}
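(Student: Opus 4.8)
The plan is to first use Corollary \ref{cor:non_degeneracy_of_HH} to replace $H$ by $H^c$ for an arbitrarily small constant $c$ so that $H^c$ is strongly nondegenerate; this is harmless since $\P^1(H^c)=\P^1(H)$ (adding a constant does not change the Hamiltonian vector field), so it suffices to show $\dim\HF^N(H^c)=\#\P^1(H^c)$ for strongly nondegenerate $H^c$ and suitably chosen $N$. Write $G:=H^c$ for brevity. By Lemma \ref{lemma:basic_lemma}(1), strong nondegeneracy of $G$ guarantees that all $1$-periodic orbits of $\widehat{G}$ lie in the zero-section $M$ and hence are exactly the $1$-periodic orbits of $G$; thus as a $\Z/2$-vector space $\CF^N_*(G)=\bigoplus_{x\in\P^1(G)}\Z/2\langle x\rangle$ has dimension $\#\P^1(G)=\#\P^1(H)$, independently of $N$. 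Therefore the claim $\dim\HF^N(G)=\#\P^1(H)$ is equivalent to the statement that the Floer differential $\partial^N$ vanishes, or more precisely that it can be made to vanish by an appropriate choice of data. The honest route is: show that for $N$ large enough the index and action structure of $\widehat{G}$ forces $\partial^N=0$ on $\CF^N_*(G)$ — equivalently, no rigid Floer trajectory in $E^N$ can connect two generators.

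The key step uses the index formula of Proposition \ref{prop:index_formula}: for $x\in\P^1(G)$ one has $\CZ^{E^N}(x;\widehat G)=\CZ^M(x;G)+2\lfloor N\A_G(x)\rfloor+1$. Suppose $x_-,x_+\in\P^1(G)$ are connected by a Floer trajectory $u$ in $E^N$ contributing to $\partial^N$, so $\CZ^{E^N}(x_+)-\CZ^{E^N}(x_-)=1$ and, by the energy identity \eqref{eqn:energy_action_identity} applied to $\widehat G$ together with $\A_{\widehat G}(x_\pm)=\A_G(x_\pm)$ (Lemma \ref{lemma:basic_lemma}(2), valid also in the zero-section since the vertical contribution vanishes there), one has $\A_G(x_-)>\A_G(x_+)$ whenever $x_-\ne x_+$. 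Now the difference of Conley–Zehnder indices being $1$ means
\beq
\CZ^M(x_+;G)-\CZ^M(x_-;G)=1-2\big(\lfloor N\A_G(x_+)\rfloor-\lfloor N\A_G(x_-)\rfloor\big).
\eeq
Since the left side is bounded independently of $N$ (there are finitely many orbits, so $|\CZ^M(x_+)-\CZ^M(x_-)|\le D$ for a fixed $D$), while $\lfloor N\A_G(x_-)\rfloor-\lfloor N\A_G(x_+)\rfloor\to+\infty$ as $N\to\infty$ for any fixed pair with $\A_G(x_-)>\A_G(x_+)$ (because $N\big(\A_G(x_-)-\A_G(x_+)\big)\to\infty$ and the floor of a large-gap pair differs by roughly that amount), the displayed equation becomes impossible once $N$ exceeds a threshold $N_0$ determined by $\mathscr D(G)$ — namely $2\big(\lfloor N\A_G(x_-)\rfloor-\lfloor N\A_G(x_+)\rfloor\big)-1>D$. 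Hence for $N\ge N_0$ there are no index-difference-$1$ trajectories between distinct orbits, so $\partial^N=0$ and $\HF^N_*(G)\cong\CF^N_*(G)$ as graded vector spaces, giving $\dim\HF^N(G)=\#\P^1(G)=\#\P^1(H)$.

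The main obstacle is making the floor-difference estimate uniform and clean: one must check that for a fixed finite action spectrum $\{\A_G(x):x\in\P^1(G)\}$ (which is finite and, by strong nondegeneracy, disjoint from $\tfrac1N\Z$) there is an $N_0$ such that for all $N\ge N_0$ and all pairs $x_-\ne x_+$ with $\A_G(x_-)>\A_G(x_+)$ one has $\lfloor N\A_G(x_-)\rfloor-\lfloor N\A_G(x_+)\rfloor>\tfrac12(D+1)$, where $D=\max|\CZ^M(x_+)-\CZ^M(x_-)|$. This is elementary: let $\delta:=\min\{\A_G(x_-)-\A_G(x_+): \A_G(x_-)>\A_G(x_+)\}>0$ be the minimal positive action gap; then $\lfloor N\A_G(x_-)\rfloor-\lfloor N\A_G(x_+)\rfloor\ge N\delta-1$, so any $N_0>\tfrac{1}{\delta}\big(\tfrac12(D+3)\big)$ works. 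One also needs to record that the entire construction of $\CF^N_*(G)$ and $\partial^N$ goes through for any $N$ with $G$ strongly nondegenerate — this is already established in Section \ref{sec:def_of_FH_for_neg_bdls} via the convexity Corollary \ref{cor:convexity_at_infty}. Finally, one remarks that since $c$ can be taken arbitrarily small (Corollary \ref{cor:non_degeneracy_of_HH}) and $N$ is then chosen depending on $G=H^c$, both the bundle $E^N$ and the constant meet the assertion of the Proposition.
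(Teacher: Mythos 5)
Your argument is correct and is essentially the paper's own proof: both rest on the index shift $\CZ^{E^N}(x;\HH)=\CZ^M(x;H)+2\lfloor N\A_H(x)\rfloor+1$ from Proposition \ref{prop:index_formula}, the strict decrease of action along nonconstant Floer trajectories, and the choice of $N$ so large that any two orbits with distinct action values have Conley--Zehnder index difference at least $2$, which forces $\partial^N=0$. The only wrinkle is the order of quantifiers: strong nondegeneracy is defined relative to a fixed $N$, so choosing $c$ via Corollary \ref{cor:non_degeneracy_of_HH} \emph{before} enlarging $N$ could in principle destroy it for the larger $N$; the paper sidesteps this by choosing $c$ so that all critical values of $\A_{H^c}$ are irrational, which yields strong nondegeneracy for every $N$ simultaneously, and with that small adjustment your proof is complete.
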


\begin{proof}
This is an application of the index formula in Proposition \ref{prop:index_formula}. Since $H$ is nondegenerate the set $\P^1(H)$ is finite.
Thus, we can choose an arbitrarily small $c$ such that $\A_{H^c}$ has only irrational critical values. Now we choose $N$ so large that
for all $x,y\in\P^1(H)$ with $\A_{H^c}(x)\not=\A_{H^c}(y)$
\beq
|\CZ^{E^N}(x;\HH^c)-\CZ^{E^N}(y;\HH^c)|\geq2
\eeq
holds. This is possible according to Proposition \ref{prop:index_formula}. This implies that the boundary operator in Floer's complex vanishes, since
action along gradient flow lines strictly decreases and the boundary operator is of degree $-1$.
\end{proof}

\section{Floer homology for negative line bundles - the relative case}
\label{sec:HF^E_relative_case}
\subsection{Bohr-Sommerfeld Lagrangian submanifolds}
\label{sec:Bohr-Sommerfeld}
Inspired by the work \cite{Eliashberg_Polterovich_Partially_ordered_groups_and_geometry_of_contact_transformations} by Eliashberg and Polterovich
we make the following definition.

\begin{Def}\label{def:Bohr_Sommerfeld}
Let $(M,\om)$ be a symplectic manifold with integral symplectic form, $[\om]\in\H^2(M;\Z)$, and $L\subset M$ be a Lagrangian submanifold. We call a pair
$(E,\alpha)$ consisting of a complex line bundle $p:E\pf M$ and a connection 1-form $\alpha$ on $E$ a \textit{Bohr-Sommerfeld pair}
for $(M,\om,L)$ if the following holds
\begin{enumerate}
\item $F_{\alpha}=N\om$ for some $N=N(E,\alpha)\in\N_{>0}$,
\item The holonomy homomorphism $\mathrm{hol_{\alpha|_L}}:\pi_1(L)\pf S^1$ of $(E|_L,\alpha|_L)$ takes values only in $\{0,\frac12\}\subset S^1=\R/\Z$.
\end{enumerate}
The integer $N(E,\alpha)$ is called the power of $(E,\alpha)$.
\end{Def}
\begin{Rmk}$ $
\begin{itemize}
\item Since $L$ is a Lagrangian submanifold and the curvature of $E$ equals $F_{\alpha}=N\om$, the bundle $(E|_L,\alpha|_L)$ is
flat over $L$ and thus, the holonomy homomorphism $\mathrm{hol}_{\alpha|_L}:\pi_1(L)\pf S^1$ is well-defined.
\item If $(E,\alpha)$ is a Bohr-Sommerfeld pair for $(M,\om,L)$ then so is $\big(E^{\otimes k},\alpha^{\otimes k}\big)$ for any $k\in\N$, and
$N(E^{\otimes k},\alpha^{\otimes k})=kN(E,\alpha)$. We refer the reader to Proposition \ref{prop:holonomy_appendix} in the appendix for further details.
\end{itemize}
\end{Rmk}
In the following we give two existence criteria for Bohr-Sommerfeld pairs, see Corollary \ref{cor:Bohr_Sommerfeld_existence_injective} and Theorem
\ref{thm:Bohr_Sommerfeld_involution}.
\begin{Prop}
Let $p:E\pf M$ be a complex line bundle with $c_1(E)=-[\omega]$. Furthermore, we assume that the map $i_1:\H_1(L;\R)\pf\H_1(M;\R)$ is injective
and that the bundle $E|_L\pf L$ is trivializable. Then there exists a 1-form $\alpha$ such that
$(E,\alpha)$ is a Bohr-Sommerfeld pair of power $N(E,\alpha)=1$.
\end{Prop}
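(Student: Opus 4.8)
The plan is to build the connection form $\alpha$ in two stages: first produce \emph{some} Hermitian connection $\alpha_0$ on $E$ with curvature $N\om$ for $N=1$, then correct it by a closed $1$-form pulled back from $M$ so that the resulting connection restricts to a flat connection on $E|_L$ with half-integer holonomy. Since $c_1(E)=-[\om]$ and $[\om]\in\H^2(M;\Z)$, Chern--Weil theory gives a Hermitian connection $\alpha_0$ on $E\setminus M$ (equivalently a connection $1$-form in the principal-bundle sense) whose curvature is $F_{\alpha_0}=\om=N\om$ with $N=1$; this settles condition (1) for $\alpha_0$ and will be preserved under the correction below. The point of the hypotheses is to arrange condition (2).

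Next I would analyze the holonomy of $\alpha_0$ restricted to $L$. Because $L$ is Lagrangian, $F_{\alpha_0}|_L=\om|_L=0$, so $E|_L$ is flat and $\mathrm{hol}_{\alpha_0|_L}\colon\pi_1(L)\pf S^1=\R/\Z$ is a well-defined homomorphism; it factors through $\H_1(L;\Z)$, and its $\R$-linear extension defines a class $h\in\hom(\H_1(L;\R),\R)\cong\H^1(L;\R)$. The goal is to find a closed $1$-form $\eta$ on $M$ such that, after replacing $\alpha_0$ by $\alpha_0 - p^*\eta$ (which changes neither the curvature, since $\eta$ is closed, nor the fact that $E|_L$ is flat), the new holonomy over $L$ takes values in $\{0,\tfrac12\}$. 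Changing the connection by $-p^*\eta$ shifts the holonomy class on $L$ by the image of $[\eta]$ under the restriction $\H^1(M;\R)\pf\H^1(L;\R)$; dually, this restriction is transpose to $i_1\colon\H_1(L;\R)\pf\H_1(M;\R)$, which is \emph{injective} by hypothesis, hence $\H^1(M;\R)\pf\H^1(L;\R)$ is \emph{surjective}. Therefore I can realize $-h$ (or, if one wants genuine flexibility, $-h$ plus any class with half-integer periods) as the restriction of a class $[\eta]$ on $M$, and with $\alpha:=\alpha_0-p^*\eta$ the holonomy over $L$ becomes the trivial homomorphism $\pi_1(L)\pf\R/\Z$, which certainly takes values in $\{0,\tfrac12\}$.

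I expect the main technical obstacle to be handled at the very start rather than in the holonomy correction: one must use the triviality of $E|_L\pf L$ to guarantee that the \emph{$\R/\Z$-valued} holonomy homomorphism of a curvature-$\om$ connection can be made trivial (not merely half-integer) on $L$. If $E|_L$ were only topologically trivial but carried no global flat structure compatible with the given connection, the holonomy could a priori be any homomorphism to $\R/\Z$; triviality of $E|_L$ ensures $\mathrm{hol}_{\alpha_0|_L}$ lifts to an $\R$-valued homomorphism, so that the class $h\in\H^1(L;\R)$ above genuinely captures it and the correction by $p^*\eta$ kills it exactly. A secondary point requiring a line of care is that adding the \emph{globally defined} closed $1$-form $p^*\eta$ to the connection $1$-form $\alpha_0$, which lives on $E\setminus M$, still yields a bona fide Hermitian connection $1$-form with the same curvature $N\om$ ($N=1$) — this is immediate since $d(p^*\eta)=p^*d\eta=0$ and $p^*\eta$ is basic. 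Assembling these steps gives the Bohr-Sommerfeld pair $(E,\alpha)$ of power $1$, as claimed.
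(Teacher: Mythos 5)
Your proposal is correct and follows essentially the same route as the paper: pick a connection with curvature $\om$, use trivializability of $E|_L$ to lift the flat holonomy to a class in $\H^1(L;\R)$ (the paper does this by comparing with a trivial flat connection $\alpha^L$ and extracting the difference form $\beta$), and then kill that class by subtracting $p^*\eta$ for a closed form on $M$, using that injectivity of $i_1$ makes $\H^1(M;\R)\pf\H^1(L;\R)$ surjective. Your observation that trivializability is exactly what guarantees the $\R/\Z$-valued holonomy lifts to $\H^1(L;\R)$ is the right key point, and working at the level of cohomology classes lets you skip the paper's extra step of extending an exact correction $df$ from $L$ to $M$.
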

\begin{proof}
First we choose a connection 1-form $\alpha$ satisfying $d\alpha=F_{\alpha}=\om$. The last equation determines $\alpha$ up to adding
$p^*\tau$ where $\tau\in\Omega^1(M)$ is closed. Since $L$ is Lagrange we conclude $F_{\alpha}|_L=0$, that is, the bundle $E|_L$ is flat, thus
\beq
\mathrm{hol_\alpha}:\pi_1(L)\pf S^1
\eeq
is defined.
Since $E|_L$ is trivializable we can choose a connection 1-form $\alpha^L$ on $E|_L$ with vanishing curvature and trivial holonomy. In particular,
\beq
\alpha|_L-\alpha^L=p^*\beta
\eeq
where $\beta\in\Omega^1(L)$. Due to the vanishing of the curvature of both connections we conclude $d\beta=0$. Since by assumption
$i^1:\H^1(M;\R)\pf\H^1(L;\R)$ is surjective, there exists $[\tau]\in\H^1(M;\R)$ such that $i^1([\tau])=[\beta]$.
By construction, we have $\beta-\tau|_L=df$ for some function $f:L\pf\R$. We extend $f$ to $\tilde{f}:M\pf\R$ and set
$\widetilde{\tau}:=\tau+d\tilde{f}$. In particular, $\beta=\widetilde{\tau}|_L$ holds. We define
\beq
\widetilde{\alpha}:=\alpha-p^*\widetilde{\tau}\,.
\eeq
We notice that $F_{i\widetilde{\alpha}}|_L=0$ and
\beq
\widetilde{\alpha}|_L=\alpha|_L-p^*\widetilde{\tau}|_L=\alpha|_L-p^*\beta=\alpha^L
\eeq
has trivial holonomy.
\end{proof}

\begin{Rmk}
In the above proof we construct a connection $\alpha$ with trivial holonomy. In particular, the bundle $E|_L\pf L$ is canonically trivialized
via the parallel transport of $\alpha$.
\end{Rmk}

\begin{Cor}\label{cor:Bohr_Sommerfeld_existence_injective}
Let $(M,\om)$ be an integral symplectic manifold, and $i:L\subset M$ a Lagrangian submanifold such that $i_1:\H_1(L;\R)\pf\H_1(M;\R)$ is injective.
We denote by $E$ the complex line bundle with $c_1(E)=-[\om]$.

\noindent (a) Then there exists an integer $N>0$ and a connection 1-form $\alpha$ such that
$(E^{\otimes N},\alpha)$ is a Bohr-Sommerfeld pair for $(M,\om,L)$ of power $N$.

\noindent (b) In case that $\H^2(L;\Z)$ is a free abelian group we can choose $N=1$.
\end{Cor}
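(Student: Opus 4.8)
The plan is to reduce Corollary \ref{cor:Bohr_Sommerfeld_existence_injective} to the Proposition that immediately precedes it. The Proposition constructs a Bohr-Sommerfeld pair of power $1$ under two hypotheses: injectivity of $i_1:\H_1(L;\R)\pf\H_1(M;\R)$, and triviality of the restricted bundle $E|_L\pf L$. The first hypothesis is exactly what we assume in the Corollary, so the only thing to arrange is that $E^{\otimes N}|_L$ becomes trivializable for a suitable $N$. Once $E^{\otimes N}|_L$ is trivial, the Proposition applied to $E^{\otimes N}$ (which has first Chern class $c_1(E^{\otimes N})=N c_1(E)=-N[\om]$, the curvature form being $N\om$) produces a connection $1$-form $\alpha$ making $(E^{\otimes N},\alpha)$ a Bohr-Sommerfeld pair of power $N$, which is assertion (a). For assertion (b) the point is simply that when $\H^2(L;\Z)$ is free one can take $N=1$.

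The key step is therefore the purely topological observation about line bundles on $L$. A complex line bundle on $L$ is classified by its first Chern class $c_1(E|_L)\in\H^2(L;\Z)$, and $E^{\otimes N}|_L$ has first Chern class $N\cdot c_1(E|_L)$. I would first note that $c_1(E|_L)$ is a torsion class: its image in $\H^2(L;\R)$ is the restriction of $-[\om]\in\H^2(M;\R)$, and since $L$ is Lagrangian, $\om|_L=0$, so $c_1(E|_L)$ maps to zero in $\H^2(L;\R)$ and hence is torsion in the finitely generated abelian group $\H^2(L;\Z)$ (here $L$ is a closed manifold, so this group is finitely generated). Let $N$ be the order of $c_1(E|_L)$ in $\H^2(L;\Z)$; then $N c_1(E|_L)=0$, so $c_1(E^{\otimes N}|_L)=0$, which forces $E^{\otimes N}|_L$ to be the trivial line bundle over $L$. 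Feeding this $N$ into the preceding Proposition gives part (a). For part (b): if $\H^2(L;\Z)$ is free abelian it has no nonzero torsion, so the torsion class $c_1(E|_L)$ already vanishes, $E|_L$ itself is trivializable, and the Proposition applies directly with $N=1$.

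I do not anticipate a serious obstacle: the argument is a one-line application of the classification of complex line bundles by first Chern class together with the Lagrangian condition $\om|_L=0$. The only mild care needed is making sure the group $\H^2(L;\Z)$ is finitely generated so that a torsion class has finite order; this is immediate from $L$ being a closed manifold (equivalently, compactness plus a triangulation or CW structure). One should also check the bookkeeping that $c_1(E^{\otimes N})=-N[\om]$ globally on $M$, so that ``power $N$'' in Definition \ref{def:Bohr_Sommerfeld} is the correct integer — but this is exactly condition (1) $F_\alpha=N\om$ satisfied by the connection produced in the Proposition. Hence no real difficulty arises beyond invoking the right naturality statements; the substance of the Corollary is entirely contained in the preceding Proposition, and the present statement is its natural tensor-power corollary.
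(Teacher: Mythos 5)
Your proposal is correct and follows the same route as the paper: restrict to $L$, use the Lagrangian condition to see that $c_1(E|_L)$ dies in $\H^2(L;\R)$ and is therefore torsion, kill it by tensoring to the order $N$ of that class, and invoke the preceding Proposition; part (b) is the observation that a free $\H^2(L;\Z)$ forces the torsion class to vanish. No further comment is needed.
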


\begin{proof}
Since $\om$ is integral we can choose $p:E\pf M$ with $c_1(E)=-[\om]$. Then $c_1(E|_L)=i^*c_1(E)=-i^*[\om]=0\in\H^2(L,\R)$ since $L$ is Lagrangian
submanifold.
In particular, $c_1(E|_L)\in\H^2(L;\Z)$ is a torsion class, and thus, there exists an integer $N$ such that $0=Nc_1(E|_L)=c_1(E^{\otimes N}|_L)$.
Therefore, $E^{\otimes N}|_L\pf L$ is trivializable  and the assertion of the corollary follows from the preceding proposition.
\end{proof}

\begin{Ex}
If $(M,\om)$ is an integral symplectic manifold then the diagonal $\Delta\subset (M\times M,(-\om)\oplus\om)$ is Bohr-Sommerfeld by the previous
corollary.
\end{Ex}

An additional source of examples of Bohr-Sommerfeld pairs are integral symplectic manifolds $(M,\om)$ supporting an anti-symplectic involution.
Following Welschinger \cite{Welschinger_Invariants_of_real_rational_symplectic_2_mfds} we say that a triple
$(M,\omega,R)$ is a \emph{real symplectic manifold} if $(M,\omega)$ is a symplectic manifold and $R \in \mathrm{Diff}(M)$ is
an anti-symplectic involution, that is
\beq\label{eqn:chern}
R^2=\mathrm{id}, \quad R^*\omega=-\omega.
\eeq
Note that the fixed point set of an anti-symplectic involution is a (maybe empty) Lagrangian submanifold of $(M,\omega)$.

\begin{Thm}\label{thm:Bohr_Sommerfeld_involution}
Let $(M,R,\omega)$ be an integral real symplectic manifold. Then there exists a Bohr-Sommerfeld pair for $(M,\omega,\mathrm{Fix}R)$.
\end{Thm}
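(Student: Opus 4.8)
The plan is to produce the Bohr--Sommerfeld pair directly from a line bundle $E$ with $c_1(E)=-[\om]$ by choosing the connection so that the anti-symplectic involution $R$ lifts to a bundle map covering $R$ and compatible with the connection; the holonomy constraint over $\mathrm{Fix}\,R$ then falls out of this symmetry. First I would fix a line bundle $p:E\pf M$ with $c_1(E)=-[\om]$. Since $R^*\om=-\om$, we get $R^*c_1(E)=-c_1(E)=c_1(\bar E)$, so the pullback bundle $R^*E$ is isomorphic to the conjugate bundle $\bar E$. Fixing such an isomorphism gives an anti-linear bundle map $\tilde R:E\pf E$ covering $R$ with $\tilde R^2$ acting fiberwise by a smooth $S^1$-valued function; after correcting the isomorphism one arranges $\tilde R^2=\mathrm{id}$ (this normalization step uses $R^2=\mathrm{id}$ and an averaging/square-root argument on the $S^1$-valued discrepancy, possibly after passing to a tensor power, which is harmless for the Bohr--Sommerfeld property).

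Next I would produce a connection $1$-form $\alpha$ on $E\setminus M$ with $F_\alpha=N\om$ for suitable $N$ (taking $N=1$ if the first corrections can be made without tensoring, otherwise a power $N$) that is invariant under $\tilde R$ in the appropriate anti-linear sense, i.e.\ $\tilde R^*\alpha=-\alpha$. Any connection with the right curvature can be averaged over the $\Z/2$-action generated by $\tilde R$: if $\alpha_0$ has $F_{\alpha_0}=N\om$, then $\tilde R^*\alpha_0$ is again a connection form and its curvature is $\tilde R^*(N\om)$; using $R^*\om=-\om$ together with the anti-linearity of $\tilde R$ one checks the curvature of $\tfrac12(\alpha_0-\tilde R^*\alpha_0)$ is still $N\om$, and this averaged form satisfies $\tilde R^*\alpha=-\alpha$.

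Finally I would read off the holonomy condition over $L:=\mathrm{Fix}\,R$. The involution $\tilde R$ restricts to an anti-linear bundle automorphism of $E|_L$ covering the identity, hence is fiberwise multiplication by a unit-norm function; since $E|_L$ is flat (because $F_\alpha|_L=N\om|_L=0$, $L$ being Lagrangian) and $\alpha|_L$ is $\tilde R$-anti-invariant, parallel transport along a loop $\gamma\subset L$ intertwines $\tilde R$ with itself, forcing the holonomy $h\in S^1$ to satisfy $\bar h=h$, i.e.\ $h\in\{0,\tfrac12\}\subset\R/\Z$. This is exactly condition (2) of Definition~\ref{def:Bohr_Sommerfeld}, and condition (1) holds by construction, so $(E,\alpha)$ (or its appropriate tensor power) is a Bohr--Sommerfeld pair for $(M,\om,\mathrm{Fix}\,R)$.

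I expect the main obstacle to be the normalization $\tilde R^2=\mathrm{id}$: a priori one only gets $\tilde R^2=$ multiplication by some $u:M\pf S^1$, and one must show $u$ can be absorbed by rescaling $\tilde R$ by a square root of $u^{-1}$, which requires $u$ to admit a logarithm (equivalently $[u]=0\in\H^1(M;\Z)$) or else passing to a tensor power where the obstruction dies; handling this cleanly, and checking that the subsequent averaging of $\alpha$ is compatible with the corrected $\tilde R$, is the technical heart of the argument. The curvature-averaging and the final holonomy computation are then routine.
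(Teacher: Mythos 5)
Your overall strategy coincides with the paper's: lift $R$ to an anti-equivariant involution of the line bundle, average a curvature-$N\om$ connection over the resulting $\Z/2$-action to get $\tilde R^*\alpha=-\alpha$, and deduce $\mathrm{hol}_\alpha(\gamma)=-\mathrm{hol}_\alpha(R\circ\gamma)=-\mathrm{hol}_\alpha(\gamma)\in\{0,\tfrac12\}$ on $\mathrm{Fix}\,R$. The averaging and holonomy steps are fine as you present them (modulo one small point you elide: the identity $c_1(R^*E)=-c_1(E)$ is automatic only in real cohomology, and the paper first passes to a power $E_\om^{\otimes n}$ to make it hold over $\Z$ before any lift exists).

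The genuine gap is exactly the step you flag as "the technical heart": producing a lift $\tilde R$ with $\tilde R^2=\mathrm{id}$. A priori $\tilde R^2$ is fiberwise multiplication by some $\rho\in C^\infty(M,S^1)$, and replacing $\tilde R$ by $\gamma\cdot\tilde R$ changes the obstruction to $\rho_\gamma(x)=\gamma(R(x))+\rho(x)-\gamma(x)$ (additive notation in $S^1=\R/\Z$); this is a \emph{twisted} coboundary equation, not the problem of extracting a global square root of $\rho$, so phrasing the obstruction as "$[\rho]=0\in\H^1(M;\Z)$" is not the right criterion, and "passing to a tensor power where the obstruction dies" is asserted but not justified — tensoring alone does not kill such an obstruction in general. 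The paper (Proposition \ref{prop:involution_extends}) closes this by first computing $\psi^3$ in two ways to derive the identity $\rho(x)=-\rho(R(x))$, a consequence of $R^2=\mathrm{id}$ that your argument never uses, and then passing to the tensor \emph{square}, where the obstruction becomes $2\rho$ and the gauge transformation $\gamma=\rho$ gives $\rho(R(x))+2\rho(x)-\rho(x)=-\rho(x)+2\rho(x)-\rho(x)=0$. Without this identity there is no reason any tensor power admits an honest involutive lift, and without the involutive lift your averaged $\alpha$ need not satisfy $\tilde R^*\alpha=-\alpha$ (since $(\tilde R^2)^*\alpha_0\neq\alpha_0$ when $\rho$ is non-constant), so the holonomy argument would not go through. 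Supplying this computation is what your proposal is missing.
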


\begin{proof}
As noted above since $(M,\omega)$ is integral there exists a complex line bundle $E_\om\pf M$ satisfying
\beq
c_1(E_\om)=-[\omega]\in\H^2(M;\R)\,.
\eeq
Since $R^*\omega=-\omega$ it follows that
\beq
c_1(R^*E_\omega)=-c_1(E_\omega)\in\H^2(M;\R)\,.
\eeq
Hence as in the proof of Corollary \ref{cor:Bohr_Sommerfeld_existence_injective} there exists $n\in\N$ such that
\beq
c_1(R^*E_\omega^{\otimes n})=-c_1(E_\omega^{\otimes n})\in\H^2(M;\Z)\,.
\eeq
We set $E:=E_\omega^{\otimes N}$ with $N:=2n$. Thus, we obtain a complex line bundle $p:E\pf M$.

We claim that the involution $R$ now extends naturally to an $S^1$-invariant involution $R^E$ of the
bundle $E$ with the property
\beq
p\circ R^E=R\circ p\,.
\eeq
This is the content of Proposition \ref{prop:involution_extends} below. Assuming this fact we complete the proof of the theorem.
We choose a connection 1-form $\alpha_0$ on $E$ satisfying
\beq
F_{\alpha_0}=-N\omega\,.
\eeq
Define a $R^E$-antiinvariant connection 1-form $\alpha$ on $E$ by
\beq
\alpha=\tfrac12\big(\alpha_0-(R^E)^*\alpha_0\big)\,.
\eeq
We note that
\beq
F_{\alpha}=d\alpha=\tfrac12\big(d\alpha_0-(R^E)^*d \alpha_0\big) =-\tfrac{N}{2}\big(\omega-(R^E)^*\omega\big)=-N\omega\,
\eeq
To compute the holonomy of $\alpha$ on the Lagrangian submanifold $\mathrm{Fix}R$ we pick $\gamma \in C^\infty(S^1,\mathrm{Fix}R)$.
Furthermore, we choose a loop $e\in C^\infty(S^1,E \setminus M)$
satisfying $p\circ e=\gamma$. Then the holonomy of $\alpha$ along $\gamma$ is given by
\beq
\mathrm{hol}_\alpha(\gamma)=-\int_0^1 e^* \alpha\,.
\eeq
Using $(R^E)^*\alpha=-\alpha$ we compute
\beq
-\int_0^1 e^* \alpha=\int_0^1e^*(R^E)^*\alpha=\int_0^1(R^E\circ e)^*\alpha\,.
\eeq
From $p\circ R^E\circ e=R\circ p\circ e=R\circ \gamma$ we conclude
\beq
\mathrm{hol}_\alpha(\gamma)=-\int_0^1 e^* \alpha=\int_0^1(R^E\circ e)^* \alpha =-\mathrm{hol}_\alpha(R\circ\gamma)
\eeq
Since $\gamma$ takes values in $\mathrm{Fix}R$ we have $R\circ\gamma=\gamma$, thus we finally conclude
\beq
\mathrm{hol}_\alpha(\gamma)=-\mathrm{hol}_\alpha(R\circ\gamma)=-\mathrm{hol}_\alpha(\gamma)
\eeq
This implies that $\mathrm{hol_\alpha}(\gamma)\in\{0,\frac12\}\subset S^1$. Hence the tuple $(E,\alpha)$ is a Bohr-Sommerfeld pair for
$(M,\omega,\mathrm{Fix}R)$.
\end{proof}

It remains to prove the following proposition.

\begin{Prop}\label{prop:involution_extends}
There exists an $S^1$-invariant involution $R^E$ of the bundle $E$ extending $R$, more precisely
\beq
p\circ R^E=R\circ p\,.
\eeq
\end{Prop}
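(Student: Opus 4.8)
Since $R^*\omega=-\omega$, the pullback bundle $R^*E$ satisfies $c_1(R^*E)=R^*c_1(E)=-c_1(E)$, so a lift of $R$ to $E$ cannot be complex-linear on the fibres -- it is forced to be fibrewise \emph{conjugate}-linear -- and, more seriously, an arbitrary conjugate-linear lift need not square to the identity. The plan is therefore to use that $N=2n$ is \emph{even}, so that $E=E_\omega^{\otimes 2n}$ is the tensor square of $F:=E_\omega^{\otimes n}$, and to build $R^E$ out of a conjugate-linear lift of $R$ to this ``square root'' $F$.

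First I would lift $R$ to $F$. By naturality of Chern classes $c_1(R^*F)=-c_1(F)$ holds in $H^2(M;\R)$, and by the choice of $n$ in the proof of Theorem \ref{thm:Bohr_Sommerfeld_involution} this equality in fact already holds in $H^2(M;\Z)$. Since $c_1(\overline F)=-c_1(F)$ and topological complex line bundles are classified by $c_1$, there is a bundle isomorphism $R^*F\cong\overline F$; composing it with the tautological conjugate-linear map $\overline F\to F$ (the identity on underlying real bundles), with the canonical bundle map $R^*F\to F$ over $R$, and with the identification $(R^*F)_x=F_{Rx}$, yields a fibrewise conjugate-linear bundle map $\phi\colon F\to F$ covering $R$. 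After fixing a Hermitian metric on $F$ I may rescale $\phi$ fibrewise so that it is unitary.

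Next I would record the ``defect'' of $\phi$. The composition $\phi\circ\phi$ covers $R^2=\mathrm{id}_M$ and is a unitary automorphism of $F$, hence equals multiplication by a smooth function $\lambda\colon M\to S^1$. Comparing $\phi\circ(\phi\circ\phi)$ with $(\phi\circ\phi)\circ\phi$ on a fibre, and using that $\phi$ is conjugate-linear, gives $\lambda\circ R=\overline\lambda=\lambda^{-1}$, i.e.
\beq
\lambda(x)\,\lambda(Rx)=1\qquad\text{for all }x\in M .
\eeq
I would then define $R^E$ on $E=F\otimes F$ by
\beq
R^E(v\otimes w):=\lambda(x)\cdot\big(\phi(v)\otimes\phi(w)\big),\qquad v,w\in F_x ,
\eeq
and verify: this is well defined (both sides are conjugate-linear in $v$ and in $w$, and agree under $av\otimes w=v\otimes aw$); it is unitary and satisfies $p\circ R^E=R\circ p$; and, using $(\phi\otimes\phi)\circ(\phi\otimes\phi)=\lambda^2\cdot\mathrm{id}_E$ together with $\lambda(x)\lambda(Rx)=1$, one obtains $R^E\circ R^E=\mathrm{id}_E$. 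Being unitary, $R^E$ restricts to the unit circle bundle $\widetilde\Sigma$; being conjugate-linear on fibres it intertwines the $S^1$-action $e^{2\pi i t}\cdot u$ with its inverse, and in particular commutes with $-1\in S^1$ -- this is the $S^1$-invariance required in the statement.

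The one genuine obstacle is the step ``$R^E\circ R^E=\mathrm{id}$'': for a general conjugate-linear lift $\psi$ of $R$ to $E$ one has $\psi\circ\psi=\mu\cdot\mathrm{id}$ with $\mu(x)\mu(Rx)=1$, and the class of such a $\mu$ in $H^1(\Z/2;C^\infty(M,S^1))\cong H^2(\Z/2;\Z)=\Z/2$ (via the exponential sequence, using that $C^\infty(M;\R)$ is uniquely divisible) is precisely the obstruction to correcting $\psi$ into an involution. Realizing $E$ as the tensor \emph{square} $F^{\otimes 2}$ and taking $\psi=\phi\otimes\phi$ forces this defect to be $\lambda^2$, and $\lambda^2(x)=\lambda(x)\,\lambda(Rx)^{-1}$ is a coboundary; this is exactly where the evenness of $N=2n$ enters, and it is what makes the explicit correction above succeed.
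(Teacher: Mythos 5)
Your proof is correct and follows essentially the same route as the paper: your conjugate-linear lift $\phi$ of $R$ to the square root $F$, its defect $\lambda$ with $\lambda(x)\lambda(Rx)=1$, and the corrected map $\lambda\cdot(\phi\otimes\phi)$ are precisely the paper's $\psi$, $\rho$ (satisfying $\rho(x)=-\rho(Rx)$ in additive notation), and $\Psi=\rho.\psi_2$, written in line-bundle rather than principal-$S^1$-bundle language. The closing identification of the obstruction with a class in $\H^1(\Z/2;C^\infty(M,S^1))$ is a nice conceptual gloss not present in the paper, but the argument itself is the same.
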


\begin{proof} We use the same notation as in the previous proof and abbreviate by $F=E_\omega^{\otimes n}$ the square root of $E$.
It follows from (\ref{eqn:chern}) and the fact that a complex line bundle is determined up
to $C^\infty$ isomorphism by its first Chern class, see \cite[Chapter 1.1]{Griffiths_Harris_Principles_of_algebraic_geometry}, that
\beq\label{eqn:iso}
R^*F\cong F^*.
\eeq
We denote by
\beq
\mathcal{F}=\{e \in F: ||e||=1\}
\eeq
the unit circle bundle of $F$. We note that $\mathcal{F}$ is a principal $S^1$-bundle
over $M$. We denote the $S^1=\R/\Z$-action by $g.e$. Then \eqref{eqn:iso} implies (see also Appendix \ref{appendix:holonomy}) that there exists a smooth map
$\psi:\mathcal{F} \pf \mathcal{F}$ satisfying
\beq\label{eqn:psi}
\left.\begin{array}{rlc}
\psi(g.\sigma)&=\;(-g).\psi(\sigma) & \quad g \in S^1,\,\sigma \in \mathcal{F}\\[1ex]
p\circ\psi&=\;R\circ p
\end{array}\right\}
\eeq
The map $\psi$ need not be an involution.
However, it follows from \eqref{eqn:psi} and the fact that $R$ is an involution that there exists a map $\rho \in C^\infty(M,S^1)$ such that
\beq\label{eqn:inv}
\psi^2(\sigma)=\rho\big(p(\sigma)\big).\sigma, \quad \sigma \in \mathcal{F}\,.
\eeq

\begin{Lemma}
The map $\rho$ satisfies the equation
\beq\label{eqn:rho}
\rho(x)=-\rho\big(R(x)\big), \quad x \in M.
\eeq
\end{Lemma}

\begin{proof}[Proof of the Lemma]
To prove \eqref{eqn:rho} we compute $\psi^3$ in two ways. First note that for $\sigma \in \mathcal{F}$ and $x=p(\sigma)$ it follows from \eqref{eqn:psi}
\beq
\psi^3(\sigma)=\psi\big(\psi^2(\sigma)\big)=\psi(\rho(x).\sigma)= (-\rho(x)).\psi(\sigma)\,.
\eeq
Alternatively we compute using $p\circ\psi=R\circ p$ and \eqref{eqn:inv}
\beq
\psi^3(\sigma)=\psi^2\big(\psi(\sigma)\big)=\rho\big(p(\psi(\sigma))\big).\psi(\sigma)=\rho\big(R(p(\sigma))\big).\psi(\sigma)=\rho\big(R(x)\big).\psi(\sigma)
\eeq
The two equations imply \eqref{eqn:rho}.
\end{proof}

The gauge group $C^\infty(M,S^1)$ acts on solutions of \eqref{eqn:psi} in the following way. Let $\gamma \in C^\infty(M,S^1)$
and $\psi$ be a solution of \eqref{eqn:psi} then the map $\psi_\gamma:\mathcal{F} \pf \mathcal{F}$ defined by
\beq
\psi_\gamma(\sigma)=\gamma(p(\sigma)).\psi(\sigma)
\eeq
is also a solution of \eqref{eqn:inv}. Let $\rho_\gamma \in C^\infty(M,S^1)$ be as in \eqref{eqn:inv} the obstruction for $\psi_\gamma$ to be an
involution.
\begin{Lemma}
The maps $\rho, \rho_\gamma \in C^\infty(M,S^1)$ are related by
\beq\label{eqn:gamma}
\rho_\gamma(x)=\gamma\big(R(x)\big).\rho(x).(-\gamma(x)), \quad x \in M.
\eeq
\end{Lemma}
\begin{proof}[Proof of the Lemma]
For $\sigma \in \mathcal{F}$ and $x=p(\sigma) \in M$ we compute
\bea
\rho_\gamma(x).\sigma &= \psi^2_\gamma(\sigma)\\
&=\psi_\gamma\big(\gamma(x).\psi(\sigma)\big)\\
&=\Big(-\gamma(x)+\gamma\big(p(\psi(\sigma))\big)\Big).\psi^2(\sigma)\\
&=\Big(-\gamma(x)+\gamma\big(R(x)\big)\Big).\psi^2(\sigma)\\
&=\Big(-\gamma(x)+\gamma\big(R(x)\big)+\rho(x)\Big).\sigma
\eea
This implies \eqref{eqn:gamma}.
\end{proof}
If $\mathcal{F}$ is a principal $S^1$-bundle over $M$, then the tensor product of $\mathcal{F}$ with itself is given by
\beq
\mathcal{F}_2= \mathcal{F} \otimes \mathcal{F}=\big(\mathcal{F} \times _{M} \mathcal{F}\big) /{\bar{\Delta}}\,.
\eeq
Here $\mathcal{F} \times_M \mathcal{F}$ is the fiber product of $\mathcal{F}$ with itself over $M$. This is a principal torus bundle over $M$.
Dividing out the antidiagonal $\bar{\Delta}=\big\{(g,-g): g \in S^1\big\}\subset T^2$ we obtain a principal $S^1$-bundle
over $M$ again. A smooth map $\psi:\mathcal{F} \pf \mathcal{F}$ satisfying \eqref{eqn:psi} induces a map
$\psi_2:\mathcal{F}_2 \pf \mathcal{F}_2$ defined by
\beq
\psi_2[(\sigma_1,\sigma_2)]=[\psi(\sigma_1),\psi(\sigma_2)],
\quad (\sigma_1,\sigma_2) \in
\mathcal{F} \times_M \mathcal{F}.
\eeq
Note that $\psi_2$ satisfies \eqref{eqn:psi} for $\mathcal{F}_2$ again. Let $\rho_2 \in C^\infty(M,S^1)$ be the obstruction of $\psi_2$
to being an involution. Note that
\beq
\rho_2(x)=2\rho(x), \quad x \in M\,.
\eeq

\begin{Lemma}
The map $\Psi=(\psi_2\big)_\rho=\rho.\psi_2$ is an involution on $\mathcal{F}_2$.
\end{Lemma}

\begin{proof}[Proof of the Lemma]
Using \eqref{eqn:rho} and \eqref{eqn:gamma} we compute for $x \in M$
\beq
\rho_{2,\rho}(x)=\rho\big(R(x)\big)+\rho_2(x)-\rho(x)=-\rho(x)+2\rho(x)-\rho(x)=0.
\eeq
This proves the Lemma.
\end{proof}

To finish the proof of Proposition \ref{prop:involution_extends} we note that the complex line bundle $E$ is by definition
$E=F^{\otimes 2}=\big(\mathcal{F}_2 \times \mathbb{C}\big)/S^1$. We  define a involution $R^E:E \pf E$ by the formula
\beq
R^E\big[(\sigma,z)\big]=\big[(\Psi(\sigma),-z)\big]\,.
\eeq
Then the property $p\circ R^E=R\circ p$ follows immediately from the fact that $\Psi$ is a solution of \eqref{eqn:psi}.
This finishes the proof of Proposition \ref{prop:involution_extends}.
\end{proof}

\begin{Ex}
The Clifford torus $\mathbb{T}^n\in\CP^n$ is the fixed point set of an anti-symplectic involution given by $[z_0:\ldots :z_n]\mapsto [\frac{z_0}{|z_0|^2}:\ldots:\frac{z_n}{|z_n|^2}]$. Thus, Theorem \ref{thm:Bohr_Sommerfeld_involution} applies and $\mathbb{T}^n$ is a Bohr-Sommerfeld Lagrangian, even though the simpler homological condition of Corollary \ref{cor:Bohr_Sommerfeld_existence_injective} does not apply. 
\end{Ex}

\subsection{Definition of Floer Homology}
\label{sec:def_of_FH_for_neg_bdls_relative case}

We are considering an integral, closed, symplectically aspherical symplectic manifold $(M,\om)$.
Furthermore, we assume that $L\subset M$ is a closed Lagrangian submanifold which is symplectically aspherical:
\beq\label{eqn:aspherical_Lagrangian}
\Mas|_{\pi_2(M,L)}=0\quad\text{and}\quad\om|_{\pi_2(M,L)}=0\,.
\eeq
Let $p:E^N\pf M$ be a complex line bundle and $\alpha$ a connection 1-form as in Section \ref{sec:the_setting},
that is $c_1(E)=-N[\om]$. We assume that $(E^N,\alpha)$ is a Bohr-Sommerfeld pair for $(M,\om,L)$. By definition the power of $(E^N,\alpha)$ is $N$.
We fix an identification of a fiber $E^N_x\cong\C$ for some $x\in L$.
Since the holonomy of $\alpha|_L$ takes only values in $\{\pm1\}$ parallel transport along any loop in $L$ starting at $x$ will map
$\R\subset\C\cong E^N_x$ into itself. Thus, parallel transport along paths in $L$ defines a $\R$-vector bundle $L^N$ over $L$.
We obtain a non-compact Lagrangian submanifold $\LH\subset(E^N,\Omega)$ satisfying
\beq
\Mas|_{\pi_2(E^N,\LH)}=0\quad\text{and}\quad\Omega|_{\pi_2(E^N,\LH)}=0\,.
\eeq
If the holonomy is trivial then the bundle $E^N|_L$ is canonically trivialized, i.e.~$E^N|_L=L\times\C$ and $\LH=L\times\R$.

To define Lagrangian Floer homology $\HF_*^N(H;L)$ for a Hamiltonian function $\HH$ as defined in Section \ref{sec:the_setting}
we establish a relative version of Corollary \ref{cor:convexity_at_infty}. Let $u:\R\times[0,1]\pf E^N$ be a solution
of Floer's equation with Lagrangian boundary conditions. We recall that the Hamiltonian $\HH$ is of the form $Nf(r)H(t,x)$. In Proposition
\ref{prop:convexity} we derived for the function $F(s,t):=f\circ u(s,t):\R\times[0,1]\pf\R$ the equality
\beq
\Delta F=||\partial_su||^2\,.
\eeq
In order to apply the maximum principle at a boundary point of $\R\times[0,1]$ we employ a reflection argument. For
convenience we only treat the boundary component $\{0\}\times\R$. We extend $F$ to a map $F:[-1,1]\times\R\pf\R$ by
setting $F(s,-t)=F(s,t)$. In order to prove $F\in W^{2,2}_{\mathrm{loc}}([-1,1]\times\R)$ it
suffices to show $\frac{\partial F}{\partial t}(s,0)=0$ for all $s\in\R$. This relies on the following facts established earlier in
Section \ref{sec:convexity}
\begin{itemize}
\item $X(x)\in T_x\LH$ for all $x\in\LH$,
\item $\nabla f=X$,
\item $dH(R)=0$.
\end{itemize}
At the point $(s,0)$ we compute
\bea
\frac{\partial F}{\partial t}&= <\nabla f,\partial_tu>\\
    &= <X,\partial_tu> =\om(X,J\partial_tu)\\
    &=\om(X,-\partial_su+JX_H)\\
    &=-\om(X,\partial_su)+\om(X_H,JX)\\
    &=-\om(X,\partial_su)+dH(JX)\\
    &=-\om(X,\partial_su)+dH(R)\\
    &=0
\eea
The last equality follows from the fact that both $\partial_su$ and $X$ are tangent to the Lagrangian submanifold $\LH$.
The above computation implies $F\in W^{2,2}_{\mathrm{loc}}$, thus the maximum principle applies to $F:[-1,1]\times\R\pf\R$. We conclude

\begin{Cor}\label{cor:convexity_at_infty_Lagrangian_case}
If $u:\R\times[0,1]\pf E^N$ is a solution of Floer's equation and $f\circ u$ attains a maximum then $f\circ u$ is constant.

Moreover, if we allow in Floer's equation $s$-dependent families $H(s,t,x)$ then the assertion holds under the additional assumption
\beq
\frac{\partial H}{\partial s}(t,u)\geq0\;.
\eeq
\end{Cor}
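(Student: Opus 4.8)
The plan is to reduce this Lagrangian statement to the interior case already established in Corollary \ref{cor:convexity_at_infty}, by means of the even reflection of $F:=f\circ u$ across the two boundary components of the strip $\R\times[0,1]$. By Proposition \ref{prop:convexity} we have $\Delta F=\|\partial_s u\|^2\geq 0$ on the interior, so $F$ is subharmonic there. The computation displayed just above the statement gives $\partial_t F(s,0)=0$ for all $s$, and the verbatim same computation at the other boundary component — using the three structural facts $X(x)\in T_x\LH$, $\nabla f=X$, and $dH(R)=0$ — yields $\partial_t F(s,1)=0$ as well.

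First I would reflect: define $\widetilde F$ on a neighbourhood of $\R\times[0,1]$ by setting $\widetilde F(s,-t):=F(s,t)$ near $t=0$ and $\widetilde F(s,2-t):=F(s,t)$ near $t=1$. Since the normal derivative of $F$ vanishes along both boundary components, a standard integration-by-parts argument shows $\widetilde F\in W^{2,2}_{\mathrm{loc}}$ and that the inequality $\Delta\widetilde F\geq 0$ continues to hold \emph{weakly} across the reflected boundaries. Hence $\widetilde F$ is a weakly subharmonic $W^{2,2}_{\mathrm{loc}}$ function on an open neighbourhood of $\R\times[0,1]$.

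Now if $f\circ u=F$ attains its maximum at some $(s_0,t_0)\in\R\times[0,1]$, then $(s_0,t_0)$ is an \emph{interior} point of the domain of $\widetilde F$ — irrespective of whether $t_0\in\{0,1\}$ — and $\widetilde F$ attains its maximum there. The strong maximum principle for weakly subharmonic $W^{2,2}_{\mathrm{loc}}$ functions (see \cite[Theorem 3.5]{Gilbarg_Trudinger_Elliptic_partial_differential_equations_of_second_order}) then forces $\widetilde F$, and hence $F$, to be locally constant; connectedness of the strip gives that $f\circ u$ is constant. For the $s$-dependent version one runs the identical argument starting from the modified identity of the Remark after Proposition \ref{prop:convexity}, namely $\Delta F=\|\partial_s u\|^2+f(u)\,\frac{\partial H}{\partial s}(t,u)$; since $f=e^\rho>0$, the hypothesis $\frac{\partial H}{\partial s}\geq 0$ keeps the right-hand side non-negative, so the reflection and the maximum principle apply unchanged.

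The one point that genuinely requires care — and the only place the structural facts about $X$, $f$ and $H$ are needed — is the regularity step: one must check that the vanishing of $\partial_t F$ on the boundary really upgrades the evenly reflected $\widetilde F$ to a $W^{2,2}_{\mathrm{loc}}$ weak subsolution of $\Delta\geq 0$ across the reflected boundary, so that the maximum principle is legitimately applicable at boundary maxima. Everything past that is the standard maximum-principle machinery.
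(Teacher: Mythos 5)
Your argument is exactly the paper's: even reflection of $F=f\circ u$ across the boundary of the strip, the computation using $X(x)\in T_x\LH$, $\nabla f=X$ and $dH(R)=0$ to show the normal derivative $\partial_t F$ vanishes on the boundary, the resulting $W^{2,2}_{\mathrm{loc}}$ regularity of the reflected function, and then the maximum principle applied to the subharmonic identity $\Delta F=\|\partial_su\|^2$ (respectively its $s$-dependent modification). The proposal is correct and follows the same route, merely spelling out both boundary components where the paper treats one ``for convenience.''
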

We note that critical points of $\A_\HH$ project via $p:E^N\pf M$ to critical points of $\A_H$.
The analogue of Proposition \ref{prop:index_formula} in the relative case reads

\begin{Lemma}\label{lemma:basic_lemma_rel_case}$ $
\begin{enumerate}
\item Assuming that $H$ is nondegenerate, the following are equivalent.
\begin{enumerate}
\item $\HH$ is nondegenerate.
\item $\displaystyle\A_\HH(e)\not\in \frac{1}{2N}\Z\quad\forall e\in\P_{L^N}(\HH)$.
\item All Hamiltonian chords of $\HH$ are contained in the zero-section $M$ (and then are necessarily Hamiltonian chords of $H$).
\end{enumerate}$ $
\item
Moreover, if there exists a 1-periodic chord $e$ of $X_\HH$ which is \textit{not} contained in the zero-section $M$ then all chords $r\cdot e$
obtained by fiber-wise multiplication by $r\in\R$ are 1-periodic chords of $X_\HH$. In particular,
\beq
\A_\HH(e)=\A_H\big(p(e)\big)
\eeq
in the degenerate and the nondegenerate case.
\end{enumerate}
\end{Lemma}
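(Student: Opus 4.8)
This is the relative analogue of Lemma~\ref{lemma:basic_lemma}, and the plan is to re-run that proof with the circle fibre replaced by the real line subbundle $\LH$, the only genuinely new input being condition (2) of Definition~\ref{def:Bohr_Sommerfeld}, which is exactly what turns $\tfrac1N$ into $\tfrac{1}{2N}$. I would begin with part (2). In the proof of Lemma~\ref{lemma:basic_lemma}(2) it was extracted from \eqref{eqn:proof_of_basic_lemma} that the vertical component of $X_\HH$ acts on each fibre by the complex-linear map $v\mapsto -NH(t,x(t))\,i\,v$; hence the flow of $X_\HH$ is $\C$-equivariant on $E^N\setminus M$. So if $e$ is a chord with $e(0)\notin M$, then for every $r\in\R$ the path $r\cdot e$ (fibrewise real rescaling) again solves $\dot e=X_\HH(t,e)$, and its endpoints lie in $\LH$ because $\LH$ is by its very construction a \emph{real} line subbundle and hence closed under multiplication by $r\in\R$. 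Thus $\{r\cdot e:r\in\R\}$ is a whole line of Hamiltonian chords of $\HH$, reaching the zero-section chord $p(e)$ at $r=0$. Since $r\mapsto\A_\HH(r\cdot e)$ has vanishing derivative (each $r\cdot e$ is a critical point of $\A_\HH$) it is constant, and evaluating at $r=0$, where $\Omega|_M=\om$ and $\HH|_M=H$, yields $\A_\HH(e)=\A_H(p(e))$, in both the degenerate and the nondegenerate cases.

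Next I would carry out the computation underlying part (1). Let $e$ be a chord of $X_\HH$ with $e(0)\notin M$ over $x=p(e)\in\P_L^1(H)$. By the formula of \eqref{eqn:lifted_periodic_orbit}, whose derivation uses only \eqref{eqn:Ham_vfield_of_HH_vertical}, we have $e(t)=\exp\big(2\pi iN\int_0^tH(\tau,x(\tau))\,d\tau\big)\,P_x^t e(0)$ with $P_x^t$ the $\alpha$-parallel transport along $x$. I would then pick a capping half-disk $\bar{x}:\D^2_+\pf M$ for $x$, let $\gamma\subset L$ be its restriction to the diameter (a path from $x(0)$ to $x(1)$), and factor $P_x^1$ as the $\alpha$-holonomy around the contractible loop ``$x$ then $\gamma$ reversed'' — computed as in \eqref{eqn:computation_angle} in terms of $N\int_{\D^2_+}\bar{x}^*\om$ — followed by $P_\gamma^1$. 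Since $\gamma\subset L$, parallel transport $P_\gamma^1$ preserves the real subbundle $\LH$, which is precisely where $\mathrm{hol}_{\alpha|_L}\in\{0,\tfrac12\}$ enters; hence $P_\gamma^1 e(0)$ is a nonzero real vector of $\LH_{x(1)}$. The two exponential phases coalesce, via $\A_H(x)=-\int_{\D^2_+}\bar{x}^*\om-\int_0^1H(\tau,x(\tau))\,d\tau$, into a single unit-modulus factor governed by $N\A_H(x)$, so $e(1)$ is a fixed complex multiple of that real vector with phase governed by $N\A_H(x)$. Therefore $e$ is a Hamiltonian chord, i.e.\ $e(1)\in\LH_{x(1)}$, if and only if this phase lies in $\{\pm1\}$, equivalently $N\A_H(x)\in\tfrac12\Z$, i.e.\ $\A_H(x)\in\tfrac{1}{2N}\Z$; and conversely the same formula lifts any $x\in\P_L^1(H)$ with $\A_H(x)\in\tfrac{1}{2N}\Z$ to chords of $\HH$ off the zero-section, one for each $e_0\in\LH_{x(0)}\setminus 0$. (The factor $2$ compared with Lemma~\ref{lemma:basic_lemma} is exactly the passage from the trivial stabilizer of a point of a circle fibre to the $\{\pm1\}$-stabilizer of a real line.)

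With these two ingredients the equivalences in (1) would follow by the same chain of implications as in the periodic case. For not-(b) $\Rightarrow$ not-(a): a chord $e$ with $\A_\HH(e)\in\tfrac{1}{2N}\Z$ either sits off $M$, in which case part (2) fills the line $\R\cdot e$ with chords, or sits in $M$ with $\A_H(p(e))\in\tfrac{1}{2N}\Z$, in which case the converse half of the key computation again produces a line of chords off $M$ — either way $\A_\HH$ is degenerate. For not-(c) $\Rightarrow$ not-(b): a chord $e$ off $M$ has $\A_\HH(e)=\A_H(p(e))\in\tfrac{1}{2N}\Z$ by the key computation and part (2). For (b) $\wedge$ (c) $\Rightarrow$ (a): all chords lie in $M$, and at such a chord $x$ Lemma~\ref{lemma:lifted_flow_preserves_horizontal_distribution} (equivalently \eqref{eqn:matrix_without_star}) gives $D\phi_\HH^1(x)=\mathrm{diag}\big(D\phi_H^1(x),e^{2\pi i\beta}\big)$ with $\beta=N\A_H(x)$ in the splittings $T_xE^N=T_xM\oplus E^N_x$ and $T_x\LH=T_xL\oplus\LH_x$; the horizontal blocks are transverse since $H$ is nondegenerate, and the vertical block carries $\LH_x$ to $e^{2\pi i\beta}\LH_x$, transverse to $\LH_{x(1)}$ inside $E^N_{x(1)}$ precisely because $\beta\notin\tfrac12\Z$. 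Hence $\HH$ is nondegenerate.

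The step I expect to be the main obstacle is the bookkeeping in the key computation: relating $\alpha$-parallel transport along the chord $x$, which need not lie in $L$, to transport along an $L$-path homotopic to it rel endpoints, and chasing signs so that the half-disk area term and $\int_0^1 H$ combine into precisely $N\A_H(x)\bmod\tfrac12\Z$, while checking at each step that the real subbundle $\LH$ is preserved. This last point is the sole place condition (2) of Definition~\ref{def:Bohr_Sommerfeld} is used, and the sole reason the denominator is $2N$ rather than $N$; once it is in place everything else is a routine transcription of the proof of Lemma~\ref{lemma:basic_lemma}.
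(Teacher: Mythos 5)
Your proposal is correct and follows essentially the same route as the paper: the only genuinely new ingredient over Lemma \ref{lemma:basic_lemma} is the angle computation in which the chord is closed up by a path $\gamma\subset L$, the Bohr--Sommerfeld condition guarantees that parallel transport along $\gamma$ preserves the real subbundle $\LH$, and hence the total angle $N\A_H(x)$ must lie in $\tfrac12\Z$ rather than $\Z$ — exactly the modification the paper makes, with the remaining equivalences transcribed from the periodic case. Your write-up is more detailed than the paper's (which simply defers to Lemma \ref{lemma:basic_lemma} for everything except the angle), but the content is the same.
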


\begin{proof}
The proof is analogous to the proof of Lemma \ref{lemma:basic_lemma}. The only modification is in the computation of the angle. In the relative
case the Bohr-Sommerfeld condition (see Definition \ref{def:Bohr_Sommerfeld}) is crucial. We denote $x(t):=p(e(t))$
We consider a Hamiltonian chord $e\in\P_{L^N}(\HH)$ and choose a path
$\gamma:[0,1]\pf L$ in $L$ such that $\gamma(0)=x(1)$ and $\gamma(1)=x(0)$. This is possible since, by definition, $[e]=0\in\pi_1(E^N,L^N)$.
We study the parallel transport along the loop $x\#\gamma$ and consider the angle $\angle\big(e(0),P^1_\gamma(e(1))\big)$. We note that
by the Bohr-Sommerfeld condition $P^1_\gamma(e(1))\in L^N_{x(0)}$. In particular, we have $\angle\big(e(0),P^1_\gamma(e(1))\big)\in\tfrac12\Z$.
On the other hand, as in the proof of Lemma \ref{lemma:basic_lemma}, the angle computes to
\beq
\angle\big(e(0),P^1_\gamma(e(1))\big)=N\A_H(x)\in\tfrac12\Z\,.
\eeq
The remaining assertions follow as in the proof of Lemma \ref{lemma:basic_lemma}.
\end{proof}

\begin{Prop}\label{prop:index_formula_Lagrangian_case}
For $x\in\P^1_L(\HH)$
\beq
\Mas^{L^N}(x;\HH)=\Mas^L(x;H)+\lfloor N\A_H(x)\rfloor+\frac12\;.
\eeq
\end{Prop}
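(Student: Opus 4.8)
The plan is to mimic the proof of Proposition \ref{prop:index_formula}, with the product property of the Conley--Zehnder index replaced by the direct-sum property of the Maslov index. I would first pass to the case in which $x$ lies in the zero-section $M\subset E^N$; by Lemma \ref{lemma:basic_lemma_rel_case} this is automatic once $\HH$ is nondegenerate, which is the situation in which the proposition is used, and then $x$ is identified with the chord $p(x)\in\P^1_L(H)$. Over the zero-section one has the canonical splitting $T_xE^N=T_xM\oplus E^N_x$, and correspondingly $T_xL^N=T_xL\oplus\R_x$, where $\R_x\subset E^N_x\cong\C$ is the real line cut out by parallel transport in the definition of $L^N$. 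By Lemma \ref{lemma:lifted_flow_preserves_horizontal_distribution} the linearised flow $D\phi_\HH^t(x)$ is block-diagonal for this splitting, with horizontal block $D\phi_H^t(x)$ and a vertical block $C(t)$ on the fibre $E^N_x$.

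Next I would set up the trivialisation needed to compute the two Maslov indices. Fix a symplectic trivialisation of $\bar{e}^*TE^N$ over the half-disk $\D^2_+$, where $\bar{e}$ lifts the half-disk $\bar{x}$ occurring in the definition of $\A_H$, requiring that it restrict on the diameter $\D^2_+\cap\R$ to a trivialisation of $TL^N$. Since $TE^N|_M=TM\oplus E^N$ as symplectic vector bundles and $L^N$ meets the zero-section with tangent bundle $TL$ plus the fibrewise real line, one may take this trivialisation to be a direct sum of (i) a trivialisation of $\bar{x}^*TM$ over $\D^2_+$ adapted to $TL$ on the diameter --- precisely the trivialisation defining $\Mas^L(\,\cdot\,;H)$ --- and (ii) a trivialisation of $\bar{x}^*E^N$ over $\D^2_+$ adapted to $L^N$ on the diameter. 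The trivialisation in (ii) exists because $\bar{x}^*E^N$ is a complex line bundle over the contractible set $\D^2_+$, hence trivial, while along the diameter the Bohr--Sommerfeld condition exhibits $L^N$ as a real line sub-bundle of it; one straightens $L^N$ to $\R$ along the diameter and extends over $\D^2_+$, which is unobstructed as $\D^2_+$ retracts onto the diameter.

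With respect to this split trivialisation the path $t\mapsto D\phi_\HH^t(x)\,T_xL^N$ computing $\Mas^{L^N}(x;\HH)$ decomposes as the direct sum of $t\mapsto D\phi_H^t(x)\,T_xL$, whose Maslov index is $\Mas^L(x;H)$, and the path $t\mapsto C(t)\,\R_x$ of Lagrangian lines in $\C$. By additivity of the Robbin--Salamon index this gives
\[
\Mas^{L^N}(x;\HH)=\Mas^L(x;H)+\mu,
\]
with $\mu$ the Maslov index of $t\mapsto C(t)\,\R_x$ relative to the reference line.

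The remaining and most delicate point is to show $\mu=\lfloor N\A_H(x)\rfloor+\tfrac12$. As in the angle computation in the proof of Lemma \ref{lemma:basic_lemma_rel_case} --- the relative counterpart of \eqref{eqn:matrix_without_star} --- in the adapted trivialisation $C(t)$ is, up to homotopy rel endpoints, a rotation of $\C$ whose winding relative to $\R_x$ is governed by $N\A_H(x)$: the dynamical contribution $\propto\int_0^1H(t,x(t))\,dt$ and the parallel-transport contribution $\propto\int_{\D^2_+}\bar{x}^*\om$ combine to the action $\A_H(x)$, the contribution of the diameter vanishing because the trivialisation was adapted to $L^N$ there. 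Strong nondegeneracy --- $\A_\HH(x)\notin\tfrac1{2N}\Z$, Lemma \ref{lemma:basic_lemma_rel_case}(1)(b) --- forces $C(1)\,\R_x$ to be transverse to the reference line, so $t=1$ is not a crossing; evaluating the remaining crossings by the Robbin--Salamon crossing form, each interior crossing contributing $+1$ and the endpoint crossing at $t=0$ contributing $+\tfrac12$, yields $\mu=\lfloor N\A_H(x)\rfloor+\tfrac12$. This is the Lagrangian analogue of the identity $\CZ(e^{2\pi i\beta t},\,t\in[0,1])=2\lfloor\beta\rfloor+1$ used in Proposition \ref{prop:index_formula}, and I expect it to be the main obstacle: one must pin down the total winding and the endpoint half-crossing precisely so that the Gauss bracket and the summand $\tfrac12$ come out right. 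Substituting into the displayed identity finishes the proof.
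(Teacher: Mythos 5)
Your strategy is the same as the paper's: the printed proof is a one-line reference to Proposition \ref{prop:index_formula} and Lemma \ref{lemma:basic_lemma_rel_case}, and your splitting $T_xE^N=T_xM\oplus E^N_x$, the block-diagonality from Lemma \ref{lemma:lifted_flow_preserves_horizontal_distribution}, the additivity of the Robbin--Salamon index, and the identification of the fibre rotation angle with $N\A_H(x)$ via the relative angle computation are exactly the ingredients the authors have in mind. The reduction to chords in the zero-section and the construction of the split trivialisation over $\D^2_+$ adapted to $L^N$ along the diameter are fine.

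The gap is in the step you yourself flag as the main obstacle, and your sketch does not close it: you assert that the Robbin--Salamon index of the path of fibre lines is $\lfloor N\A_H(x)\rfloor+\tfrac12$, but the crossing count you describe gives $\lfloor 2N\A_H(x)\rfloor+\tfrac12$. Writing $\beta=N\A_H(x)$, the fibre block rotates the line $\R\subset\C$ through the total angle $2\pi\beta$ over $t\in[0,1]$; the path $t\mapsto e^{2\pi i\beta t}\R$ meets the reference line $\R$ whenever $2\beta t\in\Z$, i.e.\ every \emph{half}-turn --- which is precisely why the degeneracy lattice in Lemma \ref{lemma:basic_lemma_rel_case} is $\tfrac{1}{2N}\Z$ rather than $\tfrac{1}{N}\Z$, a fact you correctly invoke to rule out a crossing at $t=1$. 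Counting these crossings with the crossing form (each interior crossing contributing $\mathrm{sign}(\beta)$, the endpoint at $t=0$ contributing half of that) yields $\lfloor 2\beta\rfloor+\tfrac12$. The two expressions agree when $N\A_H(x)\in(-\tfrac12,0)$, which is why the sanity checks against Proposition \ref{prop:tiny_Hamiltonian_and_Morse} and Remark \ref{rmk:index_formula_Lagrangian_case} do not distinguish them, but they differ by $1$ as soon as $N\A_H(x)$ drops below $-\tfrac12$: a constant chord at a point with $NH(p)\in(\tfrac12,1)$ rotates the fibre line past a half-turn and produces an interior crossing that $\lfloor N\A_H(x)\rfloor+\tfrac12$ does not record. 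So either you must explain why only every second crossing contributes --- and I see no mechanism for that in your set-up --- or the identity your argument actually delivers is $\mu=\lfloor 2N\A_H(x)\rfloor+\tfrac12$, which is not the stated formula. Until this single scalar computation is pinned down explicitly (total winding, list of crossing times, endpoint half-contribution), the proof is not complete at its decisive step.
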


\begin{proof}
This follows as in the proof of Proposition \ref{prop:index_formula} and Lemma \ref{lemma:basic_lemma_rel_case}.
\end{proof}

\begin{Rmk}\label{rmk:index_formula_Lagrangian_case}
If $H:M\pf(0,\infty)$ is a $C^2$-small, positive, and strongly nondegenerate Hamiltonian function then
\beq
-\frac{n+1}{2}\leq\Mas^{L^N}(x;\HH)\leq\frac{n-1}{2}
\eeq
whereas if $H:M\pf(-\infty,0)$ is a $C^2$-small, negative, and strongly nondegenerate Hamiltonian function then
\beq
-\frac{n-1}{2}\leq\Mas^{L^N}(x;\HH)\leq\frac{n+1}{2}\,.
\eeq
\end{Rmk}

We call a Hamiltonian function $H\in C^{\infty}(I\times M)$ \textit{strongly nondegenerate} if $H$ is nondegenerate
and
\beq\label{eqn:basic_lemma_Lagr}
\A_\HH(e)\not\in \frac{1}{2N}\Z\quad\forall e\in\P_{L^N}(\HH)\;.
\eeq
In particular, as in explained in Lemma \ref{lemma:basic_lemma_rel_case}, if $\HH$ is nondegenerate, $\P^1_L(\HH)$ is a finite set and moreover,
the critical points of $\A_\HH$ and $\A_H$ coincide.

\begin{Def}
For a strongly nondegenerate $H$ we set
\beq
\HF^N_*(H;L):=\H_*\big(\CF_\bullet^N(H;\LH),\partial^N\big)\;.
\eeq
\end{Def}

We recall that in the absolute case $\HF^N_*(H)$ has been defined in Section \ref{sec:def_of_FH_for_neg_bdls}.

\section{Applications to Hamiltonian chords}\label{sec:applications}

\noindent In this section we continue to assume that $(M^{2n},\om)$ is a closed, connected, integral symplectic manifold and $L\subset M$ a closed,
connected, symplectically aspherical Lagrangian submanifold. Moreover, we assume that $(E^N,\alpha)$ is a Bohr-Sommerfeld pair for $(M,\om,L)$.

We recall from the introduction.

\begin{Def}\label{def:Ham_chord_and_general_action_functional}
Let $H:\R_+\times M\pf\R$ be a Hamiltonian function.
A pair $(x,\tau)$ where $\tau>0$ and $x\in C^\infty([0,\tau],M)$ solving
\beq\begin{cases}
&\dot{x}(t)=X_H(t,x(t))\\
&x(0),x(\tau)\in L
\end{cases}\eeq
is called a \emph{Hamiltonian chord} of period $\tau$. If the Hamiltonian chord is contractible (relative $L$) its action is defined as
\beq
\A_H(x,\tau):=-\int_{\D^2_+}\bar{x}^*\om-\int_0^\tau H(t,x(t))dt\,,
\eeq
where $\bar{x}$ realizes the homotopy of $x$ to a constant.
If $\tau=1$ we recover the previous definition $\A_H(x,1)=\A_H(x)$. The set of contractible (relative to $L$) Hamiltonian chords is denoted by $\CC(H)$.
\end{Def}

\noindent From now on we will only consider contractible Hamiltonian chords.

\begin{Rmk}
\begin{enumerate}
\item If $(x,\tau)$ is a Hamiltonian chord for $H$ then also for $H+c$ for any constant $c\in\R$.
Furthermore,
\beq\label{eqn:transformation_of_action}
\A_{H+c}(x,\tau)=\A_{H}(x,\tau)-c\tau
\eeq
\item An autonomous Hamiltonian function $H:M\pf\R$ is constant along its chords. Furthermore, for each $a\in\R_+$ there exists a
canonical map $\CC(H)\pf\CC(aH)$ given by $a*(x(t),\tau):=(x(at),\tau/a)$. The action transforms as follows
\beq\label{eqn:transformation_of_action_2}
\A_{H}(x,\tau)=\A_{aH}(a*(x,\tau))\,.
\eeq
\end{enumerate}
\end{Rmk}
From now on we only consider autonomous Hamiltonian functions.
\begin{Def}\label{def:wiggliness}
For a strongly nondegenerate Hamiltonian function $H:M\pf\R$ we define its \textit{wiggliness} to be
the minimal integer $\mathcal{W}(H)>0$ such that $\forall c\in\R$ and $\forall N\geq\mathcal{W}(H)$
the following holds
\beq
|\Mas^{L^N}(\xi;\HH_c)-\Mas^{L^N}(x;\HH_c)|\geq 2\qquad\forall x,\xi\in\P_{L}(H)\text{ with }\A_H(x)\not=\A_H(\xi)
\eeq
where $\HH_c:=\widehat{H+c}$.
\end{Def}

\begin{Lemma}\label{lemma:wiggliness_constant}
The wiggliness $\mathcal{W}(H) $ is finite and satisfies $\mathcal{W}(H)=\mathcal{W}(H+c)$.
\end{Lemma}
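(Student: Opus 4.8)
The statement has two parts: finiteness of $\mathcal{W}(H)$, and the identity $\mathcal{W}(H)=\mathcal{W}(H+c)$. For finiteness, the plan is to unwind the definition using the index formula from Proposition \ref{prop:index_formula_Lagrangian_case}. For two chords $x,\xi\in\P_L(H)$ and a constant $c$, the lift indices satisfy
\beq
\Mas^{L^N}(\xi;\HH_c)-\Mas^{L^N}(x;\HH_c)=\big(\Mas^L(\xi;H+c)-\Mas^L(x;H+c)\big)+\big(\lfloor N\A_{H+c}(\xi)\rfloor-\lfloor N\A_{H+c}(x)\rfloor\big).
\eeq
Since $H$ and $H+c$ have the same Hamiltonian flow, the base Maslov indices $\Mas^L(\cdot;H)$ and $\Mas^L(\cdot;H+c)$ agree (they depend only on the linearized flow along the chord and on the Maslov normalization), so the first bracket is a fixed integer $d(x,\xi)$ independent of $c$ and $N$. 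Using the transformation rule \eqref{eqn:transformation_of_action}, $\A_{H+c}(x,1)=\A_H(x)-c$ for all chords (period $1$), so
\beq
\lfloor N\A_{H+c}(\xi)\rfloor-\lfloor N\A_{H+c}(x)\rfloor=\lfloor N\A_H(\xi)-Nc\rfloor-\lfloor N\A_H(x)-Nc\rfloor.
\eeq
The key estimate is: whenever $a\neq b$ are fixed reals, the quantity $\lfloor Na-Nc\rfloor-\lfloor Nb-Nc\rfloor$ has absolute value at least $2$ once $N>\tfrac{2}{|a-b|}$ — this is \emph{uniform in $c$}, because for any real $t$ one has $N(a-b)-1<\lfloor Nt+N(a-b)\rfloor-\lfloor Nt\rfloor<N(a-b)+1$, and $N|a-b|\ge 2$ forces the integer in between to have absolute value $\ge 2$ (here I also use that $\A_H(x)\ne\A_H(\xi)$ gives $|a-b|>0$; strong nondegeneracy is what makes these finitely many action values distinct and rational-free, but all we need is finiteness of $\P_L(H)$ and pairwise distinctness of the relevant action values). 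Taking $N$ larger than $\tfrac{2}{\min|\A_H(x)-\A_H(\xi)|}$ over the finite set of pairs with distinct actions then makes $|\Mas^{L^N}(\xi;\HH_c)-\Mas^{L^N}(x;\HH_c)|\ge 2$ for \emph{all} $c$ simultaneously, provided $d(x,\xi)$ doesn't cancel; but since the floor-difference term dominates in absolute value (it can be made $\ge |d(x,\xi)|+2$ by enlarging $N$), the total difference has absolute value $\ge 2$. This exhibits a finite $N$ with the required property, hence $\mathcal{W}(H)$ is well-defined and finite.

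For the identity $\mathcal{W}(H)=\mathcal{W}(H+c)$, the observation is that the defining condition for $\mathcal{W}$ is \emph{the same condition} for $H$ and for $H+c$. Indeed $\P_L(H)=\P_L(H+c)$ (same flow), $\A_H(x)\ne\A_H(\xi)\iff\A_{H+c}(x)\ne\A_{H+c}(\xi)$ (the actions differ by the common constant $-c$), and most importantly, for any second constant $c'$ we have $(H+c)_{c'}=\widehat{H+c+c'}=\HH_{c+c'}$, so the family of Hamiltonians $\{(H+c)+c'\}_{c'\in\R}$ appearing in the test for $\mathcal{W}(H+c)$ is literally the family $\{H+c''\}_{c''\in\R}$ appearing in the test for $\mathcal{W}(H)$, just reparametrized by $c''=c+c'$. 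Since the condition is quantified over \emph{all} constants, this reparametrization is a bijection of the quantified set, so the condition ``$\forall c'\ \forall N\ge M:\dots$'' holds for $H+c$ exactly when it holds for $H$, for the same $M$. Hence the minimal such $M$ is the same, i.e. $\mathcal{W}(H)=\mathcal{W}(H+c)$.

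The main obstacle is the finiteness argument, and specifically making the floor-difference estimate genuinely uniform in the constant $c$: one must resist the temptation to argue pointwise in $c$ and then take a supremum (which could be infinite). The resolution is the elementary but crucial fact that $\lfloor t+\delta\rfloor-\lfloor t\rfloor$ is bounded between $\delta-1$ and $\delta+1$ independently of $t$, so a lower bound on $N|\A_H(x)-\A_H(\xi)|$ translates into a lower bound on $|\Mas^{L^N}(\xi;\HH_c)-\Mas^{L^N}(x;\HH_c)|$ that does not see $c$ at all. Once this is in hand, both parts are essentially bookkeeping with the transformation rule \eqref{eqn:transformation_of_action} and the index formula.
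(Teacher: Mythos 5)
Your proposal is correct and takes essentially the same route as the paper's proof: both decompose the lifted Maslov difference via Proposition \ref{prop:index_formula_Lagrangian_case} into the ($c$-independent) base Maslov gap plus a floor-difference term, and both use the uniform-in-$c$ estimate $\delta-1<\lfloor t+\delta\rfloor-\lfloor t\rfloor<\delta+1$ together with finiteness of $\P_L(H)$ to choose $N$ so that $N$ times the minimal nonzero action gap beats the maximal base Maslov gap. The paper dismisses the identity $\mathcal{W}(H)=\mathcal{W}(H+c)$ as obvious from the definition, which is precisely your reparametrization observation spelled out.
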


\begin{proof}
We first recall the following inequalities:
\bea
\lfloor a-b \rfloor + \lfloor b \rfloor = \lfloor a -b + \lfloor b \rfloor \rfloor &\geq \lfloor a-1 \rfloor= \lfloor a \rfloor -1\\
\lfloor a-b \rfloor + \lfloor b \rfloor &\leq a-b+b=a
\eea
from which we obtain the following string of inequalities:
\beq
\lfloor a \rfloor-\lfloor b\rfloor-1\leq\lfloor a - b\rfloor\leq\lfloor a\rfloor-\lfloor b\rfloor 
\eeq
Since the set $\P_L(H)$ is finite the following quantities are well-defined
\bea
\mu&:=\max\Big\{ |\Mas^L(x,H)-\Mas^L(\xi,H)|\;\Big|\; x,\xi\in\P_L(H)\Big\}\\[1ex]
\alpha&:=\min \Big\{ |\A_H(x)-\A_H(\xi)|\;\Big|\; x,\xi\in\P_L(H)\text{ with }\A_H(x)\neq\A_H(\xi)\Big\}\;.
\eea
Obviously $\alpha>0$, therefore there exists $N_0\in\N$ with
\beq
\alpha N_0\geq\mu+5\;.
\eeq
We estimate for $N\geq N_0$ and $x,\xi\in\P_L(H)\text{ with }\A_H(x)\neq\A_H(\xi)$ using Proposition \ref{prop:index_formula_Lagrangian_case}
\bea
\Big|\Mas^{L^N}(\xi;\HH_c)-\Mas^{L^N}(x;\HH_c) \Big|=&\; \Big|\Mas^L(\xi;H+c)+\lfloor N\A_{H+c}(\xi)\rfloor+\tfrac12\\ 
&\qquad-\big(\Mas^L(x;H+c)+\lfloor N\A_{H+c}(x)\rfloor+\tfrac12\big) \Big|\\[1ex]
\geq&\;-\Big|\Mas^L(\xi;H)-\Mas^L(x;H) \Big|\\
&\qquad+ \Big|\lfloor N\A_H(\xi)-Nc\rfloor-\lfloor N\A_H(x)-Nc\rfloor \Big|\\[1ex]
\geq&\;-\mu+ \Big|\lfloor N\A_H(\xi)-N\A_H(x)\rfloor \Big|-1\\[1ex]
\geq&\;-\mu+\lfloor N\alpha\rfloor-2\\[1ex]
\geq&\;-\mu+\lfloor N_0\alpha\rfloor-2\\[1ex]
\geq&\;-\mu+N_0\alpha-3\\[1ex]
\geq&\;2
\eea
Thus, the wiggliness $\mathcal{W}(H)\leq N_0$, thus finite. The second assertion is obvious from the definition.
\end{proof}

\begin{Def}\label{def:huge_Hamiltonian}
A nondegenerate positive Hamiltonian function $H:M\pf(0,\infty)$ is called \textit{huge} if it satisfies
\beq
\Mas^L(x;H)+\lfloor N\A_H(x)\rfloor<-\frac{n+1}{2},\quad\forall x\in\P_L^1(H),\;\forall N\geq \mathcal{W}(H),
\eeq
where $\dim L=n$.
\end{Def}

\begin{Rmk}\label{rmk:huge_after_adding_constant}
We note that every nondegenerate Hamiltonian function becomes huge after adding a sufficiently positive constant. Indeed, observe that the Maslov index and the wiggliness do not change under $H\mapsto H+c$ for $c>0$ whereas $\A_{H+c}=\A_H-c$. Moreover a huge function remains huge under adding positive constants.
\end{Rmk}

\begin{Prop}\label{prop:huge_computes_homology}
Let $H$ be huge and choose $N\geq\mathcal{W}(H)$. If $H$ is strongly nondegenerate for $E^N$ then the following holds
\beq
\dim\HF^N(H;L)=\#\P_L^1(H)\,,
\eeq
furthermore,
\beq\label{eqn:trivial_homology}
\HF^N_k(H;L)=0\quad\forall\,k\geq-\frac{n+1}{2}\,.
\eeq
\end{Prop}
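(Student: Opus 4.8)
The plan is to combine the index formula of Proposition~\ref{prop:index_formula_Lagrangian_case} with the hugeness hypothesis to show that \emph{all} Hamiltonian chords of $\HH$ sit in a range of Maslov indices that is both too negative to carry any differential pattern and, in particular, below $-\frac{n+1}{2}$. First I would record that since $N\geq\mathcal{W}(H)$ and $H$ is strongly nondegenerate for $E^N$, Lemma~\ref{lemma:basic_lemma_rel_case} guarantees that $\P^1_L(\HH)$ is finite and that its elements are exactly the lifts (to the zero-section $M\subset E^N$) of the chords in $\P^1_L(H)$; so $\CF^N_*(H;L)$ has $\#\P^1_L(H)$ generators and it suffices to prove that $\partial^N=0$, which will simultaneously give $\dim\HF^N(H;L)=\#\P^1_L(H)$ and the vanishing statement once we know the indices.

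Next I would invoke Proposition~\ref{prop:index_formula_Lagrangian_case}: for every $x\in\P^1_L(\HH)$,
\beq
\Mas^{L^N}(x;\HH)=\Mas^L(x;H)+\lfloor N\A_H(x)\rfloor+\tfrac12\,.
\eeq
By the hugeness hypothesis (Definition~\ref{def:huge_Hamiltonian}), applied with this $N\geq\mathcal{W}(H)$, the sum $\Mas^L(x;H)+\lfloor N\A_H(x)\rfloor<-\frac{n+1}{2}$, hence $\Mas^{L^N}(x;\HH)<-\frac{n+1}{2}+\tfrac12=-\frac{n}{2}$. That already proves \eqref{eqn:trivial_homology}: every generator has index strictly below $-\frac{n+1}{2}$ (indeed below $-\frac n2$), so $\CF^N_k(H;L)=0$ for all $k\geq-\frac{n+1}{2}$, and the same holds for homology.

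To get $\partial^N=0$ I would argue exactly as in Proposition~\ref{prop:negative_bundles_detect_orbits}: the Floer differential has degree $-1$ and strictly decreases the action functional $\A_\HH$ along gradient flow lines, so a nonzero matrix coefficient between generators $x,\xi$ forces both $\Mas^{L^N}(\xi;\HH)-\Mas^{L^N}(x;\HH)=1$ and $\A_\HH(x)>\A_\HH(\xi)$, hence (by Lemma~\ref{lemma:basic_lemma_rel_case}(2), since these chords lie in the zero-section) $\A_H(p(x))>\A_H(p(\xi))$, in particular $\A_H(p(x))\neq\A_H(p(\xi))$. But by the definition of wiggliness (Definition~\ref{def:wiggliness}) and $N\geq\mathcal{W}(H)$, any two chords with distinct base actions have Maslov indices differing by at least $2$, contradicting the difference $1$. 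Hence there are no index-$1$ connecting trajectories, $\partial^N=0$, and $\dim\HF^N(H;L)=\dim\CF^N_*(H;L)=\#\P^1_L(H)$.

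The only genuinely delicate point is the bookkeeping that ensures the differential lives entirely among zero-section chords and that the convexity-at-infinity result (Corollary~\ref{cor:convexity_at_infty_Lagrangian_case}) confines all Floer strips to a compact subset of $E^N$ so that the complex $(\CF^N_*(H;L),\partial^N)$ is defined in the first place — but this is already guaranteed by the setup of Section~\ref{sec:def_of_FH_for_neg_bdls_relative case} together with strong nondegeneracy. So the main obstacle is not analytic; it is simply making sure the inequalities from hugeness and wiggliness are applied with the \emph{same} choice of $N$, which the hypotheses $N\geq\mathcal{W}(H)$ and "$H$ huge" (which quantifies over all $N\geq\mathcal{W}(H)$) are designed to supply.
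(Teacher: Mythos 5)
Your proof is correct and follows essentially the same route as the paper's: hugeness plus the index formula of Proposition~\ref{prop:index_formula_Lagrangian_case} pushes every generator below the stated index range, and the differential vanishes because chords with distinct actions have Maslov difference at least $2$ by wiggliness, while equal-action chords cannot be joined by a nonconstant Floer strip. The only quibble is your parenthetical ``(indeed below $-\frac{n}{2}$)'': since $-\frac{n}{2}>-\frac{n+1}{2}$ this is the \emph{weaker} bound, and one needs the discreteness of $\Mas^{L^N}$ (it takes values in $\frac{n+1}{2}+\Z$) to upgrade $<-\frac{n}{2}$ to $\leq-\frac{n+1}{2}$ --- the same half-integer slack occurs in the paper's own proof.
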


\begin{proof}
The latter two requirements in Definition \ref{def:huge_Hamiltonian} together with the index formula from Proposition
\ref{prop:index_formula_Lagrangian_case} imply
\beq
\Mas^{L^N}(x;\HH)<-\frac{n+1}{2}
\eeq
thus \eqref{eqn:trivial_homology} follows. The first statement of the proposition follows from the fact that the
differential in Floer's complex $\CF^N(H;L)$ vanishes. Indeed, two Hamiltonian chords of $\HH$ either have the same action or Maslov index
difference different from 1.
\end{proof}

\begin{Prop}\label{prop:tiny_Hamiltonian_and_Morse}
Let $H:M\pf\R$ be a Hamiltonian function such that $H|_L:L\pf\R$ is a Morse function. Then for all $N\in\N$ there exists $\epsilon_0=\epsilon_0(N)>0$ such that for all $0<\epsilon<\epsilon_0$ there is a 1-1-correspondence between $\P_L(\epsilon H)\stackrel{1-1}{=}\Crit(H)$. Moreover, all
Hamiltonian chords $x\in\P_L(\epsilon H)$ are nondegenerate and
\beq\label{eqn:Morse_equal_Maslov_for_small}
\Mas(x)=\Morse(\hat{x})-\tfrac12\dim L
\eeq
where the critical point $\hat{x}$ of $H|_L$ corresponds to the Hamiltonian chord $x$.
If $H$ takes only positive values then $-\frac{1}{N}<\A_{\epsilon H}(x)<0$.
\end{Prop}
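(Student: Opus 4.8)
The plan is to pass to a Weinstein neighbourhood of $L$, recognise the Hamiltonian chords of $\epsilon H$ as the critical points of a generating function that is $C^2$-close to $\epsilon\,(H|_L)$, and then estimate the action by hand. Fix an identification of a neighbourhood of $L$ in $M$ with a neighbourhood $U$ of the zero section in $T^*L$, carrying $L$ to the zero section and $\om$ to the canonical form. Since $\varphi^t_{\epsilon H}=\varphi^{\epsilon t}_H$ and $L$ is compact, $\varphi^1_{\epsilon H}\to\mathrm{id}$ in $C^\infty$ as $\epsilon\to0$; hence for small $\epsilon$ the Lagrangian $\varphi^1_{\epsilon H}(L)$ lies in $U$, is $C^1$-close to the zero section and is Hamiltonianly isotopic to it, so it is the graph of an exact $1$-form $dS_\epsilon$ with $S_\epsilon\in C^\infty(L)$. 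A first-order Taylor expansion of $\varphi^\epsilon_H$ along the zero section (using $\om(X_H,\cdot)=dH$) gives $dS_\epsilon=\epsilon\,d(H|_L)+O(\epsilon^2)$, i.e. $\tfrac1\epsilon S_\epsilon\to H|_L$ in $C^\infty(L)$. Since $H|_L$ is Morse, Morse stability shows $S_\epsilon$ is Morse for small $\epsilon$, with $\Crit(S_\epsilon)$ in bijection with $\Crit(H|_L)$ through an $O(\epsilon)$-small correspondence and with equal Morse indices (the sign $+\epsilon\,d(H|_L)$, rather than $-\epsilon\,d(H|_L)$, is precisely what makes the indices agree and not get complemented). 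Now $L\cap\varphi^1_{\epsilon H}(L)=\Crit(S_\epsilon)$, and for small $\epsilon$ every period-$1$ chord of $\epsilon H$ has image of diameter $O(\epsilon)$, hence is contractible relative $L$; so $x\mapsto x(1)$ identifies $\P_L(\epsilon H)$ with $\Crit(S_\epsilon)$, hence with $\Crit(H|_L)$, and transversality $L\pitchfork\varphi^1_{\epsilon H}(L)$ (equivalently nondegeneracy of $S_\epsilon$) makes all these chords nondegenerate.

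For the index formula I would then simply observe that $\epsilon H$ is $C^2$-small for $\epsilon$ small and $(\epsilon H)|_L=\epsilon(H|_L)$ is Morse with the same critical points and Morse indices as $H|_L$; this is exactly the regime in which the Maslov index $\Mas$ was normalised in Section~\ref{sec:Floer_hom_relative_case}, so $\Mas(x)=\Morse(\hat x)-\tfrac12\dim L$ for the critical point $\hat x$ of $H|_L$ corresponding to $x$, which is \eqref{eqn:Morse_equal_Maslov_for_small}.

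It remains to estimate the action under the hypothesis $H>0$; set $h_0:=\min_M H>0$ and $h_1:=\max_M H$. Let $x\in\P_L(\epsilon H)$ with associated critical point $\hat x$ of $H|_L$. By the first paragraph $x(1)\in\Crit(S_\epsilon)$ is $O(\epsilon)$-close to $\hat x$, and since $\dot x=\epsilon X_H(x)$ the whole chord stays in a ball $B$ of radius $O(\epsilon)$ about $\hat x$; thus $\int_0^1 H(x(t))\,dt=H(\hat x)+O(\epsilon)$, so $\int_0^1\epsilon H(x(t))\,dt=\epsilon H(\hat x)+O(\epsilon^2)$, a quantity in $[\,\epsilon h_0-O(\epsilon^2),\ \epsilon h_1+O(\epsilon^2)\,]$. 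For the topological term, choose the capping $\bar x:\D^2_+\pf M$ with image in $B$ and a primitive $\theta$ of $\om$ on $B$ with $\theta_{\hat x}=0$, so $|\theta|=O(\epsilon)$ on $B$; the boundary of $\bar x$ is the chord $x$ followed by a path in $L$ from $x(1)$ to $x(0)$ which can be chosen of length $O(\epsilon)$, so Stokes gives $\big|\int_{\D^2_+}\bar x^*\om\big|=\big|\int_{\partial\D^2_+}\bar x^*\theta\big|=O(\epsilon^2)$, and by asphericity this is the value for any capping. Therefore
\beq
\A_{\epsilon H}(x)=-\epsilon H(\hat x)+O(\epsilon^2),
\eeq
which for $\epsilon$ small is $\le-\epsilon h_0+O(\epsilon^2)<0$; and since $|\A_{\epsilon H}(x)|\le\epsilon h_1+O(\epsilon^2)\to0$, shrinking $\epsilon_0=\epsilon_0(N)$ further so that $\epsilon h_1+O(\epsilon^2)<\tfrac1N$ yields $-\tfrac1N<\A_{\epsilon H}(x)<0$ for all $0<\epsilon<\epsilon_0(N)$, as claimed.

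The one point that deserves care is the $C^2$-estimate $\tfrac1\epsilon S_\epsilon\to H|_L$ with the correct sign: this is what transfers the Morse property, the bijection with $\Crit(H|_L)$ and the Morse indices, and it is a classical generating-function (Poincaré-type) argument. Everything else — the localisation of the chords in an $O(\epsilon)$-ball, the $O(\epsilon^2)$ area bound, and the choice of $\epsilon_0$ as a function of $N$ — is elementary bookkeeping in $\epsilon$ and $N$.
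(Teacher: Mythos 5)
Your proposal is correct, and the high-level strategy (localize near the critical points of $H|_L$, use $C^2$-smallness for the Maslov index, estimate the action directly) matches the paper's; but the mechanism for the core existence/uniqueness step is genuinely different. The paper does not pass through the generating function of $\varphi^1_{\epsilon H}(L)$: near each critical point $p$ of $H|_L$ it builds an adapted chart in which $L$ is the quadratic graph $\{y_i=\tfrac{a_i}{2}x_i^2\}$ and $X_H$ is the constant field $\sum b_i\partial_{x_i}$, and then writes the unique period-$\epsilon$ chord explicitly, $x_i^\epsilon(t)=-\tfrac{\epsilon b_i}{2}+b_it$, $y_i^\epsilon=\tfrac{a_i\epsilon^2b_i^2}{8}$. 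That explicitness buys sharper asymptotics: since $H$ is affine in the $y$-coordinates there and $|y^\epsilon|=O(\epsilon^2)$, the paper gets $H(\xi^\epsilon)=c+O(\epsilon^2)$ and area $O(\epsilon^3)$, hence $\A_{\epsilon H}(\xi^\epsilon)=-\epsilon c+O(\epsilon^3)$, versus your $-\epsilon H(\hat x)+O(\epsilon^2)$ — both amply suffice for $-\tfrac1N<\A_{\epsilon H}(x)<0$ once $\epsilon_0(N)$ is shrunk, and your uniform constants come from compactness just as needed. Your generating-function route is more global and packages the bijection with $\Crit(H|_L)$ and nondegeneracy into Morse stability of $\tfrac1\epsilon S_\epsilon\pf H|_L$; the sign ambiguity you flag is real but harmless, because (as in your second paragraph, and as in the paper's own argument via $H=H\circ\pi+h$ in a Weinstein neighborhood) the index formula \eqref{eqn:Morse_equal_Maslov_for_small} is ultimately read off from the normalization of $\Mas$ for $C^2$-small Hamiltonians with Morse restriction to $L$, not from the Morse indices of $S_\epsilon$ itself.
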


\begin{proof}
Let $p$ be a critical point of $H|_L$. In case that $dH(p)=0$ we are done, otherwise there exists a coordinate chart $\chi:V\pf U\subset\R^{2n}$
with the following properties, where the coordinates on $\R^{2n}$ are denoted by
  $(x_1,\ldots,x_n,y_1,\ldots,y_n)$.
\begin{itemize}
  \item $\chi(p)=0$ and $\exists a_i\neq0$ such that
    \beq
    \chi(L\cap V)=\big\{(x_1,\ldots,x_n,\frac{a_1}{2}x_1^2,\ldots,\frac{a_n}{2}x_n^2)\mid (x_1,\ldots,x_n)\in\R^n\cap U\big\}
    \eeq
  \item $\chi_*(X_H)=\sum_{i=1}^nb_i\frac{\partial}{\partial x_i}$ where the $b_i$ are constants.
\end{itemize}
The unique (and nondegenerate) chord
$(x^\epsilon,y^\epsilon):=(x_1^\epsilon(t),\ldots,x_n^\epsilon(t),y_1^\epsilon(t),\ldots,y_n^\epsilon(t))$ of period $\epsilon$ is given by
\beq
x_i^\epsilon(t)=-\frac{\epsilon b_i}{2}+b_it,\quad y_i^\epsilon(t)=\frac{a_i\epsilon^2b_i^2}{8},\quad 0\leq t\leq\epsilon\,.
\eeq

We first prove equation \eqref{eqn:Morse_equal_Maslov_for_small}. In a Weinstein neighborhood of $L$ we write $H=H\circ\pi+h$ where $\pi:T^*L\pf L$ is the projection. The equality of the Maslov index and the Morse index can be seen by choosing a homotopy from $H=H\circ\pi+h$ to $H\circ\pi$ and noting that the Hamiltonian chords of $\epsilon H\circ\pi$ are exactly the critical points for the Morse function $H|_L$.

Now assume in addition that $H$ takes only positive values. We set $c:=H(p)>0$ and denote
$\xi^\epsilon(t):=\chi^{-1}(x^\epsilon,y^\epsilon)$.
Then the above formulas imply
\beq
H(\xi^\epsilon)=c+O(\epsilon^2)
\eeq
and since $0\leq t\leq\epsilon$
\beq
\int\om(\bar{\xi^\epsilon})=O(|x^\epsilon|\cdot|y^\epsilon|)=O(\epsilon^3)
\eeq
thus
\beq\label{eqn:prop_tiny_Hamiltonian_and_Morse}
\A_{\epsilon H}(\xi^\epsilon)=-\int\om(\bar{\xi^\epsilon})-\epsilon H(\xi^\epsilon)=-\epsilon c+O(\epsilon^3)\,.
\eeq
Hence, for sufficiently small $\epsilon>0$, the action will satisfy the claimed inequality.
\end{proof}

\begin{Def}\label{def:magnitude}
Let $H:M\pf(0,\infty)$ be positive, strongly nondegenerate, and such that $H|_L:L\pf\R$ is a Morse function.
For $N\geq\mathcal{W}(H)$ we define the \textit{magnitude} of $H$ to be
\beq
\mathfrak{m}(H,N):=\inf\left\{\;r>0\;\left|\;
            \begin{aligned}&\tfrac1r H \text{ is strongly nondegenerate and }\\
            &-\frac{n+1}{2}\leq\Mas^{L^{N}}(x;\tfrac1r \widehat{H})\leq\frac{n-1}{2},\;\forall x\in\P_{L}(\tfrac1r H)
            \end{aligned}\right.\right\}
\eeq
\end{Def}

\begin{Prop}
Let $H$ be as in the definition above. Then $\mathfrak{m}(H,N)$ is finite.
\end{Prop}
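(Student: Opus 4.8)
The plan is to show that the set defining $\mathfrak{m}(H,N)$ is nonempty; since it is bounded below by $0$, its infimum is then a finite real number. Concretely, I will show that $r=1/\epsilon$ lies in this set for every sufficiently small $\epsilon>0$.

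First I would apply Proposition \ref{prop:tiny_Hamiltonian_and_Morse}, but with the integer $2N$ in place of $N$. This produces $\epsilon_0=\epsilon_0(2N)>0$ such that for every $0<\epsilon<\epsilon_0$ the set $\P_L(\epsilon H)$ is finite and in bijection with $\Crit(H|_L)$, every $x\in\P_L(\epsilon H)$ is nondegenerate with $\Mas^L(x;\epsilon H)=\Morse(\hat x)-\tfrac12\dim L$, and --- using $H>0$ --- one has $-\tfrac{1}{2N}<\A_{\epsilon H}(x)<0$. In particular $\epsilon H$ is a nondegenerate Hamiltonian function. Next I would check that $\epsilon H$ is strongly nondegenerate for $E^N$, i.e.\ that \eqref{eqn:basic_lemma_Lagr} holds. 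By Lemma \ref{lemma:basic_lemma_rel_case}(2), every chord $e\in\P_{L^N}(\widehat{\epsilon H})$ --- whether or not it lies in the zero-section --- projects to a chord $p(e)\in\P_L(\epsilon H)$ and satisfies $\A_{\widehat{\epsilon H}}(e)=\A_{\epsilon H}(p(e))\in(-\tfrac{1}{2N},0)$; in particular $\A_{\widehat{\epsilon H}}(e)\notin\tfrac{1}{2N}\Z$, which is exactly \eqref{eqn:basic_lemma_Lagr}. Hence $\epsilon H$ is strongly nondegenerate, and by Lemma \ref{lemma:basic_lemma_rel_case}(1) all chords of $\widehat{\epsilon H}$ lie in the zero-section, so $\P_{L^N}(\widehat{\epsilon H})$ is identified with $\P_L(\epsilon H)$.

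Finally I would substitute into the index formula. For $x\in\P_L(\epsilon H)$ we have $N\A_{\epsilon H}(x)\in(-\tfrac12,0)\subset[-1,0)$, hence $\lfloor N\A_{\epsilon H}(x)\rfloor=-1$. Combining Proposition \ref{prop:index_formula_Lagrangian_case} with the Morse--Maslov identity above (and writing $n=\dim L$) gives
\[
\Mas^{L^N}(x;\widehat{\epsilon H})=\Mas^L(x;\epsilon H)+\lfloor N\A_{\epsilon H}(x)\rfloor+\tfrac12=\Morse(\hat x)-\tfrac n2-1+\tfrac12=\Morse(\hat x)-\tfrac{n+1}{2}.
\]
Since $0\le\Morse(\hat x)\le n$, this lies in $\big[-\tfrac{n+1}{2},\tfrac{n-1}{2}\big]$, which is precisely the bound required in Definition \ref{def:magnitude}. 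Thus $1/\epsilon$ belongs to the set defining $\mathfrak{m}(H,N)$ for every $0<\epsilon<\epsilon_0(2N)$, whence $\mathfrak{m}(H,N)\le 1/\epsilon_0(2N)<\infty$.

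The only genuinely delicate point is the verification of strong nondegeneracy --- equivalently, ruling out chords of $\widehat{\epsilon H}$ off the zero-section, whose presence is governed by the rescaled action hitting $\tfrac{1}{2N}\Z$. This is exactly why Proposition \ref{prop:tiny_Hamiltonian_and_Morse} is applied with $2N$ rather than $N$: it confines the actions of the rescaled chords to the open interval $(-\tfrac{1}{2N},0)$, which is disjoint from $\tfrac{1}{2N}\Z$. Everything else is a direct substitution into the formulas already established.
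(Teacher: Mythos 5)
Your proposal is correct and follows the same route as the paper, whose proof simply cites Proposition \ref{prop:tiny_Hamiltonian_and_Morse}, Proposition \ref{prop:index_formula_Lagrangian_case}, and Remark \ref{rmk:index_formula_Lagrangian_case}; you have merely written out the details (in particular the verification of strong nondegeneracy via Lemma \ref{lemma:basic_lemma_rel_case} and the computation $\lfloor N\A_{\epsilon H}(x)\rfloor=-1$) that the paper leaves implicit. The device of invoking Proposition \ref{prop:tiny_Hamiltonian_and_Morse} with $2N$ to confine the action spectrum to $(-\tfrac{1}{2N},0)$ is a clean way to handle the one genuinely non-immediate point.
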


\begin{proof}
This follows immediately from Propositions \ref{prop:tiny_Hamiltonian_and_Morse} and \ref{prop:index_formula_Lagrangian_case} and
Remark \ref{rmk:index_formula_Lagrangian_case}.
\end{proof}

\begin{Rmk}
If the Hamiltonian function $H$ is huge, then $\mathfrak{m}(H,N)>1$ for all $N\geq\mathcal{W}(H)$.
\end{Rmk}

\begin{Def}
Let $H$ be an autonomous Hamiltonian function and $N\in\N$. A solution $(x,\tau)$ with $\tau>0$ of
\beq
\begin{cases}
&\dot{x}=X_H(x),\;x(0),\,x(\tau)\in L,\\
&\A_H(x,\tau)\in\frac{1}{2N}\Z\,,
\end{cases}
\eeq
is called a \textit{N-quantized Hamiltonian chord}. We denote
the set of $N$-quantized chords with period less or equal than $\tau_0$ by $\P_L^\mathfrak{q}(H;\tau_0,N)$.
\end{Def}

The interest in quantized Hamiltonian chords comes from the relation to Reeb chords which we explain next. We recall that $(E^N,\alpha)$
is a Bohr-Sommerfeld pair for $(M,\om,L)$.

\begin{Lemma}\label{lemma:lifted_Lagrangian_intersects_to_Legendrian}
We denote by $(\widetilde{\Sigma}^N,\widetilde{\xi})$ the contact manifold obtained from the $S^1$-bundle of $E^N$ together with its horizontal plane
field distribution induced by $\alpha$. Then $\widetilde{\L}^N:=L^N\cap\widetilde{\Sigma}^N$ is a Legendrian submanifold of
$(\widetilde{\Sigma}^N,\widetilde{\xi})$.
\end{Lemma}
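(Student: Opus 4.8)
The plan is to reduce the statement to a short tangent-space computation. By construction $\widetilde{\xi}$ is the horizontal plane field of $\alpha$ restricted to $\widetilde{\Sigma}^N$; equivalently, since $\alpha$ is a connection form with $\alpha(R)=1$ for the vertical generator $R$ of Section~\ref{sec:the_setting} and since $dr$ vanishes on horizontal vectors, one has, for every $e\in\widetilde{\Sigma}^N$,
\beq
T_e\widetilde{\Sigma}^N=T^h_eE^N\oplus\R\,R(e),\qquad \widetilde{\xi}_e=\ker\big(\alpha|_{\widetilde{\Sigma}^N}\big)_e=T^h_eE^N\,.
\eeq
Because $\widetilde{\xi}$ has rank $2n=\dim M$, any $n$-dimensional submanifold of $\widetilde{\Sigma}^N$ whose tangent spaces lie in $\widetilde{\xi}$ is automatically a maximal isotropic, hence Legendrian (tangency to $\ker\alpha$ already forces $d\alpha$ to restrict to zero). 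So it suffices to verify that $\widetilde{\L}^N$ is such a submanifold.

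First I would observe that $\widetilde{\L}^N$ is a compact submanifold of $\widetilde{\Sigma}^N$ of dimension $n$. Indeed $L^N$ was defined --- this is where the Bohr--Sommerfeld hypothesis $\mathrm{hol}_{\alpha|_L}\subset\{0,\tfrac12\}$ is used --- as a smooth rank-one \emph{real} subbundle of $E^N|_L\subset E^N$; restricting the Hermitian metric identifies $\widetilde{\L}^N=L^N\cap\widetilde{\Sigma}^N$ with the unit sphere bundle of $L^N$, a $\Z/2$-bundle over $L$, hence a compact submanifold of dimension $\dim L=n$. (Alternatively, $L^N$ and $\widetilde{\Sigma}^N$ are transverse in $E^N$, since in each fibre $E^N_x\cong\C$ the real line $L^N_x$ meets the unit circle transversally.)

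Next, the tangency. Fix $e\in\widetilde{\L}^N$ and $x=p(e)\in L$. Since $\alpha$ is Hermitian, $\alpha$-parallel transport $P^t_\gamma$ along a curve $\gamma$ in $L$ through $x$ is norm-preserving and takes values in $L^N$ by the very construction of $L^N$; hence $t\mapsto P^t_\gamma e$ is a curve in $\widetilde{\L}^N$, and its velocity at $t=0$ is the $\alpha$-horizontal lift of $\dot{\gamma}(0)$. Letting $\gamma$ vary over curves through $x$, these velocities span the horizontal lift of $T_xL$, an $n$-dimensional subspace of $T_e\widetilde{\L}^N$; by the dimension count from the previous step it is all of $T_e\widetilde{\L}^N$. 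Therefore $T_e\widetilde{\L}^N\subset T^h_eE^N=\widetilde{\xi}_e$, and $\widetilde{\L}^N$ is Legendrian.

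I do not expect a genuine obstacle here. The only thing to get right is the identification of $\widetilde{\xi}$ with the horizontal distribution (already built into its definition) together with the fact that $\alpha$-parallel transport preserves both norms and the real subbundle $L^N$; granting these, the tangency computation is automatic. It is worth emphasizing that the Bohr--Sommerfeld condition is used solely to make $L^N$ a well-defined real line bundle --- once $L^N$ exists, its Legendrian character in $\widetilde{\Sigma}^N$ is forced, because it is swept out by $\alpha$-horizontal curves lying over $L$.
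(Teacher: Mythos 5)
Your proof is correct and follows essentially the same route as the paper: both arguments come down to identifying $T_e\widetilde{\L}^N$ with the $\alpha$-horizontal lift of $T_{p(e)}L$ (the paper states $T_eL^N\cong T_{p(e)}L\oplus\R X_e$ directly from the definition of $L^N$, which is exactly what your parallel-transport curves establish) and then concluding Legendrianity from the dimension count. Your added details --- the smoothness and dimension of $\widetilde{\L}^N$ and the observation that tangency to $\ker\alpha$ already kills $d\alpha$ --- are correct and merely make explicit what the paper leaves implicit.
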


\begin{proof}
Using the contact form $\alpha$ we decompose $T_eE^N=T_e^hE^N\oplus T_e^vE^N$ into horizontal and vertical part. Then vertical part $T_e^vE^N$ is spanned
by the vectors $R_e$ and $X_e$ where $R$ is the infinitesimal generator of the $S^1$-action and $X$ the Liouville vector field, see Section
\ref{sec:the_setting}. Using the canonical identification of $T_e^hE^N\cong T_{p(e)}M$ the definition of $L^N$ immediately implies
\beq
T_eL^N\cong T_{p(e)}L\oplus \R X_e
\eeq
and thus
\beq
T_e\widetilde{\L}^N\cong T_{p(e)}L
\eeq
implies the claim.
\end{proof}

\begin{Rmk}
If the first Stiefel-Whitney class $\mathrm{w}_1(L^N)\in\H^1(L;\Z/2)$ of $L^N$ vanishes then the Legendrian submanifold
$\widetilde{\L}$ has two connected components, otherwise one.
\end{Rmk}

The group $\Z/2$ acts on $(\widetilde{\Sigma},\widetilde{\xi},\widetilde{\L})$ by $e\mapsto -e$. The quotient is denoted by $(\Sigma,\xi,L)$.
In particular, $\L$ is diffeomorphic to $L$.

As explained above Lemma \ref{lemma:Hamiltonian_vfield_equals_Reeb_vfield} (where $\widetilde{\Sigma}$ is denoted by $\Sigma$ etc.)
every positive, autonomous Hamiltonian function $H\in C^\infty(M)$ gives rise to a $S^1$-invariant contact form $\alpha_H=\frac{1}{NH}\alpha$
on $\widetilde{\Sigma}$ inducing the same contact structure $\widetilde{\xi}$. Since $\alpha_H$ is $S^1$-invariant it descends to a contact form
on $(\Sigma,\xi)$ which we denote by $\alpha_H$ again.

\begin{Prop}\label{pro:quantized_chords_are_Reeb_chords}
If the Hamiltonian function $H:M\pf(0,\infty)$ is autonomous and positive then $N$-quantized Hamiltonian chords are in 1-1-correspondence to Reeb chords of $(\Sigma,\xi,L)$ with respect to the contact form $\alpha_H$.
\end{Prop}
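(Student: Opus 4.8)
The plan is to build the bijection by lifting everything to the unit circle bundle $\widetilde{\Sigma}^N\pf M$ of $E^N$. Recall that the covering $\widetilde{\Sigma}^N\pf\Sigma$ has deck group $\Z/2$ acting by $e\mapsto-e$ and restricts to the double cover $\widetilde{\L}^N\pf\L$; since $\widetilde{\L}^N$ is the full $p$-preimage of $\L$, a Reeb chord of $(\Sigma,\alpha_H,\L)$ of period $T$ is precisely a $\Z/2$-orbit of a Reeb trajectory of $\alpha_H$ on $\widetilde{\Sigma}^N$ of period $T$ whose two endpoints lie on $\widetilde{\L}^N$. By Lemma~\ref{lemma:Hamiltonian_vfield_equals_Reeb_vfield} the Reeb vector field of $\alpha_H$ on $\widetilde{\Sigma}^N$ is $-X_\HH$; by Lemma~\ref{lemma:lifted_flow_preserves_horizontal_distribution} the radial coordinate is preserved, so $\widetilde{\Sigma}^N$ is invariant under the flow of $X_\HH$, and by \eqref{eqn:Ham_vfield_of_HH} this flow projects to the flow of $X_H$ on $M$. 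Hence, after a time reversal, a Reeb trajectory on $\widetilde{\Sigma}^N$ becomes an $X_\HH$-trajectory which projects to an $X_H$-trajectory on $M$ of the same time length. The quantization of the action will drop out of the angle computation already carried out in the proof of Lemma~\ref{lemma:basic_lemma_rel_case}.

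First I would go from an $N$-quantized chord to a Reeb chord. Let $(x,T)$ be an $N$-quantized Hamiltonian chord of $H$, so $\dot x=X_H(x)$, $x(0),x(T)\in L$, $x$ is contractible relative $L$, and $\A_H(x,T)\in\tfrac{1}{2N}\Z$. Since $H$ is autonomous, $H$ is constant $\equiv c:=H(x(0))>0$ along $x$, so the explicit lift of \eqref{eqn:lifted_periodic_orbit}, namely $\widetilde{e}(t):=e^{2\pi iNct}\,P_x^te_0$ for $t\in[0,T]$ with $e_0$ one of the two unit vectors of $\widetilde{\L}^N_{x(0)}$, takes values in $\widetilde{\Sigma}^N$ (parallel transport is unitary) and solves $\dot{\widetilde{e}}=X_\HH(\widetilde{e})$; consequently $t\mapsto\widetilde{e}(T-t)$ is a Reeb trajectory of $\alpha_H$ on $\widetilde{\Sigma}^N$ of period $T$, one endpoint of which is $e_0\in\widetilde{\L}^N$. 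To control the other endpoint $\widetilde{e}(T)=e^{2\pi iNcT}P_x^Te_0$, I would pick a path $\gamma\subset L$ from $x(T)$ to $x(0)$ (possible as $[x]=0\in\pi_1(M,L)$) and, exactly as in the proof of Lemma~\ref{lemma:basic_lemma_rel_case}, compute the angle between $e_0$ and $P_\gamma^1\widetilde{e}(T)$ in the complex line $E^N_{x(0)}$ to be $N\A_H(x,T)$ modulo $\Z$; since $\gamma\subset L$ maps $L^N_{x(T)}$ onto $L^N_{x(0)}$, one has $\widetilde{e}(T)\in\widetilde{\L}^N_{x(T)}$ iff this angle lies in $\tfrac12\Z$, i.e.\ iff $N\A_H(x,T)\in\tfrac12\Z$ --- which is the quantization hypothesis. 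Thus $\widetilde{e}$ descends to a Reeb chord of $(\Sigma,\alpha_H,\L)$ of period $T$, and the two choices of $e_0$ give the antipodal lifts $\pm\widetilde{e}$, hence the same chord downstairs, so the assignment is well defined.

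Next I would run the construction backwards. A Reeb chord $e$ of $(\Sigma,\alpha_H,\L)$ lifts, after choosing a lift of $e(0)$ to $\widetilde{\L}^N$, to a unique Reeb trajectory $\widetilde{e}$ on $\widetilde{\Sigma}^N$, and $\widetilde{e}(T)\in\widetilde{\L}^N$ automatically. Reversing time and projecting by $p$ yields a Hamiltonian chord $(x,T)$ of $H$ with $x(0),x(T)\in L$; by uniqueness for the ODE $\dot{}=X_\HH$ its lift is again of the form $e^{2\pi iNct}P_x^te_0$, so the angle computation of the previous paragraph now reads in the other direction and forces $\A_H(x,T)\in\tfrac{1}{2N}\Z$, i.e.\ $(x,T)$ is $N$-quantized. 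These two assignments are visibly mutually inverse (each is determined fibrewise by parallel transport together with the rotation $e^{2\pi iNct}$), which gives the stated bijection; it preserves periods and, via \eqref{eqn:computation_angle}, identifies the action data on the two sides.

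The one genuinely non-formal step is the angle computation invoked twice above: one has to match the half-disk $\bar x\colon\D^2_+\pf M$ that enters the definition of $\A_H(x,T)$ (Definition~\ref{def:Ham_chord_and_general_action_functional}) with a filling of the closed-up loop $x\#\gamma$, keeping every sign and the factor $N$ in place. This is precisely the calculation performed in the proof of Lemma~\ref{lemma:basic_lemma_rel_case} (there for period $1$; the present case only adds the rescaling $t\mapsto t/T$ and uses Bohr--Sommerfeld half-integrality rather than integrality), so it can be quoted rather than redone. I would also record that the bijection matches $N$-quantized chords with exactly those Reeb chords whose underlying trajectory in $M$ is contractible relative $L$ --- which is the condition making $\A_H$ defined on it --- and that this is the class relevant for the applications.
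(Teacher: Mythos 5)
Your proof is correct and follows essentially the same route as the paper's: lift the quantized chord to the unit circle bundle via the explicit formula \eqref{eqn:lifted_periodic_orbit}, identify the Reeb flow of $\alpha_H$ with $-X_\HH$ via Lemma \ref{lemma:Hamiltonian_vfield_equals_Reeb_vfield}, use the Bohr--Sommerfeld angle computation of Lemma \ref{lemma:basic_lemma_rel_case} to see that the endpoint lands on $\widetilde{\L}^N$ exactly when $N\A_H(x,\tau)\in\tfrac12\Z$, and quotient by the $\Z/2$-action to get the bijection. You simply spell out the explicit lift, the time reversal, and the inverse map, which the paper leaves implicit by citing Lemmas \ref{lemma:lifted_Lagrangian_intersects_to_Legendrian}, \ref{lemma:Hamiltonian_vfield_equals_Reeb_vfield}, and \ref{lemma:basic_lemma_rel_case}.
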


\begin{proof}
Lemma \ref{lemma:lifted_Lagrangian_intersects_to_Legendrian} states that $\widetilde{\L}^N=L^N\cap\widetilde{\Sigma}^N$ is a Legendrian submanifold
of $(\widetilde{\Sigma},\widetilde{\xi})$. The previous Lemma together with Lemma \ref{lemma:Hamiltonian_vfield_equals_Reeb_vfield} implies
the assertion as follows.
Given an $N$-quantized chord $x$ of $H$ we concluded in Lemma \ref{lemma:basic_lemma} resp.~Lemma \ref{lemma:basic_lemma_rel_case} that the
fibers over $x$ are filled by chords of $X_\HH$. Thus, we find a chord $e$ lying in $\Sigma$. By
Lemma \ref{lemma:Hamiltonian_vfield_equals_Reeb_vfield} the chord $e$ is a Reeb chord of $\L^N$ with respect to $\alpha_H$. Replacing $e$ by $-e$
gives rise to a different Reeb chord lying over the same quantized Hamiltonian chord $p(e)$ in $M$. After dividing out this action the
statement of the proposition follows.
\end{proof}

\begin{Def}\label{def:non_resonant}
We call an autonomous Hamiltonian function $H:M\pf\R$ \textit{non-resonant} if it satisfies for all $N\geq\mathcal{W}(H)$

\begin{enumerate}
\item $H$ is strongly nondegenerate for $N$.
\item For all $N$-quantized chords $(x,\tau)$ the following is true: $D\varphi_H^\tau(T_{x(0)}L)\pitchfork T_{x(\tau)}L$
and $X_H(x(0))\not\in T_{x(0)}L$ and $X_H(x(\tau))\not\in T_{x(\tau)}L$
\item $H|_L:L\pf\R$ is Morse.
\end{enumerate}
\end{Def}

\begin{Rmk}
Part (2) in the previous definition implies that $\Crit(H)\cap L=\emptyset$.
\end{Rmk}

The following lemma provides a sufficient condition for the number of $N$-quantized chords to be finite. We point out
that we assume do not assume the Bohr-Sommerfeld condition for $L$.

\begin{Lemma}\label{lemma:finitely_many_quantized_chords}
We assume that $L\subset (M,\om)$ is a closed, aspherical Lagrangian submanifold and that $H:M\pf (0,\infty)$ is a positive Hamiltonian function
satisfying the transversality conditions $D\varphi_H^\tau(T_{x(0)}L)\pitchfork T_{x(\tau)}L$,
$X_H(x(0))\not\in T_{x(0)}L$, and $X_H(x(\tau))\not\in T_{x(\tau)}L$ for all $N$-quantized chords.
Then the set $\P_L^\mathfrak{q}(H;\tau_0,N)$ of $N$-quantized chords with period less or equal than $\tau_0$ is finite.
\end{Lemma}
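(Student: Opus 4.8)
\textsc{Proof plan.} The plan is to argue by contradiction, using the transversality hypothesis twice: once to exclude quantized chords of arbitrarily small period, and once to show that near any quantized chord the quantized chords are isolated.

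First I would reformulate the problem. Consider the smooth map $F\colon L\times(0,\tau_0]\pf M$, $F(p,\tau):=\varphi_H^\tau(p)$; then $F^{-1}(L)$ is precisely the set of (parametrised) Hamiltonian chords of period in $(0,\tau_0]$, and $\P_L^\mathfrak{q}(H;\tau_0,N)$ is the subset on which $\A_H\in\tfrac1{2N}\Z$. At a point $(p,\tau)$ lying over a quantized chord the hypothesis gives $D\varphi_H^\tau(T_pL)\pitchfork T_{\varphi_H^\tau(p)}L$, so these two $n$-planes are complementary in $T_{\varphi_H^\tau(p)}M$; a short computation of the tangent space to $F^{-1}(L)$ (if $D\varphi_H^\tau(v)+sX_H(q)\in T_qL$ with $v\in T_pL$, then $s=0$ forces $v=0$) shows that $F$ is transverse to $L$ there and that $\tau$ restricts to a submersion, hence a local coordinate, on the resulting $1$-manifold. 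Assume now that $\P_L^\mathfrak{q}(H;\tau_0,N)$ is infinite; since $L\times[0,\tau_0]$ is compact we may pick pairwise distinct elements $(p_k,\tau_k)$ converging to some $(p_\infty,\tau_\infty)\in L\times[0,\tau_0]$.

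The step I expect to be the main obstacle — and the only place where the transversality assumption does not-entirely-obvious work, since without it the lemma can fail (constant quantized chords at critical points of $H$ on $L$ have arbitrarily small period) — is ruling out $\tau_\infty=0$. Here I would use that for the quantized chords $(p_k,\tau_k)$ the hypothesis gives $D\varphi_H^{\tau_k}(T_{p_k}L)\pitchfork T_{q_k}L$, where $q_k:=\varphi_H^{\tau_k}(p_k)\in L$. If $\tau_k\to0$, then $D\varphi_H^{\tau_k}\to\id$ uniformly on compacta and $q_k\to p_\infty$, so both $D\varphi_H^{\tau_k}(T_{p_k}L)$ and $T_{q_k}L$ converge in the Grassmannian to $T_{p_\infty}L$; but two $n$-planes sufficiently close to one and the same $n$-plane in a $2n$-dimensional space are never complementary, contradicting transversality for $k$ large. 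Hence $\tau_\infty>0$.

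With $\tau_\infty>0$ the chords $x_k(t):=\varphi_H^t(p_k)$, $t\in[0,\tau_k]$, converge in $C^\infty$ to a Hamiltonian chord $x_\infty$ of period $\tau_\infty$ with $x_\infty(0),x_\infty(\tau_\infty)\in L$, and $x_\infty$ is contractible rel $L$, being a limit of such. Because $(M,L)$ is symplectically aspherical, $\A_H$ is independent of the capping and depends continuously on the chord, so $\A_H(x_\infty,\tau_\infty)=\lim_k\A_H(x_k,\tau_k)$; a convergent sequence in the discrete set $\tfrac1{2N}\Z$ is eventually constant, so $(x_\infty,\tau_\infty)$ is itself an $N$-quantized chord. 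Applying the first paragraph to it, near $(p_\infty,\tau_\infty)$ the set $F^{-1}(L)$ is an arc parametrised by the period $\tau$; along this family $(x_\tau,\tau)$ of chords of the autonomous Hamiltonian $H$ the standard first-variation computation (using the Lagrangian boundary condition and that $H$ is constant along its chords) gives $\tfrac{d}{d\tau}\A_H(x_\tau,\tau)=-H(x_\tau)<0$, since $H>0$. Thus $\tau\mapsto\A_H(x_\tau,\tau)$ is a diffeomorphism onto an interval, so its preimage of $\tfrac1{2N}\Z$ is discrete, i.e.\ $(x_\infty,\tau_\infty)$ is isolated in $\P_L^\mathfrak{q}(H;\tau_0,N)$. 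This contradicts $(p_k,\tau_k)\to(p_\infty,\tau_\infty)$ with the $(p_k,\tau_k)$ distinct and quantized, and the lemma follows.
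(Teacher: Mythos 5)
Your overall strategy coincides with the paper's: pass to a convergent subsequence, use the transversality hypothesis to realize the limiting quantized chord as a member of a local one-parameter family of chords along which $\tau$ is a coordinate (the paper packages this as Proposition \ref{prop:HZ_Poincare_family}), and then derive the contradiction from the first variation $\tfrac{d}{d\tau}\A_H(x_\tau,\tau)=-H(x_\tau)<0$ together with discreteness of $\tfrac{1}{2N}\Z$. That part of your argument is correct and matches the paper's proof.

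The step you yourself single out as the main obstacle, however, contains a genuine error. You claim that two $n$-planes sufficiently close to one and the same $n$-plane in a $2n$-dimensional space are never complementary. This is false: in $\R^2$ any two \emph{distinct} lines through the origin are complementary, no matter how close both are to the $x$-axis; more generally, if $V_k=\mathrm{graph}(A_k)$ and $W_k=\mathrm{graph}(B_k)$ are graphs over a fixed $n$-plane $V$ with $A_k,B_k\to 0$, then $V_k\cap W_k\cong\ker(A_k-B_k)$, which is trivial whenever $A_k-B_k$ is injective, so $V_k$ and $W_k$ are transverse. (Transversality is an open, not a closed, condition, so the fact that both planes converge to $T_{p_\infty}L$ yields no contradiction with their transversality for finite $k$.) Hence your exclusion of $\tau_\infty=0$ does not work as written. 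The correct way to rule out $\tau_k\to 0$ uses positivity of $H$ together with quantization rather than transversality: for a short contractible chord one may choose a small capping (legitimate by asphericity), so that the symplectic area term is $O(\tau_k^2)$ and
\beq
\A_H(x_k,\tau_k)=-\tau_k H\big(x_k(0)\big)+O(\tau_k^2)\in\big(-\tfrac{1}{2N},0\big)
\eeq
for $k$ large, which is incompatible with $\A_H(x_k,\tau_k)\in\tfrac{1}{2N}\Z$; this is the same computation as in Proposition \ref{prop:tiny_Hamiltonian_and_Morse}. With that substitution (the paper itself treats this point only implicitly, asserting that the Arzela--Ascoli limit again lies in $\P_L^\mathfrak{q}(H;\tau_0,N)$), the rest of your proof goes through.
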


\begin{proof}
The proof of this lemma is contained in the appendix, where this is Lemma \ref{lemma:finitely_many_quantized_chords_APPENDIX}.
\end{proof}

\begin{Rmk}
We point out that a priori condition (2) in the non-resonancy definition implies that a quantized chord $(x,\tau)$ is isolated only in the set
of $\tau$-periodic chords and not necessarily in the set of all chords. The latter assertion is provided by the previous Lemma under the additional
assumptions that the Hamiltonian function $H$ satisfies $X_H(x(0))\not\in T_{x(0)}L$, $X_H(x(\tau))\not\in T_{x(\tau)}L$, and $H>0$.
Without the assumption $H>0$ quantized chords need not be
isolated. We give a counterexample in the appendix, see Example \ref{ex:counterexample_HZ}.
\end{Rmk}

\begin{Thm}\label{thm:non_resonant_is_generic}
If $\dim M\geq4$, then the set of non-resonant Hamiltonian functions is a generic subset of the set of autonomous Hamiltonian functions.
\end{Thm}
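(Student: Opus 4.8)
The plan is to prove genericity by exhibiting the non-resonant Hamiltonian functions as a countable intersection of open and dense subsets, one for each integer $N\geq 1$ and each period bound, and to verify density via a Sard--Smale transversality argument applied to a suitable universal moduli space. Concretely, I would fix $N$ and a period bound $\tau_0\in\N$ and consider the three conditions in Definition \ref{def:non_resonant} separately. Condition (1) (strong nondegeneracy for $N$) is handled first: nondegeneracy of $H$ is the classical Robbin--Salamon transversality statement, and the additional requirement that the action spectrum avoid $\tfrac1{2N}\Z$ is open and dense because for a nondegenerate $H$ there are only finitely many critical values of $\A_H$, so one perturbs $H$ by a small constant as in Corollary \ref{cor:non_degeneracy_of_HH}. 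Condition (3), that $H|_L$ be Morse, is classical and clearly generic. The substance of the theorem is condition (2).

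For condition (2) I would set up the standard universal moduli space. Fix $N$ and $\tau_0$; consider tuples $(x,\tau,H)$ with $\tau\in(0,\tau_0]$, $H$ in a suitable Banach manifold of $C^k$ (or $C^\infty$ with the weak topology, using Floer's $C^\varepsilon$-trick) autonomous Hamiltonians, and $x$ a Hamiltonian chord of period $\tau$ with $\A_H(x,\tau)\in\tfrac1{2N}\Z$, i.e. a zero of the section
\beq
\F(x,\tau,H)=\big(\dot x - X_H(x),\,\A_H(x,\tau)-\text{(nearest point of }\tfrac1{2N}\Z)\big)
\eeq
on an appropriate bundle over the space of paths with endpoints on $L$. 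I would show the linearization of $\F$ in the $(x,\tau,H)$ directions is surjective at every zero: surjectivity in the $x$-direction up to a finite-dimensional cokernel is the usual elliptic (here ODE) Fredholm theory, and the remaining directions are filled by varying $H$ — a perturbation supported near an interior point of the chord $x$ (using that a chord that is not a constant critical point has such an interior point where $X_H(x)\neq 0$, and, crucially, $\dim M\geq 4$ gives room to move $x$ transversally to the codimension-one constraints) changes $\dot x - X_H(x)$ in prescribed ways, and a perturbation changing the value of $H$ along $x$ adjusts $\A_H(x,\tau)$. An application of the Sard--Smale theorem then shows that for generic $H$ the set of $N$-quantized chords of period $\leq\tau_0$ is cut out transversally, which is exactly the statement that $D\varphi_H^\tau(T_{x(0)}L)\pitchfork T_{x(\tau)}L$ together with $X_H(x(0))\notin T_{x(0)}L$ and $X_H(x(\tau))\notin T_{x(\tau)}L$; here the condition $\dim M\geq 4$ is what makes the two transversality conditions at the endpoints simultaneously achievable, since a chord in a $2$-dimensional $M$ has no room to be perturbed off the Lagrangian. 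Having density, openness for fixed $N,\tau_0$ follows because by Lemma \ref{lemma:finitely_many_quantized_chords} the relevant set of chords is finite and the transversality conditions are open; intersecting over the countably many $N$ and $\tau_0$ and with the (generic) sets from conditions (1) and (3) gives the claimed generic set. I would remark that the details of the non-resonancy argument are deferred to Appendix \ref{appendix:non_resonant}.

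The main obstacle I anticipate is the transversality argument for condition (2) at the \emph{endpoints} of the chord rather than in its interior: the usual trick of perturbing $H$ at an interior point of a non-constant trajectory does not directly control the jets of $X_H$ at $x(0)$ and $x(1)$, which lie on $L$, and one must be careful because perturbations of $H$ near $L$ interact with the Lagrangian boundary condition and with condition (3). The resolution is to perturb $H$ in a Weinstein neighborhood of $L$ in a way that affects the linearized flow's action on $T_{x(0)}L$ and the transversality of $X_H$ to $L$ without creating new critical points of $H|_L$ on $L$ — this is precisely where $\dim M\geq 4$ enters, giving at least two normal directions to play with — and then to note that the remaining interior transversality is the classical argument. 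A secondary technicality is the interaction between varying $\tau$ (so that the chord length is not fixed) and the quantization constraint $\A_H\in\tfrac1{2N}\Z$; since the left side of this constraint is one real equation and $\tau$ is one real parameter, the count of dimensions works out so that $N$-quantized chords form a $0$-dimensional (hence, with finiteness, finite) set for generic $H$, consistent with Lemma \ref{lemma:finitely_many_quantized_chords}.
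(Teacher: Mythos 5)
Your overall architecture (countable intersection over $N$ and period bounds, universal moduli spaces, Sard--Smale, Taubes' diagonal trick) matches the paper's, but there is a genuine gap at the heart of the transversality argument, and it is tied to a misreading of where the hypothesis $\dim M\geq4$ enters. Your surjectivity argument for the linearized section -- ``the remaining directions are filled by varying $H$'' via a perturbation supported along the chord -- silently assumes that the chord $x$ is an \emph{injective} path. For an \emph{autonomous} Hamiltonian this is exactly the delicate point: if $x(t_0)=x(t_0')$ for $t_0\neq t_0'$, then $x$ is (a piece of) a periodic orbit, and a perturbation $\hat H$ of the Hamiltonian cannot prescribe $X_{\hat H}(x(t))$ independently at the two times, so the cokernel need not be filled. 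This is why one cannot simply quote the ``classical'' nondegeneracy statement for condition (1) either: that statement is proved for time-dependent Hamiltonians, where one perturbs $H(t,\cdot)$ separately for each $t$. The paper's proof therefore begins with Lemma \ref{lemma:generic_no_intersection}: for a generic autonomous $H$ there is \emph{no} periodic trajectory meeting $L$ at two times, by the dimension count $\dim\M_H+1-\mathrm{codim}(L\times L)=3-2n<0$, which holds precisely when $\dim M\geq4$. Combined with $\Crit H\cap L=\emptyset$ (to rule out constant chords), this forces every chord to be injective, and only then does your perturbation argument go through. Your proposed role for $\dim M\geq4$ -- making the endpoint conditions ``simultaneously achievable'' because of ``two normal directions'' -- is not the mechanism; in $\dim M=2$ the obstruction is the abundance of periodic orbits through $L$, not a lack of normal directions at the endpoints.

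A secondary, smaller point: transversal cut-out of the zero set of your section $\F$ gives nondegeneracy of the quantized chords, but the conditions $X_H(x(0))\notin T_{x(0)}L$ and $X_H(x(\tau))\notin T_{x(\tau)}L$ are additional constraints on the $1$-jet of the solution at the boundary and do not follow automatically; the paper handles them with a separate evaluation map $\phi(H,x,\tau)=(\dot x(0),\dot x(\tau))$ into $TM\times TM$ together with Lemma \ref{lemma:Dietmar} to show $\phi$ restricts to a submersion on the universal moduli space. You should add both ingredients: the exclusion of periodic trajectories hitting $L$ twice (this is where $\dim M\geq4$ is genuinely used), and the evaluation-map argument for the endpoint conditions.
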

\noindent The proof is postponed to Appendix \ref{appendix:non_resonant}. \\

\noindent Let $H:S^1\times M\pf\R$ be a nondegenerate Hamiltonian function. We set
\bea
a_{\min}(H)&:=\min\{\A_H(x)\mid x\in\P_L^1(H)\}\,,\\
a_{\max}(H)&:=\max\{\A_H(x)\mid x\in\P_L^1(H)\}\,.
\eea

\begin{Thm}\label{thm:main_theorem}
We consider a non-resonant, huge Hamiltonian function $H:M\pf\R$. We choose $N\geq\mathcal{W}(H)$. Then the number of Hamiltonian chords
$(x,\tau)\in\CC(H)$ satisfying
\begin{enumerate}
\item $(x,\tau)$ is an $N$-quantized Hamiltonian chord of $H$,\\
\item $\displaystyle \frac{1}{\mathfrak{m}(H,N)}<\tau<1$.\\
\item $\displaystyle \A_H(x,\tau)\in\big[\,a_{\min}(H)-||H||,a_{\max}(H)+\max H\,\big]$
\end{enumerate}
is as least as big as $\lceil\frac12\#\P_L^1(H)\rceil$.
\end{Thm}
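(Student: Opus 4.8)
\textsc{Plan of proof.} The plan is to transfer the assertion into the Floer homology of a perturbation of the fibre-wise quadratic lift $\HH$, whose total rank is already computed by Proposition \ref{prop:huge_computes_homology}, and then to read off the number of $N$-quantized chords from an index/action comparison with a $C^2$-small Hamiltonian. First I would dispose of two elementary reductions: since $H$ is huge we have $\mathfrak{m}(H,N)>1$ (the Remark following Definition \ref{def:magnitude}), so the period window in (2) is non-empty; and since $H$ is non-resonant it is strongly nondegenerate for $E^N$, satisfies the transversality conditions of Definition \ref{def:non_resonant}, and $H|_L$ is Morse, so $\HF^N_*(H;L)$ is defined and, by Proposition \ref{prop:huge_computes_homology}, $\dim\HF^N(H;L)=\#\P_L^1(H)$ with $\HF^N_k(H;L)=0$ for all $k\geq-\tfrac{n+1}{2}$.

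Next I would introduce, following Remark \ref{rmk:basic_rmk}, the Hamiltonian $\HH_g:=g(\HH)$ for a smooth $g\colon\R\pf\R$ which equals the identity on a neighbourhood of $[\min_M H,\max_M H]$ — so $\HH_g\equiv\HH$ near the zero-section, hence its zero-section $1$-periodic $L^N$-chords are exactly $\P_L^1(H)$, all with $\Mas^{L^N}<-\tfrac{n+1}{2}$ by Proposition \ref{prop:index_formula_Lagrangian_case} — and which becomes a constant multiple of $\HH$ plus a constant, of slope in $(\tfrac1{\mathfrak{m}(H,N)},1)$, at infinity, with $g'$ taking values in $(\tfrac1{\mathfrak{m}(H,N)},1)$ and avoiding the finitely many values there that are periods of $N$-quantized chords of $H$ of period $\leq\tfrac1{\mathfrak{m}(H,N)}$. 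Then $\HH_g$ is again convex at infinity, so the $C^0$-bound of Corollary \ref{cor:convexity_at_infty_Lagrangian_case} applies unchanged and $\CF^N_*(\HH_g;L)$ with its differential is defined; by the Lagrangian form of Remark \ref{rmk:basic_rmk}, using the Bohr--Sommerfeld condition exactly as in Lemma \ref{lemma:basic_lemma_rel_case}, every $1$-periodic $L^N$-chord $e$ of $\HH_g$ not lying in the zero-section projects to an $N$-quantized Hamiltonian chord $(x,\tau)$ of $H$ with $\tau=g'\big(\HH(e)\big)\in(\tfrac1{\mathfrak{m}(H,N)},1)$, and the fibre-wise $\Z/2$-action $e\mapsto-e$ permutes these generators freely in pairs, so that, via Proposition \ref{pro:quantized_chords_are_Reeb_chords}, their number is exactly twice the quantity the theorem asks us to bound below. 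Non-resonancy of $H$ together with a generic choice of $g$ ($g''\neq0$ at the relevant levels) makes all of them nondegenerate; the bound $\tfrac1{\mathfrak{m}(H,N)}<\tau$ and the defining property of the magnitude (with Remark \ref{rmk:index_formula_Lagrangian_case}) control their Maslov indices; and the energy identity of Lemma \ref{lemma:energy_estimate_for_continuation}, applied to the continuation trajectories used below, confines the actions of the generators detecting the homology to the interval in (3).

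It remains to count. Write $\CF^N_*(\HH_g;L)=C_{\mathrm{old}}\oplus C_{\mathrm{new}}$, with $C_{\mathrm{old}}$ spanned by the $\#\P_L^1(H)$ zero-section generators; by the maximum principle (Corollary \ref{cor:convexity_at_infty_Lagrangian_case}) every Floer trajectory joining two zero-section generators stays in the zero-section, where $\HH_g\equiv\HH$, so the $C_{\mathrm{old}}\pf C_{\mathrm{old}}$ part of $\partial^{\HH_g}$ equals $\partial^{\HH}=0$. Since $\dim\HF^N(\HH_g;L)=\#\P_L^1(H)$ it then suffices to prove $\dim C_{\mathrm{new}}\geq\#\P_L^1(H)$; this is automatically even, so it yields $\lceil\tfrac12\#\P_L^1(H)\rceil$ for the number of $N$-quantized chords. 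To force it I would sandwich $\HH_g$ between a $C^2$-small positive Hamiltonian lift $\widehat{\epsilon h}$ — for which Proposition \ref{prop:tiny_Hamiltonian_and_Morse} and Theorem \ref{thm_floer} give that $\HF^N_*(\widehat{\epsilon h};L)$ is, up to a grading shift, $\H_*(L;\Z/2)$ and is concentrated in degrees $[-\tfrac{n+1}{2},\tfrac{n-1}{2}]$ — and $\HH$: the monotone continuations (Definition \ref{def:continuation_hom}) $\HF^N(\widehat{\epsilon h};L)\xrightarrow{\Phi_1}\HF^N(\HH_g;L)\xrightarrow{\Phi_2}\HF^N(\HH;L)$ have $\Phi_2\circ\Phi_1=0$ for degree reasons, since its source sits in degrees $\geq-\tfrac{n+1}{2}$ while its target sits in degrees $<-\tfrac{n+1}{2}$ by hugeness. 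Resolving this vanishing on the chain level, and carrying the argument out $\Z/2$-equivariantly — which is where the factor $\tfrac12$ enters — together with $\dim\HF^N(\HH;L)=\#\P_L^1(H)$, is then to produce the required inequality $\dim C_{\mathrm{new}}\geq\#\P_L^1(H)$. I expect the main obstacle to be precisely this last step — converting the homological data ($\Phi_2\circ\Phi_1=0$, $\dim\HF^N(\HH;L)=\#\P_L^1(H)$, and the $\Z/2$-symmetry) into the sharp count — together with the construction of $g$ reconciling convexity at infinity with the detection of \emph{all} Reeb chords of period in $(\tfrac1{\mathfrak{m}(H,N)},1)$, and the accompanying index and action bookkeeping.
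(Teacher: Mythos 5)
Your opening reductions are fine and agree with the paper: $\mathfrak{m}(H,N)>1$ for huge $H$, Proposition \ref{prop:huge_computes_homology} gives $\wp:=\#\P_L^1(H)$ nonzero classes in $\HF^N_k(H;L)$ with $k<-\frac{n+1}{2}$, and non-zero-section chords of $g(\HH)$ project to $N$-quantized chords of period $g'(\HH(e))$, at most two-to-one under $e\mapsto-e$. The gap is in your choice of $g$ and the resulting count, and in fact your construction is inverted relative to the one that works. You take $g=\mathrm{id}$ near the zero-section and of slope $<1$ at infinity. Then (i) $\HH_g$ is \emph{not} a compact perturbation of $\HH$, so the identity $\dim\HF^N(\HH_g;L)=\wp$ you invoke is unjustified -- for Hamiltonians linear at infinity the Floer homology depends on the asymptotic slope; and (ii) your zero-section generators already have Maslov index $<-\frac{n+1}{2}$, i.e.\ they sit exactly in the degrees where the homology lives, so nothing prevents $C_{\mathrm{old}}$ from carrying all of $\HF(\HH_g;L^N)$ and $C_{\mathrm{new}}$ from being empty. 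Your attempt to force $\dim C_{\mathrm{new}}\geq\wp$ from $\Phi_2\circ\Phi_1=0$ cannot succeed: the vanishing of a composition factoring through $\HF(\HH_g;L^N)$ imposes no lower bound on that group (the zero map factors through the zero space), and indeed $m(\HH,\widehat{\epsilon h})=0$ for degree reasons with no reference to $\HH_g$ at all. One would need a \emph{non}-vanishing map through $\HF(\HH_g;L^N)$ in degrees the zero-section generators cannot occupy.

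The paper's proof reverses the roles: $g'\equiv\epsilon\leq\frac{1}{\mathfrak{m}(H,N)}$ near the zero-section and $g=\mathrm{id}$ outside a compact set. Then $\HH_g$ \emph{is} a compact perturbation of $\HH$, so $\HF(\HH_g;L^N)\cong\HF^N(H;L)$ still carries $\wp$ classes in degrees $<-\frac{n+1}{2}$; but by the definition of the magnitude the zero-section chords of $\HH_g$ (chords of $\epsilon H$) have Maslov index in $[-\frac{n+1}{2},\frac{n-1}{2}]$. This degree obstruction forces $\wp$ distinct generators $e_1,\dots,e_{\wp}$ outside the zero-section, with periods $\tau_i=g'(\HH(e_i))\in(\epsilon,1)$, projecting at most two-to-one onto $N$-quantized chords (injectivity of $g'$ on the bending region plus injectivity of $f$ give $e=\pm e'$), whence the count $\lceil\frac12\wp\rceil$. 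The action window (3) is then obtained from Lemma \ref{lemma:energy_estimate_for_continuation} applied to the monotone homotopies between $\HH$ and $\HH_g$, using the explicit bounds on $\sup(\HH_g-\HH)$ and $\HH_g(\min H)$ fixed by the choice of $\delta$ -- an estimate your sketch gestures at but cannot set up, since in your normalization $\HH_g=\HH$ precisely on the region where those bounds are extracted.
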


\noindent\textbf{Proof of Theorem A:}\quad Theorem A from the introduction is a special case of Theorem \ref{thm:main_theorem} since by  Proposition \ref{pro:quantized_chords_are_Reeb_chords} Reeb chords are in 1-1 correspondence to quantized Hamiltonian chords. Indeed the constant $N(\mathscr{D}(H))$ is the wiggliness $\mathcal{W}(H)$ and $C=C(\mathscr{D}(H))$ is such that $H+C$ is huge, see Remark \ref{rmk:huge_after_adding_constant}.

\qed

\begin{proof}[Proof of Theorem \ref{thm:main_theorem}]
We abbreviate $\wp(H):=\#\P_L^1(H)$.
Proposition \ref{prop:huge_computes_homology} implies
that there exist distinct, non-trivial class $\xi_1,\ldots,\xi_{\wp(H)}\neq0\in\HF^N_k(H)$ with $k<-\frac{n}{2}$.

We abbreviate $\mathfrak{m}:=\mathfrak{m}(H,N)$ and fix $0<\epsilon\leq\frac{1}{\mathfrak{m}}$ such that $\epsilon H$ is strongly nondegenerate and
\beq\label{eqn:small_Maslov_index}
-\frac{n+1}{2}\leq\Mas^{L^{N}}(x;\epsilon \widehat{H})\leq\frac{n-1}{2}
\eeq
for all $x\in\P(\epsilon H)$.
We recall that the set of $N$-quantized chords with period less or equal than $\tau_0$ is denoted by $\P_L^\mathfrak{q}(H;\tau_0,N)$.
We choose a function $g:\R_{\geq0}\pf\R_{\geq0}$ satisfying
\begin{enumerate}
\item $g'(\rho)=\epsilon$ for $\rho\leq\max H+\delta$ where $\epsilon$ is chosen as above,
\item $g(\rho)=\rho$ for $\rho\geq\max H+2\delta$,
\item $g''(\rho)>0$ for all $\rho\in(\max H+\delta,\max H+2\delta)$.
\end{enumerate}

where $\displaystyle 0<2\delta<\min\left\{\frac{\min(H)}{\mathfrak{m}},\frac{||H||}{\mathfrak{m}-1}\right\}$.
We note that (1) and (3) imply that $g'(\rho)$ is injective on the interval $[\max H+\delta,\max H+2\delta]$.

\begin{figure}[htb]
\input{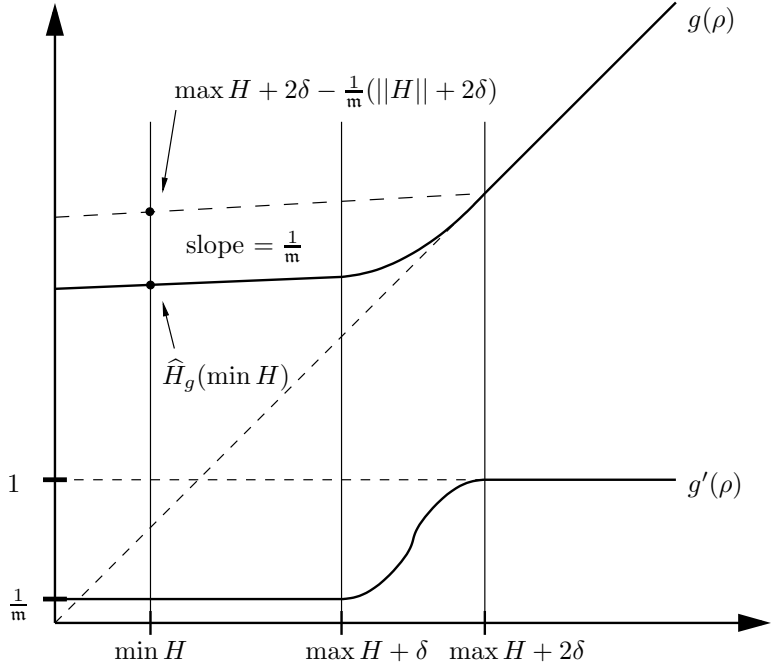}
\caption{The graph of $g$}\label{fig:graph_g}
\end{figure}

\begin{Lemma}\label{lemma:action_fctl_A_g(HH)_is_Morse}
The action functional $\A_{g(\HH)}$ is Morse.
\end{Lemma}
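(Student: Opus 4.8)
The plan is to enumerate every critical point of $\A_{g(\HH)}$ and check that each is nondegenerate. A critical point is a chord $e:[0,1]\pf E^N$ of $X_{g(\HH)}$ with $e(0),e(1)\in\LH$. Since $\HH$ is autonomous it is preserved by the flow of $g(\HH)=g\circ\HH$ and $X_{g(\HH)}=g'(\HH)X_\HH$, so $\mathfrak{c}:=\HH(e(\cdot))$ is constant along $e$; writing $\tau:=g'(\mathfrak{c})$, the reparametrised projection $x:=p\circ e$ is a $\tau$-periodic chord of $H$ by (the relative version of) Remark \ref{rmk:basic_rmk}, and when $e$ is \emph{not} contained in the zero section the angle computation in the proof of Lemma \ref{lemma:basic_lemma_rel_case} shows that $x$ is $N$-quantised. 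I would split according to the three regimes for $\mathfrak{c}$ visible in the graph of $g$ (Figure \ref{fig:graph_g}).

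In the regime $\mathfrak{c}\geq\max H+2\delta$ one has $\tau=1$, hence $x\in\P_L^1(H)$; also $e$ cannot lie in the zero section, where $\mathfrak{c}=H(p(e))\leq\max H$; but an off-section lift of a $1$-periodic chord exists only if $N\A_H(x)\in\tfrac12\Z$, contradicting strong nondegeneracy of $H$, so this regime is empty. In the regime $\mathfrak{c}\leq\max H+\delta$ one has $\tau=\epsilon$ and the same reasoning (now with strong nondegeneracy of $\epsilon H$) excludes off-section critical points; since $g'\equiv\epsilon$ near $0$, the remaining (zero-section) critical points are precisely the chords of $X_{\epsilon H}$, i.e.\ the elements of $\P_L(\epsilon H)$, a finite nondegenerate set in bijection with $\Crit(H|_L)$ by Proposition \ref{prop:tiny_Hamiltonian_and_Morse} (shrinking $\epsilon$ if necessary). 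For such a point $x$ nondegeneracy as a critical point of $\A_{g(\HH)}$ follows because near the zero section $g(\HH)=g(0)+\epsilon\HH$ and $g''\equiv0$ there, so $D\phi_{g(\HH)}^1(x)$ has no correction term and is block diagonal in $T_xM\oplus E^N_x$ (Lemma \ref{lemma:lifted_flow_preserves_horizontal_distribution}): the horizontal block $D\phi_{\epsilon H}^1(x)$ sends $T_{x(0)}L$ to a subspace meeting $T_{x(1)}L$ only in $0$ (nondegeneracy of $x$ as an $\epsilon H$-chord), and the vertical block is a rotation of the fibre carrying $L^N_{x(0)}$ onto $L^N_{x(1)}$ precisely when $N\A_{\epsilon H}(x)\in\tfrac12\Z$ (the same angle computation), which is excluded by strong nondegeneracy of $\epsilon H$.

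The substantive regime is $\mathfrak{c}\in(\max H+\delta,\max H+2\delta)$, where $\tau\in(\epsilon,1)$ and, crucially, $g''(\mathfrak{c})>0$. Here $e$ is off the zero section, its projection is an $N$-quantised $\tau$-periodic chord, and so the non-resonance hypotheses of Definition \ref{def:non_resonant} apply to it; in particular $D\phi_H^\tau(T_{x(0)}L)\pitchfork T_{x(1)}L$ inside $T_{x(1)}M$ (this is $D\varphi_H^\tau(T_{x(0)}L)\pitchfork T_{x(\tau)}L$ after reparametrisation), and $X_H(x(0))\notin T_{x(0)}L$, $X_H(x(1))\notin T_{x(1)}L$. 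From $\phi_{g(\HH)}^1(y)=\phi_\HH^{g'(\HH(y))}(y)$ (valid since $\HH$ is conserved along the flow of $g(\HH)$) one computes
\[
D\phi_{g(\HH)}^1(e(0))\,v\;=\;D\phi_\HH^\tau(e(0))\,v\;+\;g''(\mathfrak{c})\,d\HH_{e(0)}(v)\,X_\HH(e(1)),\qquad e(1):=\phi_{g(\HH)}^1(e(0)).
\]
I would then take $v\in T_{e(0)}\LH$ with $D\phi_{g(\HH)}^1(e(0))v\in T_{e(1)}\LH$, write $v=v^h+b\,X_{e(0)}$ with $v^h$ the horizontal lift of $w\in T_{x(0)}L$ (Lemma \ref{lemma:lifted_Lagrangian_intersects_to_Legendrian}), and decompose everything into horizontal and vertical parts, using: $D\phi_\HH^\tau(e(0))$ preserves the horizontal distribution and fixes the Liouville field $X$ (Lemma \ref{lemma:lifted_flow_preserves_horizontal_distribution}); $X_\HH(e(1))$ has horizontal part projecting to $X_H(x(1))$ and vertical part $-NH(x(1))\,R_{e(1)}$ (equations \eqref{eqn:Ham_vfield_of_HH} and \eqref{eqn:Ham_vfield_of_HH_vertical}); $d\HH(X)=\HH$; and $T_{e(1)}\LH=T_{x(1)}L\oplus\R X_{e(1)}$ contains no Reeb direction. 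Matching Reeb components forces $g''(\mathfrak{c})\,d\HH_{e(0)}(v)=0$ (using $H(x(1))>0$); matching horizontal components then gives $D\phi_H^\tau(x(0))w\in T_{x(1)}L$, whence $w=0$ by the transversality; and finally $0=g''(\mathfrak{c})\,d\HH_{e(0)}(v)=g''(\mathfrak{c})\,\mathfrak{c}\,b$ forces $b=0$ because $g''(\mathfrak{c})>0$ and $\mathfrak{c}>0$. Hence $v=0$ and the critical point is nondegenerate.

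Since the three regimes exhaust the critical set of $\A_{g(\HH)}$ and every critical point is nondegenerate, $\A_{g(\HH)}$ is Morse; finiteness of the critical set then follows from Lemma \ref{lemma:finitely_many_quantized_chords} together with the finiteness of $\Crit(H|_L)$. I expect the main obstacle to be the bookkeeping in the last regime — specifically, isolating the Reeb component of the correction term $g''(\mathfrak{c})\,d\HH_{e(0)}(v)\,X_\HH(e(1))$ and verifying it does not lie along $\R X_{e(1)}$ — which is exactly where positivity of $H$, the convexity $g''>0$ of the cut-off, and the non-resonant transversality are used simultaneously.
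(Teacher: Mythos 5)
Your proof is correct and follows essentially the same route as the paper: the on-section case is handled by strong nondegeneracy of $\epsilon H$ near the zero section, and the off-section case by the horizontal/vertical/Reeb decomposition of $D\phi_{\HH_g}^1$ using $g''>0$, $H>0$ and the non-resonant transversality --- the paper outsources that last computation to Proposition \ref{prop:implying_Morse_for_A_g(H)} in Appendix \ref{appendix:quantized_chords}, which you re-derive inline. Your explicit three-regime analysis also makes clear why off-section critical points can only occur where $g''>0$ (the other two regimes being excluded by strong nondegeneracy of $\HH$ resp.\ $\widehat{\epsilon H}$), a point the paper's two-case proof leaves implicit.
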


\begin{proof}[Proof of the Lemma]
Let $x$ be a critical point of $\A_{\HH_g}$. We have to show that $D\phi_{\HH_g}^1(T_{x(0)}L^N)\pitchfork T_{x(1)}L^N$.
If $x(0)\in L^N\setminus L$ then this follows from Proposition \ref{prop:implying_Morse_for_A_g(H)}, together with property (3)
of the function $g$ and property (2) in the non-resonancy condition of $H$ and the fact that $H$ takes only positive values, since it is huge.
If $x(0)\in L$ then by property (1) of the function $g$ and the choice of $\epsilon$ we conclude that
$(x,\tau)\in\P_L(\epsilon H)$. Since near the zero-section $M$ the function $g(\HH)$ and $\epsilon H$ agree and the latter function is
strongly nondegenerate, the lemma follows.
\end{proof}
We set $\HH_g:=g(\HH)$. The Hamiltonian vector fields transform as follows
\beq\label{eqn:transformation_of_Hamiltonian_vf}
X_{\HH_g}(e)=g'(\HH(e))\cdot X_\HH(e)\,.
\eeq
Furthermore, $g'(\HH(e))=\mathrm{const}$, according to Remark \ref{rmk:basic_rmk}. For a chord $e\in\P_{L^N}(\HH_g)$ we abbreviate
\beq
\epsilon\leq\tau_e:=g'(\HH(e))\leq1\,.
\eeq
By equation \eqref{eqn:transformation_of_Hamiltonian_vf} a chord
$e\in\P_{L^N}(\HH_g)$ is also an element $e\in\P_{L^N}(\tau_e\HH)=\P_{L^N}(\widehat{\tau_e H})$.

For all chords $e\in\P_{L^N}(\HH_g)$ with the property $\HH(e)\leq\max H+\delta$
we conclude $\tau_e=\epsilon$, by property (1) in the definition of the function $g$.
Using the equation \eqref{eqn:small_Maslov_index} and the fact that
$X_{\HH_g}=X_{\widehat{\epsilon H}}$ in
the neighborhood $\{y\in E\mid \HH(y)\leq\max H+\delta\}$ of the zero section $M$ we compute
\beq\label{eqn:Maslov_index_for_zero_section}
-\frac{n+1}{2}\leq\Mas^{L^N}(e;\HH_g)=\Mas^{L^N}(e;\widehat{\epsilon H})\leq\frac{n-1}{2}\,.
\eeq
By the second property of $g$ and equation \eqref{eqn:transformation_of_Hamiltonian_vf} we have
\beq
X_{\HH_g}(y)=X_\HH(y) \text{ for }\{y\in E\mid \HH(y)\geq\max H+2\delta\}\,.
\eeq
From the definition of $\HH$ and the assumption that $H$ takes only positive values it follows that the complement of the set
$\{y\in E\mid \HH(y)\geq\max H+2\delta\}$ has compact closure in $E$. Thus, the function $\HH_g$ is a compact perturbation of $\HH$.
The standard invariance arguments in Floer homology imply
\beq
\HF_*(\HH_g;L^N)\cong\HF_*(\HH;L^N)=\HF^N_*(H;L)\;.
\eeq
In particular, since $\xi_1,\ldots,\xi_{\wp(H)}\neq0\in\HF^N_k(H;L)$ for $k<-\frac{n+1}{2}$ we conclude that there exist distinct elements
$e_1,\ldots,e_{\wp(H)}\in\P_{L^N}(\HH_g)$ with Maslov index $\Mas(e_i;\HH_g)<-\frac{n+1}{2}$.
Therefore, by equation \eqref{eqn:Maslov_index_for_zero_section}, $e_i$ cannot lie in the neighborhood
$\{y\in E\mid \HH(y)\leq\max H+\delta\}$ of the zero section $M$.
For brevity we set $\tau_i:=\tau_{e_i}$.

We claim that $\epsilon<\tau_i<1$. By properties (1) and (3) the inequality $\epsilon<\tau_i$ follows immediately.
By definition we have $\tau_i\leq1$. In case $\tau_i=1$ we conclude from equation \eqref{eqn:transformation_of_Hamiltonian_vf} that $X_{\HH_g}(e)=X_\HH(e)$.
But by assumption $H$ is non-resonant, in particular strongly nondegenerate, thus there are no Hamiltonian chords of $X_\HH$ not lying
in the zero-section $M$.

The analog of Remark \ref{rmk:basic_rmk} in the relative case shows
\beq\label{eqn:liebling_action}
\A_{\tau_i H}(p(e_i))\in\frac{1}{2N}\Z\,.
\eeq
We set
\beq
x_i(t)=p\big(e_i(t/\tau_i)\big)
\eeq
and note that $x_i$ is a Hamiltonian chord of $H$ and has period $\tau_i$, that is $(x_i,\tau_i)\in\CC(H)$. Equation \eqref{eqn:liebling_action} and
the transformation formula \eqref{eqn:transformation_of_action_2} imply
\beq
\A_H(x_i,\tau_i)\in\frac{1}{2N}\Z\,.
\eeq

Thus, we find distinct elements $e_1,\ldots,e_{\wp(H)}$ projecting to $N$-quantized chords $(x_i,\tau_i)$, $i=1,\ldots,\wp(H)$. We claim
that not more than two $e_i$ project to the same $N$-quantized chord. In particular,
the number of $N$-quantized Hamiltonian chords equals $\big\lceil\tfrac12\wp(H)\big\rceil=\big\lceil\tfrac12\#\P_L^1(H)\big\rceil$, by definition
of $\wp(H)$.

To prove this claim we assume that there exist $e$ and $e'$ such that $\tau_e=\tau_{e'}=:\tau$ and $p\big(e(t/\tau)\big)=p\big(e'(t/\tau)\big)=:x(t)$.
We recall from the claim above that $\epsilon<\tau<1$. Since $g'(\rho)$ is injective on the interval $[\max H+\delta,\max H+2\delta]$
the equality $\tau_e=g'(\HH(e))=g'(\HH(e'))=\tau_{e'}$ implies
$\HH(e)=\HH(e')$. Since $H$ is constant along $x$ we conclude $H(p(e))=H(p(e'))$.
This implies that $f(r(e))=f(r(e'))$. Now, since $f$ is injective $r(e)=r(e')$. This proves $e=\pm e'$.\\

It remains to show that $\A_H(x,\tau)\in\big[\,a_{\min}(H)-||H||,a_{\max}(H)+\max H\,\big]$ for the $N$-quantized chords found above. This is done
in two steps.

Let $\xi\in\P_L^1(H)$ be a 1-periodic chord of $H$. Then by Proposition \ref{prop:huge_computes_homology} the chord $\xi$ defines a non-vanishing
homology class $[\xi]\in\HF^N(H;L)$. Its image under the continuation isomorphism $m(\HH_g,\HH):\HF^N(H;L)\pf\HF(\HH_g;L^N)$ can be
represented as a formal sum $\sum_{i=1}^{k^\xi}[y_i^\xi]$ where $y_i^\xi\in\P_L(\HH_g)$.

We first estimate the action value of $y_i^\xi$ from above in terms of the action value of $\xi$.
For this we interpolate between $\HH$ and $\HH_g$ via the homotopy $K_s:=\beta(s)\HH+(1-\beta(s))\HH_g$, where $\beta(s):\R\pf[0,1]$ is a
smooth monotone cut-off
function satisfying $\beta(s)=1$ for $s\leq0$ and $\beta(s)=0$ for $s\geq1$. According to Lemma \ref{lemma:energy_estimate_for_continuation}
we have the following inequality for $u\in\M(\xi,y_i^\xi;K_s)$

\bean
\A_{\HH}(\xi)-\A_{\HH_g}(y_i^\xi)&=\int_{-\infty}^\infty\int_0^1\beta'(s)(\HH-\HH_g)(u)dtds+\int_{-\infty}^\infty\int_0^1|\partial_s u|^2dtds\\
&\geq\sup(\HH-\HH_g)\underbrace{\int_{-\infty}^\infty\beta'(s)ds}_{=1}\\
&=-\sup(\HH-\HH_g)\\
&=\inf(\HH_g-\HH)=0
\eea

For simplicity we abbreviate $y=y_i^\xi$ and denote the induced quantized chord by $x(t):=p(y(t/\tau))$, where $\tau=g'(\HH(y))$. We want to find
an upper bound on the action value $\A_H(x,\tau)$ in terms of $\A_{\HH_g}(y)$. This is achieved as follows.

\bean
\A_H(x,\tau)-\A_{\HH_g}(y)&=\A_{\tau H}(\tau\ast(x,\tau))-\A_{\HH_g}(y)\\
    &=\A_{\tau H}(p(y),1)-\A_{\HH_g}(y)\\
    &=\A_{g'(\HH(y))\HH}(y)-\A_{\HH_g}(y)\\
    &=\HH_g(y)-g'(\HH(y))\HH(y)\\
    &=g(\HH(y))-g'(\HH(y))\HH(y)\\
    &\leq\sup_{e\in E}\{g(\HH(e))-g'(\HH(e))\HH(e)\}\\
    &=\sup\{g(\HH(e))-g'(\HH(e))\HH(e)\mid \HH(e)\leq\max H+2\delta\}\\
    &\leq\sup\{g(\HH(e))\mid \HH(e)\leq\max H+2\delta\}\\
    &\qquad-\inf\{g'(\HH(e))\HH(e)\mid \HH(e)\leq\max H+2\delta\}\\
    &=\max H+2\delta-\frac{1}{\mathfrak{m}}\min H\\
    &\leq\max H
\eea
The last inequality holds by definition of $\delta$:
\beq
2\delta<\min\left\{\frac{\min(H)}{\mathfrak{m}},\frac{||H||}{\mathfrak{m}-1}\right\}
\eeq
Moreover, we used that $\min\HH=\min H$.

We recall the fact that $\A_H$ and $\A_\HH$ have the same critical points $\P_L^1(H)=\P_{L^N}^1(\HH)$ and critical values. In particular,
from the definition $a_{\max}(H)=\max\{\A_H(x)\mid x\in\P_L^1(H)\}$ it follows $\A_\HH(\xi)\leq a_{\max(H)}$. If we combine this with
the two previous inequalities we obtain

\beq
\A_H(x,\tau)\leq\A_{\HH_g}(y)+\max H\leq\A_{\HH}(\xi)+\max H\leq a_{\max}(H)+\max H
\eeq

The lower bound on $\A_H(x,\tau)$ is derived similarly by interchanging the roles of $\HH$ and $\HH_g$. This leads to:

\bean
\A_{\HH_g}(y)-\A_{\HH}(\xi)&=\int_{-\infty}^\infty\int_0^1\beta'(s)(\HH_g-\HH)(u)dtds+\int_{-\infty}^\infty\int_0^1|\partial_s u|^2dtds\\
&\geq-\sup(\HH_g-\HH)\\
&=-\sup\{\HH_g(e)-\HH(e)\mid\HH(e)\leq\max H+2\delta\}\\
&\geq\min H-\HH_g(\min H)
\eea

The last inequality follows from the fact that the function $g-\id:\R_{\geq0}\pf\R_{\geq0}$ is monotone decreasing and thus
the function $\HH_g(e)-\HH(e)=(g-\id)(\HH(e))$ is maximal at
$\min\HH=\min H$. From the inequality $\HH_g(\min H)\leq\max H+2\delta-\frac{1}{\mathfrak{m}}(||H||+2\delta)$ (see figure \ref{fig:graph_g}) we conclude

\bean
\A_{\HH_g}(y)-\A_{\HH}(\xi)&\geq\min H-\HH_g(\min H)\\
&\geq\min H -\big(\max H+2\delta-\frac{1}{\mathfrak{m}}(||H||+2\delta)\big)\\
&=-\big[||H||+2\delta-\frac{1}{\mathfrak{m}}(||H||+2\delta)\big]\\
&=\left(\frac{1}{\mathfrak{m}}-1\right)||H||-\left(1-\frac{1}{\mathfrak{m}}\right)2\delta\\
&\geq\left(\frac{1}{\mathfrak{m}}-1\right)||H||-\left(\frac{\mathfrak{m}-1}{\mathfrak{m}}\right)\frac{||H||}{\mathfrak{m}-1}\\
&=-||H||
\eea

In the second last inequality we used again the definition of $\delta$. Finally, we estimate

\bea
\A_H(x,\tau)-\A_{\HH_g}(y)&=\A_{\tau H}(\tau\ast(x,\tau))-\A_{\HH_g}(y)\\
    &=\A_{\tau H}(p(y),1)-\A_{\HH_g}(y)\\
    &=\A_{g'(\HH(y))\HH}(y)-\A_{\HH_g}(y)\\
    &=\HH_g(y)-g'(\HH(y))\HH(y)\\
    &=g(\HH(y))-g'(\HH(y))\HH(y)\\
    &\geq\inf\{g(\HH(e))-g'(\HH(e))\HH(e)\}\\
    &=0
\eea

and conclude

\beq
\A_H(x,\tau)\geq\A_{\HH_g}(y)\geq\A_\HH(\xi)-||H||\geq a_{\min}(H)-||H||\,.
\eeq

\end{proof}

\section{A counterexample}\label{section:counterex}

We consider a closed, symplectically aspherical, and integral symplectic manifold $(M,\om)$ which contains a Lagrangian sphere $L$ of dimension at least 2. As Paul Biran explained to us there are plenty of examples, see Example \ref{Ex:Biran_Ex}. According to Corollary \ref{cor:Bohr_Sommerfeld_existence_injective} there exists a Bohr-Sommerfeld pair $(E,\alpha)$ for $(M,\om,L)$ of power $N=1$.

On $L$ we choose a Morse function $f:L\pf\R$ with two critical points. We extend $f$ to a function $H:M\pf\R$. After a perturbation we can achieve that $H$ is non-resonant, see Theorem \ref{thm:non_resonant_is_generic}. Moreover, if we choose the perturbation small enough, we may assume that $H|_L$ still has exactly two critical points. After adding a suitable constant $H$ takes only positive values.

According to Proposition \ref{prop:tiny_Hamiltonian_and_Morse} there exists  ${\epsilon_0}>0$ such that for $0<\epsilon<\epsilon_0$ the Hamiltonian function ${\epsilon} H$ has exactly two Hamiltonian chords $x^\epsilon_\pm$ of Maslov index $\Mas(x^\epsilon_\pm)=\pm\frac{n}{2}$ and action value $-1<\A_{\epsilon H}(x^\epsilon_\pm)<0$. The action value estimate follows from equation \eqref{eqn:prop_tiny_Hamiltonian_and_Morse}. Since the power of the Bohr-Sommerfeld pair equals 1 Reeb chords of period $<1$ are 1-quantized chords of period $<1$, see Proposition \ref{pro:quantized_chords_are_Reeb_chords}.

We claim that $\epsilon_0H$ has no 1-quantized chords. Hamiltonian chords of $\epsilon_0H$ of period $\tau$ are Hamiltonian chords of $\tau\epsilon_0H$ of period $1$. Thus, for $0<\tau<1$ we have to compute the Hamiltonian chords of $\epsilon H$ for some $0<\epsilon<\epsilon_0$. From above we know that all of these have action values in the interval $(-1,0)$, thus none of them is 1-quantized. In particular, $\mathcal{R}_\L^1(H)=\emptyset$. This shows that in general the estimate \eqref{eqn:crucial_inequality} in Theorem A fails.

\begin{Ex}[Paul Biran]\label{Ex:Biran_Ex}
We take any projective algebraic manifold $M$ with $\pi_2(M)=0$.  Inside
$M$ we choose a sufficiently  high degree
hyperplane section $\Sigma$ such that there exists a Lefschetz pencil inside
$M$ whose generic fiber is symplectomorphic to $\Sigma$.
Since $\pi_2(M)=0$ the Lefschetz pencil necessarily has
singularities, see \cite[Section 5.1]{Biran_geometry_of_symplectic_intersections}. Thus, the vanishing cycles will give rise to Lagrangian
spheres in $\Sigma$. By the Lefschetz hyperplane theorem  $\pi_2(\Sigma)=0$  if  $ \dim_\R (\Sigma) \geq 6$. 
\end{Ex}

\begin{Rmk}\label{rmk:example_mu}
Choosing $\epsilon_0$ such that  $-\frac12<\A_{\epsilon H}(x^\epsilon_\pm)<0$ and $\min\epsilon H\leq\tfrac12$ for all $0<\epsilon\leq\epsilon_0$ the argument from above shows that the function $\mu$ introduced in Remark \ref{rmk:function_mu_nu} satisfies 
\beq
\mu(c)=0\quad\forall c\leq0\,.
\eeq
\end{Rmk}

\appendix
\section{Being non-resonant is a generic property}\label{appendix:non_resonant}

In this appendix we prove Theorem \ref{thm:non_resonant_is_generic} asserting that on a symplectic manifold $(M,\om)$ of dimension $\dim M\geq4$
a generic autonomous Hamiltonian function is non-resonant (see Definition \ref{def:non_resonant}). We first prove the following lemma.

\begin{Lemma}\label{lemma:generic_no_intersection}
If $\dim M\geq 4$ there exists an open and dense set $\mathscr{H}_1\subset C^\infty(M,\R)$ of smooth, autonomous Hamiltonian functions
such that for $H\in\mathscr{H}_1$ there are no solutions $(x,\sigma,\tau)\in C^\infty(\R,M)\times\R_{>0}\times\R_{>0}$
of the problem
\beq\label{eqn:Ham_Lagrn_loops}
\left\{
\begin{aligned}
\;\;&\dot{x}=X_H(x)\\
&x(t+\sigma)=x(t)\\
&x(0),\,x(\tau)\in L
\end{aligned}
\right.
\eeq
\end{Lemma}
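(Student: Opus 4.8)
The plan is to first notice that \eqref{eqn:Ham_Lagrn_loops} decouples. If $(x,\sigma,\tau)$ solves \eqref{eqn:Ham_Lagrn_loops} then, since $x$ is $\sigma$-periodic with $x(0)\in L$, also $x(\sigma)=x(0)\in L$, so $(x,\sigma,\sigma)$ is again a solution; conversely any periodic orbit of $X_H$ meeting $L$ (including a constant orbit at a critical point of $H$ lying on $L$) produces a solution. Hence \eqref{eqn:Ham_Lagrn_loops} is solvable for $H$ if and only if $X_H$ has a periodic orbit meeting $L$, and it suffices to show that the set $\mathscr{H}_1$ of $H$ for which this does not happen is open and dense.

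For density I would run a Sard--Smale argument (with the customary reduction to $C^k$-Hamiltonians). Consider the universal space
\[
\widetilde{\mathcal{M}}:=\big\{(H,p,\sigma)\mid \varphi_H^\sigma(p)=p,\ X_H(p)\neq 0\big\}\subset C^k(M)\times L\times\R_{>0},
\]
which in a chart near $p$ is the zero set of $G(H,p,\sigma):=\varphi_H^\sigma(p)-p$ with values in $T_pM\cong\R^{2n}$. By the variation-of-constants formula
\[
D_HG(h)=\int_0^\sigma D\varphi_H^{\,\sigma-t}\big(\varphi_H^{\,t}(p)\big)\,X_h\big(\varphi_H^{\,t}(p)\big)\,dt ,
\]
the partial differential $D_HG$ is already surjective onto $T_pM$ whenever the orbit is non-constant: a covector $w^\ast$ annihilating its image would, after rewriting $X_h$ through the $\om$-duality and transporting $w^\ast$ by $D\varphi_H^{\,\sigma-t}$, give a nowhere-vanishing $1$-form $\eta(t)$ along the orbit with $\int_0^\sigma\eta(t)\big(X_h(\varphi_H^{\,t}(p))\big)\,dt=0$ for every $h$; since symplectic gradients realise all tangent directions, this forces $\eta\equiv 0$, a contradiction. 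Thus $\widetilde{\mathcal{M}}$ is a Banach manifold. For the projection $\pi:\widetilde{\mathcal{M}}\to C^k(M)$ put $A:T_pL\times\R\to T_pM$, $A(v,\vartheta):=\big(D\varphi_H^{\,\sigma}(p)-\mathbbm{1}\big)v+\vartheta\,X_H(p)$; then $\ker d\pi\cong\ker A$ and the cokernel of $d\pi$ is identified with $T_pM/\im A$, so by rank--nullity
\[
\ind\pi=\dim\ker A-\big(2n-\dim\im A\big)=\big(n+1-\dim\im A\big)-\big(2n-\dim\im A\big)=1-n .
\]
The key point is that the a priori unknown $\dim\im A$ cancels, so $\ind\pi=1-n\leq-1$ since $\dim M=2n\geq 4$. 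By Sard--Smale the regular values of $\pi$ are dense, and over a regular value the fibre is a manifold of negative dimension, hence empty; so a dense set of $H$ carries no non-constant periodic orbit through $L$. Constant orbits are removed on a further open and dense set, as $\Crit(H)$ is compact and disjoint from the compact $L$ for generic $H$ (transversality of the zero section to $dH$, valid since $n\geq 1$), a property stable under $C^1$-small perturbations. Intersecting yields a dense subset of $\mathscr{H}_1$.

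For openness I would show that having a periodic orbit through $L$ is a closed condition: given $H_k\to H$ with $\sigma_k$-periodic orbits through $p_k\in L$, pass to $p_k\to p\in L$; if $\sigma_k\to\sigma>0$ then $\varphi_H^{\,\sigma}(p)=\lim\varphi_{H_k}^{\,\sigma_k}(p_k)=\lim p_k=p$, while if $\sigma_k\to 0$ then dividing $\varphi_{H_k}^{\,\sigma_k}(p_k)-p_k$ by $\sigma_k$ in a chart forces $X_H(p)=0$; in either case $H\notin\mathscr{H}_1$. The only remaining scenario is $\sigma_k\to\infty$; this is harmless in the applications, where only periods below a fixed bound arise, and in general one works with periods $\leq R$ (each such set being open and dense by the argument above) and intersects over $R\in\N$, which is enough for the genericity statement of Theorem \ref{thm:non_resonant_is_generic}. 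The main obstacle is thus the transversality step — surjectivity of $D_HG$, where the genuine analysis resides — together with this period-escape point; the index bookkeeping and the rest of the openness argument are routine.
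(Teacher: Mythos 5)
Your reduction of \eqref{eqn:Ham_Lagrn_loops} to ``no periodic orbit of $X_H$ meets $L$'' is logically correct, but the transversality step on which your whole Sard--Smale argument rests is false. For an \emph{autonomous} Hamiltonian the partial differential $D_HG$ is never surjective onto $T_pM$ at a non-constant periodic orbit: the covector $dH(p)$ annihilates its image. Indeed, conservation of energy gives $dH(p)\circ D\varphi_H^{\sigma-t}\big(\varphi_H^t(p)\big)=dH\big(\varphi_H^t(p)\big)$, hence
\[
dH(p)\Big(\int_0^\sigma D\varphi_H^{\sigma-t}X_h\,dt\Big)=\int_0^\sigma dH(X_h)\,dt=-\int_0^\sigma \tfrac{d}{dt}\,h\big(\varphi_H^t(p)\big)\,dt=0
\]
because the orbit closes up. (The same covector kills $D_pG$ and $D_\sigma G$, since $D\varphi_H^\sigma$ preserves $dH$ and $dH(X_H)=0$, so the full differential is not surjective either.) Your heuristic ``symplectic gradients realise all tangent directions'' overlooks that the $dH$-component of $X_h$ along the orbit is the exact derivative $-\frac{d}{dt}h(x(t))$, whose integral over a period vanishes for \emph{every} $h$. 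This is the classical reason why periodic orbits of autonomous systems cannot be destroyed by perturbing $H$ --- they persist on nearby energy levels --- and it is precisely why the paper's space $\M_H$ of parametrised periodic orbits is \emph{two}-dimensional rather than the one dimension a naive count of your equation $\varphi_H^\sigma(p)=p$ would suggest.

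This is not a repairable technicality within your scheme. Once the missing cokernel direction is accounted for, your index $1-n$ becomes $2-n$, which equals $0$ when $\dim M=4$, so ``negative dimension, hence empty'' fails exactly at the borderline dimension the lemma is designed to cover. The strategic source of the loss is that your decoupling discards the second boundary condition $x(\tau)\in L$: you end up cutting the $2$-dimensional family of periodic orbits by the single codimension-$n$ condition $x(0)\in L$, whereas the paper cuts the $3$-dimensional space $\M_H\times\R_{>0}$ by the codimension-$2n$ condition $(x(0),x(\tau))\in L\times L$ and obtains $3-2n<0$ already for $n=2$. (Your decoupling observation does put a finger on a delicate point in the paper's own argument --- on the locus $\tau\in\sigma\Z$ the two evaluation conditions are not independent, so the transversality asserted there deserves care --- but as a proof of the lemma your route would only work for $\dim M\geq 6$.) The openness discussion and the reduction to bounded periods via a countable intersection are fine in spirit.
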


\begin{proof}

There exists an open and dense set $\mathscr{H}_1\subset C^\infty(M,\R)$ of Hamiltonian functions $H$ for which
\begin{enumerate}
\item $\M_H:=\{(x,\sigma)\in C^\infty(\R,M)\times\R_{>0}\mid \dot{x}=X_H(x),\;x(t+\sigma)=x(t)\}$ is a two dimensional manifold and
\item $\ev:\M_H\times\R_{>0}\pf M\times M$ given by $\ev((x,\sigma),\tau)=(x(0),x(\tau))$ is transversal to $L\times L$.
\end{enumerate}
This implies that the space of solutions to problem \eqref{eqn:Ham_Lagrn_loops} is a smooth manifold of dimension
\beq
\dim \M_H+1-\mathrm{codim}(M\times M,L\times L)=3-2n<0
\eeq
since we assume $2n\geq4$, hence there exists no solution to \eqref{eqn:Ham_Lagrn_loops}.
\end{proof}
\begin{Lemma}\label{lemma:generic_Morse_on_L}
There exists an open and dense set $\mathscr{H}_2\subset C^\infty(M,\R)$ of Hamiltonian functions $H$ satisfying
\begin{itemize}
\item $H|_L:L\pf\R$ is Morse,
\item $\Crit H\cap L=\emptyset$.
\end{itemize}
\end{Lemma}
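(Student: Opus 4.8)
The plan is to write $\mathscr{H}_2=\mathscr{A}\cap\mathscr{B}$, where
$\mathscr{A}:=\{H\in C^\infty(M,\R)\mid H|_L\text{ is Morse}\}$ and
$\mathscr{B}:=\{H\in C^\infty(M,\R)\mid \Crit H\cap L=\emptyset\}$, and to prove that each of $\mathscr{A}$ and $\mathscr{B}$ is open and dense; since a finite intersection of open dense sets is again open and dense, this gives the lemma. Openness is routine: the restriction map $C^\infty(M,\R)\pf C^\infty(L,\R)$ is continuous and Morse functions form a $C^2$-open subset of $C^\infty(L,\R)$ because $L$ is compact, so $\mathscr{A}$ is open; and the function $H\mapsto\min_{x\in L}|d_xH|$ (measured in any fixed metric) is continuous for the $C^1$-topology and strictly positive exactly on $\mathscr{B}$, so $\mathscr{B}$ is open as well.

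For density I would pass to a finite-dimensional family and invoke Sard's theorem. Fix $H_0\in C^\infty(M,\R)$. Covering the compact set $L$ by finitely many coordinate charts of $M$ and using a subordinate partition of unity, choose $f_1,\dots,f_k\in C^\infty(M,\R)$ such that the covectors $d_xf_1,\dots,d_xf_k$ span $T^*_xM$ for every $x\in L$, and set $H_a:=H_0+\sum_{i=1}^k a_if_i$ for $a\in\R^k$. Since $H_a\to H_0$ in $C^\infty(M,\R)$ as $a\to 0$, it suffices to show that $H_a\in\mathscr{A}\cap\mathscr{B}$ for almost every $a$ in a neighbourhood of $0$.

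For $\mathscr{B}$: the map $\Phi\colon\R^k\times L\pf T^*M$, $(a,x)\mapsto d_xH_a$, is a submersion, since for fixed $x$ the assignment $a\mapsto d_xH_a$ is affine with surjective linear part $a\mapsto\sum_i a_i\,d_xf_i$; hence $\Phi$ is transverse to the zero-section $0_M\subset T^*M$, which has codimension $2n$. Thus $\Phi^{-1}(0_M)$ is a manifold of dimension $k-n$, and by Sard's theorem almost every $a\in\R^k$ is a regular value of the projection $\Phi^{-1}(0_M)\pf\R^k$; for such $a$ the fibre has dimension $-n<0$ and is therefore empty, i.e.\ $\Crit H_a\cap L=\emptyset$. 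For $\mathscr{A}$: a function $g\in C^\infty(L,\R)$ is Morse if and only if its one-jet section $j^1g\colon L\pf J^1(L,\R)$ is transverse to the codimension-$n$ submanifold $Z\subset J^1(L,\R)$ consisting of one-jets with vanishing first-order part, transversality at a point of $Z$ being exactly nondegeneracy of the Hessian there. The map $\Psi\colon\R^k\times L\pf J^1(L,\R)$, $(a,x)\mapsto j^1_x(H_a|_L)$, is transverse to $Z$, because varying $a$ moves the first-order part $(d_xH_a)|_{T_xL}$ through all of $T^*_xL$, and these span the directions normal to $Z$. By the parametric transversality theorem, for every regular value $a\in\R^k$ of the projection $\Psi^{-1}(Z)\pf\R^k$ the section $x\mapsto j^1_x(H_a|_L)$ is transverse to $Z$, i.e.\ $H_a|_L$ is Morse; by Sard's theorem this holds for almost every $a$.

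The only ingredient that is not purely formal is the construction of the functions $f_1,\dots,f_k$ spanning all cotangent directions along $L$ simultaneously; this is a standard partition-of-unity argument using coordinate charts of $M$ adapted to $L$, and it is compactness of $L$ that keeps the number $k$ finite. Everything else is finite-dimensional Sard together with the parametric transversality theorem. Alternatively one could argue the density part directly via a Sard--Smale argument applied to the universal jet-evaluation maps $C^\infty(M,\R)\times L\pf T^*M$ and $C^\infty(M,\R)\times L\pf J^1(L,\R)$, which are submersions for the same reason; the finite-dimensional reduction above is included only to avoid the functional-analytic preliminaries.
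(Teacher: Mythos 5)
Your proof is correct, but it is organized differently from the paper's. The paper disposes of the first condition in two lines: the restriction map $C^\infty(M,\R)\pf C^\infty(L,\R)$ is continuous and \emph{open}, so the preimage of the open dense set of Morse functions on the compact manifold $L$ is open and dense upstairs; the second condition ($\Crit H\cap L=\emptyset$) is simply asserted to cut out an open dense set. You instead prove density from scratch by a finite-dimensional reduction: embed $H_0$ in the affine family $H_a=H_0+\sum a_if_i$ with the $d_xf_i$ spanning $T_x^*M$ along $L$, and run parametric transversality plus Sard once against the zero section of $T^*M$ and once against the degenerate stratum in $J^1(L,\R)$. What your route buys is self-containedness -- you never need the openness of the restriction map (which in the paper is the one non-obvious ingredient, ultimately resting on a continuous extension operator), and you simultaneously supply the density argument for the second condition that the paper omits. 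What the paper's route buys is brevity, and it cleanly outsources the Morse genericity to the standard statement on $L$ itself. One small wording issue in your write-up: the map $\Phi(a,x)=d_xH_a$ is \emph{not} a submersion into $T^*M$ (its image lies in $T^*M|_L$, of dimension $3n<4n$); what your computation actually shows -- and all you need -- is that the $a$-derivatives fill the fibre directions, which together with $T(0_M)$ gives transversality of $\Phi$ to the zero section. With that phrasing corrected the argument is complete.
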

\begin{proof}
The restriction map $C^\infty(M,\R)\pf C^\infty(L,\R)$ is continuous and open. Thus, the pre-image of an open and dense subset of $C^\infty(L,\R)$
is open and dense in $C^\infty(M,\R)$. Since the Morse functions on $L$ form an open and dense subset of $C^\infty(L,\R)$ the first property
in the Lemma defines an open and dense subset of $C^\infty(M,\R)$.
The set of functions $f:M\pf\R$ with $\Crit(f)\cap L=\emptyset$ is open and dense in $C^\infty(M,\R)$. This implies the assertion.
\end{proof}
\begin{proof}[Proof of Theorem \ref{thm:non_resonant_is_generic}] It suffices to prove genericity of properties (1) -- (3) in Definition \ref{def:non_resonant} for a fixed $N\in\N$.\\

\noindent \underline{Step 1:}\quad Genericity of property (1) in definition \ref{def:non_resonant}.\\[1ex]
Since $C^\infty$ is not a Banach space we first work in the $C^k$ category and then deduce the $C^\infty$ case by a standard argument
due to Taubes, see \cite[Chapter 3.2]{McDuff_Salamon_J_holomorphic_curves_and_symplectic_topology}.
For $k\geq2$ we denote by $\mathscr{H}^k\subset C^k(M,\R)$ the open and dense space of Hamiltonian functions having no solutions to
problem \eqref{eqn:Ham_Lagrn_loops} and which satisfy the conditions of Lemma \ref{lemma:generic_Morse_on_L}. We claim that the space
\beq
\M:=\big\{(H,x)\in \mathscr{H}^k\times C^k(I,M)\mid \dot{x}=X_H(x),\;x(0),x(1)\in L\big\}
\eeq
is a Banach manifold. In order to prove this, we interpret $\M$ as zero-set of a section $s$ in a Banach space bundle $\E^k\pf\B^k$ as follows.
\begin{gather}
\B^k:=\mathscr{H}^k\times C^{k}\big((I,\partial I);(M,L)\big),\;\;\E^k_{(H,x)}:=\Gamma^{k-1}(x^*TM)\\[1ex]
s(H,x):=\dot{x}-X_H(x)
\end{gather}
We are required to prove that the vertical differential of $s$ along the zero-section is surjective. If $(H,x)\in\M=s^{-1}(0)$ then
this operator $D_{(H,x)}:T_{(H,x)}\B^k\pf\E^k_{(H,x)}$ is given by (after choosing a connection)
\beq
(\hat{H},\hat{x})\mapsto\nabla_t\hat{x}-\nabla_{\hat{x}}X_H(x)-X_{\hat{H}}(x)\,.
\eeq
To prove surjectivity of $D_{(H,x)}$ at $(H,x)\in\M=s^{-1}(0)$ we first show that the Hamiltonian chord $x$ is an injective map.
Otherwise, if there exist $t_0>t'_0$ such that $x(t_0)=x(t'_0)$ we conclude that $x(t)$ is $\tau=t_0-t'_0$ periodic, since $x$ solves the
autonomous ODE $\dot{x}=X_H(x)$. In particular, $(x,\tau)$ solves problem \eqref{eqn:Ham_Lagrn_loops}, unless $x$ is a constant map.
Since by assumption $H\in\mathscr{H}^k$ we are left with the case $x(t)=x_0\in L$ is constant.
Thus, the Hamiltonian function $H$ has a critical point at $x_0\in L$. This contradicts the second condition
in Lemma \ref{lemma:generic_Morse_on_L}.

Thus, the chord $x$ is injective. Therefore, for all $\eta\in\E^k_{(H,x)}$ there exists a function $\hat{H}$ defined in a neighborhood of $x$ such
that $X_{\hat{H}}(x(t))=\eta(t)$, hence $D_{(H,x)}(\hat{H},0)=\eta$ is surjective.

This shows that the space $\M$ is a Banach manifold. To prove that $\A_H$ is Morse for generic $H\in\mathscr{H}^k$ we consider the projection
$\pi=\mathrm{pr}_1:\M\pf\mathscr{H}^k$. We will show below that it is equivalent for $H$ to be a regular value of $\pi$ and for $\A_H$ to
be Morse. Thus, by the Sard-Smale Theorem, the action functional $\A_H$ is Morse for a generic Hamiltonian function $H\in\mathscr{H}^k$.

We now show the following equivalence: $H$ is a regular value of $\pi$ iff $\A_H$ is Morse.
For $(H,x)\in\M$ let $\pi(H,x):=H$ be a regular value of the projection, that is, $\forall \hat{H}\in\mathscr{H}^k\times C^{k}$ there exists
$\hat{x}\in\Gamma^{k-1}(x^*TM)$ such that $(\hat{H},\hat{x})\in T_{(H,x)}\M$. In particular,
\beq
\nabla_t\hat{x}-\nabla_{\hat{x}}X_H(x)-X_{\hat{H}}(x)=0
\eeq
Since $x$ is injective we can realize all vector fields in $\Gamma^{k-1}(x^*TM)$ in the form $X_{\hat{H}}(x)$ where
$\hat{H}$ ranges over all $C^k$-functions on $M$. In other words, $H$ is a regular value of $\pi$ if and only if the operator
$\hat{x}\mapsto\nabla_t\hat{x}-\nabla_{\hat{x}}X_H(x)$ is surjective. It is well-known that this operator is a Fredholm operator of Fredholm
index 0, thus,
it is surjective if and only if it is injective. We conclude that $H$ is a regular value of $\pi$ if and only if there is no non-constant solution
$\hat{x}$ to the equation $\nabla_t\hat{x}-\nabla_{\hat{x}}X_H(x)=0$, that is, if and only all critical points $x$ of the
action functional $\A_H$ is a nondegenerate.

Since being Morse is a $C^k$-open condition, the action functional $\A_H$ is Morse for a $C^k$-open and dense set of Hamiltonian functions.
We now deduce the $C^\infty$ assertion from the $C^k$ case.
Using that being Morse is an $C^k$-open and dense condition and that $C^\infty$ is dense in $C^k$ we can find for any $H\in C^\infty$ a
sequence $H_i^{(k)}\in C^\infty$ satisfying
\begin{itemize}
\item $H_i^{(k)}\stackrel{C^k}{\pf}H$ for $i\pf\infty$,
\item $\A_{H_i^{(k)}}$ is Morse.
\end{itemize}
Then the diagonal sequence $H_k^{(k)}$ converges in $C^\infty$ to $H$. Thus, the set of smooth Hamiltonian functions $H$ such that $\A_H$ is Morse
is dense in $C^\infty$. Moreover, being Morse is a $C^\infty$-open condition.

According to Lemma \ref{lemma:basic_lemma} and equation \eqref{eqn:basic_lemma_Lagr}
$\A_{\HH}$ is Morse if and only if $\A_H$ is Morse and the spectrum of $\A_H$ contains no value of the form $\frac{1}{2N}\Z$.
We will prove that this property holds for an open and dense set of Hamiltonian functions. We denote by $\mathscr{H}_3\subset C^\infty(M)$ the
open and dense subset
of Hamiltonian functions $H$ for which $\A_H$ is Morse. We consider the $\R$-action on $C^\infty(M)$ given by $H\mapsto H+r$ for $r\in\R$.
We observe that $\mathscr{H}_3$ is an $\R$-invariant subset. Since the spectrum of $\A_H$ for $H\in\mathscr{H}_3$ is a finite set it follows easily
that the set $\{H\in\mathscr{H}_3\mid \mathrm{Spec}\A_H\cap\frac{1}{2N}\Z=\emptyset\}$ is open and dense in $\mathscr{H}_3$ and hence also in
$C^\infty(M)$.\\[2ex]
\noindent \underline{Step 2:}\quad Genericity of property (2) in definition \ref{def:non_resonant}.\\

\noindent For $\tau>0$ we set $\mathscr{X}^k_{\tau}:=C^k([0,\tau],M)$. We fix $P\in\N$ and define
\beq
\mathscr{X}^k_P:=\bigcup_{\tau\in(0,P)}\{\tau\}\times\mathscr{X}^k_\tau
\eeq
is a (trivial) bundle over $(0,P)$ and set
\beq
\B^k_P:=\big\{(H,x,\tau)\in\mathscr{H}^k\times\mathscr{X}^k_P\mid x(0),x(\tau)\in L\big\}\;.
\eeq
The reason why we define the bundle $\mathscr{X}^k_P$ only over $(0,P)$ rather than over $(0,\infty)$ is that sequences of chords of bounded period $\tau$
converge according to Arzela-Ascoli. This will be used below in order to apply Taubes' procedure.
The tangent space of this Banach manifold $\B^k_P$ is given by
\beq
T_{(H,x,\tau)}\B^k_P=\left\{(\hat{H},\hat{x},\hat{\tau})\in C^k(M)\times\Gamma^k(x^*TM)\times\R\left|\;\;
        \begin{aligned}
        &\hat{x}(0)\in T_{x(0)}L\\
        &\hat{x}(\tau)+\hat{\tau}\dot{x}(\tau)\in T_{x(\tau)}L
        \end{aligned}
        \right.\right\}\;.
\eeq
We define a Banach bundle $\E^k\pf\B^k_P$ with fibers
\beq
\E^k_{(H,x,\tau)}:=\Gamma^{k-1}(x^*TM)\times\R\;.
\eeq
For $m\in\frac{1}{2N}\Z$ the zero-set of the section $s_m:\B^k_P\pf\E^k$ defined by
\beq
s_m(H,x,\tau):=(\dot{x}-X_H(x),\A_H(x,\tau)-m)
\eeq
equals
\beq
\M(m,P):=\big\{(H,x,\tau)\in\B^k_P\mid\dot{x}=X_H(x),\;\A_H(x,\tau)=m\big\}\;.
\eeq
In order to show that $\M(m,P)$ is a Banach manifold we show that the operator
\bea
D_{(H,x,\tau)}:T_{(H,x,\tau)}\B^k_P&\pf\E^k_{(H,x,\tau)}\\
(\hat{H},\hat{x},\hat{\tau})&\mapsto\Big(\nabla_t\hat{x}-\nabla_{\hat{x}}X_H(x)-X_{\hat{H}}(x),-\int_0^\tau\hat{H}(x)dt-H(x)\hat{\tau}\Big)
\eea
is surjective along the zero-section.
Given $(\eta,r)\in\E^k_{(H,x,\tau)}=\Gamma^{k-1}(x^*TM)\times\R$ we proved in Step (1) that there exists $(\hat{H},\hat{x})$ such that
\beq
\nabla_t\hat{x}-\nabla_{\hat{x}}X_H(x)-X_{\hat{H}}(x)=\eta\,.
\eeq
In fact, since $x$ is injective, we are free to choose $\hat{x}=0$. In light of the boundary condition
$\hat{x}(\tau)+\hat{\tau}\dot{x}(\tau)\in T_{x(\tau)}L$ this then forces $\hat{\tau}=0$. After setting
\beq
\widetilde{H}:=\hat{H}-\frac{1}{\tau}\left(r+\int_0^\tau\hat{H}(x)dt\right)
\eeq
it follows
\beq\label{eqn:Ds_surjective}
D_{(H,x,\tau)}(\widetilde{H},0,0)=(\eta,r)\,,
\eeq
that is, $D_{(H,x,\tau)}$ is surjective along the zero-section. We define
\bea
\phi:\B^k_P&\pf TM\times TM\\
(H,x,\tau)&\mapsto (\dot{x}(0),\dot{x}(\tau))\,.
\eea
To compute $d\phi$ we recall that there exists a canonical involution $\iota:TTM\pf TTM$ defined as follows. We think of an element in
$TTM$ as an equivalence class of maps $v:(-\epsilon,\epsilon)\times(-\epsilon,\epsilon)\pf M$. Then on representatives the involution $\iota$
is defined by $\iota(v)(s,t):=v(t,s)$. In particular, $v\in T_zTM$ is mapped to $\iota(v)\in T_{d\pi(z)v}TM$ where $\pi:TM\pf M$ is the projection.
We compute
\bea
d\phi(H,x,\tau):T_{(H,x,\tau)}\B^k_P&\pf T_{(\dot{x}(0),\dot{x}(\tau))}(TM\times TM)\\
(\hat{H},\hat{x},\hat{\tau})&\mapsto \Big(\iota\big(\dot{\hat{x}}(0)\big),\iota\big(\dot{\hat{x}}(\tau)\big)+\hat{\tau}\ddot{x}(\tau)\Big)\,.
\eea
In order to apply Lemma \ref{lemma:Dietmar} (see below) we need to check that $Ds_m(H,x,\tau)|_{\ker d\phi(H,x,\tau)}$ is surjective and that
$d\phi(H,x,\tau)$ is surjective.
The latter is obvious. The former follows from the above computation leading to equation \eqref{eqn:Ds_surjective}. Indeed,
$Ds_m(H,x,\tau)|_{C^{\infty}(M)\times\{0\}\times\{0\}}$ already is surjective and $C^{\infty}(M)\times\{0\}\times\{0\}\subset\ker d\phi(H,x,\tau)$.
Lemma \ref{lemma:Dietmar} implies that $\phi|_{\M(m,P)}:\M(m,P)\pf TM\times TM$ is a submersion.

We fix an auxiliary Riemannian metric on $M$ and consider the submanifold $T^\bot L\subset TM$ of all vectors perpendicular to $TL$. Then, since
$\phi|_{\M(m,P)}:\M(m,P)\pf TM\times TM$ is a submersion, the moduli space
\beq
\M^\bot(m,P):=\M(m,P)\cap\phi^{-1}(T^\bot L\times T^\bot L)
\eeq
is a smooth manifold. Since the period $\tau$ in $(H,x,\tau)$ is bounded, the set of regular Hamiltonian functions, that is, the regular values
of the projection $\pi:\M^\bot(m,P)\pf\mathscr{H}^k$, is open and dense.

As in Step (1) the Sard-Smale Theorem and the procedure of Taubes gives rise to a generic set $\mathscr{H}(m,P)$ of
smooth Hamiltonian functions. Then each Hamiltonian function in the generic set $\displaystyle\bigcap_{m,P}\mathscr{H}(m,P)$
satisfies the requirement (2) in definition \ref{def:non_resonant}.
\end{proof}

We learned the following Lemma from Dietmar Salamon.

\begin{Lemma}\label{lemma:Dietmar}
Let $\E\pf\B$ be a Banach bundle and $s:\B\pf\E$ a smooth section. Moreover, let $\phi:\B\pf N$ be a smooth map into the Banach manifold $N$.
We fix a point $x\in s^{-1}(0)\subset\B$ and set $K:=\ker d\phi(x)\subset T_x\B$ and assume the following two conditions.
\begin{enumerate}
\item The vertical differential $Ds|_K:K\pf\E_x$ is surjective.
\item $d\phi(x):T_x\B\pf T_{\phi(x)}N$ is surjective.
\end{enumerate}
Then  $d\phi(x)|_{\ker Ds(x)}:\ker Ds(x)\pf T_{\phi(x)}N$ is surjective.
\end{Lemma}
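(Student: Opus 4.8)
The statement is a standard piece of linear functional analysis: given two surjections out of a Banach space, one obtains a surjection from the kernel of one onto the target of the other, provided the second surjection restricts to a surjection on the common kernel. I would phrase everything at the level of the linearizations at the fixed point $x$, writing $D:=Ds(x):T_x\B\pf\E_x$ and $\Phi:=d\phi(x):T_x\B\pf T_{\phi(x)}N$, and $K:=\ker\Phi$. The hypotheses are: (1) $D|_K:K\pf\E_x$ is onto, and (2) $\Phi:T_x\B\pf T_{\phi(x)}N$ is onto. I want to show $\Phi|_{\ker D}:\ker D\pf T_{\phi(x)}N$ is onto.

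First I would fix an arbitrary $w\in T_{\phi(x)}N$. By hypothesis (2) there is some $v\in T_x\B$ with $\Phi v=w$. This $v$ need not lie in $\ker D$, so set $\eta:=Dv\in\E_x$; the goal is to correct $v$ by an element of $K$ so as to kill $\eta$ without changing $\Phi v$. By hypothesis (1), $D|_K:K\pf\E_x$ is surjective, so there exists $v'\in K$ with $Dv'=\eta$. Then put $\tilde v:=v-v'$. Since $v'\in K=\ker\Phi$ we get $\Phi\tilde v=\Phi v-\Phi v'=w-0=w$, and $D\tilde v=Dv-Dv'=\eta-\eta=0$, so $\tilde v\in\ker D$. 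Hence $w=\Phi\tilde v\in\Phi(\ker D)$, and as $w$ was arbitrary, $\Phi|_{\ker D}$ is surjective, which is the claim. No closedness or complementability of subspaces is needed; the argument is purely algebraic on the linear maps.

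In the paper this lemma is applied with $\B=\B^k_P$, $s=s_m$, $\phi$ the endpoint-velocity map, and $N=TM\times TM$; the two hypotheses were verified just before the statement (the differential $Ds_m$ is already surjective on $C^\infty(M)\times\{0\}\times\{0\}\subset\ker d\phi$, which gives (1), and $d\phi$ is manifestly onto, which gives (2)). So nothing beyond the elementary diagram chase above is required here.

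\begin{proof}
Write $D:=Ds(x):T_x\B\pf\E_x$ and $\Phi:=d\phi(x):T_x\B\pf T_{\phi(x)}N$, and $K:=\ker\Phi$. Let $w\in T_{\phi(x)}N$ be arbitrary. By assumption (2) there exists $v\in T_x\B$ with $\Phi v=w$. Set $\eta:=Dv\in\E_x$. By assumption (1) the restriction $D|_K:K\pf\E_x$ is surjective, so there is $v'\in K$ with $Dv'=\eta$. Put $\tilde v:=v-v'$. Since $v'\in K=\ker\Phi$, we have
\beq
\Phi\tilde v=\Phi v-\Phi v'=w-0=w,
\eeq
and
\beq
D\tilde v=Dv-Dv'=\eta-\eta=0,
\eeq
so $\tilde v\in\ker D$ and $w=\Phi\tilde v\in\Phi(\ker D)$. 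As $w\in T_{\phi(x)}N$ was arbitrary, $\Phi|_{\ker D}=d\phi(x)|_{\ker Ds(x)}$ is surjective onto $T_{\phi(x)}N$.
\end{proof}
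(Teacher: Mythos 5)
Your proof is correct and is essentially identical to the paper's: both fix a target vector, lift it via condition (2), and then use condition (1) to subtract off an element of $K$ that cancels the image under $Ds(x)$ without disturbing the image under $d\phi(x)$. Only the variable names differ, so there is nothing further to compare.
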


\begin{proof}
We fix $\xi\in T_{\phi(x)}N$. Condition (2) implies that there exists $\eta\in T_x\B$ satisfying $d\phi(x)\eta=\xi$. Condition (1)
implies that there exists $\zeta\in K\subset T_x\B$ satisfying $Ds(x)\zeta=Ds(x)\eta$. We set $\tau:=\eta-\zeta$ and compute
\beq
Ds(x)\tau=Ds(x)\eta-Ds(x)\zeta=0
\eeq
thus, $\tau\in\ker Ds(x)$. Moreover,
\beq
d\phi(x)\tau=d\phi(x)\eta-\underbrace{d\phi(x)\zeta}_{=0}=d\phi(x)\eta=\xi
\eeq
proving the Lemma.
\end{proof}

\section{Autonomous Hamiltonian systems with Lagrangian boundary conditions}\label{appendix:autonomous_Lagranians}

The main result of this appendix is Lemma \ref{lemma:finitely_many_quantized_chords_APPENDIX} stating that under certain assumptions
the number of $N$-quantized chords is finite. We close this section with two examples demonstrating that these assumptions are necessary.

Throughout this section $(M,\om)$ is a symplectic manifold and $L\subset M$ is a Lagrangian submanifold.

\begin{Prop}\label{prop:HZ_Poincare_family}
Let $H:M\pf\R$ be an autonomous Hamiltonian function. We assume that there exists a point $x\in L$ and $\tau>0$ such that
$x_\tau:=\phi_H^{\tau}(x)\in L$, $D\phi_H^{\tau}(T_{x}L)\pitchfork T_{x_\tau}L$, and  $X_H(x(0))\not\in T_{x(0)}L$
and $X_H(x(\tau))\not\in T_{x(\tau)}L$.
Then there exist unique (up to reparametrization), smooth families $s\mapsto x_H(s)\in L$
and $s\mapsto \tau_H(s)$ for $s\in(-\epsilon,\epsilon)$ such that $x_H(0)=x$, and $\tau_H(0)=\tau$ and
\beq
\phi_H^{\tau_H(s)}(x_H(s))\in L\,,\quad \tau_H'(0)\neq0\quad\text{and}\quad x'_H(0)\neq0\,.     
\eeq
\end{Prop}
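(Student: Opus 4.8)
The plan is to set this up as an implicit function theorem argument on a finite-dimensional manifold. The natural object to consider is the ``Lagrangian intersection with time'' map. Define a map
\[
\Phi:L\times(0,\infty)\pf M,\qquad \Phi(y,t):=\phi_H^t(y).
\]
The set we are after is $\Phi^{-1}(L)$ near the point $(x,\tau)$. First I would compute the differential: for $(\hat y,\hat t)\in T_xL\times\R$ one has
\[
D\Phi(x,\tau)(\hat y,\hat t)=D\phi_H^\tau(x)\hat y+\hat t\,X_H(x_\tau).
\]
The transversality hypothesis $D\phi_H^\tau(T_xL)\pitchfork T_{x_\tau}L$ together with $X_H(x_\tau)\notin T_{x_\tau}L$ then guarantees that the composition
\[
T_xL\times\R \xrightarrow{D\Phi(x,\tau)} T_{x_\tau}M \pf T_{x_\tau}M / T_{x_\tau}L
\]
is surjective. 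Indeed $D\phi_H^\tau(T_xL)$ already surjects onto a codimension-one subspace of the quotient (by transversality, since $\phi_H^\tau$ is a diffeomorphism and $T_xL$, $T_{x_\tau}L$ have complementary dimension plus one in the transverse-plus-flow sense), and $X_H(x_\tau)$ supplies the missing direction. Hence $(x,\tau)$ is a transverse intersection point of $\Phi$ with $L$, and $\Phi^{-1}(L)$ is a $1$-dimensional submanifold of $L\times(0,\infty)$ near $(x,\tau)$.

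Parametrizing this $1$-manifold by $s\mapsto(x_H(s),\tau_H(s))$ with $(x_H(0),\tau_H(0))=(x,\tau)$ gives the families in the statement; uniqueness up to reparametrization is automatic since a $1$-manifold has a unique germ through a given point. It remains to check the two nondegeneracy conclusions $\tau_H'(0)\neq0$ and $x_H'(0)\neq0$. The tangent vector $(x_H'(0),\tau_H'(0))$ spans $\ker\big(\pi\circ D\Phi(x,\tau)\big)$ where $\pi:T_{x_\tau}M\to T_{x_\tau}M/T_{x_\tau}L$. If $\tau_H'(0)=0$ then $x_H'(0)\in T_xL$ is a nonzero vector with $D\phi_H^\tau(x)x_H'(0)\in T_{x_\tau}L$, which directly contradicts $D\phi_H^\tau(T_xL)\pitchfork T_{x_\tau}L$ (this transversality in the relevant dimensions forces $D\phi_H^\tau(T_xL)\cap T_{x_\tau}L=0$). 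If $x_H'(0)=0$ then $\tau_H'(0)\neq0$ and $\tau_H'(0)X_H(x_\tau)\in T_{x_\tau}L$, contradicting $X_H(x_\tau)\notin T_{x_\tau}L$. So both are nonzero.

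The main technical point — and the only place care is needed — is the bookkeeping of dimensions in the transversality argument: one must be precise that $\dim L + \dim L = \dim M$ together with transversality of $D\phi_H^\tau(T_xL)$ and $T_{x_\tau}L$ forces these two subspaces to intersect only in $0$, so that $D\phi_H^\tau(T_xL)$ has codimension exactly one in $T_{x_\tau}M/T_{x_\tau}L$ after projecting, and then the flow direction $X_H(x_\tau)$, being outside $T_{x_\tau}L$, is automatically outside $D\phi_H^\tau(T_xL)$ as well (both being codimension-one complements forces them to together span only if they differ, which they do since one contains the image of a Lagrangian-to-Lagrangian isomorphism and the other is the flow line). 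Once this linear algebra is pinned down the rest is a direct application of the implicit function theorem, and the reparametrization freedom is used to normalize, say, $\tau_H$ or arclength. The endpoint hypothesis at $x(0)$, i.e.\ $X_H(x(0))\notin T_{x(0)}L$, is not strictly needed for existence but ensures the chord is genuinely nonconstant and that the family moves its left endpoint as well, consistent with $x_H'(0)\neq0$.
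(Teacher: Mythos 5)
Your proof is correct, but it takes a genuinely different route from the paper. The paper follows the Hofer--Zehnder Poincar\'e-section method: it chooses local hypersurfaces $\Sigma_0\ni x$, $\Sigma_1\ni x_\tau$ transverse to $X_H$ and containing $L$ locally (this is where both hypotheses $X_H(x)\not\in T_xL$ and $X_H(x_\tau)\not\in T_{x_\tau}L$ enter), forms the transition map $\psi=\phi_H^{\tau(\cdot)}:\Sigma_0\pf\Sigma_1$, and shows in adapted coordinates that the block $\partial_{x_1}\widetilde\psi_2$ of $d\psi$ has full rank, so that $\widetilde\psi_2^{-1}(0)\subset L$ is a $1$-manifold; $x_H'(0)\neq0$ is then automatic and $\tau_H'(0)\neq0$ is proved by the same contradiction you give. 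You instead apply the preimage theorem directly to the evaluation map $\Phi(y,t)=\phi_H^t(y)$ on $L\times(0,\infty)$, which is cleaner: the transversality of $\Phi$ to $L$ is immediate, the solution $1$-manifold lives in $L\times(0,\infty)$ rather than in $L$, and both nondegeneracy conclusions come out of identifying its tangent line with $\ker(\pi\circ D\Phi(x,\tau))$. A side effect is that your argument never uses $X_H(x)\not\in T_xL$; that hypothesis is genuinely redundant for this statement on your route (it is used elsewhere in the paper, e.g.\ in Lemma \ref{lemma:finitely_many_quantized_chords_APPENDIX}).

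One spot in the middle is garbled, though it does not invalidate the proof: you assert that $D\phi_H^\tau(T_xL)$ projects onto a \emph{codimension-one} subspace of $T_{x_\tau}M/T_{x_\tau}L$ and that $X_H(x_\tau)$ ``supplies the missing direction.'' In fact, since $\dim D\phi_H^\tau(T_xL)=\dim T_{x_\tau}L=n$ in $T_{x_\tau}M\cong\R^{2n}$, the hypothesis $D\phi_H^\tau(T_xL)\pitchfork T_{x_\tau}L$ forces these subspaces to be complementary, so $D\phi_H^\tau(T_xL)$ already surjects onto the \emph{full} quotient and transversality of $\Phi$ to $L$ holds without any appeal to $X_H(x_\tau)$. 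The role of $X_H(x_\tau)\not\in T_{x_\tau}L$ is only the one you correctly identify at the end, namely ruling out $x_H'(0)=0$. You should delete or correct the ``codimension-one'' discussion; the rest stands.
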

\begin{Def}
In the situation of the above Proposition we denote the induced Hamiltonian chords by
\beq
\hat{x}_H(s,t):=\phi_H^t(x_H(s)) \quad\forall t\in[0,\tau_H(s)]\,.
\eeq
\end{Def}

\begin{Rmk}
The corresponding statement of the above proposition in the periodic case was known to \Poincare and is proved Chapter 4.1 of
the book \cite{Hofer_Zehner_Book}. More precisely, in Proposition 2 in Chapter 4.1 of \cite{Hofer_Zehner_Book} it is proved that
the above family $x_H(s)$ can be chosen to be parameterized by energy, that is $H(x_H(s))=H(x)+s$.

We point out that this stronger assertion does not hold in the relative case, in general, as Example \ref{ex:counterexample_HZ} shows.
\end{Rmk}

To prove Proposition \ref{prop:HZ_Poincare_family} we need the following
\begin{Lemma}
If $X_H(x)\not\in T_xL$ there exists $\xi\in T_xL$ with the property
\beq
dH(x)\xi\neq0\,.
\eeq
In particular, $T_xL\pitchfork T_x\Sigma$, where $\Sigma=H^{-1}(H(x))$ is the level set through $x$.
\end{Lemma}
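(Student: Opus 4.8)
The plan is to deduce this directly from the defining property of the Hamiltonian vector field together with the fact that a Lagrangian subspace is its own symplectic orthogonal complement; no compactness or genericity input is needed.

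First I would recall that, by definition of $X_H$, we have $\om_x\big(X_H(x),\xi\big)=dH(x)\xi$ for every $\xi\in T_xM$. Suppose, for contradiction, that $dH(x)\xi=0$ for all $\xi\in T_xL$. Then $X_H(x)$ lies in the symplectic orthogonal complement $(T_xL)^{\om_x}=\{v\in T_xM\mid\om_x(v,\xi)=0\ \forall\,\xi\in T_xL\}$. Since $L$ is Lagrangian, $(T_xL)^{\om_x}=T_xL$, hence $X_H(x)\in T_xL$, contradicting the hypothesis $X_H(x)\notin T_xL$. This produces the desired $\xi\in T_xL$ with $dH(x)\xi\ne0$.

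For the ``in particular'' part I would first observe that $X_H(x)\notin T_xL$ forces $X_H(x)\ne0$ (as $0\in T_xL$), and then nondegeneracy of $\om$ gives $dH(x)\ne0$. Thus $H(x)$ is a regular value near $x$, the level set $\Sigma=H^{-1}(H(x))$ is a smooth hypersurface in a neighbourhood of $x$, and $T_x\Sigma=\ker dH(x)$. The vector $\xi$ found above lies in $T_xL$ but not in $T_x\Sigma$; since $T_x\Sigma$ is a hyperplane in $T_xM$, adjoining any vector outside it already spans the whole tangent space, i.e.\ $T_xL+T_x\Sigma=T_xM$. This is precisely the transversality $T_xL\pitchfork T_x\Sigma$.

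I do not anticipate any real obstacle: the statement is an immediate algebraic consequence of $L$ being maximally isotropic. The only subtlety worth flagging in the write-up is that the hypothesis $X_H(x)\notin T_xL$ is exactly what guarantees $dH(x)\ne0$, so that $\Sigma$ is a genuine hypersurface near $x$ and $T_x\Sigma$ is well defined.
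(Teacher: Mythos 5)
Your argument is correct and is essentially identical to the paper's proof: assume $dH(x)$ vanishes on $T_xL$, conclude $X_H(x)\in(T_xL)^{\om}=T_xL$ by the Lagrangian condition, and derive a contradiction. Your extra spelling-out of the transversality conclusion is a harmless elaboration of what the paper leaves implicit.
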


\begin{proof}
We assume by contradiction that
\beq
0=dH(x)\xi=\om(X_H(x),\xi)\qquad\forall \xi\in T_xL\,.
\eeq
This implies that $X_H(x)\in\big(T_xL\big)^\om=T_xL$. This contradiction proves the Lemma.
\end{proof}

\begin{proof}[Proof of Proposition \ref{prop:HZ_Poincare_family}]
Differentiating the equation
\beq
H(\phi_H^t(x))=H(x)
\eeq
yields
\beq
dH(\phi_H^t(x))D\phi_H^t(x)=dH(x)\,.
\eeq
Thus, since $dH(x)\neq0$, we can choose a small neighborhood $U$ of $x$ and $\epsilon>0$ such that on the open set
$V:=\{\phi_H^t(x)\mid x\in U,\,t\in(-\epsilon,\tau+\epsilon)\}$ the function $H|_V$ has only regular values.

To prove the proposition we follow closely the proof of Proposition 2 in Chapter 4.1 of \cite{Hofer_Zehner_Book}.
Due to the assumption $X_H(x(0))\not\in T_{x(0)}L$
and $X_H(x(\tau))\not\in T_{x(\tau)}L$ we can choose two local hypersurface  $\Sigma_i\subset M$, $i=0,1$ in a neighborhood $U_0$ of $x$ and
$U_1$ of $x_\tau$ with the property
\bea
T_x\Sigma_0\oplus<X_H(x)>=T_xM\quad&\text{and}\quad L\cap U_0\subset\Sigma_0\\
T_{x_\tau}\Sigma_1\oplus<X_H(x_\tau)>=T_{x_\tau}M \quad&\text{and}\quad L\cap U_1\subset\Sigma_1\,.
\eea
Moreover, (for sufficiently small neighborhoods $U_i$) there exists a smooth function $\tau$ with $\tau(x)=\tau$ such that
\beq
\psi(y)=\phi^{\tau(y)}_H(y):\Sigma_0\pf\Sigma_1
\eeq
is well-defined. As in the proof of Lemma 1 in Chapter 4.1 of \cite{Hofer_Zehner_Book} it follows that
\beq
D\phi_H^\tau(x)=\begin{pmatrix}
  d\psi(x)&0\\
  \star&1
\end{pmatrix}\,.
\eeq
$D\phi_H^\tau(x)$ is nondegenerate since it is a symplectic transformation. Thus, $d\psi(x)$ is nondegenerate. We choose local coordinates on $\Sigma_i$
such that the Lagrangian submanifold $L$ corresponds to $\R^n\oplus\{0\}\subset\R^{2n-1}$ in both coordinate systems. We denote the map $\psi$
in local coordinates by
\beq
\widetilde{\psi}:\R^{2n-1}\pf\R^{2n-1}
\eeq
and assume that $\widetilde{\psi}(0)=0$. With respect to the splitting $\R^{2n-1}=\R^n\oplus\R^{n-1}$ we write
\beq
\widetilde{\psi}(x_1,x_2)=\big(\widetilde{\psi}_1(x_1,x_2),\widetilde{\psi}_2(x_1,x_2)\big)\,,
\eeq
and abbreviate
\beq
d\widetilde{\psi}(0)=
\begin{pmatrix}
\partial_{x_1}\widetilde{\psi}_1&\partial_{x_2}\widetilde{\psi}_1\\
\partial_{x_1}\widetilde{\psi}_2&\partial_{x_2}\widetilde{\psi}_2
\end{pmatrix}=:
\begin{pmatrix}A&B\\C&D
\end{pmatrix}\,.
\eeq
We claim that $\partial_{x_1}\widetilde{\psi}_2$ has full rank. Indeed, from the transversality $D\phi_H^{\tau}(T_{x}L)\pitchfork T_{x_\tau}L$
it follows (in local coordinates) that
\beq
d\widetilde{\phi}_H^\tau(0)\cdot
\begin{pmatrix}
a\\0\\0
\end{pmatrix}=
\begin{pmatrix}A&B&0\\C&D&0\\F_1&F_2&1
\end{pmatrix}\cdot
\begin{pmatrix}
a\\0\\0
\end{pmatrix}=
\begin{pmatrix}
Aa\\Ca\\F_1a
\end{pmatrix}\neq
\begin{pmatrix}
\star\\0\\0
\end{pmatrix}
\eeq
for all $a\not=0\in\R^n$. Since $F_1$ is a $1\times n$-matrix the above inequality readily implies that $\dim\ker C=1$. Hence,
$C=\partial_{x_1}\widetilde{\psi}_2$ has full rank. This implies that locally $\widetilde{\psi}_2^{-1}(0)$ is a 1-dimensional submanifold
of $L$.

We choose $x_H(s)$ to be a parametrization of the local 1-manifold $\widetilde{\psi}_2^{-1}(0)$. This includes that assertion
$x_H'(0)\neq0$. $\tau_H(s)$ is defined accordingly. It remains to be proved that $\tau_H'(0)\neq0$.

Let us assume by contradiction that $\tau_H'(0)=0$. We recall the notation $\tau=\tau_H(0)$, $x=x_H(0)$ and
$x_\tau=\phi_H^{\tau}(x)=\phi_H^{\tau_H(0)}(x(0))$. Then the following holds
\bea
D\phi_H^{\tau}\big(T_{x}L\big)\ni D\phi_H^{\tau_H(0)}(x(0))\cdot x_H'(0)&=\frac{\partial}{\partial s}\Big|_{s=0}\phi_H^{\tau_H(0)}(x_H(s))\\[1ex]
             &=\frac{\partial}{\partial s}\Big|_{s=0}\underbrace{\phi_H^{\tau_H(s)}(x_H(s))}_{\in L}\in T_{x_\tau}L
\eea
where we used $\tau_H'(0)=0$ in the second equation. The transversality assumption $D\phi_H^{\tau}(T_{x}L)\pitchfork T_{x_\tau}L$
implies that $x_H'(0)=0$. This contradiction concludes the proof.
\end{proof}

The following lemma is Lemma \ref{lemma:finitely_many_quantized_chords} on page \pageref{lemma:finitely_many_quantized_chords}.
\begin{Lemma}\label{lemma:finitely_many_quantized_chords_APPENDIX}
We assume that $L\subset (M,\om)$ is a closed, aspherical Lagrangian submanifold and that $H:M\pf (0,\infty)$ is a positive Hamiltonian function
satisfying the transversality conditions $D\varphi_H^\tau(T_{x(0)}L)\pitchfork T_{x(\tau)}L$, $X_H(x(0))\not\in T_{x(0)}L$,
and $X_H(x(\tau))\not\in T_{x(\tau)}L$ for all $N$-quantized chords.
Then the set $\P_L^\mathfrak{q}(H;\tau_0,N)$ of $N$-quantized chords with period less or equal than $\tau_0$ is finite.
\end{Lemma}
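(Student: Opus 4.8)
The plan is to argue by contradiction. Suppose $\P_L^\mathfrak{q}(H;\tau_0,N)$ is infinite, so there are pairwise distinct $N$-quantized chords $(x_k,\tau_k)$ with $0<\tau_k\leq\tau_0$. Reparametrizing each to the unit interval, $u\mapsto x_k(\tau_k u)$ solves $\dot y=\tau_k X_H(y)$, so these curves have derivatives bounded by $\tau_0\sup|X_H|$ on the compact set $\{\phi_H^t(q)\mid q\in L,\ 0\le t\le\tau_0\}$; by Arzel\`a--Ascoli a subsequence converges in $C^0$, hence, bootstrapping through the ODE, in $C^1$, and after a further subsequence $\tau_k\to\tau_\infty\in[0,\tau_0]$. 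First I would exclude $\tau_\infty=0$: in that case $x_k$ is $C^0$-close to the constant point $p:=\lim x_k(0)\in L$, so $\int_0^{\tau_k}H(x_k)=\tau_k H(x_k(0))\to0$, while choosing the half-disc cappings $\bar x_k$ inside a Weinstein neighbourhood of $p$ gives $\int\bar x_k^*\omega=O(\tau_k^2)$ (near $p$ one has $\omega=d\lambda$ with $\lambda|_L=0$, and $x_k$ stays within $O(\tau_k)$ of $L$). Hence $\A_H(x_k,\tau_k)=-\tau_k H(p)+o(\tau_k)$, which is negative and tends to $0$; since $\A_H(x_k,\tau_k)\in\tfrac1{2N}\Z$ and this set meets a neighbourhood of $0$ only in $\{0\}$, this forces $\A_H(x_k,\tau_k)=0$ for large $k$, i.e. $O(\tau_k^2)=\tau_k H(p)$, which is impossible. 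Therefore $\tau_\infty>0$, the limit defines a contractible Hamiltonian chord $(x_\infty,\tau_\infty)$ with endpoints on $L$, and since $\tfrac1{2N}\Z$ is closed, $\A_H(x_\infty,\tau_\infty)=\lim\A_H(x_k,\tau_k)\in\tfrac1{2N}\Z$; thus $(x_\infty,\tau_\infty)$ is itself an $N$-quantized chord and satisfies the transversality hypotheses of the lemma.

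Now I would apply Proposition \ref{prop:HZ_Poincare_family} at $(x_\infty,\tau_\infty)$, obtaining a smooth family $s\mapsto(x_H(s),\tau_H(s))$, $s\in(-\epsilon,\epsilon)$, with $x_H(0)=x_\infty(0)$, $\tau_H(0)=\tau_\infty$, $\tau_H'(0)\neq0$, realized as a parametrization of the local $1$-manifold of those starting points $q\in L$ near $x_\infty(0)$ whose first-return chord again ends on $L$. Since $x_k(0)\to x_\infty(0)$ and $\tau_k\to\tau_\infty$, for large $k$ the chord $(x_k,\tau_k)$ belongs to this family: $x_k(0)=x_H(s_k)$, $\tau_k=\tau_H(s_k)$. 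Discarding the at most one index with $(x_k,\tau_k)=(x_\infty,\tau_\infty)$, the parameters $s_k$ are pairwise distinct, nonzero (a chord is determined by its starting point), and $s_k\to0$. Moreover $\tfrac1{2N}\Z$ is discrete and $\A_H(x_k,\tau_k)\to\A_H(x_\infty,\tau_\infty)$, so $\A_H(\hat{x}_H(s_k),\tau_H(s_k))=\A_H(x_\infty,\tau_\infty)$ for all large $k$.

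The contradiction comes from showing that $s\mapsto\A_H(\hat{x}_H(s),\tau_H(s))$ has a nonzero derivative at $s=0$, hence is injective near $0$. To compute it I would reparametrize to unit period: with $y_s(u):=\hat{x}_H(s,\tau_H(s)u)$ the transformation formula \eqref{eqn:transformation_of_action_2} gives $\A_H(\hat{x}_H(s),\tau_H(s))=\A_{\tau_H(s)H}(y_s)$; here $y_s$ is a critical point of $\A_{\tau_H(s)H}$ and $\partial_s y_s$ is tangent to $L$ at both endpoints, so the path-variation term vanishes and only the explicit dependence on $\tau_H(s)$ survives:
\beq
\frac{d}{ds}\Big|_{s=0}\A_H(\hat{x}_H(s),\tau_H(s))=-\Big(\int_0^1 H\big(y_0(u)\big)\,du\Big)\,\tau_H'(0)=-H\big(x_\infty(0)\big)\,\tau_H'(0)\,,
\eeq
using that $H$ is autonomous, so $u\mapsto H(y_0(u))$ is constant. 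As $H>0$ and $\tau_H'(0)\neq0$, this is nonzero, so $\A_H(\hat{x}_H(\cdot),\tau_H(\cdot))$ is strictly monotone near $0$ and cannot equal $\A_H(x_\infty,\tau_\infty)=\A_H(\hat{x}_H(0),\tau_H(0))$ at the infinitely many $s_k\neq0$ — a contradiction. I expect the main obstacle to be the bookkeeping around the period: ruling out $\tau_\infty=0$ via the $O(\tau^2)$ estimate on the symplectic area, and justifying the first-variation identity above for the genuinely period-dependent family of chords (the vanishing of the path-variation term being the essential standard input).
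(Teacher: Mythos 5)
Your proof is correct and follows essentially the same route as the paper's: extract a convergent subsequence by Arzel\`a--Ascoli, place its tail in the local family of Proposition \ref{prop:HZ_Poincare_family}, and derive a contradiction between quantization of the action and the first-variation identity $\frac{d}{ds}\big|_{s=0}\A_H(\hat{x}_H(s),\tau_H(s))=-\tau_H'(0)\,H(x)\neq0$. The one place where you go beyond the paper is the explicit exclusion of $\tau_\infty=0$ via the $O(\tau^2)$ estimate on the symplectic area of short cappings; the paper tacitly takes the limit to be a chord of positive period, so this is a welcome tightening rather than a divergence.
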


\begin{proof}
Let $(x,\tau)$ be a $N$-quantized chord, in particular, it is a critical point of the action functional $\A_{H,\tau}:C^\infty\big([0,\tau];M,L\big)\pf\R$
given by
\beq
\A_{H,\tau}(x):=\A_H(x,\tau)
\eeq
where $\A_H(x,\tau)$ is defined in Definition \ref{def:Ham_chord_and_general_action_functional}, that is
\beq
d\A_{H,\tau}(x)=0\,.
\eeq

Let us assume that there exists a sequence $(x_\nu,\tau_\nu)\in\P_L^\mathfrak{q}(H;\tau_0,N)$. Since $M$ and $L$ are compact and $(\tau_\nu)$ is bounded
the Arzela-Ascoli theorem implies that a subsequence $(x_\nu,\tau_\nu)$ converges to an element $(x,\tau)\in\P_L^\mathfrak{q}(H;\tau_0,N)$. For
$\nu$ large enough the subsequence $(x_\nu(0),\tau_\nu)$ is part of a local family $(x_H(s),\tau_H(s))$ given by
Proposition \ref{prop:HZ_Poincare_family}. We assume by contradiction that the convergent subsequence is non-constant.
Because all $(x_\nu,\tau_\nu)$ are $N$-quantized we have
\beq
\frac{\partial}{\partial s}\Big|_{s=0}\A_H(\hat{x}_H(s),\tau_H(s))=0\,.
\eeq
On the other hand we compute using $\tau_H'(0)\neq0$, $H>0$, $\hat{x}_H(0)=x$ and $\tau_H(0)=\tau$
\bea
\frac{\partial}{\partial s}\Big|_{s=0}\A_H(\hat{x}_H(s),\tau_H(s))
            &=\frac{\partial}{\partial s}\Big|_{s=0}\A_{H,\tau}(\hat{x}_H(s))-\frac{\partial}{\partial s}\Big|_{s=0}(\tau_H(s)-\tau)H(x_H(s))\\[1ex]
            &=\underbrace{d\A_{H,\tau}(x)}_{=0}\cdot \hat{x}_H'(0)-\tau_H'(0)H(x)\\[1ex]
            &=-\tau_H'(0)H(x)\neq0
\eea
This contradiction concludes the proof.
\end{proof}

We conclude this section with two examples showing that the condition that the Hamiltonian function is positive is necessary. Moreover,
they show that the family of Hamiltonian chords from Proposition \ref{prop:HZ_Poincare_family} cannot be parameterized by energy as opposed
to the periodic case.

\begin{Ex}\label{ex:counterexample}
In figure \ref{fig:counterexample} we assume that the area of the grey-shaded region equals an integer. Then there are uncountably many quantized
chords connecting the point $P$ and $Q_s$ inside $\{H=0\}$ where the point $Q_s$ locally varies on $\{H=0\}$.

\begin{figure}[htb]
\input{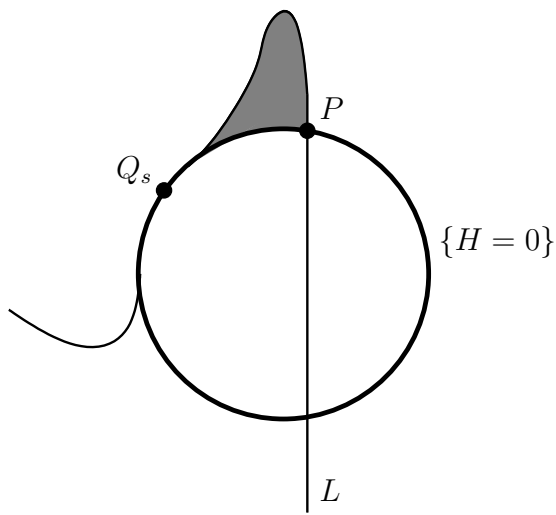}
\caption{Infinitely many quantized chords}\label{fig:counterexample}
\end{figure}
\end{Ex}

\begin{Ex}\label{ex:counterexample_HZ}
We construct an example of two Lagrangian submanifolds and a Hamiltonian function such that the Hamiltonian vector field intersects both Lagrangian
submanifolds transversely. Moreover, the Hamiltonian flow has a one-parametric family of Hamiltonian chords of constant energy. In particular,
this family cannot be parameterized by energy.

In $\R^4$ with coordinates $(x_1,x_2,y_1,y_2)$ and symplectic form $\sum dx_i\wedge dy_i$ we consider the following two Lagrangian submanifolds
\bea
L_1&:=\{x_1=x_2=0\}\\
L_2&:=e_1+<X,Y>
\eea
where $e_1:=(1,0,0,0)$, $X:=(a,0,0,b)$, and $Y:=(0,a,b,0)$, for $a,b\neq0$ to be determined later. We note that
\beq
\om(X,Y)=ab-ab=0\,.
\eeq
We set
\beq
H(x_1,x_2,y_1,y_2):=y_1\,\qquad\text{thus}\qquad X_H=\frac{\partial}{\partial x_1}\,.
\eeq
In particular,
\beq
\phi_H^\tau(x_1,x_2,y_1,y_2)=(x_1,x_2,y_1,y_2)+\tau e_1\,.
\eeq
Obviously, $X_H$ intersects $L_0$, $L_1$ transversely. Moreover, $\phi_H^1(L_1)=(e_1+L_1)\pitchfork L_2$.
According to Proposition \ref{prop:HZ_Poincare_family} (which obviously holds also for two transverse Lagrangian submanifolds)
there exists locally a one-parametric family $x_H(s)\in L_1$ and $\tau_H(s)$. In this example they are explicitly given by
\beq
x_H(s):=(0,0,0,s)\in L_1\qquad \tau_H(s):=1+\frac{a}{b}s\,.
\eeq
Indeed,
\beq
\phi_H^{\tau(s)}(x_H(s))=(1+\frac{a}{b}s,0,0,s)=e_1+\frac{s}{b}X\in L_2\,.
\eeq
We observe that for $a\neq0$ the period $\tau_H(s)$ is non-constant while
\beq
H(x_H(s))=0\,.
\eeq
We note that the intersection point of $L_1$ and $L_2$ is given by
\beq
L_1\cap L_2=\{(0,0,0,-\frac{b}{a})\}\,.
\eeq
The symplectic area of the Hamiltonian chord $\phi_H^{\tau(s)}(x_H(s))$ relative to $L_1$ and $L_2$ obviously vanishes, since the affine subspace
containing the intersection point $(0,0,0,-\frac{b}{a})$ and the chord is Lagrangian. There exists a representative $L_1\#L_2$
of the Lagrangian isotopy class of the Lagrangian connected sum for which $\phi_H^{\tau(s)}(x_H(s))$ is still a Hamiltonian chord with vanishing
symplectic area. In particular, $\phi_H^{\tau(s)}(x_H(s))$ are quantized chords for all $s$.
\end{Ex}

\section{Transversal intersection for quantized chords}\label{appendix:quantized_chords}

The main result in this appendix is Proposition \ref{prop:implying_Morse_for_A_g(H)} which is crucial for establishing the fact that
the action functional $\A_{g(\HH)}$ is Morse, see Lemma \ref{lemma:action_fctl_A_g(HH)_is_Morse}.

We use the notation introduced in Section \ref{sec:the_setting}. Here are the essentials:
$p:E\pf M$ is a complex line bundle. $\HH$ is the fiber-wise quadratic lift to $E$ of a Hamiltonian function on the base $M$. The flow of a
Hamiltonian function $H$ is denoted by $\phi_H^\tau$, and the Hamiltonian vector field by $X_H$. The function $\HH_g=g(\HH)$. $L\subset(M,\om)$
is a Lagrangian submanifold.

\begin{Lemma}\label{lemma:flow_commutes_with_projection}
The following two equations hold
\beq
p\circ\phi_\HH^\tau=\phi_H^\tau\circ p
\eeq
and
\beq
dp(\phi_\HH^\tau(x))\circ D\phi_\HH^\tau(x)=D\phi_H^\tau(p(x))\circ dp(x)\,.
\eeq
Moreover, for $x\in E$ and $\xi\in T_xE$
\beq
\phi_{\HH_g}^1(x)=\phi_\HH^{g'(\HH)}(x)
\eeq
and
\beq
D\phi_{\HH_g}^1(x)\cdot\xi=D\phi_\HH^{g'(\HH)}(x)\cdot\xi+g''(\HH(x))\,\big(d\HH(x)\cdot\xi\big)\,X_\HH(\phi_{\HH_g}^1(x))
\eeq
\end{Lemma}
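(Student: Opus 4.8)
The plan is to derive the four identities in order, each being a straightforward consequence of the preceding ones together with the chain rule.

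\emph{First two identities.} By \eqref{eqn:Ham_vfield_of_HH} and \eqref{eqn:Ham_vfield_of_HH_vertical} the horizontal part of $X_\HH$ satisfies $dp\circ X_\HH^h=X_H\circ p$ while the vertical part satisfies $dp\circ X_\HH^v=0$; hence $dp\circ X_\HH=X_H\circ p$, i.e.\ the projection $p$ intertwines $X_\HH$ and $X_H$. Therefore, for fixed $x$, the curve $t\mapsto p(\phi_\HH^t(x))$ is an integral curve of $X_H$ starting at $p(x)$, so it coincides with $t\mapsto\phi_H^t(p(x))$; this is the first identity. Differentiating it with respect to $x$ and applying the chain rule to both sides gives $dp(\phi_\HH^\tau(x))\circ D\phi_\HH^\tau(x)=D\phi_H^\tau(p(x))\circ dp(x)$.

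\emph{Third identity.} Since $\HH_g=g\circ\HH$, a direct computation gives $X_{\HH_g}=g'(\HH)\,X_\HH$ (compare \eqref{eqn:transformation_of_Hamiltonian_vf}). As $\HH$ is autonomous we have $d\HH(X_{\HH_g})=g'(\HH)\,d\HH(X_\HH)=0$, so $\HH$, and with it the scalar $g'(\HH)$, is constant along each flow line of $X_{\HH_g}$ — this is precisely the computation in Remark \ref{rmk:basic_rmk}. Writing $c:=g'(\HH(x))$, the $X_{\HH_g}$-trajectory $\gamma$ through $x$ thus solves $\dot\gamma(t)=c\,X_\HH(\gamma(t))$, which is the $X_\HH$-flow reparametrized at the constant speed $c$; hence $\gamma(t)=\phi_\HH^{ct}(x)$, and at $t=1$ this reads $\phi_{\HH_g}^1(x)=\phi_\HH^{g'(\HH(x))}(x)$.

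\emph{Fourth identity.} This follows by differentiating the third. Setting $\Phi(\tau,x):=\phi_\HH^\tau(x)$, we have $\phi_{\HH_g}^1=\Phi\big(g'(\HH(\cdot)),\cdot\big)$, so for $\xi\in T_xE$ the chain rule gives
\[
D\phi_{\HH_g}^1(x)\cdot\xi=\partial_\tau\Phi\big(g'(\HH(x)),x\big)\cdot\big(d(g'\circ\HH)(x)\cdot\xi\big)+\partial_x\Phi\big(g'(\HH(x)),x\big)\cdot\xi .
\]
Now $\partial_\tau\Phi(\tau,y)=X_\HH(\phi_\HH^\tau(y))$, which at $(\tau,y)=(g'(\HH(x)),x)$ equals $X_\HH(\phi_{\HH_g}^1(x))$ by the third identity; $d(g'\circ\HH)(x)\cdot\xi=g''(\HH(x))\,(d\HH(x)\cdot\xi)$; and $\partial_x\Phi\big(g'(\HH(x)),x\big)$ is, by definition, $D\phi_\HH^{g'(\HH)}(x)$, the differential at $y=x$ of $y\mapsto\phi_\HH^{g'(\HH(x))}(y)$ with the time frozen at its base-point value. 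Substituting yields the claimed formula. I expect no real obstacle here; the only point needing attention is keeping this frozen time separate from the $x$-dependence of the exponent when invoking the chain rule.
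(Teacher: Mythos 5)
Your proof is correct and follows essentially the same route as the paper's: the first two identities by noting that $p$ intertwines $X_\HH$ with $X_H$ (via equations \eqref{eqn:Ham_vfield_of_HH} and \eqref{eqn:Ham_vfield_of_HH_vertical}) and then differentiating, the third from $X_{\HH_g}=g'(\HH)X_\HH$ together with the constancy of $g'(\HH)$ along flow lines from Remark \ref{rmk:basic_rmk}, and the fourth by differentiating the third with the chain rule. Your version merely spells out the chain-rule bookkeeping (keeping the frozen time separate from the $x$-dependence of the exponent) that the paper leaves implicit.
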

\begin{proof}
Integrating the equations \eqref{eqn:Ham_vfield_of_HH} and \eqref{eqn:Ham_vfield_of_HH_vertical} with respect to $\tau$ leads to
the first equation. Differentiating with respect to $x$ gives the second. The third equation follows from the transformation rule
\beq
X_{\HH_g}=g'(\HH)X_\HH\,.
\eeq
We recall that $g'(\HH)$ is constant along chords of $\HH_g$, see Remark \ref{rmk:basic_rmk}. The last again by differentiating.
\end{proof}

\begin{Lemma}\label{lemma:degeneracy_for_Hamiltonian}
Assume that there exists $x\in L^N\setminus L$ and $\tau\in\R$, such that $\phi_\HH^\tau(x)\in L^N$
and $D\phi^\tau_H(p(x))\big(T_{p(x)}L\big)\pitchfork T_{\phi^\tau_H(p(x))}L$ holds. Then
\beq
D\phi_\HH^\tau(T_xL^N)\cap T_{\phi_{\HH}^\tau(x)}L^N=T^v_{\phi_{\HH}^\tau(x)}L^N=<X>\,,
\eeq
where $X$ is the Liouville vector field, see equation \eqref{eqn:Liouville_vfield}.
\end{Lemma}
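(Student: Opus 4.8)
The plan is to use the block structure of $D\phi_\HH^\tau$ established in Lemma \ref{lemma:lifted_flow_preserves_horizontal_distribution} together with the fibre-wise splitting $T_xL^N \cong T_{p(x)}L \oplus \langle X\rangle$ from Lemma \ref{lemma:lifted_Lagrangian_intersects_to_Legendrian} (more precisely the computation $T_eL^N \cong T_{p(e)}L \oplus \R X_e$ in its proof). First I would note that since $x \in L^N \setminus L$, the point $x$ lies off the zero section, so the radial coordinate $r(x) > 0$ is preserved by $\phi_\HH^\tau$; hence $\phi_\HH^\tau(x) \in L^N \setminus L$ as well, and the vertical line $T^v_{\phi_\HH^\tau(x)}L^N$ is exactly $\langle X \rangle$ at that point. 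The containment $\langle X \rangle \subset D\phi_\HH^\tau(T_xL^N) \cap T_{\phi_\HH^\tau(x)}L^N$ is then immediate: $X_x \in T_xL^N$, and by Lemma \ref{lemma:lifted_flow_preserves_horizontal_distribution} the flow preserves $X$, so $D\phi_\HH^\tau(X_x) = X_{\phi_\HH^\tau(x)}$, which lies in $T_{\phi_\HH^\tau(x)}L^N$ since the target point is in $L^N$.

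For the reverse inclusion I would take a vector $v \in T_xL^N$ with $D\phi_\HH^\tau(v) \in T_{\phi_\HH^\tau(x)}L^N$ and show $v \in \langle X_x \rangle$. Decompose $v = v^h + \lambda X_x$ with $v^h$ horizontal; under the identification $T^hE^N \cong TM$, $v^h$ corresponds to some $w \in T_{p(x)}L$. By the block-diagonal form of $D\phi_\HH^\tau$ (Lemma \ref{lemma:lifted_flow_preserves_horizontal_distribution}) and the intertwining $dp \circ D\phi_\HH^\tau = D\phi_H^\tau \circ dp$ (Lemma \ref{lemma:flow_commutes_with_projection}), the horizontal part of $D\phi_\HH^\tau(v)$ corresponds to $D\phi_H^\tau(p(x))\,w \in T_{p(x')}M$ where $x' = \phi_\HH^\tau(x)$, while the vertical part is $\lambda X_{x'}$. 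Now $D\phi_\HH^\tau(v) \in T_{x'}L^N$ means, again by the splitting $T_{x'}L^N \cong T_{p(x')}L \oplus \langle X_{x'}\rangle$, that its horizontal component lies in $T_{p(x')}L$; i.e. $D\phi_H^\tau(p(x))\,w \in T_{p(x')}L$. But $w \in T_{p(x)}L$ already, so $D\phi_H^\tau(p(x))\,w \in D\phi_H^\tau(T_{p(x)}L) \cap T_{\phi_H^\tau(p(x))}L$, and by the transversality hypothesis this intersection is $\{0\}$ (the two Lagrangians meet transversally in $M$, so they intersect trivially). Since $D\phi_H^\tau$ is an isomorphism, $w = 0$, hence $v^h = 0$ and $v = \lambda X_x \in \langle X_x\rangle$. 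This proves $D\phi_\HH^\tau(T_xL^N) \cap T_{x'}L^N \subseteq \langle X_{x'} \rangle$ after applying $D\phi_\HH^\tau$, which combined with the first inclusion gives equality.

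The only mildly delicate point — and the thing I would be careful to state cleanly rather than a genuine obstacle — is the bookkeeping of the canonical identifications: one must keep straight that the horizontal subspace at $x$ is identified with $T_{p(x)}M$, that the flow's horizontal block is literally $D\phi_H^\tau(p(x))$ under these identifications, and that $T_{x}L^N$ splits as horizontal-$L$-directions plus the Liouville line (and crucially does \emph{not} contain the Reeb direction $R$, which is why off the zero section $T^v L^N = \langle X\rangle$ is one-dimensional). Once these identifications are pinned down, the transversality of $L$ with itself under $D\phi_H^\tau$ in the base does all the work, and the argument is essentially a diagram chase through the block-diagonal matrix of Lemma \ref{lemma:lifted_flow_preserves_horizontal_distribution}. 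Note that the $g$-dependent formulas of Lemma \ref{lemma:flow_commutes_with_projection} are not needed here since this lemma concerns $\phi_\HH^\tau$ itself, not $\phi_{\HH_g}^1$.
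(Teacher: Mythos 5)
Your proposal is correct and follows the same route as the paper: the paper's proof consists of the single remark that the lemma ``follows immediately from Lemma \ref{lemma:lifted_flow_preserves_horizontal_distribution}'', and your argument is precisely the careful unpacking of that claim --- splitting $T_xL^N$ into the horizontal lift of $T_{p(x)}L$ plus the Liouville line, using the block structure and the preservation of $X$ to reduce to the base, and invoking transversality of the two Lagrangian subspaces (hence trivial intersection, as both have dimension $n$ in a $2n$-dimensional space) to kill the horizontal component.
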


\begin{proof}
This follows immediately from Lemma \ref{lemma:lifted_flow_preserves_horizontal_distribution}.
\end{proof}

\begin{Prop}\label{prop:implying_Morse_for_A_g(H)}
Let $g:\R\pf\R$ be a smooth function and recall the notation $\HH_g=g(\HH)$.
Let $x\in L^N\setminus L$ such that $\phi_{\HH_g}^1(x)\in L^N$ and $H(p(x))\neq0$. We assume
$D\phi^\tau_H(p(x))\big(T_{p(x)}L\big)\pitchfork T_{\phi^\tau_H(p(x))}L$, where $\tau:=g'(\HH(x))$.
If $g''(\HH(x))\neq0$ then
\beq
D\phi_{\HH_g}^1(x)\big(T_xL^N)\pitchfork T_{\phi_{\HH_g}^1(x)}L^N
\eeq
holds.
\end{Prop}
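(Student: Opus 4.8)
The plan is to deduce transversality from a dimension count together with the block structure of the linearized flow. Set $y:=\phi_{\HH_g}^1(x)=\phi_\HH^{\tau}(x)$ with $\tau:=g'(\HH(x))$; by Lemma \ref{lemma:flow_commutes_with_projection} we have $p(y)=\phi_H^{\tau}(p(x))$, and since $H$ is autonomous, $\HH$ is constant along the flow of $\HH_g$ (because $d\HH(X_{\HH_g})=g'(\HH)\,\om(X_\HH,X_\HH)=0$), so $\HH(y)=\HH(x)$ and $H(p(y))=H(p(x))\neq0$. Since $\dim_\R E^N=\dim_\R M+2=2\dim L^N$, the assertion $D\phi_{\HH_g}^1(x)\big(T_xL^N\big)\pitchfork T_yL^N$ is equivalent to
\[
D\phi_{\HH_g}^1(x)\big(T_xL^N\big)\cap T_yL^N=\{0\}.
\]
So I would fix $\xi\in T_xL^N$ with $D\phi_{\HH_g}^1(x)\xi\in T_yL^N$ and prove $\xi=0$.

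Next I would split everything into horizontal and vertical parts. From the proof of Lemma \ref{lemma:lifted_Lagrangian_intersects_to_Legendrian}, $T_xL^N=T_{p(x)}L\oplus\R X(x)$, so $\xi=\xi^h+aX(x)$ with $v:=dp(x)\xi\in T_{p(x)}L$; likewise $T_yL^N=T_{p(y)}L\oplus\R X(y)$, and the vertical part of $T_yL^N$ is exactly $\R X(y)$, so $R(y)\notin T_yL^N$. By Lemma \ref{lemma:lifted_flow_preserves_horizontal_distribution} the map $D\phi_\HH^{\tau}$ is block-diagonal for $TE^N\cong T^hE^N\oplus T^vE^N$, sends $X(x)$ to $X(y)$, and its horizontal block projects under $dp$ to $D\phi_H^{\tau}(p(x))$. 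Feeding these facts, together with \eqref{eqn:Ham_vfield_of_HH} and \eqref{eqn:Ham_vfield_of_HH_vertical} (which say $X_\HH(y)$ is the horizontal lift of $X_H(p(y))$ minus $NH(p(y))R(y)$), into the identity of Lemma \ref{lemma:flow_commutes_with_projection}
\[
D\phi_{\HH_g}^1(x)\xi=D\phi_\HH^{\tau}(x)\xi+c\,X_\HH(y),\qquad c:=g''(\HH(x))\,\big(d\HH(x)\,\xi\big),
\]
I would write out the vertical component of $D\phi_{\HH_g}^1(x)\xi$ as $aX(y)-cNH(p(y))R(y)$ and its horizontal component as the horizontal lift of $D\phi_H^{\tau}(p(x))v+cX_H(p(y))$.

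The core of the proof then extracts two scalar consequences of $D\phi_{\HH_g}^1(x)\xi\in T_yL^N$. On the vertical side, the $R(y)$-coefficient must vanish: $cNH(p(y))=0$; since $H(p(y))=H(p(x))\neq0$ and $N\geq1$ this gives $c=0$, hence $d\HH(x)\xi=0$ because $g''(\HH(x))\neq0$. With $c=0$ the horizontal side reads $D\phi_H^{\tau}(p(x))v\in T_{p(y)}L$, so $D\phi_H^{\tau}(p(x))v\in D\phi_H^{\tau}(p(x))(T_{p(x)}L)\cap T_{p(y)}L=\{0\}$ by the assumed transversality downstairs, forcing $v=0$ and thus $\xi=aX(x)$. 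Finally $0=d\HH(x)\xi=a\,d\HH(x)X(x)=a\,\HH(x)$, using the elementary identity $d\HH\cdot X=\HH$ (computed from $\HH=Nf(r)\,(H\circ p)$ and $X=\tfrac{f(r)}{f'(r)}\partial_r$); since $\HH(x)=Nf(r(x))H(p(x))\neq0$ (here $f>0$ and $H(p(x))\neq0$), we get $a=0$, so $\xi=0$.

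The main obstacle is purely the bookkeeping of the horizontal/vertical decomposition of $D\phi_{\HH_g}^1(x)\xi$: one must check that the only $R(y)$-contribution comes from the $X_\HH(y)$ term (so its coefficient is genuinely $cNH(p(y))$) and that $R(y)$ is not tangent to $L^N$. This is precisely where Lemma \ref{lemma:lifted_flow_preserves_horizontal_distribution} and the splitting $T_yL^N=T_{p(y)}L\oplus\R X(y)$ are used; the rest is the dimension count and the scalar identity $d\HH\cdot X=\HH$.
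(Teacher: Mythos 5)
Your proof is correct and follows essentially the same route as the paper: decompose into horizontal and vertical parts via the connection, use the correction term $g''(\HH)\,(d\HH\cdot\xi)\,X_\HH$ and the fact that $R$ is not tangent to $L^N$ to force $d\HH(x)\xi=0$, kill the horizontal part with the downstairs transversality, and kill the $X$-coefficient via $d\HH(X)=\HH\neq0$. The only cosmetic difference is that the paper extracts the $R$-coefficient by pairing with $\alpha$ (using $(\phi_\HH^\tau)^*\alpha=\alpha$), whereas you read it off directly from the block structure of $D\phi_\HH^\tau$; these are equivalent.
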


\begin{proof}
We pick $\eta\in T_xL^N$ and assume by contradiction that $D\phi_{\HH_g}^1(x)\cdot\eta\in T_{\phi_{\HH_g}^1(x)}L^N$.

\noindent \underline{Step 1}:\quad We show that $d\HH(x)\cdot\eta=0$.\\[1ex]

We write $\eta=\eta^h+c X(x)$, where $\eta^h$ is horizontal and $c\in\R$. Lemma \ref{lemma:flow_commutes_with_projection} asserts
\beq\label{eqn:differential_of_phi_HH_g}
D\phi_{\HH_g}^1(x)\cdot\eta=D\phi_\HH^{\tau}(x)\cdot\eta+g''(\HH(x))\,\big(d\HH(x)\cdot\eta\big)\,X_\HH(\phi_{\HH_g}^1(x))\,.
\eeq
Since we assume $D\phi_{\HH_g}^1(x)\cdot\eta\in T_{\phi_{\HH_g}^1(x)}L^N$ we can again write
\beq
D\phi_{\HH_g}^1(x)\cdot\eta=\zeta^h+b X(\phi_{\HH_g}^1(x))\,.
\eeq
We compute
\bea
0=\alpha\big(\zeta^h+b X(\phi_{\HH_g}^1(x))\big)&=\alpha\big(D\phi_{\HH_g}^1(x)\cdot\eta\big)\\
&=\alpha\big(D\phi_\HH^{\tau}(x)\cdot\eta\big)+\alpha\big(g''(\HH(x))\,\big(d\HH(x)\cdot\eta\big)\,X_\HH(\phi_{\HH_g}^1(x))\big)\\
&=\alpha\big(\eta\big)+g''(\HH(x))\,\big(d\HH(x)\cdot\eta\big)\alpha\big(X_\HH(\phi_{\HH_g}^1(x))\big)\\
&=\alpha\big(\eta^h+c X(x)\big)+g''(\HH(x))\,\big(d\HH(x)\cdot\eta\big)[-NH(p(x))]\\
&=-N\cdot \underbrace{H(p(x))g''(\HH(x))}_{\neq0}\,\big(d\HH(x)\cdot\eta\big)
\eea
where we used that $\alpha$ vanishes on horizontal vectors and the Liouville vector field $X$, moreover, that $\alpha$ is preserved by $\phi_\HH^\tau$,
see Lemma \ref{lemma:lifted_flow_preserves_horizontal_distribution}, and the explicit form of $X_\HH$, see
equations \eqref{eqn:Ham_vfield_of_HH} and \eqref{eqn:Ham_vfield_of_HH_vertical}.
We conclude that $d\HH(x)\cdot\eta=0$.\\

\noindent \underline{Step 2}:\quad We prove that $\eta^h=0$.\\[1ex]

The assumption $D\phi_{\HH_g}^1(x)\cdot\eta\in T_{\phi_{\HH_g}^1(x)}L^N$ together with $d\HH(x)\cdot\eta=0$
and equation \eqref{eqn:differential_of_phi_HH_g} implies
\beq
D\phi_{\HH_g}^1(x)\cdot\eta=D\phi_\HH^{\tau}(x)\cdot\eta\in T_{\phi_{\HH}^\tau(x)}L^N\,.
\eeq
Since $\eta\in T_xL^N$
\beq
D\phi_\HH^{\tau}(x)\cdot\eta\in D\phi_\HH^{\tau}(x)(T_xL^N)\cap T_{\phi_{\HH}^\tau(x)}L^N
\eeq
therefore, Lemma \ref{lemma:degeneracy_for_Hamiltonian} implies that
\beq
D\phi_\HH^{\tau}(x)\cdot\eta\in \,<X(\phi_{\HH}^\tau(x))>\,= T_{\phi_{\HH}^\tau(x)}^vL^N\,.
\eeq
Since $\phi_\HH^\tau$ preserves the Liouville vector field $X$ we conclude
from  $\eta=\eta^h+c X(x)$ that $\eta^h=0$.\\

\noindent \underline{Step 3}:\quad We prove that $\eta=0$.\\[1ex]

From Steps 1 and 2 we conclude $d\HH(x)\cdot\eta$ and $\eta=cX$, thus we compute
\bea
0=d\HH(cX)\cdot\eta&=\big(Nf'(r)H(p(x))dr+Nf(r)dH(x)\big)\cdot cX\\
    &=Nf'(r)H(p(x))dr\cdot cX\\
    &=c Nf(r)H(p(x))\,.
\eea
In particular, we obtain from $H(p(x))\neq0$ and $f(r)\neq0$, that $c=0$ and therefore $\eta=0$.
\end{proof}

\section{Holonomy of line bundles}\label{appendix:holonomy}

Let $\pi:\E\pf M$ be a principle $S^1$-bundle with connection 1-form $\alpha$. We recall the following explicit formula for the holonomy around a loop $\gamma:S^1\pf M$ in terms of a connection 1-form $\alpha$
\beq\label{eqn:def_of_holonomy}
\mathrm{hol}_\alpha(\gamma)=-\int_0^1 \eta^* \alpha\in S^1=\R/\Z\;.
\eeq
Here $\eta:S^1\pf\E$ is a loop satisfying $\pi\circ\eta=\gamma$. Alternatively, the holonomy $\mathrm{hol}_\alpha(\gamma)\in S^1$ is determined by
\beq
P_\gamma^\alpha(e)= \mathrm{hol}_\alpha(\gamma).e
\eeq
where, $e\in\E_{\gamma(0)}$, $P_\gamma^\alpha:\E_{\gamma(0)}\pf\E_{\gamma(0)}$ denotes the parallel transport along $\gamma$ with respect to the connection $\alpha$, and $g.e$ denotes the $S^1$-action. More details can be found in the book \cite[Chapter II]{Kobayashi_Nomizu_Vol_I}.

\begin{Prop}\label{prop:holonomy_appendix}
Let $(\E,\alpha)$ and $(\F,\beta)$ be principal $S^1$-bundles with connection 1-forms over the manifold $M=\E/S^1=\F/S^1$. Then the following holds.
\begin{enumerate}
\item There exists a canonical connection 1-form $\alpha\otimes\beta$ on the $S^1$-bundle $\E\otimes\F$. Moreover, the holonomy
$\mathrm{hol}_\alpha:C^\infty(S^1,M)\pf S^1$ satisfies
\beq
\mathrm{hol}_{\alpha\otimes\beta}=\mathrm{hol}_\alpha+\mathrm{hol}_\beta\,.
\eeq
\item There exists a canonical connection 1-form $\alpha^*$ on the dual $S^1$-bundle $\E^*$ and
\beq
\mathrm{hol}_{\alpha^*}=-\mathrm{hol}_\alpha\,.
\eeq
\item The bundle $(\E\otimes \E^*,\alpha\otimes\alpha^*)$ is canonically isomorphic to the trivial bundle $M\times S^1$ together with its
trivial connection.
\end{enumerate}
\end{Prop}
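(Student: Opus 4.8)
The plan is to work entirely at the level of principal $S^1$-bundles, exploiting the description of the tensor product as a quotient $\E\otimes\F=(\E\times_M\F)/\bar\Delta$, where $\bar\Delta=\{(h,-h):h\in S^1\}$ acts by $h\cdot(e,f)=(h.e,(-h).f)$, together with the holonomy formula \eqref{eqn:def_of_holonomy}. Write $\partial_1,\partial_2$ for the fundamental vector fields of the two circle actions on $\E\times_M\F$, and $q\colon\E\times_M\F\to\E\otimes\F$ for the quotient projection.

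For part (1), first I would verify that the $1$-form $\mathrm{pr}_1^*\alpha+\mathrm{pr}_2^*\beta$ on $\E\times_M\F$ descends through $q$: it is invariant under the full $S^1\times S^1$-action, hence under $\bar\Delta$, and it annihilates the generator $\partial_1-\partial_2$ of the $\bar\Delta$-action because $\alpha(\partial_1)=\beta(\partial_2)=1$. Denote the descended form by $\alpha\otimes\beta$; it is a connection $1$-form for the residual $S^1$-action, whose generator lifts to $\partial_1$ and on which $\mathrm{pr}_1^*\alpha+\mathrm{pr}_2^*\beta$ evaluates to $1$. For the holonomy identity, given a loop $\gamma$ in $M$ choose lifts $\eta_1\colon S^1\to\E$ and $\eta_2\colon S^1\to\F$ over $\gamma$; then $q\circ(\eta_1,\eta_2)$ lifts $\gamma$ to $\E\otimes\F$ and its pullback of $\alpha\otimes\beta$ equals $\eta_1^*\alpha+\eta_2^*\beta$, so \eqref{eqn:def_of_holonomy} gives $\mathrm{hol}_{\alpha\otimes\beta}(\gamma)=\mathrm{hol}_\alpha(\gamma)+\mathrm{hol}_\beta(\gamma)$. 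Part (2) is the degenerate case of this construction: $\E^*$ is $\E$ with the conjugated action $g\star e:=(-g).e$, whose fundamental vector field is $-\partial_1$, so $\alpha^*:=-\alpha$ is the canonical connection $1$-form on $\E^*$, and \eqref{eqn:def_of_holonomy} gives $\mathrm{hol}_{\alpha^*}(\gamma)=-\int_0^1\eta^*(-\alpha)=-\mathrm{hol}_\alpha(\gamma)$ for any lift $\eta$.

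For part (3), I would first note that, under the identification of (2), the $\bar\Delta$-action defining $\E\otimes\E^*$ becomes the \emph{diagonal} $S^1$-action on $\E\times_M\E$, so $\E\otimes\E^*=(\E\times_M\E)/\Delta$. Define $\Phi\colon\E\otimes\E^*\to M\times S^1$ by $\Phi([(e_1,e_2)])=(p(e_1),\delta(e_1,e_2))$, where $\delta(e_1,e_2)\in S^1$ is the unique element with $e_1=\delta(e_1,e_2).e_2$. Then $\delta$ is $\Delta$-invariant, so $\Phi$ is well defined; $\delta(g.e_1,e_2)=g+\delta(e_1,e_2)$, so $\Phi$ intertwines the residual action with translation in the $S^1$-factor and is therefore an isomorphism of principal bundles over $M$; and it remains to check that $\Phi$ pulls back the trivial connection $\mathrm{pr}_{S^1}^*d\theta$ to $\alpha\otimes\alpha^*$. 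Pulling back along $q$, this amounts to the identity $d\delta=\mathrm{pr}_1^*\alpha-\mathrm{pr}_2^*\alpha$ of $1$-forms on $\E\times_M\E$. Both sides evaluate to $1$ on $\partial_1$ and to $-1$ on $\partial_2$, and both vanish on the horizontal distribution of the product connection: for the right-hand side trivially, and for $d\delta$ because $\delta$ is constant along pairs of $\alpha$-parallel lifts of a curve in $M$ — parallel transport in $\E$ being $S^1$-equivariant. Since vertical and horizontal vectors span, the identity follows, and injectivity of $q^*$ then yields $\Phi^*(\mathrm{pr}_{S^1}^*d\theta)=\alpha\otimes\alpha^*$. (In classical terms, this is the canonical triviality of $E\otimes E^*\cong\mathrm{End}(E)$ with its induced connection.)

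The only genuinely substantive point is the equivariance-of-parallel-transport observation in part (3), which is what upgrades the vanishing of $\mathrm{hol}_{\alpha\otimes\alpha^*}$ — already implied by (1) and (2) — to a bona fide canonical trivialization of the connection. Everything else is bookkeeping, the main pitfall being to keep the two torus directions and the $\bar\Delta$- versus $\Delta$-quotients straight; there are no analytic difficulties, the whole argument living in finite-dimensional smooth differential geometry.
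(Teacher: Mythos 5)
Your proof is correct and follows essentially the same route as the paper's sketch: the same quotient model $(\E\times_M\F)/\overline{\Delta}$ for the tensor product and the same division map $[e_1,e_2]\mapsto(p(e_1),\delta(e_1,e_2))$ for part (3), with connection $1$-forms in place of the paper's horizontal distributions and the integral holonomy formula \eqref{eqn:def_of_holonomy} in place of the parallel-transport computation. If anything you are more complete than the paper on part (3), where the paper only exhibits the isomorphism and does not verify (as you do via $d\delta=\mathrm{pr}_1^*\alpha-\mathrm{pr}_2^*\alpha$) that it carries $\alpha\otimes\alpha^*$ to the trivial connection.
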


We only sketch the proof:\\

We think of a connection $\alpha$ in $\E$ as an $S^1$-invariant hyperplane distribution $H^\E$ which is transversal to the infinitesimal generator of the $S^1$-action. We construct $\E\otimes\F$. The fiber product $\E\times_M\F$ of $\E$ and $\F$ is defined as follows
\beq
\E\times_M\F=\{(e,f)\mid p_\E(e)=p_\F(f)\}\,.
\eeq
This is a principal $T^2$-bundle over $M$. We set $\overline{\Delta}:=\{(g,-g)\mid g\in S^1\}\subset T^2$ and define
\beq
\E\otimes\F:=(\E\times_M\F)/\overline{\Delta}
\eeq
which is a principal $T^2/\overline{\Delta}\cong S^1$-bundle. We denote by $H^\E$ resp.~$H^\F$ the hyperplane distributions on $\E$ resp.~$\F$. Then
\beq
H^{\E\times_M\F}:=dp_\E^{-1}(H^\E)\cap dp_\F^{-1}(H^\F)
\eeq
is a $T^2$-invariant codimension-2-distribution which is transversal to the infinitesimal generators of the torus action. In particular,
$H^{\E\times_M\F}$ descends to connection $H^{\E\otimes\F}$ on $\E\otimes\F$.

To compute the holonomy we recall that
for a loop $\gamma\in C^\infty(S^1,M)$ and $e\in \E_{\gamma(0)}$ the holonomy $\mathrm{hol}_\alpha(\gamma)\in S^1$ is determined by
\beq
P_\gamma^\alpha(e)= \mathrm{hol}_\alpha(\gamma).e
\eeq
where $P_\gamma^\alpha:\E_{\gamma(0)}\pf\E_{\gamma(0)}$ denotes the parallel transport along $\gamma$ with respect to the connection $\alpha$ and $g.e$ denotes the $S^1$-action. We observe on $\E\times_M\F$ that 
\beq
P_\gamma^{\alpha\times_M\beta}(e,f)=(P_\gamma^{\alpha}(e),P_\gamma^{\beta}(f))=(\mathrm{hol}_\alpha(\gamma).e, \mathrm{hol}_\beta(\gamma).f)\in(\E\times_M\F)_{\gamma(0)}\;.
\eeq
Thus, $\mathrm{hol}_{\alpha\otimes\beta}=\mathrm{hol}_\alpha+\mathrm{hol}_\beta$ holds.  Statement (2) about the holonomy is proved analogously.

We construct $\E^*$. We recall that $\E$ is a compact manifold with a free $S^1$-action $\psi:S^1\times\E\pf\E$. We define
$\psi^*:S^1\times\E\pf\E$ by $\psi^*(g,e):=\psi(-g,e)$. Then $\E^*$ is the principal $S^1$-bundle with total space $\E$ and action $\psi^*$.
Moreover, the connection $H^{\E^*}=H^\E$.
 
For (3) the canonical isomorphism is given by
\bea
\Phi:\E\otimes\E^*&\pf M\times S^1\\
[e,e^*]=[e,\psi^*(g,e)]&\mapsto (p_\E(e),g)\;.
\eea

$ $\\[1ex]

\noindent\hrulefill
%
%
\bibliographystyle{amsalpha}
\bibliography{../../../Bibtex/bibtex_paper_list}
\end{document}